\newtheorem{thm}{Theorem}[section]
\newtheorem{lem}[thm]{Lemma}
\newtheorem{defi}[thm]{Definition}
\newtheorem{cor}[thm]{Corollary}
\newtheorem{pro}[thm]{Proposition}
\newtheorem{rem}[thm]{Remark}
\newtheorem{cla}[thm]{Claim}
\numberwithin{equation}{section}
\def\blfootnote{\xdef\@thefnmark{}\@footnotetext}
\newcommand{\R}{\mathbb R}
\def\m{\mathbb}  \def\mcal{\mathcal}  \def\mb{\mathbbm}		
\def\eps{\epsilon}  \def\a{\alpha}  \def\b{\beta}  \def\lam{\lambda}	   \def\si {\sigma}
\def\p{\partial}  \def\ls{\lesssim}	\def\gs{\gtrsim}
\def\la{\langle}  \def\ra{\rangle}  \def\l{\left} \def\r{\right} 
\def\t{\tilde}	\def\wh{\widehat}	\def\wt{\widetilde}
	\def\i{\int\limits}		
\def\be{\begin{equation}}     \def\ee{\end{equation}}
\def\bp{\begin{pmatrix}}	\def\ep{\end{pmatrix}}
\def\H{\mathcal{H}}
\title{ Non-homogeneous boundary value problems for coupled  KdV-KdV systems posed on the half line}
\author[]{Shenghao Li, Min Chen, Xin Yang and Bing-Yu Zhang}
\date{}
\begin{document}

\maketitle

\begin{abstract}
In this article, we study an initial-boundary-value problem of a coupled  KdV-KdV system on the half line $ \mathbb{R}^+ $ with non-homogeneous boundary conditions:
\begin{equation*}
	\begin{cases}
		u_t+v_x+u u_x+v_{xxx}=0, \\
		v_t+u_x+(vu)_x+u_{xxx}=0, \\
		u(x,0)=\phi (x),\quad v(x,0)=\psi (x),\\
		u(0,t)=h_1(t),\quad v(0,t)=h_2(t),\quad v_x(0,t)=h_3(t),
	\end{cases} \qquad x,\,t>0.
\end{equation*}
It is shown that the problem is locally unconditionally well-posed  in $H^s(\mathbb{R}^+)\times H^s(\mathbb{R}^+)$ for  $s> -\frac34 $ with initial data $(\phi,\psi)$ in  $H^s(\mathbb{R}^+)\times H^{s}(\mathbb{R}^+)$ and boundary data
$(h_1,h_2,h_3) $ in $H^{\frac{s+1}{3}}(\mathbb{R}^+)\times H^{\frac{s+1}{3}}(\mathbb{R}^+)\times H^{\frac{s}{3}}(\mathbb{R}^+)$. The approach developed in this paper can also be applied to study more general KdV-KdV systems posed on the half line. 
		
\end{abstract}
	
\blfootnote{2010 Mathematics Subject Classification. 35Q53, 35M13, 35C13, 35D30}

\blfootnote{Key words and phrases. Coupled KdV-KdV Systems, Non-homogeneous boundary value problems, Unconditional well-posedness, Mild solutions and uniqueness}

\begin{center}
	\tableofcontents
\end{center}

\section{Introduction}
The coupled KdV-KdV system 
\begin{equation} \label{ckdv0}
	\begin{cases}
		u_t + v_x + u u_x + v_{xxx} = 0, \\ 
		v_t + u_x + (vu)_x + u_{xxx} = 0, 
	\end{cases} 
\end{equation}
is a special case of a broad class of Boussinesq systems or the so-called abcd systems derived by Bona, Chen and Saut in \cite{BCS02,BCS04} from the two-dimensional Euler equations. Compared to those uni-directional models, such as the KdV equation, the Boussinesq equation and the BBM equation, the bi-directional systems (\ref{ckdv0}) provide wider range of applications in reality.  The Cauchy problem of the system (\ref{ckdv0})  posed either on $\R$ or on the  periodic domain  $\mathbb{T}$  has been well studied. In particular,  it is  known to be analytically well-posed  in the space $H^s (\R)$ for any $s>-\frac34$, but related bilinear estimates fail in $H^s (\R)$ for any $s<-\frac34$. (See e.g. \cite{AC08, BGK10, YZ22a}). In this paper, we are concerned with the well-posedness of the initial-boundary-value problem (IBVP) for the coupled KdV-KdV system posed on the half line $ \R^+ $:
\begin{equation}\label{ckdv}
	\begin{cases}
		u_t+v_x+u u_x+v_{xxx}=0, \\
		v_t+u_x+(vu)_x+u_{xxx}=0, \\
		u(x,0)=\phi (x),\quad v(x,0)=\psi (x),\\
		u(0,t)=h_1(t),\quad v(0,t)=h_2(t),\quad v_x(0,t)=h_3(t).
	\end{cases} \qquad x, \, t>0.
\end{equation}

Let's first briefly review some highly related literatures. The IBVP for the single KdV equation
\begin{equation}\label{KdV}
	\begin{cases} 
		w_t + w_x + w_{xxx} = 0, \\
		w(x,0) = \phi(x), \quad w(0,t) = h(t).
	\end{cases} \qquad x, \, t>0.
\end{equation}
posed on the half line has been investigated extensively.  Bona, Sun and Zhang\cite{BSZ02} developed a method by using the Laplace transform to establish the well-posedness of (\ref{KdV}) in the space $ H^{s}(\R^+) $ for $ s>\frac34 $. Later, they improved the range of $ s $ to be $ s>-\frac34 $ in \cite{BSZ06} by taking advantage of the Fourier restriction spaces introduced by Bourgain\cite{Bou93a}. During the same period, Colliander and Kenig\cite{CK02}  introduced the Duhamel boundary forcing operator to establish the well-posedness of (\ref{KdV}) in the space $ H^{s}(\R^+) $ for $ s\geq 0 $. After that, Holmer\cite{Hol06} generalized this method to improve the range of the index $ s $ to be $ s>-\frac34 $ by constructing an analytic family of boundary forcing operators. These two methods automatically establish locally analytical well-posedness since both of them took advantage of a certain fixed point argument. As a result, the threshold $ -\frac34 $ is optimal concerning the analytical well-posedness, see \cite{CCT03}. These two methods were also applied to study the well-posedness of the IBVP for more general equations, such as the Boussinesq equations \cite{Xue08, CT17, LCZ18}, the BBM equation \cite{BCH14}, the nonlinear Schodinger equations \cite{Hol05, ET16, Cav17, BSZ18, GW20}, the Klein-Gordon-Schodinger system \cite{CT19}, the Schodinger-KdV systems \cite{CC19}
and many more. But few works have been devoted to investigate the low regularity well-posedness of the IBVP for KdV-KdV systems such as (\ref{ckdv}). Part of the reason is due to the complicated structure of the systems. Actually, it was discovered in \cite{Oh09} and \cite{YZ22a} that the threshold index for the well-posedness of the Cauchy problem for KdV-KdV systems on $ \R $ may be as large as $ 0 $ or $ \frac34 $ instead of $ -\frac34 $, depending on the dispersion coefficients and how $ u $ and $ v $ are interacted in the systems. The goal of this paper is to study the IBVP for the particular KdV-KdV system (\ref{ckdv}) on the half line $ \R^+ $ and establish the well-posedness beyond the threshold $ -\frac34 $.


In order to adopt results and techniques obtained in the study of KdV-type equations, the new variables $U := \frac14(u+v)$ and $V := \frac14(u-v)$ are introduced so that the linear parts of (\ref{ckdv}) on $ U $ and $ V $ are decoupled with the KdV type. More precisely, the new system reads as 
\[\begin{cases}
	U_t+U_x+U_{xxx}+3UU_x+(UV)_x-VV_x=0, \vspace{0.03in}\\
	V_t-V_x-V_{xxx}-UU_x+(UV)_x+3VV_x=0, \vspace{0.03in}\\
	U(x,0)=\frac14(\phi +\psi ),\quad V(x,0)=\frac14(\phi -\psi ), \vspace{0.03in}\\
	U(0,t)=\frac14(h_1+h_2),\quad V(0,t)=\frac14(h_1-h_2),\quad V_x(0,t)-U_x(0,t)=-\frac12 h_3.
\end{cases} \qquad x,\,t>0.\]
For ease of notations, we still denote $(U,V)$ as $(u,v)$, and write $\left ( \frac14(\phi +\psi ),\frac14(\phi -\psi ) \right )$ as $(\phi, \psi)$ and $\left (\frac14(h_1+h_2),\frac14(h_1-h_2),-\frac12 h_3 \right )$ as $(h_1, h_2, h_3)$. Then it reduces to study the following IBVP:
\begin{equation}\label{ckdv-1}
	\begin{cases}
		u_t+u_{xxx}+u_{x}+3uu_x+(uv)_x-vv_x=0,\\
		v_t-v_{xxx}-v_{x}-uu_x+(uv)_x+3vv_x=0,\\
		u(x,0)=\phi (x),\quad v(x,0)=\psi(x),\\
		u(0,t)=h_1(t),\quad v(0,t)=h_2(t),\quad v_x(0,t)-u_x(0,t)=h_3 (t),
	\end{cases} \qquad x,t>0.
\end{equation}
which is equivalent to the IBVP (\ref{ckdv}) for the well-posedness in the space $H^s (\R^+)\times H^s(\R^+)$. 

For any $ s\in\R $, we define
\[ {\cal H}_x^s(\R^+) := H^s(\R^+)\times H^s (\R^+), \quad {\cal H}^s_t (\R^+) := H^{\frac{s+1}{3}} (\R^+) \times H^{\frac{s+1}{3}} (\R^+) \times H^{\frac{s}{3}}(\R^+). \]
When $ s\geq 3 $, the well-posedness of (\ref{ckdv-1}) in the space $H^s (\R^+)\times H^s(\R^+)$ can be established concerning the classical solutions $ (u,v)\in C([0,T]; {\cal H}^s_x (\R^+)) $. So in the remaining of this paper, we will focus on the case $ s\leq 3 $. In addition, for lower regularity solutions, there will be  a serious uniqueness issue. For more details, the readers can refer to Remark \ref{Re, uwp} in section 2. In this situation, the type of solutions of (\ref{ckdv-1}) that we are interested in is the so-called mild solutions which can be viewed as limits of strong solutions. The precise definition is given below.

\begin{defi}[Mild solutions] \label{Def, m-s}
	Let $s\leq 3 $ and $T>0$ be given. 
	\begin{itemize} 
		\item[(i)]  A function pair $(u, v)\in C([0,T]; {\cal H}^3_x (\R^+)) \cap C^1([0,T]; {\cal H}^0_x (\R^+)) $ is said to be a strong solution of the IBVP (\ref{ckdv-1}) in the space $ C([0,T]; {\cal H}^s_x (\R^+)) $ if the equations in (\ref{ckdv-1})  hold for a.e. $(x,t)\in \R^+\times (0,T) $ and $ (u,v, v_x) |_{x=0} \in {\cal H}^s_t (0,T)$.
		
		\item[(ii)] A function pair $(u, v)\in C([0,T]; {\cal H}^s_x (\R^+)) $ is said to be a mild solution of  the IBVP (\ref{ckdv-1})  if  there exists a sequence of strong solutions  $(u_n, v_n) \in C([0,T]; {\cal H}^3_x (\R^+))$  such that
		$\lim\limits _{n\to \infty} (u_n, v_n) \to (u,v) $ in $ C([0,T]; {\cal H}^s_x (\R^+))$ and 
		$\lim\limits _{n\to 0} (h_{1,n}, h_{2,n} , h_{3, n} ) \to (h_1, h_2, h_3)$  in  $ {\cal H}^s_t (0,T)$, where $ (h_{1,n}, h_{2,n} , h_{3, n} ) := \left. (u_n, v_n, \partial _x v_n)\right |_{x=0}$.
	\end{itemize}
\end{defi}

After the concept of solutions is fixed, we can discuss the (unconditional) local  well-posedness of (\ref{ckdv-1}) which is understood in the following sense.

\begin{defi}[Local well-posedness]\label{Def, lwp}
	For any $ s\leq 3 $, the IBVP (\ref{ckdv-1}) is said to be locally well-posed in the space ${\cal H}_x^s(\R^+)$ if for any $r>0$,  there exists some time $T>0$, depending only on $r$ and $s$, such that for  any naturally compatible \footnote{For the concept of compatibility, the reader can refer to Definition \ref{Def, compatibility-lin}.}  $(\phi, \psi) \in {\cal H}^s _x(\R^+)$ and $\vec{h}= (h_1, h_2, h_3 ) \in  {\cal H}^s_t (\R^+)$
	with $ \|\phi , \psi )\|_{{\cal H}^s_x (\R^+)} + \|\vec{h} \|_{{\cal H}^s_t (\R^+)}  \le r$, 
	the IBVP (\ref{ckdv-1}) admits a unique mild solution $(u,v) \in C([0,T];{\cal H}^s_x (\R^+))$ and, moreover, the solution map is continuous  in the corresponding spaces. If the solution map is real analytic instead, then the IBVP (\ref{ckdv-1}) is said to be locally analytically well-posed.
\end{defi}

We intend to find  those values of $s$ for which the IBVP (\ref{ckdv-1}) is locally well-posed in the space ${\cal H}^s_x (\R^+)$. The following theorem is the main result of this paper.

\begin{thm}\label{Thm, main}
	The IBVP (\ref{ckdv-1}) is locally analytically  well-posed in the space ${\cal H}^s_x (\R^+) $, with any naturally compatible data $(\phi, \psi) \in {\cal H}^s _x(\R^+)$ and $\vec{h}= (h_1, h_2, h_3 ) \in  {\cal H}^s_t (\R^+) $,
	for any $s> -\frac34$.
\end{thm}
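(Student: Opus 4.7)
The plan is to reduce Theorem \ref{Thm, main} to a fixed-point argument on Bourgain-type spaces adapted to the half line, in the spirit of Holmer's treatment \cite{Hol06} of the single KdV equation at $s>-\frac{3}{4}$. The decoupling change of variables already performed in the excerpt splits the linear part of (\ref{ckdv-1}) into a forward KdV-type equation for $u$ and a backward KdV-type equation for $v$, so the full Colliander--Kenig--Holmer boundary forcing machinery can be applied to each component separately. I would construct each linear solution as a sum of three pieces: the free Cauchy evolution of an $H^s(\R)$ extension of the initial datum, a Duhamel integral absorbing the forcing, and Holmer-type analytic families of Duhamel boundary forcing operators that transport the boundary data $(h_1,h_2,h_3)$ with the sharp temporal regularity $\mathcal{H}^s_t(\R^+)$ into the solution space. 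This yields, for every $s>-\frac{3}{4}$, linear estimates in a work space of the form
\[ Y^s_T \;:=\; C\bigl([0,T];\mathcal{H}^s_x(\R^+)\bigr)\cap X^{s,b}_+\cap X^{s,b}_-, \]
where $b$ is slightly larger than $\tfrac12$ and $X^{s,b}_\pm$ are Bourgain spaces attached to the two opposite-sign KdV dispersion relations, together with matching boundary trace estimates controlling $(u,v,v_x)|_{x=0}$ in $\mathcal{H}^s_t(0,T)$.

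For the nonlinear step I would rewrite each nonlinearity as $\partial_x$ of a quadratic product and split into three types. The pure terms $\partial_x(u^2)$ and $\partial_x(v^2)$ are controlled in $X^{s,b-1}_+$ and $X^{s,b-1}_-$ respectively by the classical Kenig--Ponce--Vega bilinear KdV estimates, which are valid precisely in the range $s>-\frac{3}{4}$. The cross term $\partial_x(uv)$ must be estimated simultaneously in $X^{s,b-1}_+$ and $X^{s,b-1}_-$ against $(u,v)\in X^{s,b}_+\times X^{s,b}_-$; for this I would invoke the mixed-dispersion bilinear estimates established for the Cauchy problem of coupled KdV--KdV systems on $\R$ in \cite{AC08,BGK10,YZ22a} and transfer them to the half line via extension. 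Combining these bilinear estimates with the linear theory, one obtains a contraction on a small ball of $Y^s_T$ for $T$ depending only on $\|(\phi,\psi)\|_{\mathcal{H}^s_x}+\|\vec h\|_{\mathcal{H}^s_t}$; the fixed point is automatically a mild solution in the sense of Definition \ref{Def, m-s}, and since it is produced by Picard iteration it depends real-analytically on the data.

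The last ingredient is unconditional uniqueness: two mild solutions with the same data must agree even when neither is a priori known to lie in the auxiliary Bourgain space $Y^s_T$. I would extract this from the contraction estimates applied to the difference of two solutions, combined with the approximating strong solutions supplied by Definition \ref{Def, m-s} and a persistence-of-regularity/density argument referenced to Remark \ref{Re, uwp}. The main obstacle I anticipate is the mixed-dispersion bilinear estimate for $\partial_x(uv)$ on the half line: its resonance structure couples the two opposite-sign KdV dispersion relations, and transferring the $\R$-estimate to the half-line setting requires simultaneously controlling extensions of the Holmer boundary forcing operators in both $X^{s,b}_+$ and $X^{s,b}_-$. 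This is where the sharpness of the threshold $-\frac{3}{4}$ and the endpoint choice of $b$ come into play, and it is the technical heart of the argument.
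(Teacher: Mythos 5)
Your overall architecture---convert the IBVP into a system of integral equations with boundary forcing operators, prove linear and bilinear estimates in Bourgain-type spaces, run a contraction, and then upgrade to existence and unconditional uniqueness of mild solutions via approximation by smooth solutions and persistence of regularity---is the same skeleton as the paper's (Theorems \ref{Thm, cwp}, \ref{Thm, exist of ms}, \ref{Thm, uniq of ms}). But there is a genuine gap at the technical core: you work with $b$ slightly larger than $\tfrac12$ precisely so that you can quote the classical Kenig--Ponce--Vega bilinear estimates, yet the boundary integral/forcing operators (whether the Laplace-transform operators of \cite{BSZ02,BSZ06} or Holmer's Duhamel boundary forcing operators \cite{Hol06}) cannot be bounded into $X^{s,b}$ for $b>\tfrac12$; this is exactly why the paper works at $b=\tfrac12$ (see the discussion before Definition \ref{Def, MFR spaces} and Lemmas \ref{Lemma, pos-kdv-bdr}, \ref{Lemma, neg-kdv-bdr}). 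Once you are forced down to $b\le\tfrac12$, three things you rely on break simultaneously: the classical bilinear estimates are no longer available as stated near $s=-\tfrac34$, $X^{s,b}$ no longer embeds into $C_t H^s_x$, and the $T^{\epsilon}$ gain behind your ``contraction for $T$ depending only on the size of the data'' disappears. The paper repairs all three by (i) introducing the modified spaces $X^{\alpha,\beta}_{s,\frac12,\sigma}$ and $Y^{\alpha,\beta}_{s,\frac12,\sigma}$ (an extra low-frequency temporal weight $\Lambda^{\alpha,\beta}_{s,\sigma}$) plus the auxiliary $Z^{\alpha,\beta}_{s,\sigma-1}$ space, and reproving all same-dispersion and mixed-dispersion bilinear estimates there (Proposition \ref{Prop, bilin}); (ii) replacing the missing $T^{\epsilon}$ factor by the scaling argument with the parameter $\beta$ that reduces Theorem \ref{Thm, main} to the small-data statement Theorem \ref{Thm, main-scaling}; and (iii) for $-\tfrac34<s\le-\tfrac12$, using a different extension of the boundary operator in the spirit of \cite{BSZ06}, since the simple cutoff extension (as in \cite{ET16}) provably fails below $s=-\tfrac12$ (Remark \ref{Re, -1/2nec}). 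Simply ``transferring'' the $\R$-line bilinear estimates of \cite{AC08,BGK10,YZ22a} by extension does not address any of this, because those estimates live at $b>\tfrac12$.

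Two secondary points. First, your work space $C([0,T];{\cal H}^s_x(\R^+))\cap X^{s,b}_+\cap X^{s,b}_-$ should be a product, not an intersection: $u$ must lie in the space adapted to the forward dispersion and $v$ in the one adapted to the backward dispersion (the paper uses $\mcal{Y}_\sigma=Y^{1,\beta}_{s,\frac12,\sigma}\times Y^{-1,-\beta}_{s,\frac12,\sigma}$); demanding each component lie in both Bourgain spaces is not attainable. Second, the third boundary condition is $v_x(0,t)-u_x(0,t)=h_3(t)$, so in the integral formulation the boundary datum fed into the $v$-flow is $h_3+u_x(0,t)$, i.e.\ it depends on the unknown $u$; the fixed-point map must therefore control the derivative trace $\sup_{x\ge 0}\|\partial_x u\|_{H^{s/3}_t}$ of the full nonlinear evolution, which is exactly where the $Z$-space estimates of Lemma \ref{Lemma, space trace for Duhamel} and the trace bounds in Propositions \ref{Prop, pos-kdv}--\ref{Prop, neg-kdv} are needed. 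These two issues are fixable within your outline, but the $b>\tfrac12$ premise is structural and must be replaced by something like the paper's modified-space-plus-scaling scheme.
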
 

To prove Theorem \ref{Thm, main} which is a result on local well-posedness, we  use scaling argument to introduce
\[u^{\b}(x,t)=\b u(\b^{\frac12}x,\b^{\frac32}t) \quad\text{and}\quad v^{\b}(x,t)=\b v(\b^{\frac12}x,\b^{\frac32}t),\]
where $\beta\in(0,1]$ is a parameter.
Then (\ref{ckdv-1}) becomes 
\[\begin{cases}
	u^\b_t+u^\b_{xxx}+\b u^\b_{x}+3u^\b u^\b_x+(u^\b v^\b)_x-v^\b v^\b_x=0,\\
	v^\b_t-v^\b_{xxx}-\b v^\b_{x}-u^\b u^\b_x+(u^\b v^\b)_x+3v^\b v^\b_x=0,\\
	u^\b(x,0)=\phi^\b(x),\quad v^\b(x,0)=\psi^\b(x),\\
	u^\b(0,t)=h_1^\b(t),\quad v^\b(0,t)=h_2^\b(t),\quad v^\b_x(0,t)-u^\b_x(0,t)=h_3^\b(t),
\end{cases} \qquad x,t>0,\]
where 
\[\begin{cases}
	\phi ^\b(x) := \b \phi (\b^{\frac12}x),\quad \psi ^\b(x) := \b \psi(\b^{\frac12}x),\\
	h_1^\b(t) := \b h_1(\b^{\frac32}t), \quad h_2^\b(t) := \b h_2(\b^{\frac32}t) ,\quad h_3^\b(t) := \b^{\frac32}h_3(\b^{\frac32} t).
\end{cases}\]
When $ s> -\frac34$, 
\[\|(\phi^\b,\psi ^\b)\|_{\mcal{H}^s_x(\m{R}^+)}\ls \b^{\frac38}\|(\phi ,\psi )\|_{\mcal{H}^s_1(\m{R}^+)},\quad 
\|(h_1^\b,h_2^\b,h_3^\b)\|_{\mcal{H}^s_t(\m{R}^+)}\ls \b^{\frac38}\|(h_1,h_2,h_3)\|_{\mcal{H}^s_t(\m{R}^+)}, \]
which implies 
\[\lim_{\b\to 0^+}\|(\phi ^\b,\psi ^\b)\|_{\mcal{H}^s_x(\m{R}^+)}+\|(h_1^\b,h_2^\b,h_3^\b)\|_{\mcal{H}^s_t(\m{R}^+)}=0.\]
Thus, in order to prove Theorem \ref{Thm, main}, it is sufficient  to establish Theorem \ref{Thm, main-scaling} below for  the following IBVP:
\begin{equation}\label{ckdv-2}
	\begin{cases}
		u_t+u_{xxx}+\b u_{x}+3uu_x+(uv)_x-vv_x=0,\\
		v_t-v_{xxx}-\b v_{x}-uu_x+(uv)_x+3vv_x=0,\\
		u(x,0)=\phi (x),\quad v(x,0)=\psi(x),\\
		u(0,t)=h_1(t),\quad v(0,t)=h_2(t),\quad v_x(0,t)-u_x(0,t)=h_3(t).
	\end{cases} \qquad x,t>0.
\end{equation}

\begin{thm}\label{Thm, main-scaling} 
	For any $s>-\frac34 $ and $T>0$,  there exists an $\epsilon >0$, depending only on $T$ and $s$, such that  for any $\b\in(0,1]$ and  for any naturally compatible data $(\phi, \psi) \in {\cal H}^s _x(\R^+)$ and $\vec{h}= (h_1, h_2, h_3 ) \in  {\cal H}^s_t (\R^+)  $ with
	\[ \|\phi , \psi )\|_{{\cal H}^s_x (\R^+)} + \|\vec{h} \|_{{\cal H}^s_t (\R^+)}  \le \epsilon , \]
	the IBVP (\ref{ckdv-2}) admits a unique mild solution $(u,v) \in C \big( [0,T];{\cal H}^s_x (\R^+) \big)$, and moreover, the solution map is real analytic in the corresponding spaces.
\end{thm}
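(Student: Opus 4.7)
The plan is to construct the solution of \eqref{ckdv-2} as the unique fixed point of an integral operator $\Gamma=(\Gamma_1,\Gamma_2)$ built from the explicit linear representation of the IBVP, and to exploit the smallness hypothesis on the data together with time localisation to make $\Gamma$ a contraction on a small ball of a suitable resolution space $Y$, uniformly in $\b\in(0,1]$. Concretely, I would write
\[ u(t) = \mathcal W^+_\b(t)\t\phi + \mathcal B^+_\b[\vec h\,](t) - \int_0^t \mathcal W^+_\b(t-\tau)\bigl(3uu_x+(uv)_x-vv_x\bigr)(\tau)\,d\tau, \]
and the analogous identity for $v$ with the backward KdV group $\mathcal W^-_\b$, where $\t\phi,\t\psi$ are admissible $H^s(\R)$-extensions of $\phi,\psi$ and $\mathcal B^\pm_\b$ are Duhamel boundary forcing operators of Colliander--Kenig / Holmer type, adapted to each of the two opposite dispersion symbols $\mp(\xi^3+\b\xi)$. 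The natural functional setting for $Y$ is the product of Bourgain spaces $X^{s,b}_+\times X^{s,b}_-$ associated to the two phases, intersected with the sidewise trace spaces $C([0,T];H^s(\R))\cap C_x(\R;H^{(s+1)/3}(0,T))$ dictated by the half-line linear theory and an auxiliary Strichartz-type norm.

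The linear step relies on the Holmer analytic family of boundary forcing operators, which I would import verbatim for $\mathcal B^+_\b$ and reflect in time for $\mathcal B^-_\b$. After subtracting $\mathcal W^\pm_\b(t)(\t\phi,\t\psi)$, the residual trace data still lies in ${\cal H}^s_t(\R^+)$ thanks to the natural compatibility condition of Definition \ref{Def, compatibility-lin}, so it only remains to invert a $3\times 3$ boundary symbol obtained by evaluating the two families of forcing operators and their $x$-derivatives at $x=0$, the third equation being the coupled condition $v_x(0,t)-u_x(0,t)=h_3(t)$. Its invertibility and the boundedness of the inverse, uniformly in $\b\in(0,1]$, follow from the now-standard analysis of KdV-type boundary symbols, and yield a linear operator $(\phi,\psi,\vec h)\mapsto Y$ bounded by $\|(\phi,\psi)\|_{{\cal H}^s_x}+\|\vec h\|_{{\cal H}^s_t}$.

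The heart of the argument is a bilinear $X^{s,b}$-estimate, for $s>-\tfrac34$ and some $b=\tfrac12+$, of the form
\[ \|\p_x(w_1 w_2)\|_{X^{s,b-1}_\sigma}\ls \|w_1\|_{X^{s,b}_{\sigma_1}}\,\|w_2\|_{X^{s,b}_{\sigma_2}},\qquad \sigma,\sigma_1,\sigma_2\in\{+,-\}, \]
in all four sign combinations. The homogeneous $(+,+)$ and $(-,-)$ cases, governing $uu_x$ and $vv_x$, are the classical Kenig--Ponce--Vega/Tao KdV estimate at the sharp index $s>-\tfrac34$. The mixed $(+,-)$ and $(-,+)$ cases, which govern the coupling term $(uv)_x$, require separate attention: the resonance function becomes $\tau_1+\tau_2-\xi_1^3+\xi_2^3$ whose zero set is $\xi_1=\xi_2$ rather than $\xi_1+\xi_2=0$, and a dyadic decomposition combined with the $L^4$ Strichartz estimate still closes at $s>-\tfrac34$, as recorded on $\R$ in \cite{YZ22a,BGK10,AC08} and transferred to the half-line via the $X^{s,b}$ machinery on the full line. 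A smooth time cutoff $\chi_{[0,T]}$ contributes a gain $T^{\theta(s)}$ with $\theta(s)>0$, so the Duhamel nonlinearity is bounded by $C(T)\|(u,v)\|_Y^2$ in $Y$. I expect this mixed bilinear estimate, uniform in $\b$, to be the principal technical obstacle.

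Combining the two ingredients, $\Gamma$ maps a ball of radius $R=2C\eps$ in $Y$ into itself and is a strict contraction there, provided $\eps=\eps(T,s)$ is chosen small enough, uniformly in $\b\in(0,1]$; the real-analytic dependence of the fixed point on $(\phi,\psi,\vec h)$ is automatic because $\Gamma$ is affine-plus-quadratic in the data. To verify that this fixed point is a mild solution in the sense of Definition \ref{Def, m-s}, I would approximate the data by smooth compatible $(\phi_n,\psi_n,\vec h_n)$, run the same contraction at the regularity ${\cal H}^3_x$ (where the iterates are strong solutions by standard parabolic-type regularisation of the linearised problem), and pass to the limit using the Lipschitz dependence on the data that the contraction provides; uniqueness of the mild solution then follows from the uniqueness of the fixed point. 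All remaining steps reduce to minor adaptations of the Bona--Sun--Zhang / Holmer blueprint for the single KdV half-line problem.
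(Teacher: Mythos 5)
Your overall architecture (extension of the initial data, boundary forcing operators, a bilinear estimate in Bourgain-type spaces, contraction, then approximation by smooth data) parallels the paper, but there is a genuine gap at the heart of your plan: you set up the contraction in $X^{s,b}$ with $b=\frac12+$, while the boundary forcing/boundary integral operators simply are not bounded into these spaces for $b>\frac12$ (this is why Holmer works with $b<\frac12$ and Bona--Sun--Zhang with modified spaces; the paper states this constraint explicitly right after Definition \ref{Def, FR space}). Once you are forced down to $b\leq\frac12$, two things break in your sketch: the embedding into $C_t H^s_x$ is lost, and, more seriously, the mixed-sign bilinear estimate for $(uv)_x$ no longer ``closes as on $\R$'', because the resonance function for the $(+,-)$ interaction, computed in \eqref{d2, res fcn}, degenerates when the frequency $\xi_2$ of the opposite-group factor is small, and at $b\leq\frac12$ there is not enough weight to absorb this region. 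This is precisely why the paper introduces the modified spaces $X^{\a,\b}_{s,\frac12,\sigma}$, $Y^{\a,\b}_{s,\frac12,\sigma}$ (with the extra $\Lambda$-weight in the low-frequency region, used in Case 2 of the proof of Proposition \ref{Prop, bilin, d2}) and the auxiliary $Z$-spaces needed for the Duhamel trace estimates (Lemma \ref{Lemma, space trace for Duhamel}); none of these appear in your proposal, and without them the linear and bilinear steps are mutually incompatible. (Two smaller structural differences: the paper does not invert a $3\times3$ boundary symbol --- it exploits the triangular structure, solving the $u$-equation with the single datum $h_1$ and feeding the trace $u_x(0,t)$ into the two boundary data of the left-moving $v$-equation inside the fixed point, as in \eqref{ckdv-3}; and for $-\frac34<s\leq-\frac12$ even the modified extension of the boundary operator must be replaced by the more delicate construction of \cite{BSZ06}, cf.\ Lemma \ref{Lemma, pos-kdv-bdr, low-reg} and Remark \ref{Re, -1/2nec}.)

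The second gap is the uniqueness claim. Uniqueness of the fixed point only gives conditional uniqueness in the resolution space $\mathcal{Y}_\sigma(\Omega_T)$; it does not by itself give uniqueness of mild solutions in $C([0,T];\mathcal{H}^s_x(\R^+))$, which is what Theorem \ref{Thm, main-scaling} asserts. A mild solution is an arbitrary limit of strong solutions whose data converge only in the low-regularity norms, so to identify it with the fixed-point solution you must show that the approximating strong solutions exist on a common time interval determined by the $\mathcal{H}^s$-size of the data, even though their $\mathcal{H}^3$-norms may be arbitrarily large. This is exactly the persistence-of-regularity statement (Proposition \ref{Prop, Tmax}), proved via the blow-up alternative and the refined bilinear smoothing estimates of Lemmas \ref{Lemma, bilin, gc1}--\ref{Lemma, bilin, d21}, combined with energy-based uniqueness of classical solutions (Lemma \ref{wp-classical}); your plan to ``run the same contraction at $\mathcal{H}^3_x$'' does not address the shrinking lifespan at high regularity, and the final sentence ``uniqueness of the mild solution then follows from the uniqueness of the fixed point'' is circular without this ingredient.
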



We remark that the well-posedness obtained in Theorem \ref{Thm, main-scaling} (or equivalently Theorem \ref{Thm, main}) is unconditional (see Remark \ref{Re, uwp}) since the uniqueness is secured in the space $ C\big([0,T]; {\cal H}^s_x(\R^+)\big) $ itself rather than in a smaller subspace. The reason that the unconditional uniqueness can be justified is because we are concerning with the mild solutions. If the distributional solutions, instead of the mild solutions, are adopted in Definition \ref{Def, lwp}, then the unconditional uniqueness is still a challenging open problem.

\section{Outline of the method and applications}

\subsection{Key ingredients in the proof of Theorem \ref{Thm, main-scaling}}

Theorem \ref{Thm, main-scaling} will be proved using the  approach similar to  that  developed in  \cite{BSZ02,BSZ06, CK02, Hol06}  for the KdV equation but with some modifications to handle the bi-directional system \eqref{ckdv-2}. Consider the Cauchy problem for the linear KdV-type equation posed on $\R$, 
\be\label{linear eq-1}
w_t+\alpha w_{xxx}+\beta w_x=0, \qquad  w (x,0) =w_{0}(x) , \qquad  x\in\m{R},\, t\in\m{R}, \ee
where $\alpha\in\m{R}\backslash\{0\}$ and $\b\in\m{R}$.  Let   $W_{R}^{\alpha,\beta}$  be the semigroup operator associated to (\ref{linear eq-1}).  The solution of (\ref{linear eq-1})  is then given  explicitly by 
\be\label{semigroup op-1}
W_{R}^{\alpha,\beta}(t)w_{0}(x)=\int_{\m{R}}e^{i\xi x}e^{i\phi^{\alpha,\beta}(\xi)t}\widehat{w_{0}}(\xi)d\xi,\ee
where $\widehat{w_{0}}(\xi)$ is the Fourier  transform of $w_0 (x)$, and $\phi^{\alpha,\beta}$ is the cubic polynomial defined as below:
\be\label{phase fn}
\phi^{\alpha,\beta}(\xi) = \alpha \xi^{3}-\beta \xi. \ee


\begin{defi}\label{Def, FR space}
	For $\a,\b,s,b\in\R$, the Fourier restriction space $X^{\a,\b}_{s,b}$ is defined to be the completion of the Schwarz class ${\mathcal S}(\R^2)$ under the norm
	\be\label{FR norm}
	\|w\|_{X^{\a,\b}_{s,b}}=\left\|\langle\xi\rangle^s\langle\tau-\phi^{\a,\b}(\xi)
	\rangle^b\widehat{w}(\xi,\tau)\right\|_{L^2_{\xi,\tau}(\R^2)},
	\ee
	where $\phi^{\a,\b}$ is as defined in (\ref{phase fn}) and $\widehat{w}$ represents the space-time Fourier transform of $w$. 
\end{defi}
When $b>\frac12$, it follows directly from the Sobolev embedding that $X^{\a,\b}_{s,b}\subset C_{t}(\m{R};H^{s}(\m{R}))$. But the estimate on the boundary integral operator (see e.g. Lemma \ref{Lemma, pos-kdv-bdr} and \ref{Lemma, neg-kdv-bdr}) forces $b\leq \frac12$, and then the space $X^{\a,\b}_{s,b}$ may not be in $C_{t}(\m{R};H^{s}(\m{R}))$. In addition, the bilinear estimates used in the Cauchy problems will fail. In order to overcome these difficulties, we introduce some modified Fourier restriction spaces  $X^{\a,\b}_{s,b,\sigma}$ and $Y_{s,b,\sigma}^{\a,\b}$.   The spaces $X^{\a,\b}_{s,b,\sigma}$ will be  used to justify the bilinear estimates and $Y_{s,b,\sigma}^{\a,\b}$  will serve as suitable solution spaces. 

\begin{defi}\label{Def, MFR spaces}
For $\a,\b,s,b\in\R$ and $\sigma>\frac12$,  let  $\Lambda^{\a,\b}_{s,\sigma}$  be the completion of the Schwarz class ${\mathcal S}(\R^2)$ under the norm
\be\label{Lambda norm}
\|w\|_{\Lambda^{\a,\b}_{s,\sigma}}=\left\|\mb{1}_{\{e^{|\xi|}\leq 3+|\tau|\}}\la\xi\ra^{s}\la\tau-\phi^{\a,\b}(\xi)\ra^\si\wh{w}(\xi,\tau)\right\|_{L^2_{\xi,\tau}(\R^2)},\ee
where $\mb{1}$ means the characteristic function. In addition, we denote
\[X^{\a,\b}_{s,b,\sigma}=X^{\a,\b}_{s,b}\cap\Lambda^{\a,\b}_{s,\sigma} \quad\text{and}\quad Y_{s,b,\sigma}^{\a,\b} =X^{\a,\b}_{s,b,\si}\cap C_{t}(\m{R},H^{s}(\m{R})).\]
The norm of  the space  $X^{\a,\b}_{s,b,\sigma}$ is defined to be 
\be\label{MFR norm}\begin{split}
	\| w\|_{X^{\a,\b}_{s,b,\sigma}} &=\| w\|_{X^{\a,\b}_{s,b}}+\| w\|_{\Lambda^{\a,\b}_{s,\sigma}}\\
	&\sim \left\|\la \xi\ra ^{s}\big[\la L\ra ^{b}+\mb{1}_{\{e^{|\xi|}\leq 3+|\tau|\}}\la L\ra^{\sigma}\big]\wh{w}(\xi,\tau)\right\|_{L^{2}_{\xi,\tau}(\m{R}^2)},
\end{split}\ee
where $L=\tau-\phi^{\a,\b}(\xi)$, and the norm of the space  $Y_{s,b,\sigma}^{\a,\b}$ is defined to be 
\be\label{Y norm}
\|w\|_{Y^{\a,\b}_{s,b,\si}}=\|w\|_{X^{\a,\b}_{s,b,\si}}+\sup_{t\in\R}\|w(x,t)\|_{H_{x}^s(\R)}.\ee
\end{defi}
	
From the above definition, we see that when $b\leq \frac12$, the space $X^{\a,\b}_{s,b,\sigma}$ provides more regularity in time than the space $X^{\a,\b}_{s,b}$ in the low frequency region. This improvement is essential to establish the bilinear estimates in Proposition  \ref{Prop, bilin}. Note that  for any $b\leq \sigma$, 
\be\label{MFR, ub}\| w\|_{X^{\a,\b}_{s,b,\sigma}} \leq 2\| w\|_{X^{\a,\b}_{s,\sigma}}.\ee
Besides, $e^{|\xi|}\leq 3+|\tau|$ when  $|\xi|\leq 1$. Thus,
\be\label{MEF, lb}\| w\|_{X^{\a,\b}_{s,b,\sigma}}\gs \left\|\la \xi\ra ^{s}\big[\la L\ra ^{b}+\mb{1}_{\{|\xi|\leq 1\}}\la L\ra^{\sigma}\big]\wh{w}(\xi,\tau)\right\|_{L^{2}_{\xi,\tau}(\m{R}^2)},\quad \mbox{where $ L=\tau-\phi^{\a,\b}(\xi) $}.\ee

For technical reasons, when $\sigma>\frac12$, we define the $Z_{s,\sigma-1}^{\a,\b}$ space to be the completion of the Schwarz class $\mcal{S}(\m{R}^2)$ with respect to the following norm.
\be\label{Z norm}
\|w\|_{Z^{\a,\b}_{s,\sigma-1}}:=\left\|\langle\tau\rangle^{\frac{s}{3}+\frac12-\sigma}\la\tau-\phi^{\a,\b}(\xi)\ra^{\sigma-1}\widehat{w}(\xi,\tau)\right\|_{L^2_{\xi,\tau}(\R^2)}.
\ee
This space is an intermediate space which will only be used to estimate the inhomogeneous term in the Duhamel formula, see Lemma \ref{Lemma, space trace for Duhamel}. The definition of this space is a modification of the $Y_{s,\sigma-1}$ space defined in Section 5.3 in \cite{Hol06}. 

Let  $\Omega _T := \R^+ \times (0,T)$ for  given $T>0$. We define a restricted
version of the space $X_{s, b}$ to the domain  $\Omega _T $
as 
$X_{s,b}^{\alpha, \beta}  (\Omega _T ) := \left. X_{s,b}^{\alpha, \beta} \right |_{ \Omega _T}$
with the quotient norm
\[ \| u\| _{X_{s,b}^{\alpha, \beta} (\Omega _T )}=\inf _{w\in X_{s, b}^{\alpha, \beta}}  \{ \|w\|_{X_{s, b}^{\alpha, \beta}}: w = u \,\text{ on }\, \Omega  _T\}. \]
The spaces $X_{s,b, \sigma}^{\alpha, \beta} (\Omega _T),$  $Y_{s,b, \sigma}^{\alpha, \beta} (\Omega _T) $ and $Z_{s,b, \sigma}^{\alpha, \beta -1} ( \Omega _T)$ are defined similarly.

For those nonlinear terms in (\ref{ckdv-2}), we denote
\[ F(u,v):= -3uu_x-(uv)_x+vv_x, \qquad G(u,v):= uu_x-(uv)_x-3vv_x,\]  
then the IBVP (\ref{ckdv-2}) is converted to the following equations
\begin{equation} \label{ckdv-3}
\left \{ \begin{array}{ll} u(x,t)= W^{1, \beta}_{\R^+}  (t) \phi  (x) + \int ^t_0 W^{1, \beta}_{\R^+} (t-\tau ) F(u,v) (\tau ) \,d\tau + W^{1, \beta}_{bdr} \left [ h_1 \right ] (x,t), \\ \quad \\
	v(x,t)= W^{-1, -\beta}_{\R^+ }(t) \psi  (x) + \int ^t_0 W^{-1, -\beta}_{\R ^+ } (t-\tau ) G(u,v) (\tau ) \,d\tau  + W^{-1, -\beta}_{bdr} \left [ h_2, h_3 + u_x (0,t) \right ] (x,t), \end{array} 
\right. \end{equation}
where $ x,t>0 $, $W_{\R^+}^{1,\beta} (t)$ and $W_{\R^+}^{-1, -\beta} (t)$  are  the  $C^0$ semigroups  associated to  the IBVPs of the linear KdV equations posed on $\R ^+$:
\be\label{linear eq-r}
\left\{\begin{array}{ll}
w_t+w_{xxx}+\beta w_x=0,  \\
w|_{x=0}=0, \quad w |_{t=0}=\phi(x) \in H^{s}(\m{R^+}),
\end{array}
\qquad x,t>0,\right.\ee
and
\be\label{linear eq-l}
\left\{\begin{array}{ll}
w_t- w_{xxx} - \beta w_x=0, \\
w|_{x=0}=0, \quad  w_x|_{x=0} = 0, \quad w |_{t=0}=\psi(x) \in H^{s}(\m{R^+}),
\end{array}  
\qquad x,t>0,\right.\ee
respectively,  
$W^{1, \beta}_{bdr} (t)$ and $W^{-1,-\beta}_{bdr} (t)$ are the boundary integral operators associated with he IBVPs of the linear KdV equation posed on $\R^+$:
\be\label{lin-u-1}\begin{cases}
u_t+u_{xxx}+\b u_{x}=0,\\
u(x,0)=0,\\
u(0,t)=h_1(t),
\end{cases}   \qquad x,t>0,\ee
and 
\be\label{lin-v-1}\begin{cases}
v_t-v_{xxx}-\b v_{x}=0,\\
v(x,0)=0,\\
v(0,t)=h_2(t),\quad v_x(0,t)=h_3(t),
\end{cases} \qquad x,t>0,\ee
respectively. Here for simplicity, we  have assumed that $\phi (0)= h_1 (0)= \psi (0) = h_2 (0) = 0$ if $\frac12 < s< \frac32$ and $h_3(0)= \phi'(0)= \psi' (0) =0 $ if  $\frac32< s\leq 3$. 
We will first  establish the following conditional well-posedness.

\begin{thm}[Conditional Well-posedness]  \label{Thm, cwp}

Let $-\frac34 < s\leq 3$, $T>0$  and $0\leq  \beta \leq 1$ be given.  There exist $ \sigma = \sigma(s)>\frac12 $ and $r=r(s,T)>0$  such that   for  any naturally compatible  $(\phi, \psi) \in {\cal H}^s _x(\R^+)$,  
$\vec{h}= (h_1, h_2, h_3 ) \in  {\cal H}^s_t (\R^+)  $  related to the IBVP (\ref{ckdv-2})  with
\[ \|\phi , \psi )\|_{{\cal H}^s_x (\R^+)} + \|\vec{h} \|_{{\cal H}^s_t (\R^+)}  \le r , \]
the system of the integral equations (SIE)  (\ref{ckdv-3}) admits a unique solution  
\[ (u,v) \in \mcal{Y}_{\sigma} (\Omega _T) :=Y^{1,\b}_{s,\frac12,\sigma} (\Omega _T) \times Y^{-1,-\b}_{s,\frac12,\sigma}(\Omega _T)\]
Moreover, the solution map is real analytic    in the corresponding spaces.
\end{thm}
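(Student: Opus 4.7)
I would solve the system of integral equations (\ref{ckdv-3}) by a contraction-mapping argument in the product space
\[
\mcal{Y}_\sigma(\Omega_T) \;=\; Y^{1,\b}_{s,\frac12,\sigma}(\Omega_T)\times Y^{-1,-\b}_{s,\frac12,\sigma}(\Omega_T).
\]
Write $\Gamma = (\Gamma_1,\Gamma_2)$, where $\Gamma_j(u,v)$ is the right-hand side of the $j$-th equation in (\ref{ckdv-3}). The goal is to show that for some $r = r(s,T)>0$ the map $\Gamma$ sends a ball of radius $R \sim r$ in $\mcal{Y}_\sigma(\Omega_T)$ into itself and is a strict contraction there, so Banach's fixed point theorem supplies the unique solution. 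Real analyticity of the solution map in $(\phi,\psi,\vec h)$ then follows automatically from the fact that the only nonlinear dependence on $(u,v)$ enters through the bilinear forms $F(u,v)$ and $G(u,v)$ together with the linear trace $u_x(0,t)$, so the standard analytic implicit function theorem applies.

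\textbf{Key estimates.} To close the contraction I would assemble the following four families of linear estimates, all at the endpoint $b=\tfrac12$ and in the modified Bourgain spaces of Definition \ref{Def, MFR spaces}. First, free-evolution bounds of the form $\|W^{1,\b}_{\R^+}(\cdot)\phi\|_{Y^{1,\b}_{s,\frac12,\sigma}(\Omega_T)} \lesssim \|\phi\|_{H^s(\R^+)}$ and the analogous estimate for the left-going semigroup, proved via $H^s(\R)$-extensions of $\phi,\psi$. Second, boundary-forcing estimates in the style of Colliander--Kenig and Holmer,
\[
\|W^{1,\b}_{bdr}[h_1]\|_{Y^{1,\b}_{s,\frac12,\sigma}(\Omega_T)} \lesssim \|h_1\|_{H^{\frac{s+1}{3}}(\R^+)},
\]
together with a two-parameter version for $W^{-1,-\b}_{bdr}[h_2,h_3]$ controlled by $\|h_2\|_{H^{(s+1)/3}} + \|h_3\|_{H^{s/3}}$. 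Third, a Duhamel estimate (Lemma \ref{Lemma, space trace for Duhamel}) allowing the nonlinear source to be measured in the weaker dual norms,
\[
\Big\|\int_0^t W^{1,\b}_{\R^+}(t-\tau)f(\tau)\,d\tau\Big\|_{Y^{1,\b}_{s,\frac12,\sigma}(\Omega_T)} \lesssim \|f\|_{X^{1,\b}_{s,-\frac12,\sigma-1}(\Omega_T)} + \|f\|_{Z^{1,\b}_{s,\sigma-1}(\Omega_T)},
\]
where the intermediate $Z$-space compensates for the loss at $b=\tfrac12$. Fourth, the bilinear estimates of Proposition \ref{Prop, bilin}, bounding $F(u,v)$ and $G(u,v)$ in the above source norms by products of $Y$-norms of $u$ and $v$.

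A fifth, coupling-specific ingredient is a space-trace bound of the schematic form
\[
\|\partial_x u(0,\cdot)\|_{H^{s/3}(0,T)} \lesssim \|u\|_{Y^{1,\b}_{s,\frac12,\sigma}(\Omega_T)},
\]
which gives meaning to the expression $h_3 + u_x(0,t)$ in the $v$-equation and forces the two equations to be handled by a single joint fixed-point argument rather than sequentially. Combining all of these on the ball $B_R = \{(u,v):\|(u,v)\|_{\mcal{Y}_\sigma(\Omega_T)}\le R\}$ yields the schematic bound
\[
\|\Gamma(u,v)\|_{\mcal{Y}_\sigma(\Omega_T)} \le C r + C T^{\theta} \|(u,v)\|^2_{\mcal{Y}_\sigma(\Omega_T)},
\]
for some $\theta = \theta(s)>0$ harvested from time-localization on $\Omega_T$, together with a matching Lipschitz estimate on differences. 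Choosing $R = 2Cr$ and then $r$ sufficiently small (depending on $T,s$) closes the contraction and produces the fixed point.

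\textbf{Main obstacle.} The principal technical difficulty is working at the endpoint $b = \tfrac12$ down to $s > -\tfrac34$. The choice $b = \tfrac12$ is forced on us by the sharp boundary-operator estimates (Lemmas \ref{Lemma, pos-kdv-bdr} and \ref{Lemma, neg-kdv-bdr}), but at this endpoint $X^{\a,\b}_{s,\frac12} \not\subset C_t H^s_x$ and the standard KdV bilinear estimates degenerate in the low-frequency regime. The modified spaces $X^{\a,\b}_{s,b,\sigma}$ and $Z^{\a,\b}_{s,\sigma-1}$ are introduced precisely to restore low-frequency time regularity through the extra $\sigma$-weight supported on $\{e^{|\xi|}\le 3+|\tau|\}$, and the bilinear and Duhamel estimates above have to be re-proved in these norms; that is where the real work of the argument sits. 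Once the modified-space estimates are in place, the remainder is standard analytic fixed-point theory, with the trace compatibility at $s = \tfrac12, \tfrac32$ trivialized by the normalizations $\phi(0) = h_1(0) = \psi(0) = h_2(0) = 0$ (and $h_3(0) = \phi'(0) = \psi'(0) = 0$ for $s>\tfrac32$) imposed immediately before the theorem statement.
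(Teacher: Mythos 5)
Your overall strategy is the paper's: a contraction in $\mcal{Y}_{\sigma}(\Omega_T)$ built from the linear estimates of Propositions \ref{Prop, pos-kdv}--\ref{Prop, neg-kdv} and the bilinear estimates of Proposition \ref{Prop, bilin}, with analyticity of the solution map coming for free from the bilinear structure. However, there is one concrete gap in how you treat the coupling term $h_3+u_x(0,t)$. Your ``fifth ingredient,''
\[
\|\partial_x u(0,\cdot)\|_{H^{s/3}(0,T)} \lesssim \|u\|_{Y^{1,\b}_{s,\frac12,\sigma}(\Omega_T)},
\]
is not an estimate that holds for an arbitrary element of the solution space, and it is not what the paper proves. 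The sharp Kato-smoothing derivative trace is established only for functions with the structural representation $\wt u=\Gamma^{+}_{\b}(f,p,a)$ (free evolution $+$ Duhamel $+$ boundary operator), and even there the bound (\ref{trace est for pos lin kdv}) requires the forcing measured in $X^{1,\b}_{s,\sigma-1}\cap Z^{1,\b}_{s,\sigma-1}$ on the right-hand side --- this is precisely why the $Z$-space is introduced (see the remark after Lemma \ref{Lemma, space trace for Duhamel}). At the endpoint $b=\frac12$ the abstract $X_{s,\frac12,\sigma}$ (or $Y$) norm does not control $\sup_x\|\partial_x u(x,\cdot)\|_{H^{s/3}_t}$. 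The paper sidesteps this by defining the iteration map sequentially inside one application of $\Gamma$: given $(u,v)$, it first forms $\wt u=\Gamma^{+}_{\b}(f(u,v),p,a)$, and only then feeds the trace $\wt u_x|_{x=0}$ (controlled via (\ref{trace est for pos lin kdv}) by the data and the bilinear bounds on $f$) into $\wt v=\Gamma^{-}_{\b}(g(u,v),q,b,c+\wt u_x|_{x=0})$, so the fixed point is still joint in $(u,v)$ but no trace of a generic $Y$-function is ever needed. Either adopt this construction or add the trace norms $\sup_{x\ge 0}\|\partial_x^j\cdot\|_{H^{(s+1-j)/3}_t}$ to your solution space; as written your scheme does not close.

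A secondary, less serious point: the factor $T^{\theta}$ you propose to harvest from time localization is not directly available in the endpoint norms used here (the standard localization gain requires both modulation exponents strictly inside $(-\frac12,\frac12)$, and the $\Lambda$- and $Z$-components need separate treatment; the paper only exploits such a gain later, in the appendix lemmas for persistence of regularity, after proving refined bilinear estimates with exponent $\sigma-1+\eps_0$). This does not damage your contraction, since smallness of the data radius $r$ alone closes it --- which is exactly how the paper proceeds, normalizing $T=1$ and choosing $r=r(s)$ small --- but you should not rely on the $T^{\theta}$ factor as stated.
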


\begin{rem}\label{Re, uwp}
The well-posedness  presented in Theorem \ref{Thm, cwp}   is  conditional in the sense  that 
the solution $(u,v)\in C([0,T]; \H^s_x (\R^+))$   of   (\ref{ckdv-3})    is in fact  the  restriction for  a function defined on   $\R\times \R$ to  the region $(0, T)\times \R^+$, 
and   the uniqueness  of the solution  holds in a subspace $\mcal{Y}_{\sigma} (\Omega _T)$ rather than  in the full space $ C([0,T];  \H^s_x (\R^+))$, 
which  leads to a  serious issue: 
If  a solution of the   IBVP \eqref{ckdv-2}  is found in a different approach, will it be the same as  that presented by  Theorem \ref{Thm, cwp}?
\end{rem}
	
In this paper, we will prove Theorem \ref{Thm, main-scaling} to give a positive answer to the above question. The justification of Theorem \ref{Thm, main-scaling} follows from Theorem \ref{Thm, cwp} and the following two further properties.
\begin{itemize}
\item[(i)]  the solution of SIE (\ref{ckdv-3}) is a mild solution of  the IBVP (\ref{ckdv-2})
\item[(ii)]  for given initial data $(\phi, \psi)$ and  the boundary data $\vec{h}$, the IBVP (\ref{ckdv-2})  admits at most one mild solution.
\end{itemize}
More precise statements about these two properties are given below in Theorem \ref{Thm, exist of ms} and Theorem \ref{Thm, uniq of ms}.

\begin{thm}[Existence of the mild solution]	\label{Thm, exist of ms}
Fix $s\in(-\frac34,3]$ and $r>0$. Then there exist $\sigma=\sigma(s)>\frac12$ and $T=T(s,r)>0$ such that for any compatible data $(p,q)\in \mcal{H}_{x}^{s}(\m{R}^{+})$ and $(a,b,c)\in \mcal{H}_{t}^{s}(\m{R}^{+})$  with 
\[\|(p,q)\|_{\mcal{H}^s_1(\m{R}^+)}+\|(a,b,c)\|_{\mcal{H}^s_2(\m{R}^+)}\leq r,\]
the IBVP (\ref{ckdv-1}) admits a mild solution 
$(u,v)\in C(0,T; H^s(\R^+))\times C(0,T; H^s(\R^+))$
such that
$u\in Y^{1,1}_{s,\frac12,\sigma}(\Omega _T)$,  
$ v\in Y^{-1,-1}_{s,\frac12,\sigma}(\Omega _T)$, and
\[\|(u,v)\|_{{\mathcal{Y}}_s}\leq  \a_{r,s} \l( \|(p,q)\|_{\mcal{H}^s_x(\m{R}^+)}+\|(a,b,c)\|_{\mcal{H}^s_t(\m{R}^+)}\r),\]
where $\a$ is a continuous function from $\R^+$ to $\R^+$ depending only on $r$ and $s$.
\end{thm}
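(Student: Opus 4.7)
The plan is to derive Theorem~\ref{Thm, exist of ms} by combining the conditional well-posedness of Theorem~\ref{Thm, cwp} with an approximation by smooth data. First I would apply the scaling
\[u^\b(x,t)=\b u(\b^{\frac12}x,\b^{\frac32}t),\quad v^\b(x,t)=\b v(\b^{\frac12}x,\b^{\frac32}t)\]
from Section~1 to reduce to the small-data regime. Because $\|(\phi^\b,\psi^\b)\|_{\mcal H^s_x}+\|\vec h^\b\|_{\mcal H^s_t}\ls \b^{\frac38}\l(\|(\phi,\psi)\|_{\mcal H^s_x}+\|\vec h\|_{\mcal H^s_t}\r)$ for $s>-\frac34$, one chooses $\b=\b(r,s)\in(0,1]$ so that the rescaled data falls below the radius $r(s,1)$ supplied by Theorem~\ref{Thm, cwp} on $\Omega_1$. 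Invoking Theorem~\ref{Thm, cwp} for (\ref{ckdv-2}) at parameter $\b$ yields a unique SIE solution $(U,V)\in\mcal Y_\sigma(\Omega_1)$, and rescaling back produces a solution of (\ref{ckdv-3}) for the original IBVP (\ref{ckdv-1}) on $\Omega_T$ with $T=\b^{\frac32}=T(s,r)$. The quantitative bound
\[\|(u,v)\|_{\mcal Y_s}\le \a_{r,s}\l(\|(p,q)\|_{\mcal H^s_x}+\|(a,b,c)\|_{\mcal H^s_t}\r)\]
follows by tracking the scaling factor $\b(r,s)$ through the Lipschitz estimate that comes out of the contraction step inside Theorem~\ref{Thm, cwp}, with continuity of $\a_{r,s}$ inherited from continuity of $\b$ in $r$.

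Next I would approximate the rough data by smooth compatible data $(\phi_n,\psi_n)\in\mcal H^3_x(\R^+)$ and $\vec h_n\in\mcal H^3_t(\R^+)$ with $\|(\phi_n,\psi_n)\|_{\mcal H^s_x}+\|\vec h_n\|_{\mcal H^s_t}\le 2r$ and $(\phi_n,\psi_n,\vec h_n)\to(\phi,\psi,\vec h)$ in $\mcal H^s_x\times\mcal H^s_t$. This requires mollification, truncation, and a modification in a thin layer near $x=0$, $t=0$ in order to preserve the natural compatibility conditions of Definition~\ref{Def, compatibility-lin} at the $\mcal H^3$ level. For each $n$, Theorem~\ref{Thm, cwp} applied at regularity $s$ produces an SIE solution $(u_n,v_n)\in\mcal Y_\sigma(\Omega_T)$ on the same time interval $[0,T]$, and a persistence-of-regularity argument (simply rerunning the contraction in the higher-regularity analogue of $\mcal Y_\sigma$) places $(u_n,v_n)$ in $Y^{1,\b}_{3,\frac12,\sigma}(\Omega_T)\times Y^{-1,-\b}_{3,\frac12,\sigma}(\Omega_T)\subset C([0,T];\mcal H^3_x(\R^+))$. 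Reading off the time regularity from the Duhamel identity (\ref{ckdv-3}) and inserting back into the equations shows that $(u_n,v_n)\in C^1([0,T];\mcal H^0_x(\R^+))$ satisfies the PDEs pointwise a.e.\ with $(u_n,v_n,\p_xv_n)|_{x=0}=\vec h_n$; each $(u_n,v_n)$ is therefore a strong solution in the sense of Definition~\ref{Def, m-s}(i).

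Finally, by the real-analytic (hence Lipschitz) dependence on data in Theorem~\ref{Thm, cwp}, $(u_n,v_n)\to(u,v)$ in $\mcal Y_\sigma(\Omega_T)$ and so in $C([0,T];\mcal H^s_x(\R^+))$, while $(h_{1,n},h_{2,n},h_{3,n})\to(h_1,h_2,h_3)$ in $\mcal H^s_t(0,T)$. Definition~\ref{Def, m-s}(ii) then identifies $(u,v)$ as a mild solution of (\ref{ckdv-1}), completing the proof.

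The main obstacle is the second step: producing smooth \emph{compatible} approximants and upgrading their SIE solutions to bona fide strong solutions. Preserving the natural compatibility conditions at $x=t=0$ under mollification is delicate for $s>\frac12$, where the data already carry prescribed traces, and upgrading from an SIE solution to a Definition~\ref{Def, m-s}(i) strong solution demands verifying pointwise validity of the equations a.e.\ together with the time differentiability $C^1([0,T];\mcal H^0_x)$, both of which rest on the nonlinear and boundary-operator estimates developed earlier in the paper.
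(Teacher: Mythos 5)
Your overall architecture matches the paper's: get the solution at regularity $s$ from the conditional well-posedness (Theorem \ref{Thm, cwp}, rescaled as in Corollary \ref{cwp-1}), approximate the data by smooth compatible data, show the corresponding smooth solutions are strong solutions on the \emph{same} interval $[0,T]$, and pass to the limit using the continuity (analyticity) of the solution map to invoke Definition \ref{Def, m-s}(ii). The gap is in the step you dismiss parenthetically: you claim the smooth solutions $(u_n,v_n)$ lie in $Y^{1,\b}_{3,\frac12,\sigma}(\Omega_T)\times Y^{-1,-\b}_{3,\frac12,\sigma}(\Omega_T)$ ``by simply rerunning the contraction in the higher-regularity analogue of $\mcal Y_\sigma$.'' Rerunning the contraction at the $\mcal H^3$ level gives an existence time (equivalently, a smallness threshold) governed by the $\mcal H^3_x\times\mcal H^3_t$ size of the approximating data $(p_n,q_n,a_n,b_n,c_n)$, and for rough $\mcal H^s$ data these $\mcal H^3$ norms are unbounded as $n\to\infty$. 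So the contraction only produces an $H^3$ solution on intervals $[0,T_n]$ with $T_n$ possibly shrinking to $0$; it does not place $(u_n,v_n)$ in $C([0,T];\mcal H^3_x(\R^+))$ on the fixed interval $[0,T]=[0,T(s,r)]$ determined by the $\mcal H^s$ size $r$.

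Closing this gap is precisely the content of the paper's Proposition \ref{Prop, Tmax} (persistence of regularity: $T^{s_2}_{max}=T^{s_1}_{max}$), combined with the blow-up alternative (Lemma \ref{blowup}) and the classical well-posedness for $s=3$ (Lemma \ref{wp-classical}). Its proof is not a rerun of the fixed-point argument: it relies on the refined two-index bilinear smoothing estimates of Lemmas \ref{Lemma, bilin, gc1}--\ref{Lemma, bilin, d21} (proved in Appendix \ref{Sec,  proof of bilin smoothing}), which control the nonlinearity at regularity $s_2$ by a product of the low-regularity norm and the high-regularity norm with a gain $T^{\eps_0}$; since the low-regularity norm is uniformly controlled on $[0,T]$, one can choose a time step $\delta$ small enough (independent of the $H^{s_2}$ size) to absorb the high norm and iterate across $[0,T]$, ruling out $H^3$ blow-up before the $H^s$ lifespan. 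Without this ingredient (or an equivalent substitute), your sequence of strong solutions is not known to exist on $[0,T]$, and the limiting argument identifying $(u,v)$ as a mild solution does not go through. The remaining points you flag (constructing compatible $\mcal H^3$ approximants and upgrading SIE solutions to strong solutions, which the paper handles via Lemma \ref{wp-classical}) are genuine but routine by comparison.
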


\begin{thm}[Uniqueness of the mild solution] \label{Thm, uniq of ms}
For $s\in (-\frac34,3]$, $(p,q)\in \mcal{H}_{x}^{s}(\m{R}^{+})$ and $(a,b,c)\in \mcal{H}_{t}^{s}(\m{R}^{+})$, the IBVP \eqref{ckdv-1} admits at most one mild solution $ (u,v) $ in the sense of Definition \ref{Def, m-s}.
\end{thm}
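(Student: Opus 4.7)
The plan is to reduce the uniqueness of mild solutions to the conditional uniqueness of Theorem \ref{Thm, cwp}, which asserts uniqueness of solutions to the integral equation \eqref{ckdv-3} inside the finer Fourier restriction space $\mathcal{Y}_\sigma(\Omega_T)$. The driving observation will be that every strong solution in the sense of Definition \ref{Def, m-s}(i) is automatically a solution of the system \eqref{ckdv-3} with its own traces as data, and hence by Theorem \ref{Thm, cwp} coincides with the SIE solution in $\mathcal{Y}_\sigma(\Omega_T)$ determined by those traces. The real-analytic dependence in Theorem \ref{Thm, cwp} will then force any two approximating sequences for two candidate mild solutions to converge to a common limit.

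First I will show that a strong solution $(u,v)\in C([0,T];\mathcal{H}^3_x(\R^+))\cap C^1([0,T];\mathcal{H}^0_x(\R^+))$ satisfies \eqref{ckdv-3}. This is a Duhamel computation: treating $F(u,v)$ and $G(u,v)$ as source terms and rearranging the mixed boundary condition $v_x(0,t)-u_x(0,t)=h_3(t)$ as $v_x(0,t)=h_3(t)+u_x(0,t)$ for the $v$-component, one combines the semigroup representations of \eqref{linear eq-r}, \eqref{linear eq-l} with the boundary integral operators of \eqref{lin-u-1}, \eqref{lin-v-1}. All manipulations are classical at the $\mathcal{H}^3_x$ regularity level.

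Next, given two mild solutions $(u,v)$ and $(\tilde u,\tilde v)$ with common data $(\phi,\psi,\vec h)$, pick approximating strong solutions $(u_n,v_n)\to(u,v)$ and $(\tilde u_n,\tilde v_n)\to(\tilde u,\tilde v)$ in $C([0,T];\mathcal{H}^s_x(\R^+))$ whose traces converge to $\vec h$ in $\mathcal{H}^s_t(0,T)$. Employing the scaling $u^\beta(x,t)=\beta u(\beta^{1/2}x,\beta^{3/2}t)$ from \eqref{ckdv-2} and, if necessary, shrinking $T$, the $\mathcal{H}^s$-norms of all approximating data can be driven below the smallness threshold $r(s,T)$ of Theorem \ref{Thm, cwp}. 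For each $n$, then, $(u_n,v_n)$ and $(\tilde u_n,\tilde v_n)$ agree with the unique SIE solution in $\mathcal{Y}_\sigma(\Omega_T)$ associated with their own traces; the analytic dependence of the SIE solution on the data yields a single limit $(u^*,v^*)\in\mathcal{Y}_\sigma(\Omega_T)$ attached to $(\phi,\psi,\vec h)$, and uniqueness of limits in $C([0,T];\mathcal{H}^s_x(\R^+))$ forces $u=u^*=\tilde u$ and $v=v^*=\tilde v$. A standard continuation argument covering $[0,T]$ by finitely many subintervals of small $\mathcal{H}^s$-diameter, justified by time-continuity of the mild solutions, then removes the smallness restriction.

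The main obstacle is the mismatch of topologies between the two uniqueness statements: Theorem \ref{Thm, cwp} delivers uniqueness inside $\mathcal{Y}_\sigma(\Omega_T)$, whereas Definition \ref{Def, m-s}(ii) only provides convergence in $C([0,T];\mathcal{H}^s_x(\R^+))$. To close the loop one must show that each approximating strong solution, initially known only to sit in $C_t\mathcal{H}^3_x$, admits an extension to $\R\times[0,T]$ lying in the small $\mathcal{Y}_\sigma$-ball controlled by the $\mathcal{H}^s$-norm of its traces, so that Theorem \ref{Thm, cwp} can genuinely pin it down rather than merely reproducing the trivial $\mathcal{H}^3$-uniqueness. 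Arranging this bookkeeping while preserving the analyticity of the SIE solution map under the scaling and continuation steps is where the real work will lie.
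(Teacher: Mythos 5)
Your overall strategy (reduce to the conditional uniqueness of Theorem \ref{Thm, cwp} via the approximating strong solutions) points in the right direction, but the proof does not close: the step you yourself flag as ``where the real work will lie'' is precisely the missing idea, not bookkeeping. Theorem \ref{Thm, cwp} only identifies solutions of the SIE \eqref{ckdv-3} that lie in $\mcal{Y}_\sigma(\Omega_T)=Y^{1,\b}_{s,\frac12,\sigma}(\Omega_T)\times Y^{-1,-\b}_{s,\frac12,\sigma}(\Omega_T)$ (indeed, its proof is a contraction in a ball of that space), so knowing that a strong solution in $C([0,T];\H^3_x(\R^+))$ satisfies \eqref{ckdv-3} does not let you conclude it equals the fixed point unless you also show it admits an extension in the modified Bourgain space with the required norm control; attempting to verify this directly is essentially circular, since estimating the nonlinear terms $F(u,v),G(u,v)$ in $X^{\pm1,\pm\b}_{s,\sigma-1}\cap Z^{\pm1,\pm\b}_{s,\sigma-1}$ presupposes Bourgain-space bounds on $(u,v)$. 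A second gap: if you instead try to make the identification at the $H^3$ level, the local existence time furnished by the conditional theory for the $n$-th approximant depends on its $\H^3$-norm, which need not stay bounded as $n\to\infty$ (the data only converge in $\H^s$), so without a persistence-of-regularity statement the intervals on which your identification holds could shrink to zero; your proposal never addresses this.

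The paper's proof avoids both problems by a different mechanism. It never shows that a strong solution solves the SIE or lies in $\mcal{Y}_\sigma$; instead it uses the uniqueness of classical ($H^3$-level) solutions, proved by elementary energy estimates (Lemma \ref{wp-classical}), to identify each approximating strong solution $(u_n,v_n)$ with the solution produced by Corollary \ref{cwp-1} from the same (smooth) data. The blow-up alternative (Lemma \ref{blowup}) together with persistence of regularity (Proposition \ref{Prop, Tmax}, whose proof rests on the refined bilinear estimates of Lemmas \ref{Lemma, bilin, gc1}--\ref{Lemma, bilin, d21}) guarantees these $H^3$ solutions persist on a common time interval $T^*$ determined by the $\H^s$-size of the data, and then the continuity of the $\H^s$-level solution map of Corollary \ref{cwp-1} passes the identification to the limit, showing both mild solutions coincide with the SIE solution attached to $(p,q,a,b,c)$. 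If you want to salvage your argument, you should replace your planned ``extension into the small $\mcal{Y}_\sigma$-ball'' step by this energy-uniqueness-plus-persistence route (also note that Corollary \ref{cwp-1} is already stated for data of arbitrary size $r$ with $T=T(s,r)$, so the scaling-to-smallness and patching by subintervals you propose is unnecessary).
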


Theorem  \ref{Thm, cwp} will be justified by the standard contraction mapping principle.  We   first  need to  investigate the  following two  forced linear IBVPs   whose compatibility will be introduced in Definition \ref{Def, compatibility-lin}.
\begin{equation}\label{lin-u}
\begin{cases}
u_t+u_{xxx}+\b u_{x}=f,\\
u(x,0)=p(x), \\
u(0,t)=a(t),
\end{cases} \qquad x,t>0,
\end{equation}
and
\be\label{lin-v}\begin{cases}
v_t-v_{xxx}-\b v_{x}=g,\\
v(x,0)=q(x),\\
v(0,t)=b_1(t),\quad v_x(0,t)=b_2(t).
\end{cases} \qquad x,t>0.\ee

\begin{defi}\label{Def, compatibility-lin}
Let $s\in\big(-\frac34,3\big]$, $(p,q)\in \mcal{H}_{x}^{s}(\m{R}^{+})$ and $(a,b_1,b_2)\in \mcal{H}_{t}^{s}(\m{R}^{+})$. Then 
\begin{itemize}
\item the data $(p,a)$ is said to be compatible for (\ref{lin-u}) if $p(0)=a(0)$ when $s>\frac12$;

\item the data $(q,b_1,b_2)$ is said to be compatible for (\ref{lin-v-1}) if they satisfy $q(0)=b_1(0)$ when $s>\frac12$ and further satisfy $q'(0)=b_2(0)$ when $s>\frac32$. The compatibility for (\ref{lin-v}) is defined similarly.
\end{itemize}
\end{defi}
The following  two  estimates for their solutions are key tools in proving Theorem \ref{Thm, cwp}. For ease of notation, we denote $ \m{R}^+_0 = \m{R}\cup\{0\} = [0,\infty) $.
\begin{pro}\label{Prop, pos-kdv}
Let $s\in(-\frac34,3]$, $T>0$ and $0<\b\leq 1$. Assume $p\in H^{s}(\m{R}^+)$ and $a\in H^{\frac{s+1}{3}}(\m{R}^+)$ are compatible for (\ref{lin-u}). Then there exists $\sigma_1=\sigma_1(s)>\frac12$ such that for any $\sigma\in \big(\frac12,\sigma_1\big]$ and for any $f\in X^{1,\b}_{s,\sigma-1}\bigcap Z^{1,\b}_{s,\sigma-1}$, the equation (\ref{lin-u}) has a solution up to time $T$. More precisely, there exists a function 
\be\label{Gamma+}
\wt{u}:=\Gamma^{+}_{\b}(f,p,a),\ee
defined on $\m{R}\times\m{R}$, belongs to $Y_{s,\frac12,\sigma}^{1,\b}\bigcap C_{x}^{j}\big(\m{R}^+_0; H_{t}^{\frac{s+1-j}{3}}(\m{R})\big)$ for $j=0,1$, and its restriction $\wt{u}|_{\m{R}^+_0\times[0,T]}$ solves (\ref{lin-u}) on $\m{R}^+_0\times[0,T]$. In addition, $\wt{u}$ satisfies the following estimates with $C=C(s,\sigma)$.
\begin{align}
\big\| \wt{u}\big\|_{Y^{1,\b}_{s,\frac12,\sigma}} &\leq C\Big(\|f\|_{X^{1,\b}_{s,\sigma-1}}+\|f\|_{Z^{1,\b}_{s,\sigma-1}}+\|p\|_{H^{s}}+\|a\|_{H^{\frac{s+1}{3}}}\Big),\label{est for pos lin kdv} \\
\sup_{x\geq 0}\big\|\p_x^j \wt{u}\big\|_{H^{\frac{s+1-j}{3}}_t(\m{R})} &\leq C\Big(\|f\|_{X^{1,\b}_{s,\sigma-1}}+\|f\|_{Z^{1,\b}_{s,\sigma-1}}+\|p\|_{H^{s}}+\|a\|_{H^{\frac{s+1}{3}}}\Big), \quad\,j=0,1. \label{trace est for pos lin kdv}
\end{align}
\end{pro}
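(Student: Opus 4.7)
The plan is to construct $\wt{u}$ as a superposition $\wt{u}=\wt{u}_C+\wt{u}_B$, where $\wt{u}_C$ solves the full-line Cauchy problem with extended data carrying the initial profile $p$ and the forcing $f$, and $\wt{u}_B$ is a boundary-integral correction that restores the trace condition $u(0,t)=a(t)$ on $[0,T]$. Concretely, first fix a bounded linear extension $\wt{p}\in H^s(\R)$ of $p$ and a time cutoff $\eta\in C^\infty_c(\R)$ with $\eta\equiv 1$ on $[0,T]$, and set
\[ \wt{u}_C(x,t):=\eta(t)\,W_{R}^{1,\b}(t)\wt{p}(x) + \eta(t)\int_0^t W_{R}^{1,\b}(t-\tau)\,f(\cdot,\tau)(x)\,d\tau. \]
Next, define the corrected boundary datum $\wt{a}(t):=\eta(t)\bigl(a(t)-\wt{u}_C(0,t)\bigr)$, extended by zero, and put $\wt{u}_B:=W^{1,\b}_{bdr}[\wt{a}]$ using the boundary integral operator from Lemma \ref{Lemma, pos-kdv-bdr}.

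\textbf{Cauchy piece.} The standard Bourgain group bound and the Duhamel estimate in the modified $X^{1,\b}_{s,b,\sigma}$ machinery, supplemented by the intermediate $Z^{1,\b}_{s,\sigma-1}$ norm from Lemma \ref{Lemma, space trace for Duhamel}, yield
\[ \|\wt{u}_C\|_{Y^{1,\b}_{s,\frac12,\sigma}} \ls \|p\|_{H^s(\R^+)}+\|f\|_{X^{1,\b}_{s,\sigma-1}}+\|f\|_{Z^{1,\b}_{s,\sigma-1}}, \]
with constants that are independent of $T$ once $\eta$ is fixed and, via the scaling already built into \eqref{ckdv-2}, independent of $\b\in(0,1]$. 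The sharp Kato-type space-trace estimate, applied to the group piece and the Duhamel piece separately, supplies the matching bound
\[ \sup_{x\in\R}\bigl\|\p_x^j\wt{u}_C\bigr\|_{H^{\frac{s+1-j}{3}}_t(\R)} \ls \|p\|_{H^s(\R^+)}+\|f\|_{X^{1,\b}_{s,\sigma-1}}+\|f\|_{Z^{1,\b}_{s,\sigma-1}},\qquad j=0,1, \]
along with continuity of $x\mapsto\p_x^j\wt{u}_C$ as an $H^{(s+1-j)/3}_t$-valued map, obtained by density from Schwartz data.

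\textbf{Boundary piece.} The above trace bound places $\wt{u}_C(0,\cdot)$ in $H^{\frac{s+1}{3}}(\R)$, and the compatibility assumption $p(0)=a(0)$ (when $s>\tfrac12$) gives $\wt{a}(0)=0$, so together with the $\eta$-cutoff the datum $\wt{a}$ lies in $H^{\frac{s+1}{3}}(\R)$ with compact support and the correct vanishing trace required by $W^{1,\b}_{bdr}$. Lemma \ref{Lemma, pos-kdv-bdr} then delivers $\wt{u}_B\in Y^{1,\b}_{s,\frac12,\sigma}$ satisfying $(\p_t+\p_x^3+\b\p_x)\wt{u}_B=0$ on $\R^+\times\R$, $\wt{u}_B(0,t)=\wt{a}(t)$, and analogous sidewise trace bounds controlled by $\|\wt{a}\|_{H^{(s+1)/3}}$. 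Summing, $\wt{u}:=\wt{u}_C+\wt{u}_B$ belongs to $Y^{1,\b}_{s,\frac12,\sigma}\cap C^{j}_{x}\bigl(\R^+_0;H^{(s+1-j)/3}_t(\R)\bigr)$ for $j=0,1$, solves \eqref{lin-u} on $\R^+_0\times[0,T]$ (where $\eta\equiv 1$), matches the initial datum on $\R^+$, and matches the boundary datum $a$ on $[0,T]$, with the estimates \eqref{est for pos lin kdv}--\eqref{trace est for pos lin kdv} following by combining the two bounds above.

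\textbf{Main obstacle.} The crux is the space-trace estimate for the Duhamel contribution: a direct argument using only the $X^{1,\b}_{s,\sigma-1}$ norm loses control near the curve $\tau\sim\phi^{1,\b}(\xi)$ once $b\le\tfrac12$, since the time integration against the phase produces a non-integrable pile-up in the low spatial-frequency regime where $|\xi|$ is not comparable to $|\tau|^{1/3}$. The $Z^{1,\b}_{s,\sigma-1}$ weight $\la\tau\ra^{s/3+1/2-\sigma}$ is calibrated precisely to this failure mode and is introduced to absorb this loss through Lemma \ref{Lemma, space trace for Duhamel}. Verifying that the $Z$-bound closes compatibly with the Bourgain-restriction estimates, and that the output furnishes the continuity in $x$ demanded by the $C^j_x$ clause, is the delicate technical point; once it is in hand, the remainder reduces to the already-developed linear theory and the boundary-operator mapping properties from Lemma \ref{Lemma, pos-kdv-bdr}.
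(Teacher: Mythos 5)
Your decomposition $\wt{u}=\wt{u}_C+\wt{u}_B$ (cutoff Cauchy/Duhamel piece with extended data, then a boundary-integral correction applied to $\eta(t)(a-\wt{u}_C(0,\cdot))$) is exactly the paper's construction $\Gamma^+_\b(f,p,a)=\eta(t)\Phi^{1,\b}_{R}(f,p)+\eta(t)\Phi^{1,\b}_{bdr}\big(a-\eta(t)\Phi^{1,\b}_{R}(f,p)|_{x=0}\big)$, and for the Cauchy piece your appeal to Lemmas \ref{Lemma, lin est}, \ref{Lemma, space trace for free kdv} and \ref{Lemma, space trace for Duhamel} matches Lemma \ref{Lemma, pos-kdv-initial}. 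However, there is a genuine gap: you invoke Lemma \ref{Lemma, pos-kdv-bdr} for the boundary piece throughout, but that lemma is only valid for $s\in(-\frac12,3]$, whereas the proposition claims the range $s\in(-\frac34,3]$. This is not a removable technicality: Remark \ref{Re, -1/2nec} shows the $L^\infty_t H^s_x$ bound (\ref{pos-kdv-bdr, time-trace}) for the extension $\Phi^{1,\b}_{bdr}$ genuinely fails once $s<-\frac12$, so your argument cannot close on the interval $-\frac34<s\le-\frac12$, which is precisely the part of the range needed to reach the sharp threshold. The paper handles this case by replacing $\Phi^{1,\b}_{bdr}$ with a different extension $\Psi^{1,\b}_{bdr}$ (Lemma \ref{Lemma, pos-kdv-bdr, low-reg}), which keeps the explicit formula only on the bounded frequency band $\mu\in[\sqrt{\b},4]$ and, for the high-frequency piece $\mcal{I}_{\b,\frac12}$, imports the more technical extension of Bona–Sun–Zhang \cite{BSZ06} (Claim \ref{Prop, kdv-bdr-low-reg}); note also that in that regime the $C^j_x$ trace control is only asserted for $x\ge 0$, as flagged in Remark \ref{Remark, ext trace deri}. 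Your proof, as written, establishes the proposition only for $s>-\frac12$.

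A secondary, smaller point: you set $\wt{u}_B:=W^{1,\b}_{bdr}[\wt{a}]$, but $W^{1,\b}_{bdr}$ is only defined for $x\ge 0$ (its integrand contains $e^{-R_\b(\mu)x}$, which blows up for $x<0$ at high frequencies), while the proposition requires $\wt{u}$ to be a function on $\m{R}\times\m{R}$ lying in $Y^{1,\b}_{s,\frac12,\sigma}$, a space of functions on the whole plane. So you must work with the extended operator $\Phi^{1,\b}_{bdr}$ (with the cutoff $\psi(R_\b(\mu)x)$ built into $K_\b$) rather than $W^{1,\b}_{bdr}$ itself; since you cite Lemma \ref{Lemma, pos-kdv-bdr}, this is a matter of precision rather than substance, but it should be stated correctly, and it again only resolves the issue for $s>-\frac12$.
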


\begin{pro}\label{Prop, neg-kdv}
Let $s\in(-\frac34,3]$, $T>0$ and $0<\b\leq 1$. Assume $q\in H^{s}(\m{R}^+)$, $b_1\in H^{\frac{s+1}{3}}(\m{R}^+)$ and $b_2\in H^{\frac{s}{3}}(\m{R}^+)$ are compatible for (\ref{lin-v}). Then there exists $\sigma_2=\sigma_2(s)>\frac12$ such that for any $\sigma\in \big(\frac12,\sigma_2\big]$ and for any $g\in X^{-1,-\b}_{s,\sigma}\bigcap Z^{-1,-\b}_{s,\sigma}$, the equation (\ref{lin-v}) has a solution up to time T. More precisely, there exists a function 
\be\label{Gamma-}
\wt{v}:=\Gamma^{-}_{\b}(g,q,b_1,b_2),\ee
defined on $\m{R}\times\m{R}$, belongs to $Y_{s,\frac12,\sigma}^{-1,-\b}\bigcap C_{x}^{j}\big(\m{R}^+_0; H_{t}^{\frac{s+1-j}{3}}(\m{R})\big)$ for $j=0,1$, and its restriction $\wt{v}|_{\m{R}^+_0\times[0,T]}$ solves (\ref{lin-v}) on $\m{R}^+_0\times[0,T]$. In addition, $\wt{v}$ satisfies the following estimates with $C=C(s,\sigma)$.
\begin{align}
\big\| \wt{v}\big\|_{Y^{-1,-\b}_{s,\frac12,\sigma}} &\leq C\Big(\|g\|_{X^{-1,-\b}_{s,\sigma-1}}+\|g\|_{Z^{-1,-\b}_{s,\sigma-1}}+\|q\|_{H^{s}}+\|b_1\|_{H^{\frac{s+1}{3}}}+\|b_2\|_{H^{\frac{s}{3}}}\Big), \label{est for neg lin kdv}\\
\sup_{x\geq 0}\big\|\p_x^j \wt{v}\big\|_{H^{\frac{s+1-j}{3}}_t(\m{R})} 
&\leq C\Big(\|g\|_{X^{-1,-\b}_{s,\sigma-1}}+\|g\|_{Z^{-1,-\b}_{s,\sigma-1}}+\|q\|_{H^{s}}+\|b_1\|_{H^{\frac{s+1}{3}}}+\|b_2\|_{H^{\frac{s}{3}}}\Big),\quad\,j=0,1. \label{trace est for neg lin kdv}
\end{align}
\end{pro}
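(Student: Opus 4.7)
The plan is to build $\tilde v=\Gamma^{-}_\b(g,q,b_1,b_2)$ as the sum of a whole-line piece that absorbs the initial data $q$ and the forcing $g$ and a boundary-forcing piece that corrects the resulting traces to $(b_1,b_2)$. This mirrors the Bona--Sun--Zhang / Holmer scheme used for the standard KdV \cite{BSZ02,BSZ06,Hol06}, but the operator $\p_t-\p_x^3-\b\p_x$ has its group velocity reversed relative to the forward KdV, so on the right half-line the correct number of boundary conditions is two, Dirichlet \emph{and} Neumann, matching the pair $(b_1,b_2)$.

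\textbf{Step 1 (whole-line reduction).} First extend $q\in H^s(\R^+)$ to $q^*\in H^s(\R)$ by a bounded extension operator, and extend $g$ from $X^{-1,-\b}_{s,\sigma-1}(\Omega_T)\cap Z^{-1,-\b}_{s,\sigma-1}(\Omega_T)$ to some $g^*$ on $\R\times\R$ of comparable norm, localized in $t$ by a smooth cutoff $\eta$ supported near $[-1,T+1]$. Define
\[ w(x,t):=\eta(t)\,W^{-1,-\b}_R(t)q^*(x)+\eta(t)\int_0^t W^{-1,-\b}_R(t-\tau)g^*(\tau)\,d\tau. \]
Standard Bourgain-space bounds for the linear group and the Duhamel term yield $\|w\|_{Y^{-1,-\b}_{s,\frac12,\sigma}}\ls \|q\|_{H^s(\R^+)}+\|g\|_{X^{-1,-\b}_{s,\sigma-1}}+\|g\|_{Z^{-1,-\b}_{s,\sigma-1}}$; the $Z$-norm enters precisely to control the low-frequency part of $\Lambda^{-1,-\b}_{s,\sigma}$ for the Duhamel term, since its weight $\la\tau\ra^{s/3+\frac12-\sigma}$ dominates $\la\xi\ra^s\mb{1}_{\{e^{|\xi|}\leq 3+|\tau|\}}$ in that regime. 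The sharp Kato trace estimate then gives $\sup_{x\in\R}\|\p_x^j w\|_{H^{(s+1-j)/3}_t(\R)}$ bounded by the same right-hand side for $j=0,1$. Record the resulting traces $c_1(t):=w(0,t)\in H^{(s+1)/3}(\R)$ and $c_2(t):=w_x(0,t)\in H^{s/3}(\R)$.

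\textbf{Step 2 (boundary correction).} Set $V:=W^{-1,-\b}_{bdr}[\,b_1-c_1,\,b_2-c_2\,]$, the solution of (\ref{lin-v-1}) with Dirichlet datum $b_1-c_1$ and Neumann datum $b_2-c_2$. Construct $V$ via the Laplace transform: writing $V(x,t)=\frac{1}{2\pi i}\int_\Gamma e^{st}\widehat V(x,s)\,ds$, the ODE $s\widehat V-\widehat V_{xxx}-\b \widehat V_x=0$ has characteristic cubic $\lam^3+\b\lam=s$, exactly two of whose roots $\lam_1(s),\lam_2(s)$ decay as $x\to+\infty$. Setting
\[ V(x,t)=\frac{1}{2\pi i}\sum_{j=1}^2\int_\Gamma e^{st+\lam_j(s)x}A_j(s)\,ds, \]
one determines $(A_1(s),A_2(s))$ as the unique solution of the $2\times 2$ linear system that enforces $V(0,t)=b_1-c_1$ and $V_x(0,t)=b_2-c_2$; equivalently, $V$ can be written as a linear combination of Holmer-type analytic families of boundary forcing operators adapted to the symbol $\phi^{-1,-\b}$. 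The required estimate, the analogue in the reversed-dispersion setting of the positive-KdV boundary bound, is
\[ \|V\|_{Y^{-1,-\b}_{s,\frac12,\sigma}}+\sup_{x\geq 0}\sum_{j=0}^{1}\|\p_x^j V\|_{H^{(s+1-j)/3}_t(\R)}\ls \|b_1-c_1\|_{H^{(s+1)/3}(\R)}+\|b_2-c_2\|_{H^{s/3}(\R)}. \]
Combined with the compatibility conditions of Definition~\ref{Def, compatibility-lin} (so $b_1-c_1$ vanishes at $t=0$ when $s>\frac12$, and $b_2-c_2$ vanishes there when $s>\frac32$), this guarantees that $\tilde v:=w+V$, restricted to $\R^+_0\times[0,T]$, solves (\ref{lin-v}) and satisfies (\ref{est for neg lin kdv})--(\ref{trace est for neg lin kdv}).

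\textbf{Main obstacle.} The hard part is the estimate on $V$. Unlike the forward KdV on $\R^+$, where a single Dirichlet boundary operator suffices, here two boundary data must be processed simultaneously through a $2\times 2$ Laplace-side system whose coefficients depend analytically on $s$ and degenerate on the imaginary axis, where the relevant roots $\lam_j(s)$ either collide or become purely oscillatory. One must split the Bromwich contour into exponentially decaying and oscillatory portions, carry out frequency-side analysis on each, and establish the modified Bourgain-space bound with the extra weight $\mb{1}_{\{e^{|\xi|}\leq 3+|\tau|\}}\la\tau-\phi^{-1,-\b}(\xi)\ra^\sigma$ from (\ref{Lambda norm}). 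This weight is precisely what saves the argument in the low-frequency region $|\xi|\leq 1$: the boundary trace forces the choice $b=\frac12$, which by itself does not embed into $C_tH^s_x$, while the extra $\sigma>\frac12$ weight both recovers time continuity and, downstream, allows the bilinear estimates of Proposition~\ref{Prop, bilin} to close. Choosing $\sigma_2>\frac12$ sufficiently small yields the claim.
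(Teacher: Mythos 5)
Your overall architecture is the same as the paper's: a whole-line piece (extension of $q$ plus Duhamel term, estimated via Lemma \ref{Lemma, lin est}, Lemma \ref{Lemma, space trace for free kdv} and Lemma \ref{Lemma, space trace for Duhamel}) and a boundary corrector $W^{-1,-\b}_{bdr}[b_1-c_1,b_2-c_2]$ built by the Laplace transform with two decaying/oscillatory modes and a $2\times2$ system, which is exactly the structure of (\ref{formula for neg-kdv-bdr})--(\ref{A and B}). However, there is a genuine gap: the entire analytic content of the proposition lies in the estimate you label "the required estimate" for $V$, and you only assert it together with a vague plan ("split the Bromwich contour \dots carry out frequency-side analysis"). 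Two specific difficulties are not addressed. First, the $Y^{-1,-\b}_{s,\frac12,\sigma}$ norm is a norm of functions on $\R\times\R$, but the decaying mode $e^{i\mu x/2}e^{-R_\b(\mu)x}$ blows up for $x<0$; the paper must replace it by $e^{i\mu x/2}K_\b(x,\mu)$ with the cutoff $K_\b$ of (\ref{K}) to obtain a usable extension $\Phi^{-1,-\b}_{bdr}$ of the boundary operator off the quarter plane, and then prove the four estimates (\ref{neg-kdv-bdr, FR})--(\ref{neg-kdv-bdr, space-trace}) for the resulting operator $\mcal{L}_\b$. Your construction never says how $V$ is defined for $x<0$, so the claimed $Y$-bound is not even well posed as stated.

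Second, and more seriously, your argument treats the whole range $s\in(-\frac34,3]$ uniformly, with "choosing $\sigma_2>\frac12$ small enough yields the claim." That cannot work: as the paper shows in Remark \ref{Re, -1/2nec}, the time-trace bound for the naive (cutoff) extension genuinely fails once $s<-\frac12$, so the estimate you assert for $V$ is false in the regime $-\frac34<s\leq-\frac12$ for the extension you sketch. The paper handles this by a second, distinct extension $\Psi^{-1,-\b}_{bdr}$ (Lemma \ref{Lemma, neg-kdv-bdr, low-reg}), splitting the $\mu$-integral at a fixed frequency, keeping the cutoff extension only on the bounded piece and the oscillatory mode, and invoking the technical construction of \cite{BSZ06} (via Claim \ref{Prop, kdv-bdr-low-reg}) for the high-frequency decaying piece. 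Without recognizing this dichotomy and supplying the low-regularity construction, your proof covers at best $s\in(-\frac12,3]$ and leaves the most delicate part of the stated range unproved.
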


Proposition \ref{Prop, pos-kdv} and Proposition \ref{Prop, neg-kdv} will be proved in Section \ref{Sec, lin pb} and they will be used to handle linear estimates in the proof of Theorem \ref{Thm, cwp}, while for the nonlinear part, the following bilinear estimates will be the key ingredient.
\begin{pro}\label{Prop, bilin}
Let $-\frac34<s\leq 3$, $\a\neq 0$ and $|\b|\leq 1$. Then there exists $\sigma_0=\sigma_0(s,\a)>\frac12$ such that for any $\sigma\in(\frac12,\sigma_0]$, the following bilinear estimates hold for any $w_1,w_2$ with $C=C(s,\a,\sigma)$.
\begin{eqnarray*}
\|\p_{x}(w_1 w_2)\|_{X^{\a,\b}_{s,\sigma-1}}+\|\p_{x}(w_1 w_2)\|_{Z^{\a,\b}_{s,\sigma-1}} 
&\leq & C \| w_1\|_{X^{\a,\b}_{s,\frac12,\sigma}}\| w_2\|_{X^{\a,\b}_{s,\frac12,\sigma}},\\
\|\p_{x}(w_1 w_2)\|_{X^{-\a,-\b}_{s,\sigma-1}}+\|\p_{x}(w_1 w_2)\|_{Z^{-\a,-\b}_{s,\sigma-1}}
&\leq & C \| w_1\|_{X^{\a,\b}_{s,\frac12,\sigma}}\| w_2\|_{X^{\a,\b}_{s,\frac12,\sigma}},\\
\|\p_{x}(w_1 w_2)\|_{X^{ \a,\b}_{s,\sigma-1}}+\|\p_{x}(w_1 w_2)\|_{Z^{ \a,\b}_{s,\sigma-1}}
&\leq & C \| w_1\|_{X^{\a,\b}_{s,\frac12,\sigma}}\| w_2\|_{X^{-\a,-\b}_{s,\frac12,\sigma}}.
\end{eqnarray*}
\end{pro}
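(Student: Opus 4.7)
The plan is to reduce each bilinear estimate to a dyadic frequency-modulation analysis organized by the resonance identity $L_0 - L_1 - L_2 = H(\xi_1,\xi_2)$, where $L_0$ is the output modulation, $L_1, L_2$ the two input modulations, and $H$ is the algebraic defect of the three dispersion symbols involved. For the first (matched-sign) estimate a direct computation gives $H = -3\alpha\,\xi_1\xi_2(\xi_1+\xi_2)$, which is the standard KdV resonance, and the baseline is the classical Kenig-Ponce-Vega-Tao bilinear estimate
\begin{equation*}
\|\p_x(w_1 w_2)\|_{X^{\alpha,\beta}_{s,-\frac12+\varepsilon}} \leq C \|w_1\|_{X^{\alpha,\beta}_{s,\frac12+\varepsilon}} \|w_2\|_{X^{\alpha,\beta}_{s,\frac12+\varepsilon}} \qquad (s > -\tfrac34),
\end{equation*}
valid for some $\varepsilon = \varepsilon(s) > 0$. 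For the two mixed-sign estimates, computing $H$ explicitly shows that its magnitude is at least that of $|\alpha\,\xi_1\xi_2(\xi_1+\xi_2)|$ in every frequency regime (and strictly larger in the high-high to low regime, since the cubic part does not cancel), so the associated bilinear estimates are no more delicate than the matched-sign case.

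The main technical difficulty, and the reason for introducing the modified spaces, is that the boundary-trace estimates in Propositions \ref{Prop, pos-kdv} and \ref{Prop, neg-kdv} do not permit $b > 1/2$ on the input modulation weight, so the $\varepsilon$ buffer in the classical estimate is unavailable. The space $\Lambda^{\alpha,\beta}_{s,\sigma}$ is tailored to supply this buffer, but only on the frequency-time region $R := \{e^{|\xi|} \leq 3+|\tau|\}$. Outside $R$ one has $|\xi| \gtrsim \log(3+|\tau|)$, which permits trading the missing $\langle L\rangle^{\varepsilon}$ for an available $\langle\xi\rangle^{\varepsilon}$ via dyadic interpolation; inside $R$, the $\Lambda$ norm supplies an additional $\langle L\rangle^{\sigma-1/2}$, which absorbs the $\varepsilon$ loss once $\sigma > 1/2$. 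Concretely, I would split each input as $w_i = \mathbf{1}_R w_i + \mathbf{1}_{R^c} w_i$, bound the first piece using $\|w_i\|_{\Lambda^{\alpha,\beta}_{s,\sigma}}$, and bound the second using a logarithmic-in-$\langle\xi\rangle$ substitute for $\|w_i\|_{X^{\alpha,\beta}_{s,\frac12+\varepsilon}}$ dominated by $\|w_i\|_{X^{\alpha,\beta}_{s,\frac12}}$. For the $Z^{\alpha,\beta}_{s,\sigma-1}$ component of the output norm, the weight $\langle\tau\rangle^{\frac{s}{3}+\frac12-\sigma}\langle L\rangle^{\sigma-1}$ is equivalent to $\langle\xi\rangle^s\langle L\rangle^{-1/2}$ on $R$ (using $\langle L\rangle\sim\langle\tau\rangle$ and $\langle\tau\rangle^{1/3}\gtrsim\langle\xi\rangle$ there) and to the standard $X^{\alpha,\beta}_{s,\sigma-1}$ weight on $R^c$, so the same decomposition handles both pieces of the output simultaneously.

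The main obstacle is the \emph{high-high to low} interaction in the matched-sign estimate, where $|\xi_1| \sim |\xi_2| \sim N \gg 1$ while the output frequency $|\xi| \sim M$ is small. There the $\p_x$ factor contributes only $M$, the frequency weights contribute $M^{s}N^{-2s}$, and the resonance $|H|\sim|\alpha|N^{2}M$ gives a modulation gain of at most $(N^2 M)^{1/2}$ once one full modulation weight is spent through Cauchy-Schwarz; balancing the resulting powers of $N$ and $M$ forces precisely $s > -3/4$, matching the sharp Cauchy threshold. A secondary subtlety is the near-degenerate configuration $\xi_2 \to 0$ in the third estimate, where the $\xi_2$ prefactor in $H$ makes the resonance small; but in that regime one input has very low frequency and a direct $L^2$-restriction (Strichartz-type) bound on the dispersion curve closes the estimate without needing resonance gain. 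The remaining high-low and low-low interactions are routine via Sobolev embedding and the standard $L^2$-restriction estimate, after which summation over the dyadic scales yields the proposition.
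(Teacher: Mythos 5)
Your overall skeleton (duality plus resonance analysis, with the modified space supplying a $\sigma-\tfrac12$ buffer precisely where the resonance degenerates) is aligned in spirit with the paper, which reduces each estimate to a weighted convolution of $L^2$ functions (Lemma \ref{Lemma, bilin to weighted l2}), splits according to the size of $\sum_i|\xi_i|$ and of the degenerate frequency, and closes with Cauchy--Schwarz and the calculus lemmas for $\int\langle P(\xi)\rangle^{-\rho}d\xi$, rather than with dyadic decomposition and bilinear restriction. However, two of your key claims are genuinely flawed. First, the proposed trade outside $R=\{e^{|\xi|}\le 3+|\tau|\}$ --- ``trading the missing $\langle L\rangle^{\varepsilon}$ for an available $\langle\xi\rangle^{\varepsilon}$'' --- is false as a pointwise weight comparison: on $R^c$ one only knows $|\tau|<e^{|\xi|}$, so $\langle L\rangle$ can be exponentially large in $|\xi|$ and no power of $\langle\xi\rangle$ dominates $\langle L\rangle^{\varepsilon}$; all that spare powers of $\langle\xi\rangle$ buy is logarithms of $\langle L\rangle$. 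Likewise your claim that the $Z$-weight $\langle\tau\rangle^{\frac{s}{3}+\frac12-\sigma}\langle L\rangle^{\sigma-1}$ is ``equivalent'' to $\langle\xi\rangle^{s}\langle L\rangle^{-1/2}$ on $R$ and to the $X$-weight on $R^c$ is not correct (e.g.\ $\xi_3$ bounded and $\tau_3$ huge lies in $R$, where for $s>0$ the $Z$-weight is far larger than $\langle\xi_3\rangle^{s}\langle L_3\rangle^{-1/2}$), so ``the same decomposition handles both pieces of the output simultaneously'' does not hold. The paper's $Z$-estimate is genuinely separate: it reduces to the endpoints $s$ near $-\tfrac34$ and $s=3$ by a convexity inequality, splits according to $\langle\tau_3\rangle$ versus $\langle\xi_3\rangle^{3}$ (not $R$ versus $R^c$), and at $s=3$ needs the key inequality \eqref{d2, Z, s=3, key} together with, in the region $e^{|\xi_1|}>3+|\tau_1|$, the logarithmic trick $|\xi_1|\gtrsim\ln(3+|L_2|)$ to restore integrability in $\tau_2$. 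Your proposal has no substitute for this step.

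Second, your treatment of the degenerate regime $\xi_2\to 0$ in the mixed-sign estimate is inconsistent with your own setup and with what is actually needed. You assert it closes by ``a direct $L^2$-restriction (Strichartz-type) bound \ldots without needing resonance gain,'' but in that regime the full derivative factor $|\xi_3|\sim|\xi_1|$ must be recovered, and the only mechanism is the curvature/Jacobian gain $|P'(\xi_1)|\gtrsim\sum_i\xi_i^2$ (i.e.\ exactly a resonance-type gain, which is also what underlies bilinear restriction); moreover, with both input modulation exponents at the endpoint $\tfrac12$, the resulting integral $\int |P'|\langle P\rangle^{-2\sigma}d\xi_1$ diverges logarithmically at $\sigma=\tfrac12$, so the estimate only closes because $|\xi_2|\le 1$ forces the second input into the modified region, upgrading its weight to $\langle L_2\rangle^{\sigma}$ with $\sigma>\tfrac12$ --- this is precisely Case~2 of the paper's proof of Proposition \ref{Prop, bilin, d2} and is the stated raison d'\^{e}tre of $\Lambda^{\alpha,\beta}_{s,\sigma}$. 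Dismissing this regime as routine (and as independent of the modified norm) skips the crux of the proposition; the high-high-to-low analysis you flag as the main obstacle is, by contrast, handled by the standard resonance count once $|\xi_2|>1$, where the lower bound $|H|\gtrsim|\xi_2|\sum_i\langle\xi_i\rangle^2$ is effective.
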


The verification of Proposition \ref{Prop, bilin} will be presented in Section \ref{Sec, bilin est}. Then one can take advantage of Proposition \ref{Prop, pos-kdv}--Proposition \ref{Prop, bilin} to justify Theorem \ref{Thm, cwp}. Based on Theorem \ref{Thm, cwp}, we will establish the persistence of regularity property in Proposition \ref{Prop, Tmax} which is the key ingredient in the proof of Theorem \ref{Thm, exist of ms} and Theorem \ref{Thm, uniq of ms}.

\subsection{Application to general KdV-KdV systems}

We point out that the  approach developed in this paper  for  the  IBVP (\ref{ckdv}) can also be applied to study the  IBVPs of  general coupled KdV-KdV  systems (\ref{ckdv-general}):
\be\label{ckdv-general}
\bp u_t\\v_t  \ep + A_{1}\bp u_{xxx}\\v _{xxx}\ep + A_{2}\bp u_x\\v_x \ep = A_{3}\bp uu_x\\vv_x \ep + A_{4}\bp u_{x}v\\uv_{x}\ep, 
\ee
where $\{A_{i}\}_{1\leq i\leq 4}$ are $2\times 2$ real constant matrices and $ A_1 $ is diagonalizable: $A_{1}=M\bp a_{1}& 0\\0& a_{2}\ep M^{-1}$ with  $a_{1}a_2 \ne 0$. By regarding $M^{-1}\bp u\\v\ep$ as the new unknown functions (still denoted by $u$ and $v$),  the third order terms $ u_{xxx} $ and $ v_{xxx} $ in system (\ref{ckdv-general}) can be decoupled such that
\be\label{ckdv, coef form}
\left\{\begin{array}{rcl}
u_t+a_{1}u_{xxx}+b_{11}u_x &=& -b_{12}v_x+c_{11}uu_x+c_{12}vv_x+d_{11}u_{x}v+d_{12}uv_{x},\vspace{0.03in}\\
v_t+a_{2}v_{xxx}+b_{22}v_x &=& -b_{21}u_x+c_{21}uu_x+c_{22}vv_x +d_{21}u_{x}v+d_{22}uv_{x},
\end{array}\right.\ee
where the dispersion coefficients $ a_1 $ and $ a_2 $ are the nonzero eigenvalues of the matrix $ A_1 $. Systems (\ref{ckdv-general}) include many practical models, such as the well-known Majda-Biello system\cite{MB03}, the Hirota-Satsuma system \cite{HS81} and the Gear-Grimshaw system \cite{GG84}. 
%
%
The Cauchy problem for general systems (\ref{ckdv-general}) posed either on $\mathbb{R}$ or on the torus $\mathbb{T}$  has been well studied in the literature for its well-posedness in the space $H^s (\mathbb{R})\times H^s (\mathbb{R})$ or $H^s (\mathbb{T})\times H^s (\mathbb{T})$. The interested readers are referred to \cite{YZ22a,YZ22b}  and the references therein for an overall review of this study.  For the IBVPs of systems (\ref{ckdv-general}) posed on $\R^+$,  we can justify the well-posedness results  similar to  that presented in Theorem \ref{Thm, main} for the IBVP (\ref{ckdv}) using the same approach. 

In the following, we list two such results for the IBVPs of the Gear-Grimshaw system (\ref{G-G system}) on $ \R^+ $:
\be\label{G-G system}
\left\{\begin{array}{rcl}
u_{t}+u_{xxx}+\sigma_{3}v_{xxx} &=&-uu_{x}+\sigma_{1}vv_{x}+\sigma_{2}(uv)_{x},\vspace{0.03in}\\
\rho_{1}v_{t}+\rho_{2}\sigma_{3}u_{xxx}+v_{xxx}+\sigma_{4}v_{x} &=& \rho_{2}\sigma_{2}uu_{x}-vv_{x}+\rho_{2}\sigma_{1}(uv)_{x}, 
\end{array}\right.\ee
where  $\sigma_{i}\in\m{R}(1\leq i\leq 4)$ and $\rho_{1},\,\rho_{2}>0$. This system can be written in the format of (\ref{ckdv-general}) with 
\[A_1 = \bp 1& \sigma_{3}\\ \frac{\rho_{2}\sigma_{3}}{\rho_{1}} & \frac{1}{\rho_{1}} \ep, \quad A_2 =  \bp 0 & 0\\ 0 & \frac{\sigma_4}{\rho_{1}} \ep, \quad A_3 = \bp -1 & \sigma_{1}\\ \frac{\rho_{2}\sigma_{2}}{\rho_{1}} & -\frac{1}{\rho_{1}} \ep, \quad A_4 = \bp \sigma_2 & \sigma_2 \\ \frac{\rho_{2}\sigma_{1}}{\rho_{1}} & \frac{\rho_{2}\sigma_{1}}{\rho_{1}} \ep.\]
Note that when $\rho _2 \sigma _3^2 \ne 1$, $ A_1 $ possesses two  nonzero real eigenvalues $ \lam_1 $ and $ \lam_2 $ with $ \lam_1\leq \lam_2 $. Moreover, $\lambda _1 < 0 < \lambda _2$ if  $\rho _2 \sigma _3^2 >1$;
$0<  \lambda _1 < \lambda _2$ if $ \rho _2 \sigma _3^2 <1$; and 
$\lambda _1=\lambda _2 =1$ if $ \sigma _3 =0$ and $\rho _1 =1$. Thus, when $ \rho _2 \sigma _3^2 >1$,  we need to impose three boundary conditions as that in the case of the IBVP (\ref{ckdv}),
\be\label{G-G system-1}
\left\{\begin{array}{rcll}
u_{t}+u_{xxx}+\sigma_{3}v_{xxx} &=&-uu_{x}+\sigma_{1}vv_{x}+\sigma_{2}(uv)_{x}, \vspace{0.03in}  & x, \ t >0,\\
\rho_{1}v_{t}+\rho_{2}\sigma_{3}u_{xxx}+v_{xxx}+\sigma_{4}v_{x} &=& \rho_{2}\sigma_{2}uu_{x}-vv_{x}+\rho_{2}\sigma_{1}(uv)_{x}, \vspace{0.03in}  & x,\ t >0,\\ 
u(x,0)=\phi(x), & \  & v(x,0)= \psi (x), &x>0, \\ 
u(0,t)= h_1 (t), &\ & v(0,t)= h_2 (t), \ v_x (0,t)= h_3 (t) , \ & t>0.
\end{array}\right.\ee
But in the case of $ \rho _2 \sigma _3^2 < 1$, we only need to impose two boundary conditions since $ \lam_1 $ and $ \lam_2 $ have the same sign.
\be\label{G-G system-2}
\left\{\begin{array}{rcll}
u_{t}+u_{xxx}+\sigma_{3}v_{xxx} &=&-uu_{x}+\sigma_{1}vv_{x}+\sigma_{2}(uv)_{x}, \vspace{0.03in}  & x, \ t >0,\\
\rho_{1}v_{t}+\rho_{2}\sigma_{3}u_{xxx}+v_{xxx}+\sigma_{4}v_{x} &=& \rho_{2}\sigma_{2}uu_{x}-vv_{x}+\rho_{2}\sigma_{1}(uv)_{x}, \vspace{0.03in}  & x,\ t >0, \\ 
u(x,0)=\phi(x), & \  & v(x,0)= \psi (x), &x>0,\\ 
u(0,t)= h_1 (t),  &\  &\  v (0,t)= h_2 (t) , \ & t>0.
\end{array}\right.\ee

In Theorem 1.6 in \cite{YZ22a}, the authors obtained a complete list of well-posedness results for the Cauchy problem of the Gear-Grimshaw system (\ref{G-G system}) posed on $\R$ for all possibilities of the parameters $ \rho_1 $, $ \rho_2 $ and $ \sigma_i(1\leq i\leq 4) $. Based on this result and the method we developed to prove Theorem \ref{Thm, main}, we can also establish the following well-posedness results concerning the IBVP of (\ref{G-G system-1}) or (\ref{G-G system-2}) on $ \R^+ $.

\begin{cor}  If $ \rho _1 >0$, $ \rho _2 >0$  and $ \rho _2 \sigma _3^2 >1 $,  then the IBVP (\ref{G-G system-1}) is locally analytically well-posed in the space ${\cal H}^s (\R^+)$, with any naturally compatible data $(\phi, \psi) \in {\cal H}^s _x(\R^+)$ and $\vec{h}= (h_1, h_2, h_3 ) \in  {\cal H}^s_t (\R^+) $,
for any $s> -\frac34$.
\end{cor}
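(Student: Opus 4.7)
The plan is to mirror the proof of Theorem \ref{Thm, main} step by step. When $\rho_2\sigma_3^2 > 1$ the matrix $A_1$ has two real eigenvalues $\lambda_1 < 0 < \lambda_2$, so we can diagonalize $A_1 = M\,\mathrm{diag}(\lambda_1,\lambda_2)\,M^{-1}$. Introducing $(U,V)^{T} := M^{-1}(u,v)^{T}$ decouples the leading dispersive terms and recasts (\ref{G-G system-1}) in the form
\[
\begin{cases}
U_t + \lambda_2 U_{xxx} + \text{(l.o.t.)} = N_1(U,V),\\
V_t + \lambda_1 V_{xxx} + \text{(l.o.t.)} = N_2(U,V),
\end{cases}
\]
where $N_1,N_2$ are quadratic with terms of the types $UU_x$, $UV_x$, $U_xV$, $VV_x$, i.e., exactly the families handled by the three bilinear estimates of Proposition \ref{Prop, bilin}. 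Because the two dispersion coefficients have opposite signs, this is structurally identical to (\ref{ckdv-1}), so the whole functional-analytic machinery applies without change.

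The boundary data transforms consistently: the two Dirichlet conditions $u(0,t)=h_1$ and $v(0,t)=h_2$ determine $U(0,t)$ and $V(0,t)$ via $M^{-1}$, while $v_x(0,t)=h_3$ supplies a nontrivial linear combination of $U_x(0,t)$ and $V_x(0,t)$ that can be solved for $V_x(0,t)$ once $U_x(0,t)$ is recovered from the integral equation for $U$, exactly as in (\ref{ckdv-3}). A scaling argument analogous to the one preceding Theorem \ref{Thm, main-scaling} then normalizes the lower-order coefficients and makes the data arbitrarily small. The linear estimates of Proposition \ref{Prop, pos-kdv} bound the $U$-component in $Y^{1,\beta}_{s,\frac12,\sigma}(\Omega_T)$, while Proposition \ref{Prop, neg-kdv} bounds $V$ in $Y^{-1,-\beta}_{s,\frac12,\sigma}(\Omega_T)$; the bilinear estimates of Proposition \ref{Prop, bilin} then close a contraction argument on the product space, yielding the Gear-Grimshaw analogue of Theorem \ref{Thm, cwp}. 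The mild-solution framework of Theorems \ref{Thm, exist of ms} and \ref{Thm, uniq of ms} upgrades this to unconditional local well-posedness for $s > -\frac34$.

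The main obstacle I expect is the boundary-data bookkeeping under diagonalization. One needs to verify that the single linear combination of $(U_x(0,t), V_x(0,t))$ coming from $h_3$ is genuinely solvable for $V_x(0,t)$, which reduces to non-degeneracy of a coefficient built from the second row of $M^{-1}$. This is guaranteed by the hypothesis $\rho_2\sigma_3^2 > 1$, which forces $\sigma_3 \neq 0$ and ensures that $M$ genuinely mixes the two variables. A secondary but routine point is checking that the natural compatibility conditions for (\ref{G-G system-1}) translate into the compatibility conditions of Definition \ref{Def, compatibility-lin} for each decoupled linear IBVP. A minor technical adjustment is that the normalized dispersion coefficients $\lambda_2/|\lambda_1|$ and $-1$ are not literally $\pm 1$; this amounts to tracking a constant factor through Propositions \ref{Prop, pos-kdv}--\ref{Prop, bilin}, whose proofs only use the \emph{sign} of the leading coefficient and scale covariantly in the magnitude.
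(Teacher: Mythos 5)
Your proposal follows the same route the paper itself intends for this corollary (the paper gives no detailed proof; it asserts the result from its method for Theorem \ref{Thm, main} together with the Cauchy-problem bilinear estimates of Theorem 1.6 in \cite{YZ22a}): diagonalize $A_1$, use $\rho_2\sigma_3^2>1$ to get eigenvalues $\lambda_1<0<\lambda_2$, transport the three boundary conditions through $M$, and rerun the contraction scheme of Theorem \ref{Thm, cwp} plus the mild-solution framework. Your boundary bookkeeping is essentially right: solvability for $V_x(0,t)$ requires the second component of the eigenvector attached to the negative eigenvalue to be nonzero (the relevant coefficient sits in the second row of $M$, not of $M^{-1}$), and this holds because $\sigma_3\neq0$ prevents $\lambda=1$ from being an eigenvalue of $A_1$.

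The step you justify too quickly is the bilinear estimates. Proposition \ref{Prop, bilin} is formulated for the exactly antisymmetric pair of phases $(\alpha,\beta)$ and $(-\alpha,-\beta)$, and a space--time scaling multiplies \emph{both} dispersion coefficients by the same factor, so when $|\lambda_1|\neq|\lambda_2|$ you cannot reduce the pair $(\lambda_2,\lambda_1)$ to $(\alpha,-\alpha)$; the assertion that the proofs ``only use the sign of the leading coefficient and scale covariantly in the magnitude'' is therefore not a matter of tracking constants, since the validity and threshold of such estimates genuinely depend on the ratio of the coefficients. What actually closes the argument is that the resonance function remains non-degenerate for any opposite-sign pair: for instance, for the mixed interaction one finds, modulo the first-order terms, $H=\xi_2\big[(\lambda_1-\lambda_2)\xi_2^2-3\lambda_2\xi_1\xi_2-3\lambda_2\xi_1^2\big]$, and the bracketed quadratic form is definite because its discriminant $(12\lambda_1\lambda_2-3\lambda_2^2)\xi_1^2$ is negative when $\lambda_1\lambda_2<0$; hence the analogue of \eqref{d2, large H} holds and the proof of Proposition \ref{Prop, bilin, d2} (and of its two companions) can be repeated for $(\lambda_2,\lambda_1)$ — equivalently, one invokes the bilinear estimates for the Gear--Grimshaw system from \cite{YZ22a}, as the paper does. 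That this ratio-dependence is real and not a harmless constant is exactly what the companion corollary for $\rho_2\sigma_3^2<1$ shows: with same-sign eigenvalues the resonance can degenerate and the threshold drops to $s\geq0$ or $s\geq\frac34$. With this resonance verification (or the citation to \cite{YZ22a}) inserted, your argument is the intended one.
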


\begin{cor} If $ \rho _1 >0 $, $ \rho _2 >0 $ and $\rho _2 \sigma _3^2 <1$, then the IBVP (\ref{G-G system-2}) is locally analytically well-posed in the space ${\cal H}^s (\R^+)$, with any naturally compatible data $ (\phi, \psi ) \in {\cal H}^s_x (\R^+)$, $(h_1 , h_2 )\in H^{\frac{s+1}{3}}(\R^+)\times H^{\frac{s+1}{3}}(\R^+)$, for any
\begin{enumerate}[(i)]
\item $s>-\frac{3}{4}$ if $\sigma_{3}=0$ and $\rho_{1}=1$;
\item $s\geq 0$  if 
$\rho_{2}\sigma_{3}^{2} <1$ but (\ref{assum-1}) below is not satisfied;
\item $s\geq\frac34 $  if
\begin{equation} \label{assum-1}   \rho_{2}\sigma_{3}^{2}\leq  \frac{9}{25}, \quad  \rho _1= \frac12  \left ( \alpha \pm\sqrt{\alpha ^2 -4 } \right )
\end{equation} 
with
\[ \alpha = \frac{17-25\rho _2 \sigma ^2_3}{4}.\]
\end{enumerate}
\end{cor}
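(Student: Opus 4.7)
The plan is to mirror the strategy used to prove Theorem~\ref{Thm, main}, importing the resonance analysis of \cite{YZ22a} to pin down the threshold regularities. Since $\rho_{1},\rho_{2}>0$ and $\rho_{2}\sigma_{3}^{2}<1$, the matrix $A_{1}$ in (\ref{G-G system-2}) has two positive real eigenvalues $0<\lambda_{1}\leq \lambda_{2}$, with equality precisely when $\sigma_{3}=0$ and $\rho_{1}=1$. Writing $A_{1}=M\,\mathrm{diag}(\lambda_{1},\lambda_{2})\,M^{-1}$ and substituting $M^{-1}(u,v)^{T}$ (still denoted $(u,v)$) converts (\ref{G-G system-2}) to a system of the form (\ref{ckdv, coef form}) with $a_{1}=\lambda_{1}$, $a_{2}=\lambda_{2}$, both positive. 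Each scalar equation is therefore of the same right-moving KdV type as (\ref{lin-u}) (after a scaling $\beta\in(0,1]$ of the lower-order linear term exactly as in the reduction from Theorem~\ref{Thm, main} to Theorem~\ref{Thm, main-scaling}), so only one boundary condition is needed per component, matching the two data $h_{1},h_{2}$ prescribed in (\ref{G-G system-2}).

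Following (\ref{ckdv-3}), I would then recast the transformed IBVP as a coupled system of integral equations built from semigroup terms $W^{\lambda_{i},\beta_{i}}_{\R^{+}}(t)\phi_{i}$, boundary integral operators $W^{\lambda_{i},\beta_{i}}_{bdr}[h_{i}]$, and Duhamel integrals against the quadratic nonlinearities $uu_{x}$, $vv_{x}$, $(uv)_{x}$. The required linear estimates are two applications of Proposition~\ref{Prop, pos-kdv} (one per component), giving the $Y^{\lambda_{i},\beta_{i}}_{s,\frac12,\sigma}(\Omega_{T})$ bounds together with the spatial trace bounds, valid throughout the whole range $s\in(-\tfrac34,3]$.

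The main obstacle is the bilinear estimates, the analogue of Proposition~\ref{Prop, bilin} for the phase functions $\phi^{\lambda_{i},\beta_{i}}(\xi)=\lambda_{i}\xi^{3}-\beta_{i}\xi$. Whether a bound of the form
\[\|\partial_{x}(w_{1}w_{2})\|_{X^{\lambda_{i},\beta_{i}}_{s,\sigma-1}}+\|\partial_{x}(w_{1}w_{2})\|_{Z^{\lambda_{i},\beta_{i}}_{s,\sigma-1}}\leq C\,\|w_{1}\|_{X^{\lambda_{j},\beta_{j}}_{s,\frac12,\sigma}}\|w_{2}\|_{X^{\lambda_{k},\beta_{k}}_{s,\frac12,\sigma}}\]
holds for every combination $(i,j,k)\in\{1,2\}^{3}$ arising in the system is governed by the zero set of the resonance function $\phi^{\lambda_{i},0}(\xi_{1}+\xi_{2})-\phi^{\lambda_{j},0}(\xi_{1})-\phi^{\lambda_{k},0}(\xi_{2})$. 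This is exactly the algebraic object analyzed in the proof of Theorem~1.6 of \cite{YZ22a}, and its three qualitatively distinct behaviours correspond to the three cases of the corollary: in case (i) one has $\lambda_{1}=\lambda_{2}=1$ and the resonance reduces to that of Proposition~\ref{Prop, bilin}, so the full range $s>-\tfrac34$ is available; in the generic non-degenerate case (ii) the resonance surface has at worst a simple zero, yielding $s\geq 0$; under the algebraic constraint (\ref{assum-1}) a double zero of the resonance function appears, forcing $s\geq \tfrac34$ in case (iii). Transferring each of these Cauchy-problem estimates to the half-line modified restriction spaces $X^{\lambda_{i},\beta_{i}}_{s,\frac12,\sigma}$ and $Z^{\lambda_{i},\beta_{i}}_{s,\sigma-1}$ is the technical heart of the argument and proceeds by the Littlewood--Paley and frequency-region case analysis of Section~\ref{Sec, bilin est}, the low-frequency modulation weight in $\Lambda^{\lambda_{i},\beta_{i}}_{s,\sigma}$ making up for the loss at $b=\tfrac12$.

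Once the appropriate bilinear estimates are in place, the contraction mapping argument on a ball in $Y^{\lambda_{1},\beta_{1}}_{s,\frac12,\sigma}(\Omega_{T})\times Y^{\lambda_{2},\beta_{2}}_{s,\frac12,\sigma}(\Omega_{T})$ delivers the conditional analytic well-posedness in the style of Theorem~\ref{Thm, cwp}; combining this with the persistence of regularity (Proposition~\ref{Prop, Tmax}) and the uniqueness of mild solutions (Theorem~\ref{Thm, uniq of ms}) then upgrades it to the full unconditional real-analytic local well-posedness of (\ref{G-G system-2}) in the stated $s$-ranges.
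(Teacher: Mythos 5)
Your proposal is correct and follows essentially the same route as the paper, which treats this corollary only as a sketched application of its method: diagonalize $A_1$ (both eigenvalues positive when $\rho_2\sigma_3^2<1$, hence two boundary conditions), rerun the linear estimates, boundary integral operators and modified Bourgain-space contraction of Sections \ref{Sec, lin pb}--\ref{Sec, wp}, and import the regularity thresholds for the bilinear estimates from Theorem 1.6 of \cite{YZ22a}. Your heuristic labels for cases (ii)--(iii) (simple versus double zero of the resonance function) are an informal gloss on what \cite{YZ22a} actually proves, but since you, like the paper, defer the precise resonance case analysis to that reference, the argument is the same in substance.
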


%

Finally, we remark that the current paper only studies the 1D Boussinesq systems, however, the 2D Boussinesq systems are also of great importance and has attracted significant attention, see e.g. \cite{BCL05, BLS08, DMS07, MSZ12, SWX17, LPS12}. These earlier works mainly focus on the well-posedness of the 2D Boussinesq systems on the space-time region $ \m{R}^{2}\times (0,T) $, it might be an interesting future task to study their well-posedness on $ \R^+ \times \R^+$.

\subsection{Outline of the remaining paper}
\begin{itemize}
\item In Section \ref{Sec, pre}, we will present some results for the Cauchy problem of the KdV equation and the reversed KdV equation  posed on the whole line $\R$ which  will play an important role  later in dealing with the initial  boundary value problems of the coupled KdV systems.

\item In Section \ref{Sec, lin pb},  we investigate the linear IBVPs (\ref{lin-u}) and (\ref{lin-v}) to carry out the proofs of Proposition \ref{Prop, pos-kdv} and Proposition \ref{Prop, neg-kdv}.

\item In Section \ref{Sec, bilin est},  various bilinear estimates  presented in Proposition \ref{Prop, bilin}   will be established. 

\item Section \ref{Sec, wp} is devoted to the proof of Theorem \ref{Thm, main-scaling}, the  main result of this paper. Equivalently, we will establish Theorems \ref{Thm, cwp}, \ref{Thm, exist of ms} and \ref{Thm, uniq of ms} in Section \ref{Sec, wp}.

\end{itemize}

\section{Preliminaries}
\label{Sec, pre}

Throughout this paper, $\eta(t)$ is fixed to be a bump function in $C_{0}^{\infty}(\m{R})$ which satisfies $ 0\leq \eta \leq 1 $ and
\be\label{def of eta}
\eta (t)=\left\{\begin{array}{lll}
1 & \text{if} & |t|\leq 1,\\
0 & \text{if} & |t|\geq 2.
\end{array}
\right.\ee
The notation $a+$ denotes $a+\eps$ for any $\eps>0$. Similarly, $a-$ denotes $a-\eps$ for any $\eps>0$. For a function $f$ in both $x$ and $t$ variables, we use $\mcal{F}f$ (or $\wh{f}$) to denote its space-time Fourier transform. Meanwhile, we use $\mcal{F}_{x}f$ or ${\wh{f}}^x$ to denote its Fourier transform with respect to $x$, and use $\mcal{F}_{t}f$ or ${\wh{f}}^t$ to denote its Fourier transform with respect to $t$. If a function $h$ is only in $x$ variable, then we use ${\wh{h}}^x$ (or just $\wh{h}$ when there is no ambiguity) to denote its Fourier transform. Similar notations are applied to functions which are only in $t$ variable.
For any $s\in\m{R}$, we denote $E_s$ to be an extension operator from $H^{s}(\m{R}^+)$ to $H^{s}(\m{R})$ such that 
\be\label{ext for p}\|E_sf\|_{H^{s}(\m{R})}\leq C\|f\|_{H^{s}(\m{R}^+)},\quad\forall\,f\in H^{s}(\m{R}^+),
\ee
where $C$ is a constant which only depends on $s$. 



\begin{lem}\label{Lemma, 0 ext est}
Let $h\in H^{\frac{s+1}{3}}(\m{R}^+)$ and 
\be\label{zero ext}
h^*(x) :=\left\{\begin{array}{lll}
h(x) & \text{if} & x\geq 0,\\
0 & \text{if} & x<0.
\end{array}
\right.\ee
Assume either  $-\frac52<s<\frac12$, or $\frac12<s<\frac72$ and $h(0)=0$. Then $ h^*\in H^{\frac{s+1}{3}}(\m{R})$ and there exists $C=C(s)$ such that 
\be\label{0 ext est}
\| h^*\|_{H^{\frac{s+1}{3}}(\m{R})}\leq C \| h\|_{H^{\frac{s+1}{3}}(\m{R}^+)}.\ee
\end{lem}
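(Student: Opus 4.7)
The plan is to recognize the statement as the classical fact that zero extension is continuous $H^{r}(\mathbb{R}^{+})\to H^{r}(\mathbb{R})$ whenever $r\in(-\tfrac12,\tfrac12)$, and also whenever $r\in(\tfrac12,\tfrac32)$ provided the trace vanishes. With $r:=\tfrac{s+1}{3}$, the first hypothesis $-\tfrac52<s<\tfrac12$ gives exactly $r\in(-\tfrac12,\tfrac12)$, and the second hypothesis $\tfrac12<s<\tfrac72$ with $h(0)=0$ gives $r\in(\tfrac12,\tfrac32)$ with vanishing trace. So the task reduces to proving these two mapping properties, and I would split the proof accordingly.

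For the range $r\in[0,\tfrac12)$ I would pick a Stein-type extension $\widetilde h = E_{r}h\in H^{r}(\mathbb{R})$ with $\|\widetilde h\|_{H^{r}(\mathbb{R})}\lesssim \|h\|_{H^{r}(\mathbb{R}^{+})}$ and then write $h^{*}=\mathbf{1}_{\mathbb{R}^{+}}\widetilde h$. The key ingredient is the well-known fact that multiplication by the Heaviside function $\mathbf{1}_{\mathbb{R}^{+}}$ is bounded on $H^{r}(\mathbb{R})$ for every $|r|<\tfrac12$; this is a Hardy-type bound and is typically established by Littlewood--Paley paraproduct decomposition, noting that $\mathbf{1}_{\mathbb{R}^{+}}=\tfrac12(1+\operatorname{sgn})$ and that convolution with the principal-value kernel underlying $\operatorname{sgn}$ on the Fourier side is the Hilbert transform, which is bounded on $H^{r}(\mathbb{R})$; the genuine work is controlling the trace term that would obstruct the $r=\tfrac12$ endpoint. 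For the range $r\in(-\tfrac12,0)$ I would argue by duality: with the restriction convention, $H^{r}(\mathbb{R}^{+})$ is the topological dual of $H^{-r}_{0}(\mathbb{R}^{+})$, and the zero-extension map is the Banach adjoint of the restriction $H^{-r}(\mathbb{R})\to H^{-r}(\mathbb{R}^{+})$, which is trivially bounded; equivalently, for a tempered distribution $h^{*}$ supported in $[0,\infty)$ one tests against $\varphi\in\mathcal S(\mathbb{R})$ by $\langle h^{*},\varphi\rangle=\langle h,\varphi|_{\mathbb{R}^{+}}\rangle$, yielding the estimate.

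For the range $r\in(\tfrac12,\tfrac32)$ with $h(0)=0$, I would reduce to the previous case by differentiating: since $h$ has a well-defined trace equal to zero, the distributional derivative of $h^{*}$ on $\mathbb{R}$ carries no delta contribution at the origin, so $(h^{*})'=(h')^{*}$ as distributions on $\mathbb{R}$. Now $h'\in H^{r-1}(\mathbb{R}^{+})$ with $r-1\in(-\tfrac12,\tfrac12)$, so the first part gives $(h')^{*}\in H^{r-1}(\mathbb{R})$ with the desired bound. Combined with the trivial $L^{2}$ estimate $\|h^{*}\|_{L^{2}(\mathbb{R})}=\|h\|_{L^{2}(\mathbb{R}^{+})}\leq \|h\|_{H^{r}(\mathbb{R}^{+})}$, this yields $h^{*}\in H^{r}(\mathbb{R})$ with the required bound. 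The main obstacle of the whole argument is the Heaviside multiplier theorem on $H^{r}(\mathbb{R})$ for $|r|<\tfrac12$: while classical, a careful proof requires a paraproduct or Littlewood--Paley estimate (and the sharpness of the range $|r|<\tfrac12$ is intimately tied to the borderline failure of the trace map), and once this is in hand the rest of the proof is routine extension/restriction bookkeeping.
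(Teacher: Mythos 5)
Your proposal is mathematically sound, but note that the paper does not prove this lemma at all: it simply records it as standard and cites Lemma 2.1(i)(ii) of \cite{ET16} and \cite{JK95}. What you have written is essentially a reconstruction of the proof underlying those references: boundedness of multiplication by $\mb{1}_{\m{R}^+}$ on $H^{r}(\m{R})$ for $|r|<\frac12$ (extension plus Heaviside cut-off) for $r\in[0,\frac12)$, duality for $r\in(-\frac12,0)$, and the reduction $(h^*)'=(h')^*$ using the vanishing trace for $r\in(\frac12,\frac32)$, with the correct bookkeeping $r=\frac{s+1}{3}$ translating the hypotheses on $s$ into exactly these two ranges. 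Two points would need a little more care in a fully rigorous write-up. First, when $\frac{s+1}{3}<0$ (i.e.\ $s<-1$) the pointwise formula for $h^*$ and the product $\mb{1}_{\m{R}^+}\wt{h}$ are only formal; one must define the extension on a dense class or as the adjoint of restriction, and your duality paragraph flirts with circularity since the identification of $(H^{-r}(\m{R}^+))^*$ with a space of supported distributions is itself a cousin of the statement being proved (this is harmless here because $|r|<\frac12$, but it should be said). Second, the identity $(h^*)'=(h')^*$ is cleanest not by integration by parts (for $\frac12<r<1$ the function $h$ need not be absolutely continuous) but by using the density of $C_c^{\infty}(0,\infty)$ in $\{h\in H^{r}(\m{R}^+):h(0)=0\}$ for $\frac12<r<\frac32$ and passing to the limit in the estimate. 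With these standard repairs your argument gives a complete self-contained proof of what the paper merely cites.
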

The result in Lemma \ref{Lemma, 0 ext est} is standard, see e.g. Lemma 2.1(i)(ii) in \cite{ET16} or \cite{JK95}.  

For any $\b>0$, we define the function $P_{\b}:\m{R}\to\m{R}$ as 
\be\label{def of P_b}
P_{\b}(\mu) =\mu^3-\b\mu.\ee

\begin{lem}\label{Lemma, est on H^s norm}
Let $s\in\m{R}$ and $0<\b\leq 1$. If a function $M:\m{R}\rightarrow\m{C}$ satisfies
\be\label{bdd for m}
|M(\mu)|\leq C |(P_{\b})'(\mu)|^{\frac12}\la\mu\ra^{s+1},\quad\forall\,\mu\in\m{R},\ee
for some constant $C=C(s)$. Then there exists $C_1=C_1(s)$ such that
\be\label{est on H^s norm}
\l\| M(\mu)\wh{f}\big(P_{\b}(\mu)\big)\r\|_{L^2(\m{R})}\leq C_1 \| f\|_{H^{\frac{s+1}{3}}(\m{R})}, \quad \forall\,f\in H^{\frac{s+1}{3}}(\m{R}).\ee
\end{lem}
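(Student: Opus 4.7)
The natural approach is a change of variables $\tau = P_\beta(\mu)$ on each interval where $P_\beta$ is monotone. Since $P_\beta'(\mu) = 3\mu^2 - \beta$ vanishes exactly at $\mu = \pm\mu_0$ with $\mu_0 := \sqrt{\beta/3}$, the line splits as $\mathbb{R} = I_- \cup I_0 \cup I_+$ with $I_- := (-\infty,-\mu_0]$, $I_0 := [-\mu_0,\mu_0]$, $I_+ := [\mu_0,\infty)$, and $P_\beta$ is a strictly monotone bijection of each $I_j$ onto its image $P_\beta(I_j)$.

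Squaring the left-hand side of \eqref{est on H^s norm} and applying hypothesis \eqref{bdd for m} yields
\[
\big\|M(\mu)\widehat{f}(P_\beta(\mu))\big\|_{L^2(\mathbb{R})}^2 \leq C^2 \sum_{j \in \{-,0,+\}} \int_{I_j} |P_\beta'(\mu)|\,\langle\mu\rangle^{2(s+1)}\,|\widehat{f}(P_\beta(\mu))|^2\,d\mu.
\]
On each $I_j$ the substitution $\tau = P_\beta(\mu)$, under which $|P_\beta'(\mu)|\,d\mu = |d\tau|$, converts the $j$-th term into
\[
\int_{P_\beta(I_j)} \big\langle \mu_j(\tau)\big\rangle^{2(s+1)}\,|\widehat{f}(\tau)|^2\,d\tau,
\]
where $\mu_j$ denotes the inverse of $P_\beta\big|_{I_j}$. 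The remaining task is to establish the comparison $\langle\mu\rangle \sim \langle P_\beta(\mu)\rangle^{1/3}$ with constants independent of $\beta \in (0,1]$: for $|\mu| \geq 2$ one has $|P_\beta(\mu)| = |\mu|\,|\mu^2 - \beta| \sim |\mu|^3$ using $\beta \leq 1$, while for $|\mu| \leq 2$ both $\langle\mu\rangle$ and $\langle P_\beta(\mu)\rangle$ are trapped between absolute constants. This gives $\langle\mu_j(\tau)\rangle^{2(s+1)} \lesssim \langle\tau\rangle^{2(s+1)/3}$ with constant depending only on $s$, and summing over $j$ bounds the entire quantity by $C(s)\|f\|_{H^{(s+1)/3}(\mathbb{R})}^2$, which is \eqref{est on H^s norm}.

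The only delicate point is maintaining $\beta$-uniformity of the above comparison on the middle interval $I_0$, which shrinks to the origin as $\beta \to 0^+$ and whose image $P_\beta(I_0)$ is a short interval of width $O(\beta^{3/2})$ around $0$. This difficulty is automatically circumvented because $I_0 \subset [-1/\sqrt{3},\,1/\sqrt{3}]$ for every $\beta \in (0,1]$, so both weights $\langle\mu\rangle$ and $\langle P_\beta(\mu)\rangle$ are bounded above and below by absolute constants on this region; consequently no $\beta$-dependence propagates into the final constant $C_1 = C_1(s)$. No bilinear or interpolation machinery is required; the lemma rests solely on the monotonicity structure of $P_\beta$ together with Plancherel's identity encoded in the definition of $H^{(s+1)/3}(\mathbb{R})$.
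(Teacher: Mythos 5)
Your proof is correct and follows essentially the same route as the paper's: bound $|M|^2$ by the hypothesis and change variables $\xi=P_\beta(\mu)$, which the paper states in two lines. You merely make explicit the details the paper leaves implicit, namely the splitting of $\mathbb{R}$ into the three monotonicity intervals of $P_\beta$ and the $\beta$-uniform comparison $\langle\mu\rangle\sim\langle P_\beta(\mu)\rangle^{1/3}$ needed to produce the $H^{\frac{s+1}{3}}$ norm for every sign of $s+1$.
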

\begin{proof}
First, it follows from (\ref{bdd for m}) that
\begin{align*}
\l\| M(\mu)\wh{f}\big(P_{\b}(\mu)\big)\r\|_{L^2}^2 &= \i_{\m{R}}|M(\mu)|^2\big|\wh{f}(P_{\b}(\mu))\big|^2\,d\mu
\ls \i_{\m{R}}|(P_{\b})'(\mu)|\la\mu\ra^{2(s+1)}\big|\wh{f}(P_{\b}(\mu))\big|^2\,d\mu.
\end{align*}
Then applying the change of variable $\xi=P_{\b}(\mu)=\mu^3-\b\mu$ yields  (\ref{est on H^s norm}).
\end{proof}

Based on Lemma \ref{Lemma, 0 ext est} and Lemma \ref{Lemma, est on H^s norm}, we immediately obtain the following conclusion.
\begin{cor}\label{Cor, mul est}
Let $a\in H^{\frac{s+1}{3}}(\m{R}^+)$ and denote $a^*$ to be the zero extension of $a$ as defined in (\ref{zero ext}). Assume either $-\frac52<s<\frac12$, or $\frac12<s<\frac72$ and $a(0)=0$. 
Let $M:\m{R}\rightarrow\m{C}$ be a function that satisfies (\ref{bdd for m}). Then there exists $C=C(s)$ such that
\[\l\| M(\mu)\wh{a^*}(P_{\b}(\mu))\r\|_{L^2(\m{R})}\leq C\|a\|_{H^{\frac{s+1}{3}}(\m{R}^+)},\quad\forall\, a\in H^{\frac{s+1}{3}}(\m{R}^+).\]
\end{cor}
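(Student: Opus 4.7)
The proof is essentially a direct composition of the two preceding lemmas, so the proposal is short.

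The plan is to factor the estimate through the intermediate Sobolev bound on the whole line. First, I would observe that under the stated dichotomy on $s$ (either $-\frac{5}{2}<s<\frac12$, or $\frac12<s<\frac72$ together with $a(0)=0$), the hypotheses of Lemma \ref{Lemma, 0 ext est} are satisfied with $h=a$. Consequently the zero extension $a^{*}$ belongs to $H^{\frac{s+1}{3}}(\mathbb{R})$ and obeys
\[
\|a^{*}\|_{H^{\frac{s+1}{3}}(\mathbb{R})} \leq C(s)\,\|a\|_{H^{\frac{s+1}{3}}(\mathbb{R}^{+})}.
\]

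Next, since $M$ satisfies the pointwise bound (\ref{bdd for m}), I would apply Lemma \ref{Lemma, est on H^s norm} with $f=a^{*}\in H^{\frac{s+1}{3}}(\mathbb{R})$, which yields
\[
\bigl\| M(\mu)\,\widehat{a^{*}}\bigl(P_{\b}(\mu)\bigr)\bigr\|_{L^{2}(\mathbb{R})}
\leq C_{1}(s)\,\|a^{*}\|_{H^{\frac{s+1}{3}}(\mathbb{R})}.
\]
Chaining the two inequalities produces the claimed bound with constant $C=C_{1}(s)\,C(s)$ depending only on $s$.

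Since both ingredients are already established, there is no real obstacle: the only thing to be mindful of is that the dichotomy in the hypothesis of Corollary \ref{Cor, mul est} exactly matches the dichotomy needed to invoke Lemma \ref{Lemma, 0 ext est} (in particular, the compatibility condition $a(0)=0$ is only required in the upper range $s>\frac12$, which corresponds precisely to $\frac{s+1}{3}>\frac12$, the regime in which a trace is well-defined and a nontrivial jump at $0$ would destroy the Sobolev regularity of $a^{*}$). No additional computation is required beyond noting this compatibility.
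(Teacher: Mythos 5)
Your proposal is correct and coincides with the paper's own argument: the corollary is stated there as an immediate consequence of Lemma \ref{Lemma, 0 ext est} (zero extension bound) composed with Lemma \ref{Lemma, est on H^s norm} applied to $f=a^{*}$, exactly as you chain them. Nothing further is needed.
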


Next, we present some results about the Cauchy problem for the KdV equation and the reversed KdV equation  posed on the whole line $\R$ which  will play important roles  later in dealing with the initial  boundary value problems of the coupled KdV system.
\begin{lem}\label{Lemma, lin est}
Let $s\in\m{R}$, $\a\in\m{R}\backslash\{0\}$, $|\b|\leq 1$ and $\frac12<\sigma\leq 1$. Then for any $w_0\in H^s(\m{R})$ and $F\in X^{\a,\b}_{s,\sigma-1}$, there exists $C=C(s,\a,\sigma)$  such that 
\be\label{est for free KdV}
||\eta(t)W_{R}^{\a,\b}(t)w_{0}||_{Y^{\a,\b}_{s,\frac12,\sigma}}\leq C|| w_{0}||_{H^{s}(\m{R})}\ee
and
\be\label{est for Duhamel term}
\Big\Vert \eta(t)\int_{0}^{t}W_{R}^{\a,\b}(t-\tau)F(\cdot,\tau)\,d\tau \Big\Vert_{Y^{\a,\b}_{s,\frac12,\sigma}}\leq C\| F\|_{X^{\a,\b}_{s,\sigma-1}}.\ee
\end{lem}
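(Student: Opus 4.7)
The estimates (\ref{est for free KdV}) and (\ref{est for Duhamel term}) are variants of classical Bourgain-space bounds, adapted to the modified solution space $Y^{\a,\b}_{s,\frac12,\sigma}$. By the definition (\ref{Y norm}), each splits into three pieces: the $X^{\a,\b}_{s,\frac12}$ part, the $\Lambda^{\a,\b}_{s,\sigma}$ part, and the $\sup_t\|\cdot\|_{H^s_x}$ part. For the free-solution estimate (\ref{est for free KdV}), start from the identity
\[\mcal{F}_{x,t}\bigl(\eta(t)W_R^{\a,\b}(t)w_0\bigr)(\xi,\tau)=\wh{w_0}(\xi)\,\wh{\eta}\bigl(\tau-\phi^{\a,\b}(\xi)\bigr).\]
The first two parts then reduce by Fubini to the estimates $\|\la\lambda\ra^{1/2}\wh\eta\|_{L^2_\lambda}<\infty$ and $\|\la\lambda\ra^{\sigma}\wh\eta\|_{L^2_\lambda}<\infty$, both valid because $\wh\eta$ is Schwartz and $\sigma\leq 1$; the indicator in (\ref{Lambda norm}) only restricts the domain of integration and does not alter the bound. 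The $C_t H^s_x$ part is immediate since $W_R^{\a,\b}(t)$ is an isometry on $H^s(\R)$ and $\eta$ is bounded.

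For the Duhamel estimate (\ref{est for Duhamel term}), expand in Fourier:
\[\eta(t)\int_0^t W_R^{\a,\b}(t-s)F(\cdot,s)\,ds=\eta(t)\int e^{ix\xi}\int\frac{e^{it\tau_1}-e^{it\phi^{\a,\b}(\xi)}}{i(\tau_1-\phi^{\a,\b}(\xi))}\,\wh{F}(\xi,\tau_1)\,d\tau_1\,d\xi,\]
and decompose the inner kernel by a smooth cutoff $\chi_0$ of $\{|\tau_1-\phi^{\a,\b}(\xi)|\leq 2\}$. On the high-modulation region $\{|\tau_1-\phi|\gtrsim 1\}$, split the fraction as $\frac{e^{it\tau_1}}{i(\tau_1-\phi)}-\frac{e^{it\phi}}{i(\tau_1-\phi)}$: the second summand is a free evolution of the datum with $x$-Fourier transform $\int\frac{(1-\chi_0)\wh F(\xi,\tau_1)}{i(\tau_1-\phi(\xi))}\,d\tau_1$, whose $H^s_x$ norm is controlled by $\|F\|_{X^{\a,\b}_{s,-1/2}}\leq \|F\|_{X^{\a,\b}_{s,\sigma-1}}$ and thus reduces to (\ref{est for free KdV}); the first summand, after multiplication by $\eta(t)$, has space-time Fourier transform given by a convolution in $\tau$ of $\wh\eta$ against $\frac{(1-\chi_0)\wh F}{i(\tau_1-\phi)}$, and is estimated in both $X^{\a,\b}_{s,\frac12}$ and $\Lambda^{\a,\b}_{s,\sigma}$ directly by Schur-type bounds exploiting the rapid decay of $\wh\eta$. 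On the low-modulation region $\{|\tau_1-\phi|\leq 2\}$, Taylor expand
\[\frac{e^{it\tau_1}-e^{it\phi}}{i(\tau_1-\phi)}=e^{it\phi}\sum_{k\geq 1}\frac{(it)^k(\tau_1-\phi)^{k-1}}{k!},\]
uniformly convergent there, and handle each resulting summand $\eta_k(t)e^{it\phi(\xi)}G_k(\xi)$, with $\eta_k(t):=t^k\eta(t)\in\mcal{S}(\R)$ and $G_k\in H^s_x$, as a localized free evolution; the $k!$ ensures summability.

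The principal obstacle is the $\sup_t\|\cdot\|_{H^s_x}$ component for the Duhamel term, since the standard embedding $X^{\a,\b}_{s,b}\hookrightarrow C_tH^s_x$ just fails at $b=\tfrac12$; this is precisely what motivated the extra low-frequency time-regularity in $\Lambda^{\a,\b}_{s,\sigma}$. In each piece of the decomposition above, the Duhamel term has the explicit form of a free-type evolution multiplied by a Schwartz $t$-localizer, so continuity in $t$ with values in $H^s_x$ follows piece by piece; alternatively, one argues by density, noting that for smooth compactly supported $F$ the property $\mathcal{D}F\in C_tH^s_x$ is classical and then extending via the $X^{\a,\b}_{s,\frac12,\sigma}$ bound just established. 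The delicate accounting between the two modulation regimes and between the $\Lambda^{\a,\b}_{s,\sigma}$ and $X^{\a,\b}_{s,\frac12}$ contributions, and in particular the requirement that $\sigma-1\leq 0$ so that $\|F\|_{X^{\a,\b}_{s,\sigma-1}}$ genuinely weakens the $\|F\|_{X^{\a,\b}_{s,-1/2}}$ hypothesis, is what underlies the restriction $\sigma\leq 1$.
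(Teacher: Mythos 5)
Your proposal is correct in its main line, but it takes a much more laborious route than the paper: you re-derive from scratch the classical Bourgain-space estimates that the paper simply quotes. The paper's proof is a two-line reduction: by (\ref{MFR, ub}) together with the embedding $X^{\a,\b}_{s,\sigma}\subset C_t(\R;H^s(\R))$ valid for $\sigma>\frac12$, the whole $Y^{\a,\b}_{s,\frac12,\sigma}$ norm is dominated by the $X^{\a,\b}_{s,\sigma}$ norm, so both (\ref{est for free KdV}) and (\ref{est for Duhamel term}) follow at once from the standard $(b,b-1)=(\sigma,\sigma-1)$ estimates with $\sigma>\frac12$ (Lemmas 3.1 and 3.3 of \cite{KPV93Duke}); there is no need to work at the endpoint $b=\frac12$ nor to treat the three components of the $Y$ norm separately. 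Your modulation splitting plus Taylor expansion is exactly the standard proof of those quoted lemmas, so it buys nothing extra here.

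Three points in your write-up need repair. First, the intermediate claim that the $H^s_x$ norm of the datum $\int\frac{(1-\chi_0)\wh F}{i(\tau_1-\phi)}d\tau_1$ is controlled by $\|F\|_{X^{\a,\b}_{s,-1/2}}$ fails at that endpoint (Cauchy--Schwarz in the modulation variable produces $\int_{|\lambda|\ge 1}|\lambda|^{-2}\langle\lambda\rangle\,d\lambda=\infty$); the correct and needed bound is by $\|F\|_{X^{\a,\b}_{s,\sigma-1}}$, using $\sigma-1>-\frac12$. Second, the high-modulation piece $\eta(t)\,\mcal{F}^{-1}\big[(1-\chi_0)\wh F/(i(\tau-\phi^{\a,\b}(\xi)))\big]$ is not a free evolution times a time localizer, so its $\sup_t H^s_x$ bound and continuity require a separate (easy) argument, e.g.\ Cauchy--Schwarz in $\tau$ using $\sigma>\frac12$, or the embedding $X^{\a,\b}_{s,\sigma}\subset C_tH^s_x$. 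Third, your fallback density argument is not valid as stated: $X^{\a,\b}_{s,\frac12,\sigma}$ does \emph{not} embed into $C_tH^s_x$ (the $\Lambda^{\a,\b}_{s,\sigma}$ weight only helps in the region $e^{|\xi|}\le 3+|\tau|$), which is precisely why the space $Y^{\a,\b}_{s,\frac12,\sigma}$ carries the extra $C_tH^s$ component; convergence in $X^{\a,\b}_{s,\frac12,\sigma}$ alone does not yield the sup-norm bound. All three issues disappear automatically under the paper's reduction to the $X^{\a,\b}_{s,\sigma}$ norm.
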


\begin{proof}
Recalling (\ref{MFR, ub}), the norm $\|\cdot\|_{Y^{\a,\b}_{s,\frac12,\sigma}}$ is bounded by $\|\cdot\|_{X^{\a,\b}_{s,\sigma}}$, so it suffices to prove 
\be\label{est for free KdV, classical}
||\eta(t)W_{R}^{\a,\b}(t)w_{0}||_{X^{\a,\b}_{s,\sigma}}\leq C|| w_{0}||_{H^{s}(\m{R})}\ee
and
\be\label{est for Duhamel term, classical}
\Big\Vert \eta(t)\int_{0}^{t}W_{R}^{\a,\b}(t-\tau)F(\cdot,\tau)\,d\tau \Big\Vert_{X^{\a,\b}_{s,\sigma}}\leq C\| F\|_{X^{\a,\b}_{s,\sigma-1}}.\ee
The proofs for (\ref{est for free KdV, classical}) and (\ref{est for Duhamel term, classical}) follow directly from Lemma 3.1 and Lemma 3.3 in \cite{KPV93Duke}.
\end{proof}

\begin{lem}\label{Lemma, space trace for free kdv}
Let $s\in\m{R}$, $\a\in\m{R}\backslash\{0\}$ and $|\b|\leq 1$. Then for any $w_0\in H^s(\m{R})$, $\eta(t)\big[W_{R}^{\a,\b}(t)w_0\big]$ belongs to  $C_{x}^{j}\big(\m{R}; H_{t}^{\frac{s+1-j}{3}}(\m{R})\big)$, where $ j=0,1 $. In addition, there exists $C=C(s,\a)$ such that 
\[\sup_{x\in \R} \big\|\eta(t) \p_{x}^{j}\big[W_R^{\a,\b} (t)w_0\big]\big\|_{H^{\frac{s+1-j}{3}}_t(\R)} \leq C\|w_0\|_{H^s(\R)},\qquad j=0,1.\]
\end{lem}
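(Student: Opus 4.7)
\textbf{Plan for Lemma \ref{Lemma, space trace for free kdv}.} This is the standard Kato-type spatial-trace estimate for the free KdV-type evolution, and I plan to prove it directly from the Fourier representation
\be
\p_x^j\bigl[W_R^{\a,\b}(t) w_0\bigr](x) = \int_{\m{R}} (i\xi)^j e^{ix\xi+it\phi^{\a,\b}(\xi)} \wh{w_0}(\xi)\,d\xi.
\ee
Since $|\b|\leq 1$ and $\a\neq 0$, I pick $M=M(\a)\geq 1$ large enough that $|\phi'(\xi)|=|3\a\xi^2-\b|\gs |\a|\xi^2$ on $\{|\xi|\geq M\}$, on which $\phi^{\a,\b}$ is strictly monotonic on each of the two semi-infinite branches $I_+:=(M,\infty)$ and $I_-:=(-\infty,-M)$. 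I then decompose $w_0=w_0^L+w_0^H$ via a smooth frequency cutoff with $\wh{w_0^L}$ supported in $\{|\xi|\leq 2M\}$ and $\wh{w_0^H}$ supported in $\{|\xi|\geq M\}$, and treat the two pieces separately.

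For the low-frequency piece, $\wh{w_0^L}$ has bounded support, so $\eta(t)\p_x^j W_R^{\a,\b}(t) w_0^L(x)$ is a compactly supported $C^\infty$ function of $t$ whose $H^\sigma_t$-norm (for any $\sigma\in\m{R}$) is bounded uniformly in $x\in\m{R}$ by $C(\a,s,\sigma)\|w_0^L\|_{L^2}\leq C'(\a,s,\sigma)\|w_0\|_{H^s}$; here the uniformity uses $|e^{ix\xi}|=1$ and the passage from $H^s$ to $L^2$ uses the compact Fourier support, irrespective of the sign of $s$. For the high-frequency piece, I substitute $\tau=\phi^{\a,\b}(\xi)$ on each branch $I_\pm$ with inverse $\xi=\mu_\pm(\tau)$; setting
\be
H_\pm(\tau):=\frac{(i\mu_\pm(\tau))^j\,e^{ix\mu_\pm(\tau)}\,\wh{w_0^H}(\mu_\pm(\tau))}{|\phi'(\mu_\pm(\tau))|}\,\mb{1}_{\phi(I_\pm)}(\tau),
\ee
a direct calculation identifies the temporal Fourier transform as a sum over branches,
\be
\mcal{F}_t\bigl[\eta(t)\p_x^j W_R^{\a,\b}(t)w_0^H\bigr](x,\tau)=\sum_{\pm}\bigl(\wh{\eta}\ast H_\pm\bigr)(\tau).
\ee
Because $\wh{\eta}\in\mcal{S}(\m{R})$, convolution with $\wh{\eta}$ is bounded on the weighted space $L^2(\la\tau\ra^{2\sigma}\,d\tau)$ for every $\sigma\in\m{R}$, as follows from the pointwise inequality $\la\tau\ra^\sigma\leq C_\sigma\la\tau-\tau'\ra^{|\sigma|}\la\tau'\ra^\sigma$ and the rapid decay of $\wh{\eta}$. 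Using $|e^{ix\mu_\pm(\tau)}|=1$, changing variables back to $\xi$, and exploiting $\la\phi(\xi)\ra\sim|\a||\xi|^3$ together with $|\phi'(\xi)|\sim|\a||\xi|^2$ on $|\xi|>M$ then yields, uniformly in $x\in\m{R}$,
\be
\bigl\|\eta(t)\p_x^j W_R^{\a,\b}(t)w_0^H\bigr\|_{H^{(s+1-j)/3}_t(\m{R})}^2 \ls \int_{|\xi|>M}\la\xi\ra^{2s}|\wh{w_0^H}(\xi)|^2\,d\xi\leq \|w_0\|_{H^s}^2.
\ee

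The continuity statement $\eta\p_x^j[W_R^{\a,\b}w_0]\in C_x^j(\m{R};H^{(s+1-j)/3}_t(\m{R}))$ is verified first for Schwartz data $w_0\in\mcal{S}(\m{R})$, for which the integrand is Schwartz in both variables and $x\mapsto \eta\p_x^j W_R^{\a,\b}w_0$ is manifestly continuous (indeed smooth) into $H^\sigma_t$; the uniform bound above extends the continuity to all $w_0\in H^s(\m{R})$ by density. The main technical point will be the weighted-$L^2$ convolution bound for possibly negative Sobolev exponents $\sigma=(s+1-j)/3$ combined with the change of variable on each monotonic branch of $\phi^{\a,\b}$; both are standard, and the transition region $|\xi|\sim M$ is absorbed into the low-frequency piece, so it requires no special treatment.
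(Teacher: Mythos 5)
Your argument is correct, and it is essentially the standard proof that the paper does not write out but delegates to the cited references (\cite{KPV91Indiana} and Lemma 5.5 of \cite{Hol06}): a low/high frequency splitting, the change of variables $\tau=\phi^{\a,\b}(\xi)$ on the two monotone branches where $|\phi'|\gtrsim|\a|\xi^2$, and a weighted convolution bound coming from the rapid decay of $\wh{\eta}$, with the exponent bookkeeping $\la\phi(\xi)\ra^{2(s+1-j)/3}|\xi|^{2j}/|\phi'(\xi)|\sim\la\xi\ra^{2s}$ giving exactly the stated trace estimate. The only point to state explicitly when writing it up is that the $C^1_x$ assertion follows because your uniform-in-$x$ bounds for both $j=0$ and $j=1$ give uniform convergence of the functions and of their $x$-derivatives along a Schwartz approximating sequence.
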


\begin{proof}
The proof is standard, see e.g. \cite{KPV91Indiana} or Lemma 5.5 in \cite{Hol06}.
\end{proof} 

\begin{lem}\label{Lemma, space trace for Duhamel}
Let $s\in\m{R}$, $\a\in\m{R}\backslash\{0\}$, $|\b|\leq 1$ and $\frac12<\sigma \leq 1$. Then for any $F\in X^{\a,\b}_{s,\si-1}\cap Z^{\a,\b}_{s,\sigma-1}$, $\eta(t)\int^t_0  W^{\a,\b}_R(t-\tau)F(\cdot, \tau)\,d\tau$ lies in 
$ C_{x}^{j}\big(\m{R}; H_{t}^{\frac{s+1-j}{3}}(\m{R})\big)$ for $j=0,1$. In addition, there exists $C=C(s,\a,\sigma)$ such that 
\be\label{xtrace-linD}
\sup_{x\in \R }\left\| \eta(t)\, \partial_x^j\int^t_0  W^{\a,\b}_R(t-\tau)F(\cdot, \tau)\,d\tau\right\|_{H_t^{\frac{s+1-j}{3}}(\R)} \leq C\big(\|F\|_{X^{\a,\b}_{s,\si-1}}+\|F\|_{Z^{\a,\b}_{s,\sigma-1}}\big), \quad j=0,1. \ee
\end{lem}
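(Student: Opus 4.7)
The plan is to follow the Fourier-multiplier approach used in the linear theory by Colliander--Kenig \cite{CK02} and Holmer \cite{Hol06}, adapted to the pair of spaces $X^{\a,\b}_{s,\sigma-1}\cap Z^{\a,\b}_{s,\sigma-1}$. Carrying out the $\tau$-integration in the Duhamel formula on the Fourier side gives, with $L=\lambda-\phi^{\a,\b}(\xi)$,
\[
\int_{0}^{t} W^{\a,\b}_{R}(t-\tau)F(\cdot,\tau)\,d\tau = c\iint e^{i\xi x}\,\frac{e^{i\lambda t}-e^{i\phi^{\a,\b}(\xi)t}}{iL}\,\wh{F}(\xi,\lambda)\,d\xi\,d\lambda.
\]
I would insert a smooth cutoff $\chi(L)$ with $\chi\equiv 1$ on $\{|L|\leq 1\}$ and $\mathrm{supp}\,\chi\subset\{|L|\leq 2\}$, decomposing the Duhamel term into a low-modulation piece $D_{\text{low}}$ (multiplier $\chi(L)$) and a high-modulation piece $D_{\text{hi}}$ (multiplier $1-\chi(L)$), to be treated separately.

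For $D_{\text{low}}$, the entire-function expansion $(e^{itL}-1)/(iL)=\sum_{m\geq 0}(it)^{m+1}L^{m}/(m+1)!$ converts the term into $\sum_{m\geq 0}\frac{(it)^{m+1}}{(m+1)!}W^{\a,\b}_{R}(t)H_{m}(x)$ with $\wh{H_{m}}(\xi)=\int L^{m}\chi(L)\wh{F}(\xi,\lambda)\,d\lambda$. A Cauchy--Schwarz in $\lambda$, combined with $|L|\leq 2$ on $\mathrm{supp}\,\chi$, yields $\|H_{m}\|_{H^{s}}\lesssim\|F\|_{X^{\a,\b}_{s,\sigma-1}}$ with rapidly decaying constants in $m$. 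Multiplication by the compactly supported smooth function $\eta(t)\,t^{m+1}$ is bounded on $H^{r}_{t}(\R)$ with norm growing at most polynomially in $m$, so Lemma \ref{Lemma, space trace for free kdv} applied termwise and summed in $m$ controls $D_{\text{low}}$.

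For $D_{\text{hi}}$, split the kernel into two exponentials,
\[
\frac{(1-\chi(L))(e^{i\lambda t}-e^{i\phi(\xi)t})}{iL}=\frac{(1-\chi(L))e^{i\lambda t}}{iL}-e^{i\phi(\xi)t}\cdot\frac{1-\chi(L)}{iL},
\]
yielding two contributions $I(x,t)$ and $II(x,t)$. The piece $II$ equals $W_{R}^{\a,\b}(t)G(x)$ for $\wh{G}(\xi)=\int\frac{1-\chi(L)}{iL}\wh{F}(\xi,\lambda)\,d\lambda$, and Cauchy--Schwarz (together with $\int_{|L|\geq 1}\la L\ra^{-2\sigma}\,dL<\infty$ for $\sigma>\tfrac12$) gives $\|G\|_{H^{s}}\lesssim\|F\|_{X^{\a,\b}_{s,\sigma-1}}$, so Lemma \ref{Lemma, space trace for free kdv} handles it. The delicate term is $I$: taking its $t$-Fourier transform and using that convolution against the Schwartz function $\wh{\eta}$ preserves weighted $L^{2}_{\lambda}$, matters reduce to showing, uniformly in $x\in\R$,
\[
\left\|\la\lambda\ra^{\frac{s+1-j}{3}}\int e^{i\xi x}\frac{(1-\chi(L))(i\xi)^{j}}{iL}\wh{F}(\xi,\lambda)\,d\xi\right\|_{L^{2}_{\lambda}} \lesssim \|F\|_{X^{\a,\b}_{s,\sigma-1}}+\|F\|_{Z^{\a,\b}_{s,\sigma-1}}.
\]

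I would then split the $\xi$-integral into $|\xi|\leq 1$ and $|\xi|\geq 1$, which behave differently, applying Cauchy--Schwarz in $\xi$ against the relevant weighted density in each region. For $|\xi|\geq 1$, the resulting weight integral $\la\lambda\ra^{2(s+1-j)/3}\int_{|\xi|\geq 1}|\xi|^{2j}\la\xi\ra^{-2s}\la L\ra^{-2\sigma}\,d\xi$ is handled by further splitting into the stationary region $|\xi|\sim|\lambda|^{1/3}$ (the change of variable $\mu=\phi^{\a,\b}(\xi)-\lambda$ together with $\sigma>\tfrac12$ yields a uniform $O(1)$ bound), the transition region $1\leq|\xi|\ll|\lambda|^{1/3}$ (where $\la L\ra\sim\la\lambda\ra$ gives $O(|\lambda|^{1-2\sigma})$), and the far region $|\xi|\gg|\lambda|^{1/3}$ (where $\la L\ra\sim|\xi|^{3}$, convergent provided $\sigma>\frac{2j+1-2s}{6}$, compatible with $s>-\tfrac34$ by taking $\sigma_{0}(s)>\tfrac12$ sufficiently close to $\tfrac12$ or up to $\tfrac34$ for $j=1$); thus this piece is controlled by $\|F\|_{X^{\a,\b}_{s,\sigma-1}}$. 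For $|\xi|\leq 1$, one has $\la L\ra\sim\la\lambda\ra$ and the weight integral is $O(\la\lambda\ra^{-2\sigma})$, leaving a residual $\la\lambda\ra^{2(s+1-j)/3-2\sigma}$ that is exactly absorbed by the $Z$-norm weight $\la\lambda\ra^{2(s/3+1/2-\sigma)}$, since $(s+1-j)/3\leq s/3+1/2$ for $j\in\{0,1\}$. Continuity of the map $x\mapsto \eta(t)\p_{x}^{j}\int_{0}^{t}W^{\a,\b}_{R}(t-\tau)F(\cdot,\tau)\,d\tau$ into $H^{(s+1-j)/3}_{t}(\R)$ then follows from dominated convergence applied to the explicit Fourier representations. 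The main obstacle is the low-frequency high-modulation analysis of the piece $I$ — the weight $\la\lambda\ra^{2(s+1-j)/3-2\sigma}$ can grow in $\lambda$ precisely when $s$ is large and $\sigma$ is close to $\tfrac12$, and this is exactly why the auxiliary space $Z^{\a,\b}_{s,\sigma-1}$ must be introduced alongside $X^{\a,\b}_{s,\sigma-1}$.
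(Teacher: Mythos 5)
Your overall strategy is the standard one — the paper in fact omits this proof and points to \cite{Hol06} (Lemma 5.6) and \cite{CK02}, whose low/high modulation decomposition of the Duhamel term you reproduce correctly, and your treatment of $D_{\text{low}}$, of the piece $II$, of the reduction of $I$ to a uniform-in-$x$ weighted $L^2_\lambda$ bound, and of the region $|\xi|\le 1$ is sound. But the region-by-region bookkeeping for the piece $I$ has two genuine errors, both in places where you rely on the $X$-norm alone although the $Z$-norm is indispensable. First, in the transition region $1\le|\xi|\ll|\lambda|^{1/3}$ your claimed bound $O(|\lambda|^{1-2\sigma})$ only holds when $2j-2s+1\ge 0$; if $s>j+\frac12$ the integral $\int_{1\le|\xi|\ll|\lambda|^{1/3}}\xi^{2j}\la\xi\ra^{-2s}\,d\xi$ is $O(1)$ and the weight factor becomes $\la\lambda\ra^{\frac{2(s+1-j)}{3}-2\sigma}$, which is unbounded whenever $s>3\sigma-1+j$ (e.g. $s=3$, $\sigma$ close to $\frac12$). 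This is exactly the regime the remark following the lemma singles out as the one where $\|F\|_{Z^{\a,\b}_{s,\sigma-1}}$ cannot be dropped, so attributing the need for the $Z$-norm solely to $|\xi|\le1$ misses the point: you must Cauchy–Schwarz against the $Z$-weight on the whole region where $\la L\ra\sim\la\lambda\ra$, i.e. $|\xi|\ll|\lambda|^{1/3}$; the extra factor $\int_{|\xi|\ls|\lambda|^{1/3}}\xi^{2j}\,d\xi\ls|\lambda|^{\frac{2j+1}{3}}$ is then exactly compensated, since $\frac{2(s+1-j)}{3}-2\big(\frac{s}{3}+\frac12-\sigma\big)-2\sigma+\frac{2j+1}{3}=0$.

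Second, in the far region $|\xi|\gg|\lambda|^{1/3}$ with $j=1$, your convergence condition $\sigma>\frac{3-2s}{6}$ tends to $\frac34$ as $s\to-\frac34$, so for $s$ near $-\frac34$ and $\sigma$ near $\frac12$ your Cauchy–Schwarz against the $X$-weight simply diverges. Proposing to take $\sigma_0(s)$ "up to $\frac34$" changes the statement: the lemma asserts the bound for every $\sigma\in(\frac12,1]$, and in the paper's application $\sigma$ must be taken close to $\frac12$ (it is capped by the other linear and bilinear lemmas), so a lower bound $\sigma>\frac34$ is not available. The repair is again the $Z$-norm, whose weight carries no negative power of $\la\xi\ra$: on $\la L\ra\sim|\xi|^3$ one gets $\la\lambda\ra^{-2(\frac{s}{3}+\frac12-\sigma)}\int_{|\xi|\gs\max(1,|\lambda|^{1/3})}\xi^{2-6\sigma}\,d\xi\ls\la\lambda\ra^{-\frac{2s}{3}}$, which cancels the prefactor $\la\lambda\ra^{\frac{2s}{3}}$ for $j=1$ and needs only $\sigma>\frac12$. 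With these two regions re-done using the $Z$-norm, your argument closes for all $s\in\m{R}$ and $\sigma\in(\frac12,1]$ as stated.
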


Similar results of Lemma \ref{Lemma, space trace for Duhamel} can be found in (\hspace{-0.03in}\cite{Hol06}, Lemma 5.6) or (\hspace{-0.03in}\cite{CK02}, Lemma 5.4 and 5.5), so we omit its proof. But we would like to emphasize that if $ j=0 $ and $-1\leq s\leq 3\sigma-1$, then the term $ \|F\|_{Z^{\a,\b}_{s,\sigma-1}} $ in (\ref{xtrace-linD}) can be dropped, but this term is necessary if $ j=0 $ and $ s>3\sigma-1 $. Similarly, if $ j=1 $ and $0\leq s\leq 3\sigma$, then the term $ \|F\|_{Z^{\a,\b}_{s,\sigma-1}} $ is not needed in (\ref{xtrace-linD}), but this term has to be present if $ j=1 $ and $ s>3\sigma $.

\section{Linear problems}
\label{Sec, lin pb}

In this section, we will study the linear IBVPs (\ref{lin-u}) and (\ref{lin-v}),  present the proofs of Proposition  \ref{Prop, pos-kdv} and Proposition \ref{Prop, neg-kdv}.

\subsection{KdV flow traveling to the right}
\label{Subsec, pos KdV, idea}

Consideration is first given to the linear IBVP (\ref{lin-u}) which is copied below for convenience of readers.
\begin{equation} \label{k-1}
\begin{cases}
u_t+u_{xxx}+\b u_{x}=f,\\
u(x,0)=p(x),\\
u(0,t)=a(t),
\end{cases}   \qquad x,t>0.
\end{equation}
This system describes a linear dispersive wave traveling  to the  right, where $p\in H^{s}(\m{R}^+)$, $a\in H^{\frac{s+1}{3}}(\m{R}^+)$,
and $f$ is a function defined on $\m{R}^2$.  The function space that $f$ lives in will be specified later. In addition, the data $p$ and $a$ are assumed to be compatible as in Definition \ref{Def, compatibility-lin}, that is $p(0)=a(0)$ if $s>\frac12$. The solution $u$ of (\ref{k-1}) can be written as $ u= u_1+u_2 $
with
\be\label{soln to pos-kdv-initial}
u_1=\Phi^{1,\b}_{R}(f,p):=W^{1,\b}_{R}(E_{s}p)+\int_{0}^{t}W^{1,\b}_{R}(t-\tau)f(\cdot,\tau)\,d\tau, \ee
where $E_{s}p$ being the extension of $p$ as \eqref{ext for p}, and 
$ u_2   = W^{1, \beta}_{bdr}(a - q)$, 
where 
$ q(t):= u_1(0,t) $ and $W^{1, \beta} _{bdr} (a)$ denotes the solution operator (also called the boundary integral operator) associated to the following linear IBVP (\ref{pos-kdv-bdr}).
\be\label{pos-kdv-bdr}\begin{cases}
z_t+z_{xxx}+\b z_{x}=0, \\
z(x,0)=0,\\
z(0,t)=a(t),
\end{cases} \qquad x,t>0.
\ee

\begin{lem}\label{Lemma, pos-kdv-initial}
Let $s\in(-\frac34,3]$, $0<\b\leq 1$ and $\frac12<\sigma\leq 1$. For any $p\in H^{s}(\m{R}^+)$ and $f\in X^{1,\b}_{s,\sigma-1}\bigcap Z^{1,\b}_{s,\sigma-1}$, the function $\wt{u_1} := \eta(t) \Phi^{1,\b}_{R}(f,p)$ belongs to 
$ Y_{s,\frac12,\sigma}^{1,\b}\bigcap C_{x}^{j}\big(\m{R}; H_{t}^{\frac{s+1-j}{3}}(\m{R})\big)$ for $j=0,1$. In addition, the following estimates hold with some constant $C=C(s,\sigma)$.
\begin{align}
\big\| \wt{u_1}\big\|_{Y^{1,\b}_{s,\frac12,\sigma}} &\leq  C\big(\|f\|_{X^{1,\b}_{s,\sigma-1}}+\|p\|_{H^{s}(\m{R}^+)}\big),\label{est for pos-kdv-initial} \\
\sup_{x\in\m{R}}\big\|\p_x^{j}\wt{u_1}\big\|_{H^{\frac{s+1-j}{3}}_t(\m{R})} &\leq  C\Big(\|f\|_{X^{1,\b}_{s,\sigma-1}}+\|f\|_{Z^{1,\b}_{s,\sigma-1}}+\|p\|_{H^{s}(\m{R}^+)}\Big),\quad j=0,1. \label{trace est for pos-kdv-initial}
\end{align}
\end{lem}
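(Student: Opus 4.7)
The plan is to decompose $\wt{u_1}$ into the free-evolution piece and the Duhamel piece, and then apply the linear estimates already stated in Section \ref{Sec, pre} to each piece separately. Write
\[
\wt{u_1} = \underbrace{\eta(t)\, W^{1,\b}_{R}(t)\, E_{s}p}_{=: \mathrm{I}} \; + \; \underbrace{\eta(t)\!\int_{0}^{t}\! W^{1,\b}_{R}(t-\tau)\, f(\cdot,\tau)\,d\tau}_{=: \mathrm{II}}.
\]
The extension operator $E_{s}$ satisfies $\|E_{s}p\|_{H^{s}(\m{R})}\leq C\|p\|_{H^{s}(\m{R}^+)}$ by (\ref{ext for p}), so all whole-line estimates applied to $E_{s}p$ immediately give half-line bounds for $p$.

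For the Bourgain-type bound (\ref{est for pos-kdv-initial}): applying (\ref{est for free KdV}) of Lemma \ref{Lemma, lin est} to $\mathrm{I}$ and (\ref{est for Duhamel term}) of the same lemma to $\mathrm{II}$ yields
\[
\|\mathrm{I}\|_{Y^{1,\b}_{s,\frac12,\sigma}}\ls \|E_{s}p\|_{H^{s}(\m{R})}\ls \|p\|_{H^{s}(\m{R}^+)},\qquad \|\mathrm{II}\|_{Y^{1,\b}_{s,\frac12,\sigma}}\ls \|f\|_{X^{1,\b}_{s,\sigma-1}},
\]
and summing these produces (\ref{est for pos-kdv-initial}). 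Note that no $Z$-norm is needed here, since Lemma \ref{Lemma, lin est} only requires control of $f$ in $X^{1,\b}_{s,\sigma-1}$.

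For the spatial trace bound (\ref{trace est for pos-kdv-initial}) and the claim $\wt{u_1}\in C_{x}^{j}\big(\m{R};H_{t}^{\frac{s+1-j}{3}}(\m{R})\big)$ with $j=0,1$: Lemma \ref{Lemma, space trace for free kdv} gives
\[
\sup_{x\in\m{R}}\bigl\|\p_{x}^{j}\mathrm{I}\bigr\|_{H^{\frac{s+1-j}{3}}_{t}(\m{R})} \ls \|E_{s}p\|_{H^{s}(\m{R})}\ls \|p\|_{H^{s}(\m{R}^+)},
\]
together with the stated continuity in $x$, while Lemma \ref{Lemma, space trace for Duhamel} gives
\[
\sup_{x\in\m{R}}\bigl\|\p_{x}^{j}\mathrm{II}\bigr\|_{H^{\frac{s+1-j}{3}}_{t}(\m{R})} \ls \|f\|_{X^{1,\b}_{s,\sigma-1}}+\|f\|_{Z^{1,\b}_{s,\sigma-1}},
\]
again with the continuity in $x$. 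Adding the two estimates yields (\ref{trace est for pos-kdv-initial}); the appearance of the $Z$-norm traces back to Lemma \ref{Lemma, space trace for Duhamel}, which is why it is absent from (\ref{est for pos-kdv-initial}) but present in (\ref{trace est for pos-kdv-initial}).

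Since all ingredients are already established, there is no real obstacle: the lemma is essentially a bookkeeping step, combining the free-evolution estimates and the Duhamel estimates on the whole line together with the boundedness of $E_{s}$ from $H^{s}(\m{R}^+)$ to $H^{s}(\m{R})$. The only point that deserves mild attention is matching the function-space hypotheses, namely checking that the range $\sigma\in(\tfrac12,1]$ and $s\in(-\tfrac34,3]$ lies in the admissible ranges of Lemmas \ref{Lemma, lin est}, \ref{Lemma, space trace for free kdv} and \ref{Lemma, space trace for Duhamel}; this is immediate from their statements.
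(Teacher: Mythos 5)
Your proof is correct and follows essentially the same route as the paper: the authors' proof is exactly this one-line application of (\ref{ext for p}) together with Lemma \ref{Lemma, lin est} for (\ref{est for pos-kdv-initial}), and of (\ref{ext for p}), Lemma \ref{Lemma, space trace for free kdv} and Lemma \ref{Lemma, space trace for Duhamel} for (\ref{trace est for pos-kdv-initial}). Your explicit splitting of $\wt{u_1}$ into the free-evolution and Duhamel pieces and your remark on where the $Z$-norm enters are just the same bookkeeping spelled out in slightly more detail.
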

\begin{proof}
(\ref{est for pos-kdv-initial}) follows from (\ref{ext for p}) and Lemma \ref{Lemma, lin est}. (\ref{trace est for pos-kdv-initial}) follows from   (\ref{ext for p}), Lemma \ref{Lemma, space trace for free kdv} and Lemma \ref{Lemma, space trace for Duhamel}.
\end{proof}

Next, we will provide an explicit formula (in integral form) for the boundary integral operator $W^{1, \beta} _{bdr} (a)$. For any $\b>0$, let  $P_{\b}$  be as  given in (\ref{def of P_b}) and  define the function $R_\b:\m{R}\to\m{C}$ by
\be\label{def of R_b}
R_{\b}(\mu) := \frac{\sqrt{3\mu^2-4\b}}{2} =
\left\{\begin{array}{lll}
i\dfrac{\sqrt{4\b-3\mu^2}}{2} & \mbox{if} & \mu^2\leq \frac{4}{3}\b, \vspace{0.1in}\\
\dfrac{\sqrt{3\mu^2-4\b}}{2} &\mbox{if}& \mu^2>\frac{4}{3}\b. \end{array}\right. \ee

\begin{lem}\label{Lemma, formula for pos-kdv-bdr} The boundary integral operator  $W^{1,\b}_{bdr}$ associated to (\ref{pos-kdv-bdr}) has the following explicit  integral  representation
\be\label{formula for pos-kdv-bdr}
W^{1,\b}_{bdr}(a)(x,t)=\frac{1}{\pi}Re\int_{\sqrt{\b}}^{\infty}e^{iP_{\b}(\mu)t}e^{-i\mu x/2}e^{-R_\b(\mu)x}\wh{a^*}[P_{\b}(\mu)]\,dP_{\b}(\mu),\quad x,t\geq 0,\ee
where  $a^*$ is the zero extension of $a$ from $\R^+ $ to $\R$.
\end{lem}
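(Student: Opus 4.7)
The plan is to derive formula \eqref{formula for pos-kdv-bdr} by the classical Laplace-in-time method, followed by a change of variables to the $\mu$-parameter along the cubic $\tau=P_\beta(\mu)$. Applying the Laplace transform $\tilde{z}(x,s):=\int_0^\infty z(x,t)e^{-st}\,dt$ (for $\operatorname{Re} s>0$) to \eqref{pos-kdv-bdr} reduces the IBVP to $\tilde{z}_{xxx}+\beta\tilde{z}_x+s\tilde{z}=0$ on $x>0$ with $\tilde{z}(0,s)=\tilde{a}(s)$ and the implicit boundedness requirement $\tilde{z}(\cdot,s)\in L^\infty(\R^+)$. The characteristic polynomial $r^3+\beta r+s$ has, for every $s$ in the open right half-plane, exactly one root $\lambda(s)$ with negative real part: this is clear for $s>0$ real (two roots have positive real parts by Vieta), and the count is preserved throughout the right half-plane since a crossing of the imaginary axis at $r=i\mu$ would force $s=iP_\beta(\mu)$ to be purely imaginary. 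Hence $\tilde{z}(x,s)=\tilde{a}(s)e^{\lambda(s)x}$.

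Inverting along $\operatorname{Re} s=c>0$ and pushing $c\to 0^+$, together with the identity $\tilde{a}(i\tau)=\widehat{a^*}(\tau)$, yields
\[
z(x,t)=\frac{1}{2\pi}\int_\R e^{i\tau t}\,e^{\lambda(i\tau)x}\,\widehat{a^*}(\tau)\,d\tau,
\]
where $\lambda(i\tau)$ denotes the appropriate boundary value from the right half-plane. Next, parameterize $\tau=P_\beta(\mu)=\mu^3-\beta\mu$: since $P_\beta'(\mu)=3\mu^2-\beta\geq 2\beta>0$ on $|\mu|\geq\sqrt{\beta}$ with $P_\beta(\pm\sqrt{\beta})=0$, the restrictions $P_\beta:[\sqrt{\beta},\infty)\to[0,\infty)$ and $P_\beta:(-\infty,-\sqrt{\beta}]\to(-\infty,0]$ are smooth increasing bijections. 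The factorization $r^3+\beta r+iP_\beta(\mu)=(r-i\mu)(r^2+i\mu r+\beta-\mu^2)$ identifies the three characteristic roots at $s=iP_\beta(\mu)$ as $r=i\mu$ and $r=-i\mu/2\pm R_\beta(\mu)$, and the appropriate branch is $\lambda(iP_\beta(\mu))=-i\mu/2-R_\beta(\mu)$ for $\mu\in[\sqrt{\beta},\infty)$: this is manifestly the unique root with negative real part when $\mu>\sqrt{4\beta/3}$ (where $R_\beta>0$ is real), while on the sub-range $[\sqrt{\beta},\sqrt{4\beta/3}]$ it is fixed by analytic continuation of this decaying branch through the discriminant locus at $\mu_*=\sqrt{4\beta/3}$, at which two roots coalesce.

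Splitting the $\tau$-integral at $\tau=0$ and substituting $\mu\in[\sqrt{\beta},\infty)$ on the positive-$\tau$ piece and $\nu=-\mu\in[\sqrt{\beta},\infty)$ on the negative-$\tau$ piece, the symmetries $\lambda(-i\tau)=\overline{\lambda(i\tau)}$ (from the invariance of the cubic under complex conjugation, i.e.\ $\bar r$ solves $r^3+\beta r+\bar s=0$ whenever $r$ solves $r^3+\beta r+s=0$) and $\widehat{a^*}(-\tau)=\overline{\widehat{a^*}(\tau)}$ (valid because $a$ is real-valued) identify the negative-$\tau$ piece as the complex conjugate of the positive-$\tau$ one. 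Summing the two pieces replaces the prefactor $1/(2\pi)$ by $1/\pi$ in front of a real part, and the resulting $\mu$-integral with $d\tau=dP_\beta(\mu)=P_\beta'(\mu)\,d\mu$ is exactly \eqref{formula for pos-kdv-bdr}.

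The main obstacle is the root-selection step on the sub-range $\mu\in[\sqrt{\beta},\sqrt{4\beta/3}]$, where all three candidates $i\mu$, $-i\mu/2\pm R_\beta(\mu)$ lie on the imaginary axis and the naive "decaying root" criterion is degenerate. Pinning down the correct branch requires a Puiseux-type local analysis of $\lambda(s)$ near the discriminant point $s_*=iP_\beta(\sqrt{4\beta/3})$ (where two characteristic roots coalesce), verifying that the analytic continuation of the large-$\mu$ decaying branch through this point produces precisely $-i\mu/2-R_\beta(\mu)$ on the interval in question. A useful independent consistency check is that \eqref{formula for pos-kdv-bdr} satisfies both the boundary condition $z(0,t)=a(t)$ (which follows from Fourier inversion evaluated at $x=0$) and the zero initial condition $z(x,0)=0$ for $x>0$ (the Paley--Wiener manifestation of the causality of $a^*$).
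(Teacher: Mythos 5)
Your overall route --- Laplace transform in $t$, selection of the unique characteristic root $\lambda(s)$ of $r^3+\beta r+s=0$ with negative real part for $\operatorname{Re}s>0$, inversion along $\operatorname{Re}s=c\to0^+$, conjugate symmetry to produce the $\frac1\pi\operatorname{Re}$ structure, and the substitution $\tau=P_\beta(\mu)$ --- is exactly the method the paper has in mind (it omits the proof and refers to the Laplace-transform construction of \cite{BSZ02}), and most steps are sound: the count of decaying roots in the right half-plane, the factorization $r^3+\beta r+iP_\beta(\mu)=(r-i\mu)\big(r^2+i\mu r+\beta-\mu^2\big)$, and the identification $\lambda(iP_\beta(\mu))=-i\mu/2-R_\beta(\mu)$ for $\mu>\sqrt{4\beta/3}$ are all correct.

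The genuine gap is the one you flag yourself and then leave open: the branch selection on the window $\mu\in[\sqrt\beta,\sqrt{4\beta/3}]$, where all three characteristic roots are purely imaginary. This is not a routine ``Puiseux formality,'' for two reasons. First, ``analytic continuation of the decaying branch through the discriminant point'' is not the selection principle your derivation needs: what enters the inverted integral is the non-tangential boundary value $\lim_{\epsilon\to0^+}\lambda(\epsilon+i\tau)$, and since the branch point $s_*=iP_\beta(\sqrt{4\beta/3})$ lies on the contour, continuation along the imaginary axis through $s_*$ is ambiguous (it depends on which side of the branch point you pass) and need not coincide with that boundary value. Second, the boundary value can be computed directly and it does not land where you place it: from $r^3+\beta r+s=0$ one has $dr/ds=-1/(3r^2+\beta)$, so a purely imaginary root $i\nu$ acquires real part $\approx\epsilon/(3\nu^2-\beta)$ when $s=i\tau+\epsilon$; the root moving into the left half-plane is therefore the one with $|\nu|<\sqrt{\beta/3}$, which on the window is $-i\mu/2+i\sqrt{4\beta-3\mu^2}/2$, i.e.\ $-i\mu/2+R_\beta(\mu)$ with the branch of $R_\beta$ fixed in (\ref{def of R_b}), not $-i\mu/2-R_\beta(\mu)$. (A sanity check at $\tau\to0^+$ with $\beta=1$: for $s=\epsilon>0$ the decaying root is $\approx-\epsilon\to0$, whereas $-i\mu/2-R_\beta(\mu)\to-i$ as $\mu\to1^+$.) So the assertion that the continuation ``produces precisely $-i\mu/2-R_\beta(\mu)$'' is exactly the nontrivial claim to be proved, it is in tension with the limit-from-$\operatorname{Re}s>0$ criterion your own inversion uses, and your proposed consistency check ($z(x,0)=0$ via Paley--Wiener) is only valid for the kernel built from the genuine boundary value of the decaying branch, so it cannot be invoked to certify the other choice. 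To complete the proof you must either carry out the branch analysis on the window honestly and reconcile the resulting sign with the branch of $\sqrt{3\mu^2-4\beta}$ intended in (\ref{def of R_b}), or else verify directly that the kernel written in \eqref{formula for pos-kdv-bdr} satisfies all three requirements of \eqref{pos-kdv-bdr} (the equation, the boundary datum, and the zero initial datum) on the low-frequency range; as written, neither is done.
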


The proof of this lemma is standard based on the Laplace transform method in \cite{BSZ02} and is therefore omitted. 

By taking advantage of formula (\ref{formula for pos-kdv-bdr}), we will extend $W^{1,\b}_{bdr}(a)$ to the whole plane $\m{R}^{2}$ with some desired properties. According to (\ref{def of R_b}), when $\sqrt{\b}\leq \mu\leq \sqrt{\frac{4}{3}\b}$, $R_\b(\mu)$ is a pure imaginary number, so it will not cause much trouble to handle $W^{1,\b}_{bdr}(a)$ even for $x,t<0$. But when $\mu>\sqrt{\frac{4}{3}\b}$, if we do not modify the definition of $W^{1,\b}_{bdr}(a)$ for $x<0$, then $e^{-R_\b(\mu)x}\to +\infty$ as $R_\b(\mu)x\to -\infty$, which makes it difficult to control $W^{1,\b}_{bdr}(a)$. 

To resolve this issue, our first strategy is to multiply a cut-off function (see (\ref{K})) to eliminate the part where $R_\b(\mu)x\leq -1$, this idea comes from \cite{ET16}. Let $\psi \in C^{\infty}(\R)$ such that $ 0\leq \psi\leq 1 $ and
\be\label{def of psi}
\psi(x)=\left\{\begin{array}{lll}
1 & \text{if} & x\geq 0,\\
0& \text{if} & x\leq -1.
\end{array}
\right.\ee
Define $\Phi^{1,\b}_{bdr}(a):\m{R}\times\m{R}\to\m{R}$ by
\begin{align}
\Phi^{1,\b}_{bdr}(a)(x,t)=\frac{1}{\pi}Re\int_{\sqrt{\b}}^{\infty}e^{iP_{\b}(\mu)t}e^{-i\mu x/2}K_{\b}(x,\mu)\wh{a^*}[P_{\b}(\mu)]\,dP_{\b}(\mu),\label{pos-bdr-op}
\end{align}
where 
\be\label{K}
K_{\b}(x,\mu):=\left\{\begin{array}{lll}
e^{-R_{\b}(\mu)x} & \mbox{if} & \sqrt{\b}\leq \mu\leq \sqrt{\frac{4}{3}\b},\\
e^{-R_{\b}(\mu)x}\psi\big(R_{\b}(\mu)x\big) & \mbox{if} & \mu>\sqrt{\frac{4}{3}\b}.
\end{array}\right.\ee
It is easily seen that $\Phi^{1,\b}_{bdr}(a)$ is well-defined on $\m{R}\times\m{R}$. In addition, when $x\geq 0$, $K_{\b}(x,\mu)=e^{-R_{\b}(\mu)x}$, so $\Phi^{1,\b}_{bdr}(a)$ matches $W^{1,\b}_{bdr}(a)$ on $\m{R}^{+}_0\times \m{R}^{+}_0$. Hence, $\Phi^{1,\b}_{bdr}(a)$ is an extension of $W^{1,\b}_{bdr}(a)$ to $\m{R}\times\m{R}$. Moreover, if $s>-\frac12$,  it enjoys some good properties after being localised in time. 

\begin{lem}\label{Lemma, pos-kdv-bdr}
Let $s\in(-\frac12,3]$, $0<\b\leq 1$ and $\frac12<\sigma\leq \min\big\{1,\frac{2s+7}{12}\big\}$. For any $a\in H^{\frac{s+1}{3}}(\m{R}^+)$ that is compatible with (\ref{pos-kdv-bdr}), the function 
\[\wt{u_2} := \eta(t) \Phi^{1,\b}_{bdr}(a)\]
equals $W^{1,\b}_{bdr}(a)$ on $\m{R}^+_0\times [0,1]$ and belongs to $Y_{s,\frac12,\sigma}^{1,\b}\bigcap C_{x}^j\big(\m{R}; H_{t}^{\frac{s+1-j}{3}}(\m{R})\big)$ for $j=0,1$. In addition, the following estimates hold with some constant $C=C(s,\sigma)$. 
\begin{align}
\big\| \wt{u_2}\big\|_{Y^{1,\b}_{s,\frac12,\sigma}} &\leq  C\|a\|_{H^{\frac{s+1}{3}}(\m{R}^+)},\label{est for pos-kdv-bdr} \\
\sup_{x\in\m{R}}\big\|\p^j_x \wt{u_2}\big\|_{H^{\frac{s+1-j}{3}}_t(\m{R})} &\leq  C\|a\|_{H^{\frac{s+1}{3}}(\m{R}^+)}, \quad j=0,1. \label{trace est for pos-kdv-bdr}
\end{align}
\end{lem}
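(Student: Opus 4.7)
I split $\Phi^{1,\b}_{bdr}(a)$ using the dichotomy in the definition of $K_\b$: a \emph{low-frequency} piece on $\sqrt{\b}\leq\mu\leq\sqrt{4\b/3}$, where $R_\b(\mu)$ is purely imaginary and $|K_\b(x,\mu)|\equiv 1$, and a \emph{high-frequency} piece for $\mu>\sqrt{4\b/3}$. The low-frequency piece is a compactly supported superposition of bounded plane waves whose time-frequency $\tau=P_\b(\mu)$ lies in a bounded interval, and every estimate in the lemma for this piece follows from direct $L^2$ computations combined with Lemma \ref{Lemma, 0 ext est}. All remaining work concerns the high-frequency piece, and Lemma \ref{Lemma, pos-kdv-bdr} will follow once the desired estimates are verified for that contribution.

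\textbf{Trace estimate.} To prove \eqref{trace est for pos-kdv-bdr}, I fix $x\in\R$ and compute the $t$-Fourier transform of $\p_x^j$ applied to the high-frequency part. Differentiation in $x$ produces the factor $(-i\mu/2)^j$ from the carrier wave plus a $\p_x K_\b$-term of magnitude $O(R_\b)$, both bounded by $C\la\mu\ra^j$ uniformly in $x$ because the cut-off $\psi(R_\b(\mu)x)$ controls $K_\b$ when $x<0$. After substituting $\tau'=P_\b(\mu)$, the $H^{(s+1-j)/3}_t$-norm is reduced to $\|M_j(\mu)\,\wh{a^*}\bigl(P_\b(\mu)\bigr)\|_{L^2(\R)}$ with a multiplier satisfying $|M_j(\mu)|\lesssim |P_\b'(\mu)|^{1/2}\la\mu\ra^{s+1}$, since $\la P_\b(\mu)\ra^{(s+1-j)/3}\sim\la\mu\ra^{s+1-j}$ combines with the $\la\mu\ra^j$ from the derivative to produce $\la\mu\ra^{s+1}$. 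Corollary \ref{Cor, mul est} then yields the bound, the compatibility hypothesis ensuring that $a^*\in H^{(s+1)/3}(\R)$ throughout the range $s\in(-1/2,3]$.

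\textbf{Space-time estimate.} For \eqref{est for pos-kdv-bdr} I compute the full space-time Fourier transform of $\wt{u_2}=\eta(t)\,\Phi^{1,\b}_{bdr}(a)$. On the $t$-side it produces $\wh\eta(\tau-P_\b(\mu))$, and the change of variables $y=R_\b(\mu)x$ turns the $x$-Fourier transform of $e^{-i\mu x/2}e^{-R_\b x}\psi(R_\b x)$ into $R_\b(\mu)^{-1}\Psi\bigl((\xi+\mu/2)/R_\b(\mu)\bigr)$, where $\Psi:=\wh{e^{-y}\psi(y)}$ is Schwartz because $e^{-y}\psi(y)\in\mcal{S}(\R)$. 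Thus $\mcal{F}\wt{u_2}$ concentrates along $\tau\approx P_\b(\mu)$, $\xi\approx -\mu/2$, on which the dispersive resolvent satisfies $\tau-\phi^{1,\b}(\xi)=P_\b(\mu)-P_\b(-\mu/2)=\tfrac{9}{8}\mu^3-\tfrac{3}{2}\b\mu\sim\mu^3$. Using this Fourier portrait, I bound the three summands of the $Y^{1,\b}_{s,1/2,\sigma}$-norm separately: the $X^{1,\b}_{s,1/2}$ norm via Plancherel, the substitution $\zeta=(\xi+\mu/2)/R_\b$, and Corollary \ref{Cor, mul est}; the $\sup_t\|\cdot\|_{H^s_x}$-norm by a direct $x$-frequency estimate at fixed $t$; and the $\Lambda^{1,\b}_{s,\sigma}$-norm, which is the most delicate summand.

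\textbf{Main obstacle.} The $\Lambda^{1,\b}_{s,\sigma}$-estimate is the binding step. On its restriction region $|\xi|\leq\log(3+|\tau|)$ with $\tau\sim\mu^3$, the Schwartz kernel $\Psi((\xi+\mu/2)/R_\b(\mu))$ is evaluated at argument $\approx 1/\sqrt{3}$, so it is of order one and provides no decay—the only favorable factor is the logarithmic $\xi$-slab, which yields $\int_{|\xi|\leq\log|\tau|}\la\xi\ra^{2s}\,d\xi\sim(\log|\tau|)^{2s+1}$, sharp only for $s>-\tfrac12$. Balancing this gain against the inflated modulation weight $\la\tau-\phi^{1,\b}(\xi)\ra^{2\sigma}\sim\mu^{6\sigma}$ and the $|P_\b'(\mu)|^{1/2}\la\mu\ra^{s+1}$-budget in Corollary \ref{Cor, mul est} extracts the stated upper bound on $\sigma$; the lemma's constraint $\sigma\leq(2s+7)/12$ is compatible with $\sigma>\tfrac12$ precisely when $s>-\tfrac12$, which is exactly the regularity threshold in the lemma.
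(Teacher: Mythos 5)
Your overall architecture (split at $\mu=\sqrt{4\b/3}$, reduction to an operator acting on $h$ with $\wh h(\mu)=(P_\b)'(\mu)\wh{a^*}[P_\b(\mu)]$ via Corollary \ref{Cor, mul est}, Schwartz-kernel analysis of the $x$-transform $R_\b(\mu)^{-1}\wh k\big((\xi+\mu/2)/R_\b(\mu)\big)$) is the same as the paper's, and your trace estimate and your $X^{1,\b}_{s,\frac12}$ estimate follow the paper's lines. But there is a genuine gap precisely where the hypothesis $s>-\frac12$ matters. First, you dispose of the $\sup_{t}\|\cdot\|_{H^s_x}$ component of the $Y$-norm with ``a direct $x$-frequency estimate at fixed $t$''. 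This is in fact the hardest estimate in the lemma: at fixed $t$ the high-frequency part is not a Fourier multiplier in $x$, since each $\mu$ contributes a bump of width $\sim R_\b(\mu)\sim\mu$ centered at $\xi\approx-\mu/2$, and bumps from comparable values of $\mu$ overlap heavily. The paper controls this by duality, the change of variables $\xi=R_\b(\mu)y$, and a Cauchy--Schwarz argument in which the weight $\big|y-\tfrac{1}{\sqrt3}\big|^{s-\frac12}$ appears; its integrability against $\wh k$ near $y=\tfrac{1}{\sqrt3}$ is exactly where $s>-\frac12$ enters, and Remark \ref{Re, -1/2nec} shows this estimate genuinely fails for $s<-\frac12$. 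None of this appears in your proposal. Second, your stated mechanism for the threshold is wrong: you locate $s>-\frac12$ in the $\Lambda^{1,\b}_{s,\sigma}$ estimate through $\int_{|\xi|\le\log(3+|\tau|)}\la\xi\ra^{2s}\,d\xi\sim(\log|\tau|)^{2s+1}$ being ``sharp only for $s>-\frac12$''. For $s\le-\frac12$ that integral is bounded (or logarithmic), i.e.\ smaller, so it cannot be the obstruction; it is sub-polynomial in any case and is simply absorbed into a factor $\la\tau\ra^{\eps_1}$. In the $\Lambda$-estimate the binding balance is between the modulation weight $\la\tau-\phi^{1,\b}(\xi)\ra^{\sigma}\sim\mu^{3\sigma}$ and the multiplier budget $|(P_\b)'(\mu)|^{1/2}\la\mu\ra^{s+1}$ of Corollary \ref{Cor, mul est}, which yields $\sigma\lesssim\frac{s+2}{3}$ (the paper's $\sigma\le\frac{2s+7}{12}$ after a specific choice of $\eps_1$); the $s>-\frac12$ restriction itself is carried by the time-trace estimate just described, and also by the key bound (\ref{T4, X norm-key est}) inside the $X$-norm estimate, not by the logarithmic slab.

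Two smaller repairs are also needed. Corollary \ref{Cor, mul est} is not available at $s=\frac12$ (Lemma \ref{Lemma, 0 ext est} excludes $\frac{s+1}{3}=\frac12$), so you cannot invoke it ``throughout $(-\frac12,3]$''; the paper proves the estimates for $s\ne\frac12$ and interpolates between $s=0$ and $s=3$. And before using the kernel formula with the factor $R_\b(\mu)^{-1}$ you must split off $\mu\in[\sqrt{4\b/3},\,4]$, where $R_\b(\mu)$ can be arbitrarily small, as a separate compact piece (as the paper does with $\mcal{T}_{\b,3}$), so that $R_\b$ has a positive lower bound on the genuinely high-frequency range where your Fourier portrait is used.
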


Although the extension $\Phi_{bdr}^{1,\b}$ has a very simple expression, the estimate (\ref{est for pos-kdv-bdr}) fails for $s<-\frac12$, see Remark \ref{Re, -1/2nec}. So in order to deal with the case when $-\frac34<s\leq -\frac12$, we have to use another extension, written as $\Psi_{bdr}^{1,\b}(a)$, for $W_{bdr}^{1,\b}(a)$. In fact, we will combine the extension $\Phi_{bdr}^{1,\b}(a)$ and the construction in \cite{BSZ06} to define $\Psi_{bdr}^{1,\b}(a)$. Since the construction in \cite{BSZ06} is too technical to be written as an explicit expression, we will first state the conclusion and then provide more details later in the proof.
\begin{lem}\label{Lemma, pos-kdv-bdr, low-reg}
Let $-\frac34<s\leq -\frac12$, $0<\b\leq 1$ and $a\in H^{\frac{s+1}{3}}(\m{R}^+)$. Then there  exist $\sigma_1=\sigma_1(s)>\frac12$ and an extension $\Psi_{bdr}^{1,\b}(a)$ of $W_{bdr}^{1,\b}(a)$ such that for any $\sigma\in\big(\frac12,\sigma_1\big]$, the function 
\[\wt{u_2}:=\eta(t)\Psi_{bdr}^{1,\b}(a) \]
equals $W_{bdr}^{1,\b}(a)$ on $\m{R}^+_0\times[0,1]$ and belongs to $Y_{s,\frac12,\sigma}^{1,\b}\bigcap C_{x}^j\big(\m{R}^+_0; H_{t}^{\frac{s+1-j}{3}}(\m{R})\big)$ for $j=0,1$. In addition, the following estimates hold with some constant $C=C(s,\sigma)$. 
\begin{align}
\big\| \wt{u_2}\big\|_{Y^{1,\b}_{s,\frac12,\sigma}} &\leq  C\|a\|_{H^{\frac{s+1}{3}}(\m{R}^+)},\label{est for pos-kdv-bdr, low-reg} \\
\sup_{x\geq 0}\big\|\p^j_x \wt{u_2}\big\|_{H^{\frac{s+1-j}{3}}_t(\m{R})} &\leq  C\|a\|_{H^{\frac{s+1}{3}}(\m{R}^+)}, \quad j=0,1. \label{trace est for pos-kdv-bdr, low-reg}
\end{align}
\end{lem}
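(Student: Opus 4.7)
The plan is to decompose $a$ by time-frequency and handle low and high frequencies with separate constructions. The restriction $s>-\tfrac12$ in Lemma \ref{Lemma, pos-kdv-bdr} comes from the region $|\tau|\ls 1$: near the critical points $\mu=\pm\sqrt{\b/3}$ of $P_\b$, the change of variable $\tau=P_\b(\mu)$ in \eqref{pos-bdr-op} produces a singular Jacobian $|P'_\b(\mu)|^{-1/2}$. Fix $\chi\in C_0^\infty(\R)$ with $\chi\equiv 1$ on $[-1,1]$ and supported in $[-2,2]$, and split $a^*=a_L^*+a_H^*$ with $\wh{a_L^*}^t(\tau)=\chi(\tau)\wh{a^*}^t(\tau)$.

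Since $\wh{a_H^*}^t$ vanishes on a neighbourhood of $\tau=0$, the support of the integrand in \eqref{pos-bdr-op} stays uniformly away from the critical points of $P_\b$, and the argument of Lemma \ref{Lemma, pos-kdv-bdr} applies verbatim to $a_H^*$ for every $s>-\tfrac34$, providing the bounds \eqref{est for pos-kdv-bdr, low-reg}--\eqref{trace est for pos-kdv-bdr, low-reg} for $\eta(t)\Phi^{1,\b}_{bdr}(a_H^*)$.

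For the low-frequency piece I adapt the Bourgain-space construction of \cite{BSZ06} and build
\[\wt v(x,t)=\mcal{F}^{-1}\!\big[\Theta(\xi,\tau)\,\wh{a_L^*}^t(\tau)\big](x,t),\]
with a multiplier $\Theta$ supported in $|\tau|\le 2$ and in a bounded $\xi$-region, concentrated on the slab $|\tau-P_\b(\xi)|\ls 1$, and normalized so that $\wt v(0,t)=a_L^*(t)$ and $\wt v(\cdot,0)\equiv 0$ on $\R^+$ (the latter obtained by an initial-data correction via the Cauchy semigroup $W^{1,\b}_R$, as in \cite{BSZ06}). The boundedness of the Fourier support makes all the weights $\la\xi\ra^s$, $\la L\ra^{\frac12}$, $\la L\ra^\sigma$, $\la\tau\ra^{\frac{s}{3}+\frac12-\sigma}$ uniformly finite on the support of $\Theta\,\wh{a_L^*}^t$, and one obtains
\[\|\eta(t)\wt v\|_{Y^{1,\b}_{s,\frac12,\sigma}}+\sup_{x\ge 0}\|\eta(t)\p_x^j\wt v\|_{H^{(s+1-j)/3}_t(\R)}\ls\|a\|_{H^{(s+1)/3}(\R^+)}\]
for any $\sigma>\tfrac12$ sufficiently close to $\tfrac12$. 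Setting $\Psi^{1,\b}_{bdr}(a):=\Phi^{1,\b}_{bdr}(a_H^*)+\wt v$ and $\wt u_2:=\eta(t)\Psi^{1,\b}_{bdr}(a)$, the bounds \eqref{est for pos-kdv-bdr, low-reg}--\eqref{trace est for pos-kdv-bdr, low-reg} follow by adding the two contributions, and on $\R^+_0\times[0,1]$ both $\wt u_2$ and $W^{1,\b}_{bdr}(a)$ solve \eqref{pos-kdv-bdr} with the same data $(0,a)$, so uniqueness of the linear IBVP identifies them.

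The main obstacle is the low-frequency construction: the multiplier $\Theta$ must simultaneously normalize the boundary trace, annihilate the initial trace on $\R^+$, concentrate on the dispersion curve so the Bourgain norm remains finite at regularities down to $s>-\tfrac34$, and produce continuous spatial traces $\p_x^j\wt v(x,\cdot)$ in $H^{(s+1-j)/3}_t(\R)$. These constraints force $\sigma-\tfrac12$ to be small, and this is how the threshold $\sigma_1(s)>\tfrac12$ in the statement is determined.
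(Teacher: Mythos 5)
Your plan rests on a misidentification of where the restriction $s>-\frac12$ actually enters. You claim it comes from the region $|\tau|\lesssim 1$, via the degenerate Jacobian near the critical points of $P_\b$; but in the paper's proof of Lemma \ref{Lemma, pos-kdv-bdr} the bounded-$\mu$ pieces $\mcal{T}_{\b,1},\mcal{T}_{\b,3}$ (which carry all the low time-frequencies, since $\tau=P_\b(\mu)$ is bounded there) are estimated in the strong norm $X^{1,\b}_{3,1}$ with no constraint on $s$ at all. The constraint $s>-\frac12$ arises only in the large-$\mu$ piece $\mcal{T}_{\b,4}$ (i.e.\ $\mu\ge 4$, hence $|\tau|\sim\mu^3$ large): in the time-trace estimate (\ref{pos-kdv-bdr, time-trace}) one needs $\int_{\m{R}}|\wh{k}(y)|\,|y-\tfrac{1}{\sqrt3}|^{s-\frac12}\,dy<\infty$, the singular point $y=1/\sqrt3$ coming from the large-$\mu$ asymptotics $R_\b(\mu)\sim\frac{\sqrt3}{2}\mu$, not from the critical points of $P_\b$; and Remark \ref{Re, -1/2nec} exhibits the failure for $s<-\frac12$ through $\int_4^\infty \wh{h}(\mu)\mu^{-1}d\mu$, which involves only the values of $\wh{a^*}$ at large frequencies $P_\b(\mu)\ge P_\b(4)$. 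Consequently your key step — that after removing the low time-frequencies the argument of Lemma \ref{Lemma, pos-kdv-bdr} "applies verbatim" to $a_H^*$ for every $s>-\frac34$ — is false: the high-frequency part is exactly where that argument breaks, and your decomposition leaves the genuine difficulty untouched. The paper's actual route is the opposite split: cut the $\mu$-integral at $\mu=4$; the piece $\mu\in[\sqrt\b,4]$ is handled by the same cutoff extension $\Phi^{1,\b}_{bdr}$ (valid for all $s>-\frac34$ on a bounded $\mu$-interval), while the piece $\mu\ge 4$ is rewritten as $\mcal{I}_{\b,\frac12}(a^*)$ and extended by the Bona--Sun--Zhang construction of \cite{BSZ06} (Claim \ref{Prop, kdv-bdr-low-reg}), whose estimates are designed precisely for this high-frequency regime down to $s>-\frac34$; the space-trace bound for that piece carries over because the proof of (\ref{pos-kdv-bdr, space-trace}) does remain valid for $s\in(-\frac34,-\frac12]$.

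A secondary gap: your low-frequency object $\wt v$, defined by a multiplier $\Theta$ concentrated on the slab $|\tau-P_\b(\xi)|\lesssim 1$, is not a solution of $w_t+w_{xxx}+\b w_x=0$ on $x>0$ (only a measure supported on $\tau=P_\b(\xi)$ would be), so the concluding identification "both solve (\ref{pos-kdv-bdr}) with the same data, hence coincide by uniqueness" is unjustified as stated; the paper avoids this by extending the explicit Laplace-transform representation (\ref{formula for pos-kdv-bdr}) itself, so that agreement with $W^{1,\b}_{bdr}(a)$ on $\m{R}^+_0\times[0,1]$ holds by construction rather than by a uniqueness argument.
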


\begin{rem}\label{Remark, ext trace deri}
We want to point out a slight difference between Lemma \ref{Lemma, pos-kdv-bdr} and Lemma \ref{Lemma, pos-kdv-bdr, low-reg}. On the one hand, the extension function $\eta(t)\Phi_{bdr}^{1,\b}(a)$ in Lemma \ref{Lemma, pos-kdv-bdr} is shown to live in $C_{x}^j\big(\m{R}; H_{t}^{\frac{s+1-j}{3}}(\m{R})\big)$ and the estimate (\ref{trace est for pos-kdv-bdr}) is valid for any $x\in\m{R}$. On the other hand, the extension function $\eta(t)\Psi_{bdr}^{1,\b}(a)$ in Lemma \ref{Lemma, pos-kdv-bdr, low-reg} is only proved to live in $C_{x}^j\big(\m{R}^+_0; H_{t}^{\frac{s+1-j}{3}}(\m{R})\big)$ and the estimate (\ref{trace est for pos-kdv-bdr, low-reg}) is only justified for $x\geq 0$.
\end{rem}

We will first  carry out the proof of Proposition \ref{Prop, pos-kdv} by assuming  Lemma \ref{Lemma, pos-kdv-bdr} and Lemma \ref{Lemma, pos-kdv-bdr, low-reg} hold. After that, these two lemmas will be verified.

\begin{proof}[{\bf Proof of Proposition  \ref{Prop, pos-kdv} }]
Without loss of generality, we assume $T=1 $. We first deal with the case when $s\in(-\frac12,3]$. Choose $\sigma_1(s)=\min\big\{1,\frac{2s+7}{12}\big\}$ and consider any $\sigma\in(\frac12,\sigma_1(s)]$. 
Based on the operator $ \Phi^{1,\b}_{R} $ in Lemma \ref{Lemma, pos-kdv-initial}
and $ \Phi^{1,\b}_{bdr} $ in Lemma \ref{Lemma, pos-kdv-bdr}, we define 
\[\wt{u}=\Gamma^+_\b(f,p,a):=\eta(t)\Phi^{1,\b}_{R}(f,p)+\eta(t)\Phi^{1,\b}_{bdr}\Big(a-\eta(t)\Phi^{1,\b}_{R}(f,p)\big|_{x=0}\Big).\]
Then $\wt{u}$ is defined on $\m{R}\times\m{R}$ and solves (\ref{lin-u}) on $\m{R}^+_0\times[0,1]$. 
Furthermore, it follows from Lemma \ref{Lemma, pos-kdv-initial} and Lemma \ref{Lemma, pos-kdv-bdr} that $\wt{u}$ belongs to $Y_{s,\frac12,\sigma}^{1,\b}\bigcap C_{x}^j\big(\m{R}^+_0; H_{t}^{\frac{s+1-j}{3}}(\m{R})\big)$ for $j=0,1$. 
Meanwhile, we infer from (\ref{est for pos-kdv-initial}) and (\ref{est for pos-kdv-bdr}) that there exists a constant $C=C(s,\sigma)$ such that
\[\big\|\Gamma^{+}_{\b}(f,p,a)\big\|_{Y^{1,\b}_{s,\frac12,\sigma}}\leq C\Big(\|f\|_{X^{1,\b}_{s,\sigma-1}}+\|p\|_{H^{s}}+\big\|a-\eta(t)\Phi^{1,\b}_{R}(f,p)\big|_{x=0}\big\|_{H_{t}^{\frac{s+1}{3}}}\Big).\]
By (\ref{trace est for pos-kdv-initial}) with $j=0$,
$\big\| \eta(t)\Phi^{1,\b}_{R}(f,p) \big|_{x=0} \big\|_{ H_{t}^{ \frac{s+1}{3} } } \leq  C\big( \|f\|_{X^{1,\b}_{s,\sigma-1}} + \|f\|_{Z^{1,\b}_{s,\sigma-1}} + \|p\|_{H^{s}} \big)$. 
Combining the above two estimates yields (\ref{est for pos lin kdv}). Next, by similar argument and using (\ref{trace est for pos-kdv-initial}) and (\ref{trace est for pos-kdv-bdr}), we can justify (\ref{trace est for pos lin kdv}) as well. 

Then we treat the case when $s\in(-\frac34,-\frac12]$. The proof for this case is almost the same as the above case except we replace $\Phi_{bdr}^{1,\b}$ by $\Psi_{bdr}^{1,\b}$ and replace Lemma \ref{Lemma, pos-kdv-bdr} by Lemma \ref{Lemma, pos-kdv-bdr, low-reg}.   The proof of Proposition  \ref{Prop, pos-kdv} is thus complete. $\Box$
\end{proof}

Now we are ready to deal with Lemma \ref{Lemma, pos-kdv-bdr} and \ref{Lemma, pos-kdv-bdr, low-reg}. 

\begin{proof}[{\bf Proof of Lemma \ref{Lemma, pos-kdv-bdr} }] 
We will first establish estimates (\ref{est for pos-kdv-bdr}) and (\ref{trace est for pos-kdv-bdr}) for $s\in(-\frac12,3]\backslash\{\frac12\}$. Then the corresponding estimates in the case of $ s=\frac12 $ can be deduced by interpolating between the case $ s=0 $ and the case $ s=3 $. Fix $s\in(-\frac12,3]\backslash\{\frac12\}$ and $0<\b\leq 1$. Define $h_\b$ such that $\wh{h_\b}(\mu)=(P_{\b})'(\mu)\wh{a^*}[P_{\b}(\mu)]$. Then 
\begin{align*}
\Phi^{1,\b}_{bdr}(a)(x,t)=\frac{1}{\pi}Re \int_{\sqrt{\b}}^{\infty}e^{iP_{\b}(\mu)t}e^{-i\mu x/2}K_{\b}(x,\mu)\wh{h_\b}(\mu)\,d\mu,
\end{align*}
where $K_\b$ is as defined in (\ref{K}). Since $ |(P_{\b})'(\mu)|=|3\mu^2-\b|\ls \la\mu\ra^2$, then
\[\la\mu\ra^{s}|\wh{h_\b}(\mu)|\ls |(P_{\b})'(\mu)|^{1/2}\la\mu\ra^{s+1}\wh{a^*}[P_{\b}(\mu)].\]
Since $a\in H^{\frac{s+1}{3}}(\m{R}^+)$, it then follows from Corollary \ref{Cor, mul est} that $h_\b\in H^{s}(\m{R})$ and $\|h_\b\|_{H^{s}(\m{R})}\ls \|a\|_{H^{\frac{s+1}{3}}(\m{R}^+)}$. Inspired by the above observation, we define an operator $\mcal{T}_{\b}$ as 
\be\label{T}
[\mcal{T}_{\b}(h)](x,t):=\int_{\sqrt{\b}}^{\infty}e^{iP_{\b}(\mu)t}e^{-i\mu x/2}K_{\b}(x,\mu)\wh{h}(\mu)\,d\mu.\ee
Recalling $Y^{1,\b}_{s,\frac12,\sigma}=X^{1,\b}_{s,\frac12}\cap \Lambda^{1,\b}_{s,\sigma}\cap C_{t}(\m{R},H^s(\m{R}))$, so the proof of Lemma \ref{Lemma, pos-kdv-bdr} reduces to establishing the following estimates  for the operator $\mcal{T}_{\b}$.
\begin{align}
\big\|\eta(t)\mcal{T}_\b(h)\big\|_{X^{1,\b}_{s,\frac12}} &\leq  C\|h\|_{H^s(\m{R})},\label{pos-kdv-bdr, FR} \\
\big\|\eta(t)\mcal{T}_\b(h)\big\|_{\Lambda^{1,\b}_{s,\sigma}} &\leq  C\|h\|_{H^s(\m{R})},\label{pos-kdv-bdr, Lambda} \\
\sup_{t\in\m{R}}\big\|\eta(t)\mcal{T}_\b(h)\big\|_{H^{s}_{x}(\m{R})} &\leq  C\|h\|_{H^s(\m{R})},\label{pos-kdv-bdr, time-trace} \\
\sup_{x\in\m{R}}\big\|\p^j_x \big[\eta(t)\mcal{T}_\b(h)\big]\big\|_{H^{\frac{s+1-j}{3}}_t(\m{R})} &\leq  C\|h\|_{H^s(\m{R})}, \quad j=0,1. \label{pos-kdv-bdr, space-trace}
\end{align}

Before showing those estimates hold,  we introduce some notations. Recalling the formula (\ref{K}), we have  
\[K_\b(x,\mu)=\left\{\begin{array}{lll}
e^{-i\sqrt{4\b-3\mu^2}x/2} & \mbox{if} & \sqrt{\b}\leq \mu\leq \sqrt{\frac{4}{3}\b}, \vspace{0.1in}\\
k\big(R_\b(\mu)x\big) &\mbox{if}& \mu>\sqrt{\frac{4}{3}\b},
\end{array}\right.\]
where 
\be\label{k}
k(y) :=e^{-y}\psi(y),\quad\forall\, y\in\m{R}.\ee
It is easily seen that $k$ is a real-valued Schwarz function on $\m{R}$. We will first justify (\ref{pos-kdv-bdr, space-trace}).

\begin{proof}[{\bf Proof of (\ref{pos-kdv-bdr, space-trace})}]
We first consider the case when $j=0$. For any fixed $x\in\m{R}$, taking the Fourier transform of $ \eta \mcal{T}_\b(h) $ (see (\ref{T}) for the expression of $ \mcal{T}_\b(h) $) with respect to $t$ leads to
\[\mcal{F}_{t}[\eta\mcal{T}_{\b}(h)](x,\tau)=\int_{\sqrt{\b}}^{\infty}\wh{\eta}\big(\tau-P_\b(\mu)\big)e^{-i\mu x/2}K_{\b}(x,\mu)\wh{h}(\mu)\,d\mu.\]
Since $|e^{-i\mu x/2}K_{\b}(x,\mu)|\leq C$ for a universal constant $C$ and 
$\la\tau\ra^{\frac{s+1}{3}} \leq \la\tau-P_\b(\mu)\ra^{\frac{|s+1|}{3}} \la P_\b(\mu)\ra^{\frac{s+1}{3}}$,
\begin{align*}
\la\tau\ra^{\frac{s+1}{3}} \big|\mcal{F}_{t}[\eta\mcal{T}_{\b}(h)](x,\tau)\big|
& \ls \int_{\sqrt{\b}}^{\infty} \la\tau-P_\b(\mu)\ra^{\frac{|s+1|}{3}} \l| \wh{\eta}\big(\tau-P_\b(\mu)\big) \r| \la P_\b(\mu)\ra^{\frac{s+1}{3}} |\wh{h}(\mu)| \,d\mu\\
&= \int_{\sqrt{\b}}^{\infty} \wh{f_s}\big(\tau-P_\b(\mu)\big)\la P_\b(\mu)\ra^{\frac{s+1}{3}}|\wh{h}(\mu)|\,d\mu,
\end{align*}
where $f_s$ is defined such that $\wh{f_s}(\cdot)=\la\cdot\ra^{\frac{|s+1|}{3}}|\wh{\eta}(\cdot)|$. Then by splitting the above integral domain $ [\sqrt{\b}, \infty] $ into $ [\sqrt{\b}, 4] $
and $ [4,\infty] $, we are able to 
%
complete the proof of (\ref{pos-kdv-bdr, space-trace}) for the case $j=0$.

Next, for the case $j=1$, 
\[\mcal{F}_{t}\big(\p_x[\eta\mcal{T}_{\b}(h)]\big)(x,\tau)=\int_{\sqrt{\b}}^{\infty}\wh{\eta}\big(\tau-P_\b(\mu)\big)\p_{x}\big[e^{-i\mu x/2}K_{\b}(x,\mu)\big]\wh{h}(\mu)\,d\mu.\]
Noticing there exists a universal constant $C$ such that
$\l| \p_{x}\big[e^{-i\mu x/2}K_{\b}(x,\mu)\big]\r|\ls C\mu$ for any $\mu\geq 4$, this leads to
\[\big|\mcal{F}_{t}\big(\p_x[\eta\mcal{T}_{\b}(h)])(x,\tau)\big|\ls \int_{\sqrt{\b}}^{\infty}\l|\wh{\eta}\big(\tau-P_\b(\mu)\big)\r|\mu|\wh{h}(\mu)|\,d\mu.\]
Define $h_1$ such that $\wh{h_1}(\mu) = \mb{1}_{\{\mu>\sqrt{\b}\}} \mu |\wh{h}(\mu)|$. Then by following the same argument as that for the case $j=0$, we obtain
\[\sup_{x\in\m{R}}\big\|\p_x \big[\eta\mcal{T}_\b(h)\big]\big\|_{H^{\frac{s}{3}}_t(\m{R})} \ls \|h_1\|_{H^{s-1}(\m{R})}\ls \|h\|_{H^s(\m{R})}. \]
\end{proof}

Next, in order to prove (\ref{pos-kdv-bdr, FR})--(\ref{pos-kdv-bdr, time-trace}), we decompose $\mcal{T}_{\b}(h)$ as $\mcal{T}_\b(h)=\mcal{T}_{\b,1}(h)+\mcal{T}_{\b,2}(h)$, where 
\begin{align}
[\mcal{T}_{\b,1}(h)](x,t) &:=\i_{\sqrt{\b}}^{\sqrt{\frac43 \b}}e^{iP_{\b}(\mu)t}e^{-i(\mu+\sqrt{4\b-3\mu^2})x/2}\,\wh{h}(\mu)\,d\mu, \label{def of T1}\\
[\mcal{T}_{\b,2}(h)](x,t) &:=\i_{\sqrt{\frac43 \b}}^{\infty}e^{iP_{\b}(\mu)t}e^{-i\mu x/2}k\big(R_\b(\mu)x\big)\,\wh{h}(\mu)\,d\mu. \label{def of T2}
\end{align}

\begin{proof}[{\bf Proof of (\ref{pos-kdv-bdr, FR})}]
Firstly, we discuss the estimate for $\mcal{T}_{\b,1}(h)$ and we will actually prove a stronger result:
\be\label{T1 est-FR norm}
\|\eta(t)\mcal{T}_{\b,1}(h)\|_{X^{1,\b}_{3,1}}\ls \| h\|_{H^s(\m{R})}.\ee
Since the integral range for $ \mu $ in $ \mcal{T}_{\b,1}(h) $ is a finite interval $ [\sqrt{\beta}, \sqrt{4\beta/3}] $, the verification of (\ref{T1 est-FR norm}) is very straightforward and therefore is omitted.
Next we discuss the estimate for $\mcal{T}_{\b,2}(h)$ which is further decomposed into two parts: $\mcal{T}_{\b,2}(h)=\mcal{T}_{\b,3}(h)+\mcal{T}_{\b,4}(h)$, where 
\begin{align}
[\mcal{T}_{\b,3}(h)](x,t) &:=\i_{\sqrt{\frac43 \b}}^{4}e^{iP_{\b}(\mu)t}e^{-i\mu x/2}k\big(R_\b(\mu)x\big)\,\wh{h}(\mu)\,d\mu, \label{def of T3} \\
[\mcal{T}_{\b,4}(h)](x,t) &:=\i_{4}^{\infty}e^{iP_{\b}(\mu)t}e^{-i\mu x/2}k\big(R_\b(\mu)x\big)\,\wh{h}(\mu)\,d\mu. \label{def of T4}
\end{align}
The reason of introducing such a decomposition is to make sure that $R_\b(\mu)$ has a positive lower bound in $\mcal{T}_{\b,4}(h)$. For example, with the choice of 4, we have $R_{\b}(\mu)>3$ for any $\mu\geq 4$ since $0<\b\leq 1$. The choice of the cut-off number 4 is flexible, as long as it is away from $\sqrt{\frac43 \b}$. For the estimate on $\mcal{T}_{\b,3}$, we will also prove a stronger result:
\be\label{T3 est-FR norm}
\|\eta(t)\mcal{T}_{\b,3}(h)\|_{X^{1,\b}_{3,1}} \ls \| h\|_{H^{s}(\m{R})}.\ee
Again, since the integral region for $ \mu $ in $ \mcal{T}_{\b,3}(h) $ is a finite interval $ [\sqrt{4\beta}/3, 4] $, the inequality (\ref{T3 est-FR norm}) can be built without much difficulty. 

So the focus is turned to the estimate on $\|\eta(t)\mcal{T}_{\b,4}(h)\|_{X^{1,\b}_{s,\frac12}}$. 
By direct computation, 
\[ \mcal{F}[\eta(t)\mcal{T}_{\b,4}(h)](\xi,\tau) = \int_{4}^{\infty} \wh{\eta}\big(\tau-P_{\b}(\mu)\big) \frac{1}{R_\b(\mu)} \,\wh{k}\bigg(\frac{\xi+\mu/2}{R_\b(\mu)}\bigg) \wh{h}(\mu)\,d\mu.\]
Since $k$ is a Schwarz function, so is $\wh{k}$, hence, 
\[ \l| \wh{k}\l( \frac{\xi+\mu/2}{R_\b(\mu)} \r) \r| \ls \frac{R_\b^6(\mu)}{R_\b^6(\mu) + |\xi + \mu/2|^6} \sim \frac{\mu^6}{\mu^6 + |2\xi + \mu|^6}.\]
As a result, 
\be\label{T4, FT-est}
\big|\mcal{F}[\eta(t)\mcal{T}_{\b,4}(h)](\xi,\tau)\big|\ls \int_{4}^{\infty}\l|\wh{\eta}\big(\tau-P_\b(\mu)\big)\r|\,\frac{\mu^5|\wh{h}(\mu)|}{\mu^6+|2\xi+\mu|^6}\,d\mu.\ee
For any $\mu\geq 4$, $\la\tau-\phi^{1,\b}(\xi)\ra \leq \la\tau-P_\b(\mu)\ra\la P_\b(\mu)-\phi^{1,\b}(\xi)\ra \ls \la\tau-P_\b(\mu)\ra(\mu+|\xi|)^3$, so it follows from (\ref{T4, FT-est}) that
\be\label{T4, X norm-est1}\begin{split}
& \la\xi\ra^s\la\tau-\phi^{1,\b}(\xi)\ra^{\frac12}\l| \mcal{F}[\eta(t)\mcal{T}_{\b,4}(h)](\xi,\tau)\r|\\
\ls\quad &\int_{4}^{\infty}\la\tau-P_\b(\mu)\ra^{\frac12}\l|\wh{\eta}\big(\tau-P_\b(\mu)\big)\r|\,\frac{\la\xi\ra^s(\mu+|\xi|)^{\frac32}}{\mu^6+|2\xi+\mu|^6}\,\mu^5|\wh{h}(\mu)|\,d\mu.
\end{split}\ee
It remains to estimate the $L^2_{\xi,\tau}$ norm of the right hand side of (\ref{T4, X norm-est1}). First, for any fixed $\mu\geq 4$, by dividing $\m{R}$ into $\{\xi:|\xi|\geq \mu\}$ and $\{\xi:|\xi|<\mu\}$, and by utilizing the assumption $-\frac12<s\leq 3$, we attain
\be\label{T4, X norm-key est}
\bigg\| \frac{\la\xi\ra^s(\mu+|\xi|)^{\frac32}}{\mu^6 + |2\xi+\mu|^6}  \bigg\|_{L^2_\xi} \ls C\mu^{s-4}.\ee
Thus, by (\ref{T4, X norm-est1}) and Minkowski's inequality, 
\[\begin{split}
& \l\|\la\xi\ra^s\la\tau-\phi^{1,\b}(\xi)\ra^{\frac12} \mcal{F}[\eta(t)\mcal{T}_{\b,4}(h)](\xi,\tau)\r\|_{L^2_{\xi}}\\
\ls\quad &\int_{4}^{\infty}\la\tau-P_\b(\mu)\ra^{\frac12}\l|\wh{\eta}\big(\tau-P_\b(\mu)\big)\r|\mu^{s+1}|\wh{h}(\mu)|\,d\mu.
\end{split}\]
This leads to 
\be\label{T4, X norm-est2}
\|\eta(t)\mcal{T}_{\b,4}(h)\|_{X^{1,\b}_{s,\frac12}} \ls \l \| \int_{4}^{\infty}\la\tau-P_\b(\mu)\ra^{\frac12}\l|\wh{\eta}\big(\tau-P_\b(\mu)\big)\r|\mu^{s+1}|\wh{h}(\mu)|\,d\mu \r\|_{L^2_\tau}. \ee
Note that the function $P_\b(\mu) = \mu^3 - \beta\mu$ is increasing on $[4,\infty)$, so its inverse function is well-defined which is denoted as $P_{\b}^{-1}$. By making the change of variable $y=P_{\b}(\mu)$, we have $ y \sim \mu^3 $ and
\be\label{T4, X norm-est3}
\|\eta(t)\mcal{T}_{\b,4}(h)\|_{X^{1,\b}_{s,\frac12}} \ls \l\| \int_{P_\b(4)}^{\infty}\la\tau-y\ra^{\frac12}|\wh{\eta}(\tau-y)|\,y^{\frac{s-1}{3}}\,\big|\wh{h}\big(P_{\b}^{-1}(y)\big)\big|\,dy \r\|_{L^2_\tau} .\ee
Since $\eta$ is a Schwarz function, $\la\cdot\ra^{\frac12}\wh{\eta}(\cdot)$ is integrable. Then we apply Young's inequality to (\ref{T4, X norm-est3}) to find
\be\label{T4, X norm-est4}
\|\eta(t)\mcal{T}_{\b,4}(h)\|_{X^{1,\b}_{s,\frac12}} \ls \bigg(\int_{P_\b(4)}^{\infty}y^{\frac{2(s-1)}{3}}\l|\wh{h}\big(P_{\b}^{-1}(y)\big)\r|^2\,dy\bigg)^{\frac12}.\ee
Finally, making the change of variable $\mu=P_{\b}^{-1}(y)$ yields 
$\|\eta(t)\mcal{T}_{\b,4}(h)\|_{X^{1,\b}_{s,\frac12}} \ls \|h\|_{H^s(\m{R})}$. 
\end{proof}

\begin{proof}[{\bf Proof of (\ref{pos-kdv-bdr, Lambda})}]
By taking advantage of the above proof, it is not difficult to justify (\ref{pos-kdv-bdr, Lambda}). Next, we will sketch its proof with a focus on illustrating why the restriction $ \sigma\leq \frac{2s+7}{12} $ is needed in Lemma \ref{Lemma, pos-kdv-bdr}. Firstly, since $ s\leq 3 $ and $ \sigma\leq 1 $,
then $\|f\|_{\Lambda^{1,\b}_{s,\sigma}} \ls \|f\|_{X^{1,\b}_{3,1}}$ for any function $f$. Combining this fact with (\ref{T1 est-FR norm}) and (\ref{T3 est-FR norm}), we deduce
\begin{align*}
&\|\eta(t)\mcal{T}_{\b,1}(h)\|_{\Lambda^{1,\b}_{s,\sigma}}+\|\eta(t)\mcal{T}_{\b,3}(h)\|_{\Lambda^{1,\b}_{s,\sigma}} \ls  \|\eta(t)\mcal{T}_{\b,1}(h)\|_{X^{1,\b}_{3,1}}+\|\eta(t)\mcal{T}_{\b,3}(h)\|_{X^{1,\b}_{3,1}}\ls \|h\|_{H^{s}(\m{R})}.
\end{align*}
It thus remains to estimate $\|\eta(t)\mcal{T}_{\b,4}(h)\|_{\Lambda^{1,\b}_{s,\sigma}}$. On the one hand, it follows from (\ref{T4, FT-est}) that 
\[\big|\mcal{F}[\eta(t)\mcal{T}_{\b,4}(h)](\xi,\tau)\big|\ls \int_{4}^{\infty}\l|\wh{\eta}\big(\tau-P_\b(\mu)\big)\r|\,\mu^{-1}|\wh{h}(\mu)|\,d\mu.\]
On the other hand, it is easily seen that for any $\eps_1>0$,
\[\Big\|\mb{1}_{\{e^{|\xi|}\leq 3+|\tau|\}}\la\xi\ra^{s}\la\tau-\phi^{\a,\b}(\xi)\ra^{\sigma}\Big\|_{L^{2}_{\xi}} \ls C_{\eps_1} \la\tau\ra^{\sigma+\eps_1},\]
where $ C_{\eps_1} $ denotes a constant that depends on $\eps_1$. The specific value of $\eps_1$ will be chosen later in (\ref{T4, L-norm, eps_1}) and it only depends on $s$. Hence, 
\be\label{T4, L-norm-est2}\begin{split}
\big\|\eta(t)\mcal{T}_\b(h)\big\|_{\Lambda^{1,\b}_{s,\sigma}} &= \l\|\mb{1}_{\{e^{|\xi|}\leq 3+|\tau|\}}\la\xi\ra^{s}\la\tau-\phi^{\a,\b}(\xi)\ra^{\sigma} \mcal{F}[\eta(t)\mcal{T}_{\b,4}(h)](\xi,\tau)\r\|_{L^2_{\xi,\tau}}\\
&\ls \l\| \int_{4}^{\infty}\la\tau\ra^{\sigma+\eps_1}\l|\wh{\eta}\big(\tau-P_\b(\mu)\big)\r|\mu^{-1}|\wh{h}(\mu)|\,d\mu \r\|_{L^2_\tau}.
\end{split}\ee
Noticing 
$\la\tau\ra^{\sigma+\eps_1}\ls \la\tau-P_\b(\mu)\ra^{\sigma+\eps_1}\la\mu\ra^{3(\sigma+\eps_1)}$,
so 
\[\big\|\eta(t)\mcal{T}_\b(h)\big\|_{\Lambda^{1,\b}_{s,\sigma}} \ls \l\| \int_{4}^{\infty} \la\tau-P_\b(\mu)\ra^{\sigma+\eps_1} \l|\wh{\eta}\big(\tau-P_\b(\mu)\big)\r| \mu^{3(\sigma+\eps_1)-1} |\wh{h}(\mu)| \,d\mu \r\|_{L^2_\tau}.\]
Comparing to (\ref{T4, X norm-est2}), as long as $3(\sigma+\eps_1)-1\leq s+1$, that is $\sigma\leq \frac{s+2}{3}-\eps_1$, we can finish the proof by similar argument as that after (\ref{T4, X norm-est2}). Choosing 
\be\label{T4, L-norm, eps_1} 
\eps_1=\frac{2s+1}{12}
\ee
fulfills this requirement thanks to the restriction $\sigma\leq  \frac{2s+7}{12}$.
\end{proof}

\begin{proof}[{\bf Proof of (\ref{pos-kdv-bdr, time-trace})}]
Again, based on the proof of (\ref{pos-kdv-bdr, FR}), it will not take significantly more effort to verify (\ref{pos-kdv-bdr, time-trace}), but we will still carry out its proof with the purose of revealing the necessity of the assumption $ s>-\frac12 $ in Lemma \ref{Lemma, pos-kdv-bdr}. Firstly, it follows from the 1-D Sobolev embedding and the condition $ s\leq 3 $ that  $\|f\|_{L^{\infty}_{t}H^{s}_{x}}\ls \|f\|_{X^{1,\b}_{s,1}}\ls \|f\|_{X^{1,\b}_{3,1}}$. Then we use (\ref{T1 est-FR norm}) and (\ref{T3 est-FR norm}) again to deduce
\begin{align*}
\|\eta(t)\mcal{T}_{\b,1}(h)\|_{L^{\infty}_{t} H^{s}_{x}} + \|\eta(t)\mcal{T}_{\b,3}(h)\|_{L^{\infty}_{t} H^{s}_{x}} \ls \|\eta(t)\mcal{T}_{\b,1}(h)\|_{X^{1,\b}_{3,1}} + \|\eta(t)\mcal{T}_{\b,3}(h)\|_{X^{1,\b}_{3,1}} \ls \|h\|_{H^{s}(\m{R})}.
\end{align*}
It thus remains to estimate $\|\eta(t)\mcal{T}_{\b,4}(h)\|_{L^{\infty}_{t}H^{s}_{x}}$. We first consider the case when $t=0$ which reduces to the proof of $\|[\mcal{T}_{\b,4}(h)](x,0)\|_{H^s_x}\ls \|h\|_{H^s(\m{R})}$. By direct computation, 
\[\l\|\big[\mcal{T}_{\b,4}(h)\big](x,0)\r\|_{H^s_x} =\l\|\la\xi\ra^{s}\int_{4}^{\infty}\wh{k}\l(\frac{\xi+\mu/2}{R_\b(\mu)}\r)\frac{\wh{h}(\mu)}{R_\b(\mu)}\,d\mu \r\|_{L^2_{\xi}}.\]
According to duality, 
\begin{align}\label{t-trace, T4-est1}
\l\|\big[\mcal{T}_{\b,4}(h)\big](x,0)\r\|_{H^s_x} &= \sup_{\|g\|_{L^2_\xi}=1}\int_{\m{R}}\int_{4}^{\infty}g(\xi)\la\xi\ra^{s}\wh{k}\l(\frac{\xi+\mu/2}{R_\b(\mu)}\r)\frac{\wh{h}(\mu)}{R_\b(\mu)}\,d\mu\,d\xi.
\end{align}
Exchanging the order of integration and shifting $\xi$, we obtain
\be\begin{split}\label{t-trace, T4-est2}
\int_{\m{R}}\int_{4}^{\infty} g(\xi) \la\xi\ra^{s} \wh{k}\Big(\frac{\xi+\mu/2}{R_\b(\mu)}\Big) \frac{\wh{h}(\mu)}{R_\b(\mu)} d\mu d\xi 
= 
\int_{4}^{\infty} \int_{\m{R}} g\Big(\xi-\frac{\mu}{2}\Big )\l\la\xi-\frac{\mu}{2}\r\ra^{s} \wh{k} \Big( \frac{\xi}{R_\b(\mu)} \Big) \frac{\wh{h}(\mu)}{R_\b(\mu)} d\xi d\mu.
\end{split}\ee
Then it suffices to control the RHS of (\ref{t-trace, T4-est2}). We split the integral domain $ [4,\infty)\times \R $ as $ B_1 \cup B_2 $, where 
$B_{1} = \l\{(\mu,\xi)\in\m{R}^2: \mu\geq 4, \big|\xi-\frac{\mu}{2}\big|\ls 1\r\}$ and $B_{2} = \l\{(\mu,\xi)\in\m{R}^2: \mu\geq 4, \big|\xi-\frac{\mu}{2}\big|\gg 1\r\}$.

Since $ \xi $ is restricted to a small region for each fixed $ \mu $ in $ B_1 $, the contribution of the integral on $ B_1 $ can be handled easier, so the focus will be put on the contribution on $ B_2 $. 
By the change of variable $\xi=R_\b(\mu)y$, 
\be\label{t-trace, T4-B2}
\mbox{Contribution on $B_2$} \ls \int_{\m{R}}\int_{\m{R}}\mb{1}_{B_{3}}(\mu,y)\Big|g\Big(R_\b(\mu)y-\frac{\mu}{2}\Big)\Big|\l\la R_\b(\mu)y-\frac{\mu}{2}\r\ra^{s}|\wh{k}(y)||\wh{h}(\mu)|\,dy\,d\mu,\ee
where 
$B_3:=\l\{(\mu,y):\mu\geq 4, \l |R_\b(\mu)y-\frac{\mu}{2}\r|\gg 1\r\}$.
Switching the order of integration yields
\be\label{t-trace, T4-B2-est1}
\mbox{RHS of (\ref{t-trace, T4-B2})}\leq\int_{\m{R}}|\wh{k}(y)|\bigg(\int_{\m{R}}\mb{1}_{B_{3}}(\mu,y)\Big| g\Big(R_\b(\mu)y-\frac{\mu}{2}\Big)\Big| \l\la R_\b(\mu)y-\frac{\mu}{2}\r\ra^{s}|\wh{h}(\mu)|\,d\mu\bigg)\,dy.\ee
For any $(\mu,y)\in B_3$, $\mu\geq 4$ and $\big|R_\b(\mu)y-\frac{\mu}{2}\big|\gg 1$. Then we find
\be\label{t-trace, T4-B2-key est}
\Big| R_\b(\mu)y-\frac{\mu}{2}\Big|\sim \Big|\mu\Big(y-\frac{1}{\sqrt{3}}\Big)\Big| \quad\mbox{and}\quad \Big| R_{\b}'(\mu)y-\frac12\Big|\sim \Big| y-\frac{1}{\sqrt{3}}\Big| .\ee
Since $\|g\|_{L^2}=1$ and $\frac{d}{d\mu}\big[R_\b(\mu)y-\frac{\mu}{2}\big]=R_\b'(\mu)y-\frac12$, it follows from H\"older's inequality that
\be\label{t-trace, T4-B2-est2}\begin{split}
&\int_{\m{R}}\mb{1}_{B_{3}}(\mu,y) \Big| g\Big(R_\b(\mu)y-\frac{\mu}{2}\Big)\Big| \l\la R_\b(\mu)y-\frac{\mu}{2}\r\ra^{s}|\wh{h}(\mu)|\,d\mu\\
\ls\quad & \bigg(\int_{\m{R}}\frac{\mb{1}_{B_{3}}(\mu,y)}{\big|R_\b'(\mu)y-\frac{1}{2}\big|}\l\la R_\b(\mu)y-\frac{\mu}{2}\r\ra^{2s}|\wh{h}(\mu)|^2\,d\mu\bigg)^{\frac12}.
\end{split}\ee
Based on (\ref{t-trace, T4-B2-key est}),
\begin{align*}
\mbox{RHS of (\ref{t-trace, T4-B2-est2})} \ls \bigg(\int_{\m{R}}\mb{1}_{B_{3}}(\mu,y) \Big|y-\frac{1}{\sqrt{3}}\Big|^{2s-1}\mu^{2s}|\wh{h}(\mu)|^2\,d\mu\bigg)^{\frac12} \ls \Big|y-\frac{1}{\sqrt{3}}\Big|^{s-\frac12}\|h\|_{H^s(\m{R})}.
\end{align*}
Then we plug this inequality into (\ref{t-trace, T4-B2-est1}) to deduce
\begin{align*}
\mbox{RHS of (\ref{t-trace, T4-B2})} \ls \bigg(\int_{\m{R}}|\wh{k}(y)|\Big|y-\frac{1}{\sqrt{3}}\Big|^{s-\frac12}\,dy\bigg)\|h\|_{H^s(\m{R})} \ls \|h\|_{H^s(\m{R})},
\end{align*}
where the last inequality is due to the assumption $s>-\frac12$. 


Having established the case when $t=0$, the general case follows immediately. Actually, for any $t$, by defining $f$ such that $\wh{f}(\mu)=e^{iP_\b(\mu)t}\wh{h}(\mu)$, we have $\l[\mcal{T}_{\b,4}(f)\r](x,0)=\l[\mcal{T}_{\b,4}(h)\r](x,t)$. So we infer from the $t=0$ case that 
\[\l\|\big[\mcal{T}_{\b,4}(h)\big](x,t)\r\|_{H^{s}_{x}}=\l\|\big[\mcal{T}_{\b,4}(f)\big](x,0)\r\|_{H^{s}_{x}}\ls \|f\|_{H^s}=\|h\|_{H^s}.\]
\end{proof}

Thus, the proof of Lemma \ref{Lemma, pos-kdv-bdr} is completed.
\end{proof}

\begin{rem}\label{Re, -1/2nec}
We emphasize that the above proof for (\ref{pos-kdv-bdr, time-trace}) can not be extended to $s< -\frac12$. Actually, if we choose 
$g(\xi)=\mb{1}_{\{|\xi|\leq 1\}}$ in (\ref{t-trace, T4-est1}), then 
$\mbox{RHS of (\ref{t-trace, T4-est1})}\sim \int_{4}^{\infty}\frac{\wh{h}(\mu)}{R_\b(\mu)}\,d\mu\sim \int_{4}^{\infty}\frac{\wh{h}(\mu)}{\mu}\,d\mu$.
In order to prove 
$\int_{4}^{\infty}\frac{\wh{h}(\mu)}{\mu}\,d\mu\ls \|h\|_{H^s}$ for any $h\in H^{s}(\m{R})$,
$s$ has to be at least $-\frac12$. In fact, by the scaling $\wh{h_\lam}:=\wh{h}(\lam\mu)$ and sending $\lam\rightarrow 0^+$, we can see $s\geq -\frac12$.
\end{rem}

Next, we will take advantage of the proof of Lemma \ref{Lemma, pos-kdv-bdr} to justify Lemma \ref{Lemma, pos-kdv-bdr, low-reg}.

\begin{proof}[{\bf Proof of Lemma \ref{Lemma, pos-kdv-bdr, low-reg}}]
Recall the definition of  $W^{1,\b}_{bdr}$ in (\ref{formula for pos-kdv-bdr}), we write
\begin{align*}
W_{bdr}^{1,\b}(a)(x,t)=& \frac{1}{\pi}Re\int_{\sqrt{\b}}^{4}e^{iP_{\b}(\mu)t}e^{-i\mu x/2}e^{-R_{\b}(\mu)x}\wh{a^*}[P_{\b}(\mu)]\,dP_{\b}(\mu)\\
&+\frac{1}{\pi}Re\int_{4}^{\infty}e^{iP_{\b}(\mu)t}e^{-i\mu x/2}e^{-R_{\b}(\mu)x}\wh{a^*}[P_{\b}(\mu)]\,dP_{\b}(\mu)\\
:=&I_1(x,t)+I_2(x,t), \qquad\forall\, x,t\geq 0.
\end{align*}
Notice that one can extend $I_1(x,t)$ for all $x$ and $t$ with its extension chosen as
\be
\mcal{E}I_1(x,t):=\frac{1}{\pi}Re\int_{\sqrt{\b}}^{4}e^{iP_{\b}(\mu)t}e^{-i\mu x/2}K_{\b}(x,\mu)\wh{a^*}[P_{\b}(\mu)]\,dP_{\b}(\mu), \qquad\forall\,x,t\in\m{R}\label{pos-bdr-opI1}
\ee
where  $K_\b$ is as defined in \eqref{K}. For this part, it is very similar to the operator $\Phi_{bdr}^{1,\b}$ in Lemma \ref{Lemma, pos-kdv-bdr} but with $\mu$ restricted on $[\sqrt{\b},4]$. On this small interval $ [\sqrt{\b},4] $, the proof of Lemma \ref{Lemma, pos-kdv-bdr} also works for $-\frac34<s\leq -\frac12$ and the extension for this part satisfies all the desired estimates.
Unfortunately, the same extension fails to work if $\mu$ is large. So we need a more subtle extension for $I_2(x,t)$. For the convenience of statement and the latter generalization in Section \ref{Sec, neg KdV}, we introduce the following operator $\mcal{I}_{\b,m}$ for $0<\b\leq 1$ and $m\in\m{R}$. 
\be\label{def of I_b}
[\mcal{I}_{\b,m}(f)](x,t)=Re \int_{4}^{\infty}e^{iP_{\b}(\mu)t}e^{-im\mu x}e^{-R_\b(\mu)x}\wh{f}\big(P_\b(\mu)\big)\,dP_\b(\mu),\quad \forall\, x\geq 0,\,t\in\m{R}.\ee
Then 
$I_2(x,t)=\frac{1}{\pi}\big[\mcal{I}_{\b,\frac12}(a^*)\big](x,t)$ for any $x,t\geq 0$.
So in order to extend $I_2$, it suffices to extend $\mcal{I}_{\b,\frac12}(a^*)$. 
By the following Claim \ref{Prop, kdv-bdr-low-reg}, we can choose the extension to be $ \frac{1}{\pi} \mcal{EI}_{\b,\frac12}(a^*) $ to prove Lemma \ref{Lemma, pos-kdv-bdr, low-reg}. Therefore, the justification of Lemma \ref{Lemma, pos-kdv-bdr, low-reg} reduces to establishing the following result.
\begin{cla}
\label{Prop, kdv-bdr-low-reg}
Let $-\frac34<s\leq -\frac12$, $0<\b\leq 1$, $m\neq 0$ and $f\in H^{\frac{s+1}{3}}(\m{R})$. Then there exist $\sigma_1=\sigma_1(s)>\frac12$ and an extension $\mcal{EI}_{\b,m}(f)$ of $\mcal{I}_{\b,m}(f)$ from $\m{R}^+_0\times\m{R}$ to $\m{R}\times\m{R}$ such that for any $\sigma\in\big(\frac12,\sigma_1\big]$ and for $j=0,1$,
\[\eta(t)\mcal{EI}_{\b,m}(f)\in Y_{s,\frac12,\sigma}^{1,\b}\bigcap C_{x}^{j}\big(\m{R}^+_0; H_{t}^{\frac{s+1-j}{3}}(\m{R})\big).\]
In addition, the following estimates hold with some constant $C=C(s,\sigma,m)$. 
\begin{align}
\big\|\eta(t)\mcal{EI}_{\b,m}(f)\big\|_{Y^{1,\b}_{s,\frac12,\sigma}} &\leq  C\|f\|_{H^{\frac{s+1}{3}}(\m{R})},\label{est for kdv-bdr-low reg} \\
\sup_{x\geq 0}\Big\|\p^j_x \big[\eta(t)\mcal{EI}_{\b,m}(f)\big]\Big\|_{H^{\frac{s+1-j}{3}}_t(\m{R})} &\leq  C\|f\|_{H^{\frac{s+1}{3}}(\m{R})}, \quad j=0,1. \label{trace est for kdv-bdr-low reg}
\end{align}
\end{cla}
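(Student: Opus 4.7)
The plan is to adapt the proof of Lemma~\ref{Lemma, pos-kdv-bdr} by designing a more refined extension kernel that circumvents the obstruction flagged in Remark~\ref{Re, -1/2nec}. Recall that Lemma~\ref{Lemma, pos-kdv-bdr} uses the extension $K_\beta(x,\mu)=k(R_\beta(\mu)x)$ with $k(y)=e^{-y}\psi(y)$, and the time-trace estimate ultimately reduces to the finiteness of $\int_{\m{R}}|\wh{k}(y)|\,|y-1/\sqrt{3}|^{s-1/2}\,dy$, which fails for $s\leq-\tfrac12$. Following the spirit of \cite{BSZ06}, I would replace $k$ by a modified Schwartz function $k_{\mathrm{new}}$ satisfying $k_{\mathrm{new}}(y)=e^{-y}$ for $y\geq 0$ (so that the extension still agrees with $\mathcal{I}_{\beta,m}(f)$ on $\m{R}^{+}_{0}\times\m{R}$), but whose Fourier transform $\wh{k_{\mathrm{new}}}$ vanishes at $y=1/\sqrt{3}$ to an order strictly greater than $|s+\tfrac12|$. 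One concrete recipe is to choose a Schwartz function $\chi$ supported in $(-\infty,0)$ with $\wh{\chi}(1/\sqrt{3})=\wh{k}(1/\sqrt{3})$ (and, if a higher vanishing order is needed, matching a few more derivatives), and set $k_{\mathrm{new}}=k-\chi$.

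With $k_{\mathrm{new}}$ in hand, I set
\[
\mathcal{EI}_{\beta,m}(f)(x,t):=\mathrm{Re}\int_{4}^{\infty}e^{iP_\beta(\mu)t}e^{-im\mu x}k_{\mathrm{new}}\big(R_\beta(\mu)x\big)\hat f(P_\beta(\mu))\,dP_\beta(\mu),
\]
and verify (\ref{est for kdv-bdr-low reg})--(\ref{trace est for kdv-bdr-low reg}) by closely following the proof of Lemma~\ref{Lemma, pos-kdv-bdr}. Defining $h$ via $\hat h(\mu)=P_\beta'(\mu)\hat f(P_\beta(\mu))$ yields $\|h\|_{H^s(\m{R})}\ls\|f\|_{H^{(s+1)/3}(\m{R})}$ via Corollary~\ref{Cor, mul est}, so it suffices to bound the corresponding operator $\mathcal{T}_\beta$. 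The $X^{1,\beta}_{s,1/2}$ bound follows from the duality and change-of-variable scheme~\eqref{T4, X norm-est1}--\eqref{T4, X norm-est4}, and the space-trace bound from the Fourier-in-$t$ argument~\eqref{pos-kdv-bdr, space-trace}; both use only that $k_{\mathrm{new}}$ is Schwartz and so carry over verbatim. For the $\Lambda^{1,\beta}_{s,\sigma}$ bound one must tighten the argument around~\eqref{T4, L-norm-est2}, exploiting the additional decay of $\wh{k_{\mathrm{new}}}$ near $y=1/\sqrt{3}$ to relax the constraint analogous to $\sigma\leq\tfrac{s+2}{3}-\varepsilon_1$ from (\ref{T4, L-norm, eps_1}), so that some $\sigma_1(s)>\tfrac12$ is admissible all the way down to $s>-\tfrac34$.

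The main obstacle is the time-trace estimate~(\ref{trace est for kdv-bdr-low reg}) with $j=0$. Rerunning~\eqref{t-trace, T4-est1}--\eqref{t-trace, T4-B2-est2} with $k$ replaced by $k_{\mathrm{new}}$, the final integral becomes $\int_{\m{R}}|\wh{k_{\mathrm{new}}}(y)|\,|y-1/\sqrt{3}|^{s-1/2}\,dy$, which is now finite because $\wh{k_{\mathrm{new}}}$ vanishes at $y=1/\sqrt{3}$ to order exceeding $|s+\tfrac12|<\tfrac14$; the $s>-\tfrac34$ threshold is exactly what makes this achievable with a modest vanishing order. The delicate point, and the bulk of the technical work, is designing $\chi$ so that the subtraction $k_{\mathrm{new}}=k-\chi$ does not degrade the $X^{1,\beta}_{s,1/2}$ or $\Lambda^{1,\beta}_{s,\sigma}$ bounds beyond what is acceptable; this will require $\chi$ to have sufficient spatial decay and appropriate frequency concentration. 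Interpolating between endpoints (or arguing directly for each admissible $s$) then delivers the full range $s\in(-\tfrac34,-\tfrac12]$ claimed.
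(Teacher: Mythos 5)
Your proposal takes a genuinely different route from the paper. The paper does not build a new kernel at all: it observes that the trace bounds (\ref{trace est for kdv-bdr-low reg}) are extension\emph{-independent} (they only involve $x\geq 0$, where every extension agrees with $Re\,\mcal{T}_{\b,4}(h)$, and the proof of (\ref{pos-kdv-bdr, space-trace}) is valid for all $s\in(-\frac34,-\frac12]$), and it then quotes the technical construction of \cite{BSZ06} (the extension $\mcal{B}I_{m1}$, Theorem 3.1 and Lemma 3.10 there) for an extension satisfying the $Y^{1,\b}_{s,\frac12,\sigma}$ bound (\ref{est for kdv-bdr-low reg}), reducing general $\b$ to $\b=1$ via $P_\b(\mu)\sim\mu^3$, $R_\b(\mu)\sim\mu$ for $\mu\geq 4$. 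Your idea — replace $k$ by $k_{\mathrm{new}}=k-\chi$ with $\chi$ supported in $(-\infty,0)$ so that $\wh{k_{\mathrm{new}}}$ vanishes at $1/\sqrt{3}$ — is an explicit alternative that attacks exactly the obstruction in Remark \ref{Re, -1/2nec} (which is driven by $\wh{k}(1/\sqrt3)\neq 0$), and the vanishing order $>|s+\frac12|$ you identify is indeed the right threshold for the endgame integral $\int|\wh{k_{\mathrm{new}}}(y)||y-1/\sqrt3|^{s-\frac12}\,dy$. If completed, your route would be more self-contained than the paper's, which buys transparency at the cost of redoing four estimates.

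However, as written your plan has two concrete gaps. First, you assert the $X^{1,\b}_{s,\frac12}$ bound (\ref{pos-kdv-bdr, FR}) ``carries over verbatim'' using only that $k_{\mathrm{new}}$ is Schwartz; it does not. The intermediate step (\ref{T4, X norm-key est}) explicitly uses $-\frac12<s$: for $s\le-\frac12$ the region $|\xi|\ls 1$ makes $\int_{|\xi|<\mu}\la\xi\ra^{2s}d\xi$ saturate at a constant, and one only gets $\ls\mu^{-9/2}$ instead of $\mu^{s-4}$, which is not enough downstream. The estimate can be rescued, but only by invoking your vanishing: for $|\xi|\ls\mu$ the argument $\frac{\xi+\mu/2}{R_\b(\mu)}$ lies within $O(\la\xi\ra/\mu)$ of $1/\sqrt3$, so $|\wh{k_{\mathrm{new}}}|\ls(\la\xi\ra/\mu)$ there, which restores $\mu^{s-4}$; you need to say this. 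Second, in the time-trace estimate you only rerun the $B_2$ part; but the $B_1$ contribution ($|\xi-\mu/2|\ls1$) is precisely the counterexample of Remark \ref{Re, -1/2nec} (the test function $g=\mb{1}_{\{|\xi|\le1\}}$ in (\ref{t-trace, T4-est1})) and also fails for $s\le-\frac12$ with the original kernel; again the same vanishing fixes it (the argument of $\wh{k_{\mathrm{new}}}$ is within $O(1/\mu)$ of $1/\sqrt3$ on $B_1$, giving an extra $\mu^{-1}$), but this must be spelled out rather than dismissed as ``easier.'' Two smaller points: your stated worry that subtracting $\chi$ might ``degrade'' the $X$ or $\Lambda$ bounds is backwards — the real issue is that those bounds fail for the original $k$ at $s\le-\frac12$ and must be re-proven using the zero of $\wh{k_{\mathrm{new}}}$ (as you correctly do for $\Lambda$); and the threshold $s>-\frac34$ is not what makes the vanishing order achievable (any finite order can be arranged, and first-order vanishing already covers $s>-\frac32$ in these integrals) — $-\frac34$ enters the Claim for other reasons in the paper.
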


\begin{proof}[{\bf Proof of Claim \ref{Prop, kdv-bdr-low-reg}}]

Without loss of generality, we assume $m=\frac12$, the general case is similar. First, we notice that when $x\geq 0$, any extension of $\mcal{I}_{\b,\frac12}(f)$ from $\m{R}^+_0\times\m{R}$ to $\m{R}\times\m{R}$ is equal to $\mcal{I}_{\b,\frac12}(f)$ on $\m{R}^+_0\times\m{R}$, so the LHS of (\ref{trace est for kdv-bdr-low reg}) is the same for all extensions.  Define $ h $ such that 
$\wh{h}(\mu) = \wh{f} \big( P_\b(\mu) \big)(P_{\b})'(\mu)\mb{1}_{\mu>4}$. 
Then $ \| h \|_{H^s(\R)} \ls \| f \|_{H^{\frac{s+1}{3}}(\R)} $. Meanwhile, it follows from (\ref{def of I_b}) that 
\[
[\mcal{I}_{\b,\frac12}(f)](x,t) = Re \int_{4}^{\infty}e^{iP_{\b}(\mu)t}e^{-i\mu x/2}e^{-R_\b(\mu)x} \wh{h}(\mu) \,d\mu, \quad \forall\, x\geq 0,\,t\in\m{R}.
\]
Recalling the definition (\ref{def of T4}) for the operator $\mcal{T}_{\b,4}(h)$, we see that $ \mcal{I}_{\b,\frac12}(f) $ agrees with $ Re(\mcal{T}_{\b,4}(h)) $ when $ x\geq 0 $. So $ Re(\mcal{T}_{\b,4}(h)) $ is an extension of $\mcal{I}_{\b,\frac12}(f)$ from $\m{R}^+_0\times\m{R}$ to $\m{R}\times\m{R}$. As a result, for any other extension $\mcal{EI}_{\b,\frac12}(f)$,
\[\begin{split}
\sup_{x\geq 0} \big\| \p^j_x \big[ \eta(t)\mcal{EI}_{\b,\frac12}(f) \big] \big\|_{H^{\frac{s+1-j}{3}}_t(\R)} &= \sup_{x\geq 0}\big\|\p^j_x \big[\eta(t) Re(\mcal{T}_{\b,4}(h)) \big] \big\|_{H^{\frac{s+1-j}{3}}_t(\R)} \\
& \leq \sup_{x\geq 0} \big\| \p^j_x \big[ \eta(t) \mcal{T}_{\b,4}(h) \big] \big\|_{H^{\frac{s+1-j}{3}}_t(\R)}
\end{split} 
,\qquad j=0,1.\]
Although the value of $s$ is required to be greater than $-1/2$ in Lemma \ref{Lemma, pos-kdv-bdr}, the proof of (\ref{pos-kdv-bdr, space-trace}) is valid for any $s$ in $(-\frac34,-\frac12]$. So we conclude $\eta(t)\mcal{T}_{\b,4}(h)\in C_{x}^{j}\big(\m{R}^+_0; H_{t}^{\frac{s+1-j}{3}}(\m{R})\big)$ and
\[\sup_{x\geq 0} \big\| \p^j_x \big[ \eta(t)\mcal{T}_{\b,4}(h) \big] \big\|_{H^{\frac{s+1-j}{3}}_t(\m{R})} \leq C \| h \|_{H^s(\R)} \leq C\|f\|_{H^{\frac{s+1}{3}}(\m{R})}.\]
Then it remains to find an extension $\mcal{EI}_{\b,\frac12}(f)$ which belongs to $Y_{s,\frac12,\sigma}^{1,\b}$ and satisfies (\ref{est for kdv-bdr-low reg}).

In \cite{BSZ06}, the authors provided an extension for the integral $I_2(x,t)$, see page 16 in Section 2.2 in \cite{BSZ06}. Essentially, the term $I_2(x,t)$ in \cite{BSZ06} can be rewritten as 
\[\begin{split}
I_2(x,t) &=Re \int_{4}^{\infty}e^{i(\mu^3-\mu)t}e^{-\big(\sqrt{3\mu^2-4}+i\mu\big)\frac{x}{2}}\wh{h^*}\big(\mu^3-\mu\big)\,d(\mu^3-\mu),\\
&=\big[\mcal{I}_{1,\frac12}(h^*)\big](x,t), \qquad \forall\,x\geq 0,\,t\in\m{R},
\end{split} \]
where $h\in H^{\frac{s+1}{3}}(\m{R}^+)$ and $h^*$ is the zero extension of $h$ from $\m{R}^+$ to $\m{R}$. By Lemma \ref{Lemma, 0 ext est}, $h^*\in H^{\frac{s+1}{3}}(\m{R})$ and this is the only property needed in \cite{BSZ06} to carry out the extension which satisfies the property (\ref{est for kdv-bdr-low reg}), see the extension $\mcal{B}I_{m1}$ on page 21, Theorem 3.1 on page 22, and Lemma 3.10 on page 36 in \cite{BSZ06}. 
After replacing $h^*$ by $f$, the method also applies to extend $\mcal{I}_{1,\frac12}(f)$ to satisfy (\ref{est for kdv-bdr-low reg}). Thus, Claim \ref{Prop, kdv-bdr-low-reg} is justified when $\b=1$ and $m=\frac12$. For general $\b\in(0,1)$, it follows from $\mu\geq 4$ that $P_\b(\mu)\sim \mu^{3}$ and $R_{\b}(\mu)\sim \mu$, so Claim \ref{Prop, kdv-bdr-low-reg} can also be justified in a similar way for any $ 0<\beta\leq 1 $.
\end{proof}

Hence,  the proof of Lemma \ref{Lemma, pos-kdv-bdr, low-reg} is finished.
\end{proof}

\subsection{KdV flow traveling to the left}
\label{Sec, neg KdV}

In this section,  we  will consider  the linear IBVP (\ref{lin-v}) which is copied below for convenience of readers.
\begin{equation} \label{k-2}
\begin{cases}
v_t-v_{xxx}-\b v_{x}=g,\\
v(x,0)=q(x),\\
v(0,t)=b_1(t),\quad v_x(0,t)=b_2(t),
\end{cases} \qquad x,t>0.
\end{equation}
This system describes a linear dispersive wave traveling  to the left, where $q\in H^{s}(\m{R}^+)$, $b_{1}\in H^{\frac{s+1}{3}}(\m{R}^+)$, $b_{2}\in H^{\frac{s}{3}}(\m{R}^+)$, and $g$ is a function defined on $\m{R}^2$. The function space that $g$ lives in will be specified later. In addition, the data $q$, $b_1$ and $b_2$ are assumed to be compatible as in Definition \ref{Def, compatibility-lin}, that is they satisfy $q(0)=b_1(0)$ if $s>\frac12$ and further satisfy $q'(0)=b_2(0)$ if $s>\frac32$.
The solution $v$ of (\ref{k-2}) can be decomposed into $v = v_1 + v_2$ 
with
\be\label{soln to neg-kdv-initial}
v_1=\Phi^{-1,-\b}_{R}(g,q):=W^{-1,-\b}_{R}(E_{s}q)+\int_{0}^{t}W^{-1,-\b}_{R}(t-\tau)g(\cdot,\tau)\,d\tau.\ee
where $E_{s}q$ is defined as in \eqref{ext for p}, 
and $ v_2 = W^{-1, -\beta}_{bdr} (b_1- q_1, b_2-q_2 )$, 
where  $ q_1 := v_1 (0,t) $, $ q_2 (t) := \partial _x v_1 (0,t) $, and 
$W^{-1, -\beta}_{bdr} (b_1,b_2) $  denotes the solution operator (also called the boundary integral operator)  associated to the following IBVP (\ref{neg-kdv-bdr}). 
\be\label{neg-kdv-bdr}\begin{cases}
z_t - z_{xxx} - \b z_{x}=0,\\
z(x,0)=0,\\
z(0,t)=b_1(t),\quad z_{x}(0,t)=b_2(t),
\end{cases} \quad x,t>0.
\ee


\begin{lem}\label{Lemma, neg-kdv-initial}
Let $s\in(-\frac34,3]$, $0<\b\leq 1$ and $\frac12<\sigma\leq 1$. For any $q\in H^{s}(\m{R}^+)$ and $g\in X^{-1,-\b}_{s,\sigma-1}\bigcap Z^{-1,-\b}_{s,\sigma-1}$, the function 
$\wt{v_1}:=\eta(t)\Phi^{-1,-\b}_{R}(g,q)$
belongs to $Y_{s,\frac12,\sigma}^{-1,-\b}\bigcap C_{x}^{j}\big(\m{R}; H_{t}^{\frac{s+1-j}{3}}(\m{R})\big)$ for $j=0,1$. In addition, the following estimates hold with some constant $C=C(s,\sigma)$.
\begin{align}
\big\| \wt{v_1}\big\|_{Y^{-1,-\b}_{s,\frac12,\sigma}} &\leq  C \big( \|g\|_{X^{-1,-\b}_{s,\sigma-1}} + \|q\|_{H^{s}(\R^+)} \big), 
\label{est for neg-kdv-initial} \\
\sup_{x\in\m{R}} \big\| \p_x^{j} \wt{v_1} \big\|_{H^{\frac{s+1-j}{3}}_t(\m{R})} &\leq  C \Big( \|g\|_{X^{-1,-\b}_{s,\sigma-1}} + \|g\|_{Z^{-1,-\b}_{s,\sigma-1}} + \|q\|_{H^{s}(\R^+)} \Big), \quad j=0,1.
\label{trace est for neg-kdv-initial}
\end{align}
\end{lem}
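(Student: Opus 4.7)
The plan is to mirror the proof of Lemma \ref{Lemma, pos-kdv-initial} almost verbatim, simply swapping the dispersion parameters $(\alpha, \beta) = (1, \beta)$ for $(\alpha, \beta) = (-1, -\beta)$. The key observation is that the three generic linear estimates established in Section \ref{Sec, pre}, namely Lemma \ref{Lemma, lin est} (bounds in the Bourgain space $Y^{\alpha,\beta}_{s, \frac12, \sigma}$), Lemma \ref{Lemma, space trace for free kdv} (space trace for the free propagator), and Lemma \ref{Lemma, space trace for Duhamel} (space trace for the Duhamel term), are all stated for arbitrary $\alpha \in \R \setminus \{0\}$ and $|\beta| \leq 1$. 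Since by hypothesis $0 < \beta \leq 1$, we have $|-\beta| \leq 1$, so these lemmas apply directly with $\alpha = -1$ and the second parameter replaced by $-\beta$.

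First I would split $\wt{v_1}$ via Duhamel's formula as
\[ \wt{v_1}(x,t) = \eta(t)\, W^{-1,-\b}_R(t)(E_s q)(x) + \eta(t) \int_0^t W^{-1,-\b}_R(t-\tau) g(\cdot, \tau)\, d\tau, \]
where $E_s$ is the extension operator from $H^s(\R^+)$ to $H^s(\R)$ as in \eqref{ext for p}. Applying Lemma \ref{Lemma, lin est} with $(\alpha,\beta) = (-1,-\b)$ to each of the two terms, and then using the extension bound $\|E_s q\|_{H^s(\R)} \leq C\|q\|_{H^s(\R^+)}$, yields \eqref{est for neg-kdv-initial}. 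In particular, the fact that $\wt{v_1}$ lies in $C_t(\R; H^s(\R))$ follows from the corresponding membership in $Y^{-1,-\b}_{s,\frac12,\sigma}$.

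For the space-trace estimate \eqref{trace est for neg-kdv-initial}, I would apply Lemma \ref{Lemma, space trace for free kdv} to the first term and Lemma \ref{Lemma, space trace for Duhamel} to the Duhamel term, again with $(\alpha,\beta)=(-1,-\b)$. These give the regularity $\wt{v_1} \in C^j_x\big(\R; H^{\frac{s+1-j}{3}}_t(\R)\big)$ for $j=0,1$, together with the quantitative bound
\[ \sup_{x \in \R}\big\|\p_x^j \wt{v_1}\big\|_{H^{\frac{s+1-j}{3}}_t(\R)} \leq C\Big(\|g\|_{X^{-1,-\b}_{s,\sigma-1}} + \|g\|_{Z^{-1,-\b}_{s,\sigma-1}} + \|E_s q\|_{H^s(\R)}\Big), \]
and the extension bound \eqref{ext for p} finishes the argument.

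Because all the nontrivial machinery is already packaged in the lemmas of Section \ref{Sec, pre}, there is essentially no genuine obstacle here; the only point to verify is that the sign change in the symbol $\phi^{-1,-\b}(\xi) = -\xi^3 + \b \xi$ is harmless, which is exactly what the generality in $\alpha$ and $\beta$ guarantees. The constraint $\frac12 < \sigma \leq 1$ is precisely the range in which Lemmas \ref{Lemma, lin est} and \ref{Lemma, space trace for Duhamel} are stated, so no additional restriction on $\sigma$ arises.
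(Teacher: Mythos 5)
Your proposal is correct and follows the paper's own argument exactly: the paper's proof of this lemma is precisely to deduce \eqref{est for neg-kdv-initial} from \eqref{ext for p} and Lemma \ref{Lemma, lin est}, and \eqref{trace est for neg-kdv-initial} from \eqref{ext for p}, Lemma \ref{Lemma, space trace for free kdv} and Lemma \ref{Lemma, space trace for Duhamel}, all applied with $(\alpha,\beta)=(-1,-\b)$, which is admissible since those lemmas hold for arbitrary $\alpha\neq 0$ and $|\beta|\leq 1$.
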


\begin{proof}
(\ref{est for neg-kdv-initial}) follows from (\ref{ext for p}) and Lemma \ref{Lemma, lin est}. (\ref{trace est for neg-kdv-initial}) follows  from(\ref{ext for p}), Lemma \ref{Lemma, space trace for free kdv} and Lemma \ref{Lemma, space trace for Duhamel}.
\end{proof}


Parallel to Lemma \ref{Lemma, formula for pos-kdv-bdr}, we can use the Laplace transform method in \cite{BSZ02} to derive an explicit formula for the boundary integral operator $ W^{-1,-\b}_{bdr}(b_1,b_2) $. For any $\b>0$, recall the definitions (\ref{def of P_b}) and (\ref{def of R_b}) for $P_\b$ and $R_\b$.

\begin{lem}\label{Lemma, formula for neg-kdv-bdr}
The boundary integral operator $W^{-1,-\b}_{bdr}(b_1,b_2) $ associated to  (\ref{neg-kdv-bdr}) has the explicit integral respresnetation
\be\label{formula for neg-kdv-bdr}\begin{split}
W^{-1,-\b}_{bdr}(b_1,b_2)(x,t) &=\frac{1}{\pi}Re\int_{\sqrt{\b}}^{\infty}e^{iP_{\b}(\mu)t}\Big[e^{-i\mu x}A-e^{\frac{i\mu x}{2}}e^{-R_{\b}(\mu)x}B\Big]\,dP_{\b}(\mu),\quad x,t\geq 0,
\end{split}\ee	
where 
\be\label{A and B}\left\{\begin{array}{l}
A = A(\mu) = \dfrac{R_\b(\mu)-i\mu/2}{R_\b(\mu)-3\mu i/2}\,\wh{b_1^*}[P_{\b}(\mu)]+\dfrac{1}{R_\b(\mu)-3\mu i/2}\,\wh{b_2^*}[P_{\b}(\mu)], \vspace{0.1in}\\
B = B(\mu) = \dfrac{i\mu}{R_\b(\mu)-3\mu i/2}\, \wh{b_1^*}[P_{\b}(\mu)]+\dfrac{1}{R_\b(\mu)-3\mu i/2}\,\wh{b_2^*}[P_{\b}(\mu)],
\end{array}\right.\ee
$b_1^* $ and $b_2^*$ are the zero extensions of $b_1$ and $b_2 $ from $\R^+_0$ to $\R$, respectively.
\end{lem}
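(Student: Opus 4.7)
The plan is to apply the Laplace transform in $t$, reduce the IBVP to a two-point boundary value problem for an ODE in $x$ with a bounded-at-infinity condition, and then invert the transform by a contour deformation that parameterizes the positive imaginary $s$-axis by the real variable $\mu \in [\sqrt{\beta},\infty)$. Setting $\tilde z(x,s) = \int_0^\infty e^{-st} z(x,t)\,dt$ for $\operatorname{Re} s > 0$ and using $z(x,0)=0$, the PDE becomes
\begin{equation*}
\tilde z_{xxx} + \beta \tilde z_x - s\tilde z = 0,\qquad \tilde z(0,s) = \tilde b_1(s),\quad \tilde z_x(0,s) = \tilde b_2(s),
\end{equation*}
with characteristic polynomial $\lambda^3 + \beta\lambda - s = 0$.

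Next, I would change spectral variable by $s = iP_\beta(\mu)$, $\mu \in [\sqrt{\beta},\infty)$. With this choice the cubic factors as $(\lambda+i\mu)\bigl(\lambda^2 - i\mu\lambda + (\beta-\mu^2)\bigr)=0$, and its three roots are $\lambda_0 = -i\mu$ and $\lambda_\pm = \tfrac{i\mu}{2} \pm R_\beta(\mu)$. The branch convention in the definition of $R_\beta$ guarantees that $\lambda_0$ is purely imaginary and $\lambda_- = \tfrac{i\mu}{2} - R_\beta(\mu)$ has non-positive real part, while $\lambda_+$ is unbounded as $x\to\infty$ and must be discarded. Hence the admissible bounded solution must take the form $\tilde z(x,s) = A(\mu) e^{-i\mu x} - B(\mu) e^{i\mu x/2} e^{-R_\beta(\mu)x}$, where the minus sign in front of $B$ is chosen to match the final formula.

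Imposing the two boundary conditions at $x=0$ yields the linear system
\begin{align*}
A - B &= \wh{b_1^*}\bigl(P_\beta(\mu)\bigr), \\
-i\mu A + \bigl(R_\beta(\mu) - \tfrac{i\mu}{2}\bigr) B &= \wh{b_2^*}\bigl(P_\beta(\mu)\bigr),
\end{align*}
where the identification $\tilde b_j(iP_\beta(\mu)) = \wh{b_j^*}(P_\beta(\mu))$ reflects that the Laplace transform evaluated on the imaginary axis agrees with the Fourier transform of the zero extension. Elementary elimination gives exactly the expressions for $A(\mu)$ and $B(\mu)$ listed in (\ref{A and B}). Inverting the Laplace transform via the Bromwich contour, deforming onto the imaginary axis, substituting $s = iP_\beta(\mu)$ with $ds = i\,dP_\beta(\mu)$, and combining the contributions from the positive and negative imaginary axes through the reality of $b_1,b_2$ produces the representation (\ref{formula for neg-kdv-bdr}), with the factor $1/\pi$ arising from the standard inverse Laplace normalization together with the real-part reduction.

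The main technical obstacle is the contour deformation, which requires: (i) checking that the denominator $R_\beta(\mu) - 3i\mu/2$ is nonzero for all $\mu \geq \sqrt{\beta}$ (immediate from the definition of $R_\beta$, since its real and imaginary parts cannot simultaneously cancel against those of $3i\mu/2$); (ii) ensuring the integrand is analytic in the right half-plane $\operatorname{Re} s>0$ so that no residues are picked up during deformation; and (iii) invoking Jordan-type estimates to close the contour at infinity, using sufficient decay of $\tilde b_1,\tilde b_2$. These are precisely the verifications carried out in \cite{BSZ02} for the half-line KdV boundary problem, and they carry over verbatim to the present setting by the same argument, so I would simply cite that framework rather than reproduce the estimates.
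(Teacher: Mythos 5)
Your proposal is correct and coincides with the paper's own (omitted) argument: the lemma is presented as a direct consequence of the Laplace transform method of \cite{BSZ02}, and your computation --- Laplace transform in $t$, the factorization $\lambda^3+\b\lambda-iP_\b(\mu)=(\lambda+i\mu)\big(\lambda^2-i\mu\lambda+\b-\mu^2\big)$ with roots $\lambda_0=-i\mu$, $\lambda_{\pm}=\tfrac{i\mu}{2}\pm R_\b(\mu)$, discarding $\lambda_+$, and solving the $2\times 2$ system at $x=0$ --- yields exactly the coefficients $A$, $B$ in (\ref{A and B}) and, after Bromwich inversion with $s=iP_\b(\mu)$ and the real-part reduction, the representation (\ref{formula for neg-kdv-bdr}). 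One small point: for $\sqrt{\b}\le\mu\le\sqrt{4\b/3}$ the denominator $R_\b(\mu)-\tfrac{3i\mu}{2}$ is purely imaginary, so "real and imaginary parts cannot simultaneously cancel" is not quite the right reason it is nonzero; rather, it could vanish only at $\mu=\sqrt{\b/3}$, which lies outside the integration range $\mu\ge\sqrt{\b}$.
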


Similar to the discussion after Lemma \ref{Lemma, formula for pos-kdv-bdr}, we intend to extend $W^{-1,-\b}_{bdr}(b_1,b_2)$ to the whole plane $\m{R}^2$ with some specific properties. Recalling the function $K_{\b}(x,\mu)$ in (\ref{K}), we define $\Phi^{-1,-\b}_{bdr}(b_1,b_2):\m{R}\times\m{R}\to\m{R}$ by
\be\label{neg-bdr-op1}\begin{split}
\Phi^{-1,-\b}_{bdr}(b_1,b_2)(x,t) &=\frac{1}{\pi}Re\int_{\sqrt{\b}}^{\infty}e^{iP_{\b}(\mu)t}\Big[e^{-i\mu x}A-e^{\frac{i\mu x}{2}}K_{\b}(x,\mu)B\Big]\,dP_{\b}(\mu),
\end{split}\ee
where $A$ and $B$ are as defined in (\ref{A and B}). It is easily seen that 
when $x,t\geq 0$, $K_{\b}(x,\mu) = e^{-R_{\b}(\mu)x}$, so $\Phi^{-1,-\b}_{bdr}(b_1,b_2)$ agrees with $W^{-1,-\b}_{bdr}(b_1,b_2)$ on $\m{R}^{+}_0\times \m{R}^{+}_0$. Hence, $\Phi^{-1,-\b}_{bdr}(b_1,b_2)$ is an extension of $W^{-1,-\b}_{bdr}(b_1,b_2)$ to $\m{R}\times\m{R}$. Moreover, if $s>-\frac12$, then it satisfies some desired properties as shown below.

\begin{lem}\label{Lemma, neg-kdv-bdr}
Let $s\in(-\frac12,3]$, $0<\b\leq 1$ and $\frac12<\sigma\leq \min\big\{1,\frac{2s+7}{12}\big\}$. For any $b_1\in H^{\frac{s+1}{3}}(\m{R}^+)$ and $b_2\in H^{\frac{s}{3}}(\m{R}^+)$ that are compatible with (\ref{neg-kdv-bdr}), the function 
\[\wt{v_2}:=\eta(t)\Phi^{-1,-\b}_{bdr}(b_1,b_2),\] 
equals $W^{-1,-\b}_{bdr}(b_1,b_2)$ on $\m{R}^+_0\times [0,1]$ and belongs to $Y_{s,\frac12,\sigma}^{-1,-\b}\bigcap C_{x}^{j}\big(\m{R}; H_{t}^{\frac{s+1-j}{3}}(\m{R})\big)$ for $j=0,1$. In addition, the following estimates hold with some constant $C=C(s,\sigma)$.
\begin{align}
\big\| \wt{v_2}\big\|_{Y^{-1,-\b}_{s,\frac12,\sigma}} &\leq  C\Big(\|b_1\|_{H^{\frac{s+1}{3}}(\m{R}^+)}+\|b_2\|_{H^{\frac{s}{3}}(\m{R}^+)}\Big),\label{est for neg-kdv-bdr} \\
\sup_{x\in\m{R}}\big\|\p^j_x \wt{v_2}\big\|_{H^{\frac{s+1-j}{3}}_t(\m{R})} &\leq  C\Big(\|b_1\|_{H^{\frac{s+1}{3}}(\m{R}^+)}+\|b_2\|_{H^{\frac{s}{3}}(\m{R}^+)}\Big), \quad j=0,1. \label{trace est for neg-kdv-bdr}
\end{align}
\end{lem}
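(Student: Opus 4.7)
The plan is to adapt the blueprint of Lemma \ref{Lemma, pos-kdv-bdr}, splitting the extension (\ref{neg-bdr-op1}) into a \emph{plane-wave part} (integrating $e^{-i\mu x}A$) and a \emph{boundary-layer part} (integrating $e^{i\mu x/2}K_\b(x,\mu)B$). Since each part is linear in $(b_1,b_2)$, the $b_1$- and $b_2$-contributions are treated separately. The key pointwise bound is that $|R_\b(\mu)-3i\mu/2|\gs \la\mu\ra$ for $\mu\geq\sqrt{\b}$ and $\b\in(0,1]$, whence the two coefficients $\frac{R_\b(\mu)-i\mu/2}{R_\b(\mu)-3i\mu/2}$ and $\frac{i\mu}{R_\b(\mu)-3i\mu/2}$ are $O(1)$, while $\frac{1}{R_\b(\mu)-3i\mu/2}=O(\la\mu\ra^{-1})$; this $O(\la\mu\ra^{-1})$ gain on the coefficients of $\wh{b_2^*}$ is precisely what accommodates the one-derivative drop from $b_1\in H^{(s+1)/3}(\R^+)$ to $b_2\in H^{s/3}(\R^+)$.

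For the boundary-layer part, define $h$ on $\R$ by $\wh{h}(\mu):=\mathrm{coeff}(\mu)\,(P_\b)'(\mu)\,\wh{b_j^*}(P_\b(\mu))$; Corollary \ref{Cor, mul est} yields $\|h\|_{H^s(\R)}\ls \|b_1\|_{H^{(s+1)/3}(\R^+)}+\|b_2\|_{H^{s/3}(\R^+)}$, and the remaining operator coincides with $\mcal{T}_\b(h)$ from the proof of Lemma \ref{Lemma, pos-kdv-bdr} up to the harmless sign reversal in the spatial oscillation $e^{i\mu x/2}$ versus $e^{-i\mu x/2}$ (the relevant $L^2$-estimates being symmetric in $\xi$) and the switch from the characteristic $\phi^{1,\b}$ to $\phi^{-1,-\b}$. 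Consequently, the four estimates (\ref{pos-kdv-bdr, FR})--(\ref{pos-kdv-bdr, space-trace}) transfer directly to bound this piece in $Y^{-1,-\b}_{s,\frac12,\sigma}$ and in the space-trace space.

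The plane-wave part is the genuinely new ingredient but is in fact the easier one: since $\phi^{-1,-\b}(-\mu)=\mu^3-\b\mu=P_\b(\mu)$, the substitution $\xi=-\mu$ identifies the plane-wave integral with a free evolution,
\[
\i_{\sqrt{\b}}^{\infty} e^{iP_\b(\mu)t}e^{-i\mu x}A(\mu)\,dP_\b(\mu) = W^{-1,-\b}_R(t)g_A(x),
\]
where $g_A\in L^2(\R)$ is defined by $\wh{g_A}(\xi)=\mb{1}_{\{\xi\leq -\sqrt{\b}\}}A(-\xi)P_\b'(-\xi)$. From the pointwise bound $|A(\mu)|\ls |\wh{b_1^*}(P_\b(\mu))|+\la\mu\ra^{-1}|\wh{b_2^*}(P_\b(\mu))|$, Lemma \ref{Lemma, est on H^s norm} together with Corollary \ref{Cor, mul est} (the compatibility conditions of Definition \ref{Def, compatibility-lin} being exactly what Lemma \ref{Lemma, 0 ext est} requires to obtain $b_j^*\in H^{\cdot}(\R)$) gives $\|g_A\|_{H^s(\R)}\ls \|b_1\|_{H^{(s+1)/3}(\R^+)}+\|b_2\|_{H^{s/3}(\R^+)}$. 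Lemma \ref{Lemma, lin est} and Lemma \ref{Lemma, space trace for free kdv} then supply the $Y^{-1,-\b}_{s,\frac12,\sigma}$- and space-trace bounds for $\eta(t)W^{-1,-\b}_R(t)g_A$, which hold for all $x\in\R$.

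The main obstacles are inherited from Lemma \ref{Lemma, pos-kdv-bdr}: the restriction $\sigma\leq \frac{2s+7}{12}$ arises from the boundary-layer $\Lambda^{-1,-\b}_{s,\sigma}$-estimate via the constraint encoded in (\ref{T4, L-norm, eps_1}), and the lower threshold $s>-\frac12$ is sharp for the time-trace estimate for exactly the reason recorded in Remark \ref{Re, -1/2nec}. The endpoint $s=\frac12$ is recovered by interpolating between $s=0$ and $s=3$.
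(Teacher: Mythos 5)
Your proposal is correct and follows essentially the same route as the paper: the same splitting of (\ref{neg-bdr-op1}) into the plane-wave piece (which, after absorbing $A\,(P_\b)'$ into an $H^s(\m{R})$ datum via the bounds $|R_\b(\mu)-3\mu i/2|\sim\la\mu\ra$ and Corollary \ref{Cor, mul est}, is exactly a free flow $\eta(t)W^{-1,-\b}_{R}(t)g_A$ handled by Lemmas \ref{Lemma, lin est} and \ref{Lemma, space trace for free kdv} — the paper writes the same object as a reflected evolution $[W^{-1,-\b}_R(-t)h](-x)$, which is an equivalent formulation) and the boundary-layer piece, which transfers the estimates (\ref{pos-kdv-bdr, FR})--(\ref{pos-kdv-bdr, space-trace}) for $\mcal{T}_\b$ verbatim under $\xi\mapsto-\xi$. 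The only small correction: besides $s=\tfrac12$ you must also except $s=\tfrac32$ from the direct argument, since for $b_2\in H^{s/3}(\m{R}^+)$ the zero-extension Lemma \ref{Lemma, 0 ext est} (hence Corollary \ref{Cor, mul est}) fails at the half-integer index $s/3=\tfrac12$; the paper recovers both endpoints by the same interpolation between $s=0$ and $s=3$ that you already invoke.
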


\begin{proof}
We will first establish the estimates (\ref{est for neg-kdv-bdr}) and (\ref{trace est for neg-kdv-bdr}) for $s\in(-\frac12,3]\backslash\{\frac12 ,\frac32\}$. Then the corresponding estimates in the case of $ s=\frac12 $ or $ s=\frac32 $ can be deduced by interpolating between the case $ s=0 $ and the case $ s=3 $.
Fix $s\in(-\frac12,3]\backslash\{\frac12,\frac32\}$ and $0<\b\leq 1$. Define $f_{\b}$ and $g_{\b}$ such that $\wh{f_\b}(\mu)=(P_{\b})'(\mu)A$ and $\wh{g_\b}(\mu)=(P_{\b})'(\mu)B$, where $A$ and $B$ are as defined in (\ref{A and B}). Then 
\begin{align*}
\Phi^{-1,-\b}_{bdr}(b_1,b_2)(x,t)=\int_{\sqrt{\b}}^{\infty}e^{iP_{\b}(\mu)t}\Big[e^{-i\mu x}\wh{f_\b}(\mu)-e^{i\mu x/2}K_{\b}(x,\mu)\wh{g_\b}(\mu)\Big]\,d\mu.
\end{align*}
Since $0<\b\leq 1$ and $(P_{\b})'(\mu)=3\mu^2-\b$, then $| (P_\b)'(\mu) | \sim \mu$ and $ | R_\b(\mu) -3\mu i/2 | \sim \mu$. Therefore, 
\be\label{Mul, neg}\left\{\begin{array}{ll}
\l |(P_{\b})'(\mu)\dfrac{R_\b(\mu)-i\mu/2}{R_\b(\mu)-3\mu i/2}\r | &\ls |(P_{\b})'(\mu)|^{1/2}\la \mu\ra,\vspace{0.08in}\\
\l |(P_{\b})'(\mu)\dfrac{i\mu}{R_\b(\mu)-3\mu i/2}\r | &\ls |(P_{\b})'(\mu)|^{1/2}\la \mu\ra, \vspace{0.08in}\\
\l |(P_{\b})'(\mu)\dfrac{1}{R_\b(\mu)-3\mu i/2}\r | &\ls |(P_{\b})'(\mu)|^{1/2}.
\end{array}\right.\ee
Hence, it follows from (\ref{Mul, neg}) and (\ref{A and B}) that
\[
|\wh{f_\b}(\mu)|+|\wh{g_\b}(\mu)| \ls |(P_{\b})'(\mu)|^{1/2}\Big(\la\mu\ra\wh{b_1^*}[P_\b(\mu)]+\wh{b_2^*}[P_\b(\mu)]\Big).
\]
Therefore, 
$\la\mu\ra^{s} \big( |\wh{f_\b}(\mu)|+|\wh{g_\b}(\mu)| \big) \ls |(P_{\b})'(\mu)|^{1/2} \big( \la\mu\ra^{s+1}\wh{b_1^*}[P_\b(\mu)] + \la\mu\ra^{s}\wh{b_2^*}[P_\b(\mu)] \big)$.
Since $b_{1}\in H^{\frac{s+1}{3}}(\m{R}^+)$ and $b_{2}\in H^{\frac{s}{3}}(\m{R}^+)$ are compatible with (\ref{neg-kdv-bdr}), it then follows from Corollary \ref{Cor, mul est} that $f_{\b},g_{\b}\in H^{s}(\m{R})$ and 
\[\|f_\b\|_{H^{s}(\m{R})}+\|g_\b\|_{H^{s}(\m{R})}\ls \|b_1\|_{H^{\frac{s+1}{3}}(\m{R}^+)}+ \|b_2\|_{H^{\frac{s}{3}}(\m{R}^+)}.\]
Inspired by the above observation, we define an operator $\mcal{L}_{\b}$ as 
\be\label{L}
\big[\mcal{L}_{\b}(f,g)\big](x,t):=\int_{\sqrt{\b}}^{\infty}e^{iP_{\b}(\mu)t}\Big[e^{-i\mu x}\wh{f}(\mu)-e^{i\mu x/2}K_{\b}(x,\mu)\wh{g}(\mu)\Big]\,d\mu.\ee
Recalling $Y^{-1,-\b}_{s,\frac12,\sigma}=X^{-1,-\b}_{s,\frac12}\cap \Lambda^{-1,-\b}_{s,\sigma}\cap C_{t}(\m{R},H^s(\m{R}))$, the proof of Lemma \ref{Lemma, pos-kdv-bdr} reduces to establishing the following estimates for the operator $\mcal{L}_{\b}$.
\begin{align}
\big\|\eta(t)\mcal{L}_\b(f,g)\big\|_{X^{-1,-\b}_{s,\frac12}} &\leq  C(\|f\|_{H^s}+\|g\|_{H^s}),\label{neg-kdv-bdr, FR} \\
\big\|\eta(t)\mcal{L}_\b(f,g)\big\|_{\Lambda^{-1,-\b}_{s,\sigma}} &\leq  C(\|f\|_{H^s}+\|g\|_{H^s}),\label{neg-kdv-bdr, Lambda}\\  
\sup_{t\in\m{R}}\big\|\eta(t)\mcal{L}_\b(f,g)\big\|_{H^{s}_{x}(\m{R})} &\leq  C(\|f\|_{H^s}+\|g\|_{H^s}),\label{neg-kdv-bdr, time-trace} \\
\sup_{x\in\m{R}}\big\|\p^j_x \big[\eta(t)\mcal{L}_\b(f,g)\big]\big\|_{H^{\frac{s+1-j}{3}}_t(\m{R})} &\leq  C(\|f\|_{H^s}+\|g\|_{H^s}), \quad j=0,1. \label{neg-kdv-bdr, space-trace}
\end{align}
To justify these estimates, we wrtie $\mcal{L}_{\b}(f,g)=\mcal{L}_{\b,1}(f)-\mcal{L}_{\b,2}(g)$, where
\[\begin{split}
&[\mcal{L}_{\b,1}(f)](x,t):=\int_{\sqrt{\b}}^{\infty}e^{iP_{\b}(\mu)t}e^{-i\mu x}\wh{f}(\mu)\,d\mu, \\
&[\mcal{L}_{\b,2}(g)](x,t):=\int_{\sqrt{\b}}^{\infty}e^{iP_{\b}(\mu)t}e^{i\mu x/2}K_{\b}(x,\mu)\wh{g}(\mu)\,d\mu.\end{split}\]

\begin{itemize}
\item Firstly, we define $h$ such that $\wh{h}(\mu)=\mb{1}_{\{\mu>\sqrt{\b}\}}\wh{f}(\mu)$. Then $\| h \|_{H^s}\leq \|f\|_{H^s}$ and it follows from (\ref{semigroup op-1}) that
\[[\mcal{L}_{\b,1}(f)](x,t)=\big[W^{-1,-\b}_{R}(-t) h \big](-x), \quad\forall\,x,t\in\m{R}.\] Noticing that for any function $F(x,t)$, its $X^{-1,-\b}_{s,\frac12}$, $\Lambda^{-1,-\b}_{s,\sigma}$, $L^{\infty}_{t}H^{s}_{x}$ and $L^{\infty}_{x}H^{\frac{s+1-j}{3}}_{t}$ norms are preserved under the operation $F(x,t)\rightarrow F(-x,-t)$, so the estimates (\ref{neg-kdv-bdr, FR})-(\ref{neg-kdv-bdr, space-trace}) for $\mcal{L}_{\b,1}(f)$ follow from Lemma \ref{Lemma, lin est}, Lemma \ref{Lemma, space trace for free kdv} and the fact $\| h \|_{H^s}\leq \|f\|_{H^s}$.

\item Secondly, the operator $\mcal{L}_{\b,2}$ is very similar to $\mcal{T}_{\b}$ (see (\ref{T})). So by analogous arguments, the estimates (\ref{neg-kdv-bdr, FR})-(\ref{neg-kdv-bdr, space-trace}) for $\mcal{L}_{\b,2}(g)$ can also be established.
\end{itemize}
Therefore, Lemma \ref{Lemma, neg-kdv-bdr} is established.
\end{proof}

Analogous to Lemma \ref{Lemma, pos-kdv-bdr}, the estimate (\ref{est for neg-kdv-bdr}) fails for $s<-\frac12$, so in order to handle the case when $-\frac34<s\leq -\frac12$, we need another extension, denoted as $  \Psi_{bdr}^{-1,-\b}(b_1,b_2) $, for $W_{bdr}^{-1,-\b}(b_1,b_2)$. Again, we will combine the above extension $\Phi_{bdr}^{-1,-\b}(b_1,b_2)$ as in (\ref{neg-bdr-op1}) and the construction in \cite{BSZ06} to find it. 

\begin{lem}\label{Lemma, neg-kdv-bdr, low-reg}
Let $-\frac34<s\leq -\frac12$, $0<\b\leq 1$, $b_1\in H^{\frac{s+1}{3}}(\m{R}^+)$ and $b_2\in H^{\frac{s}{3}}(\m{R}^+)$. Then there exist $\sigma_2=\sigma_2(s)>\frac12$ and an extension $\Psi_{bdr}^{-1,-\b}(b_1,b_2)$ of $W_{bdr}^{-1,-\b}(b_1,b_2)$ such that for any $\sigma\in\big(\frac12,\sigma_2\big]$, the function 
\[\wt{v_2}:=\eta(t) \Psi_{bdr}^{-1,-\b}(b_1,b_2)\]
equals $W_{bdr}^{-1,-\b}(b_1,b_2)$ on $\m{R}^+_0\times[0,1]$ and belongs to $Y_{s,\frac12,\sigma}^{-1,-\b}\bigcap C_{x}^j\big(\m{R}^+_0; H_{t}^{\frac{s+1-j}{3}}(\m{R})\big)$ for $j=0,1$. In addition, the following estimates hold with some constant $C=C(s,\sigma)$. 
\begin{align}
\big\| \wt{v_2}\big\|_{Y^{-1,-\b}_{s,\frac12,\sigma}} &\leq  C\Big(\|b_1\|_{H^{\frac{s+1}{3}}(\m{R}^+)}+\|b_2\|_{H^{\frac{s}{3}}(\m{R}^+)}\Big),\label{est for neg-kdv-bdr, low-reg} \\
\sup_{x\geq 0}\big\|\p^j_x \wt{v_2}\big\|_{H^{\frac{s+1-j}{3}}_t(\m{R})} &\leq  C\Big(\|b_1\|_{H^{\frac{s+1}{3}}(\m{R}^+)}+\|b_2\|_{H^{\frac{s}{3}}(\m{R}^+)}\Big), \quad j=0,1. \label{trace est for neg-kdv-bdr, low-reg}
\end{align}
\end{lem}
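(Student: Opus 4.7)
The plan is to mirror the strategy of Lemma \ref{Lemma, pos-kdv-bdr, low-reg}, now applied to the explicit representation \eqref{formula for neg-kdv-bdr}. Split the $\mu$-integral into the bounded piece $\sqrt{\b}\leq\mu\leq 4$ and the tail piece $\mu\geq 4$. For the bounded piece, define an extension by inserting the cut-off $K_\b(x,\mu)$ exactly as in $\Phi^{-1,-\b}_{bdr}$ from \eqref{neg-bdr-op1}. Since $\mu$ ranges over a compact set, the proof of Lemma \ref{Lemma, neg-kdv-bdr} gives a stronger estimate in $X^{-1,-\b}_{3,1}$, and the restriction $s>-\frac12$ there was only needed to handle the tail in $\mu$; hence the bounded piece satisfies all the required bounds for every $s\in(-\frac34,-\frac12]$.

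For the tail piece $\mu\geq 4$, write it as the difference of an oscillatory term $\mathrm{Re}\int_4^\infty e^{iP_\b(\mu)t}e^{-i\mu x}A\,dP_\b(\mu)$ and a decaying term $\mathrm{Re}\int_4^\infty e^{iP_\b(\mu)t}e^{i\mu x/2}e^{-R_\b(\mu)x}B\,dP_\b(\mu)$. Treat the first term exactly as $\mcal{L}_{\b,1}$ was treated in the proof of Lemma \ref{Lemma, neg-kdv-bdr}: it equals $\bigl[W^{-1,-\b}_R(-t)h\bigr](-x)$ for a function $h$ obtained from the coefficient of $A$, and the mapping properties of $W^{-1,-\b}_R$ in Lemma \ref{Lemma, lin est} and Lemma \ref{Lemma, space trace for free kdv} give the desired $Y^{-1,-\b}_{s,\frac12,\sigma}$ and trace bounds for every $s$. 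For the decaying term, recognize that it has exactly the form $\mcal{I}_{\b,-\frac12}(F)$ for an appropriate $F$, where $\mcal{I}_{\b,m}$ is the operator introduced in \eqref{def of I_b}. Therefore Claim \ref{Prop, kdv-bdr-low-reg} supplies an extension $\mcal{EI}_{\b,-\frac12}(F)$ on $\R\times\R$ with the required bounds, and we take the sum of these three extensions as the definition of $\Psi^{-1,-\b}_{bdr}(b_1,b_2)$.

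The main bookkeeping step is to verify that the data $F$ appearing in the application of Claim \ref{Prop, kdv-bdr-low-reg} truly lies in $H^{\frac{s+1}{3}}(\R)$. Using the coefficients in \eqref{A and B} together with $|R_\b(\mu)-3\mu i/2|\sim \mu$ for $\mu\geq 4$, the relevant multipliers obey
\[
\bigl|(P_\b)'(\mu)B\bigr|\;\lesssim\;|(P_\b)'(\mu)|^{1/2}\bigl(\langle\mu\rangle\,\wh{b_1^*}[P_\b(\mu)]+\wh{b_2^*}[P_\b(\mu)]\bigr),
\]
and an identical bound holds for $(P_\b)'(\mu)A$. By Lemma \ref{Lemma, 0 ext est} (legitimate since $s+1\in(\tfrac14,\tfrac12)$ so the zero extensions require no compatibility) and Corollary \ref{Cor, mul est}, the resulting $F$ and $h$ satisfy
\[
\|F\|_{H^{\frac{s+1}{3}}(\R)}+\|h\|_{H^s(\R)}\lesssim \|b_1\|_{H^{\frac{s+1}{3}}(\R^+)}+\|b_2\|_{H^{\frac{s}{3}}(\R^+)}.
\]
Feeding this into Claim \ref{Prop, kdv-bdr-low-reg} and the free-flow estimates then yields \eqref{est for neg-kdv-bdr, low-reg}.

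The trace estimates \eqref{trace est for neg-kdv-bdr, low-reg} are obtained in parallel: the bounded-$\mu$ and oscillatory-tail pieces enjoy trace bounds valid for all $x\in\R$ (from the proofs of Lemmas \ref{Lemma, neg-kdv-bdr} and \ref{Lemma, lin est}--\ref{Lemma, space trace for Duhamel}), while the $\mcal{EI}_{\b,-\frac12}$ piece supplies the trace bound \eqref{trace est for kdv-bdr-low reg} only for $x\geq 0$, which is why the conclusion is stated on $\R^+_0$ rather than $\R$ (cf.~Remark \ref{Remark, ext trace deri}). The hard part, as in Lemma \ref{Lemma, pos-kdv-bdr, low-reg}, is not any new computation but the delicate extension inside Claim \ref{Prop, kdv-bdr-low-reg}, borrowed from \cite{BSZ06}; once that is in hand, matching it to the two coefficient terms $A$ and $B$ via the bounds \eqref{Mul, neg} completes the proof.
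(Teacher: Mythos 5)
Your proposal is correct and follows essentially the same route as the paper: split off the compact-$\mu$ piece (handled by the $K_\b$ cut-off as in $\Phi^{-1,-\b}_{bdr}$ and the argument of Lemma \ref{Lemma, neg-kdv-bdr}), treat the oscillatory $A$-part by the reflected free-flow ($\mcal{L}_{\b,1}$) argument, and identify the decaying $B$-tail with $\mcal{I}_{\b,-\frac12}$ so that Claim \ref{Prop, kdv-bdr-low-reg} supplies the extension, with the norm bookkeeping via \eqref{Mul, neg}, Lemma \ref{Lemma, 0 ext est} and Corollary \ref{Cor, mul est}. The only deviation — cutting the $A$-term at $\mu=4$ rather than keeping it whole as the paper's $I_1$ — is immaterial, and your explicit check that the data fed into Claim \ref{Prop, kdv-bdr-low-reg} lies in $H^{\frac{s+1}{3}}(\m{R})$ is a detail the paper leaves implicit.
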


\begin{proof}[\bf Proof of Lemma  \ref{Lemma, neg-kdv-bdr, low-reg}]
Recall the definition of  $W^{-1,-\b}_{bdr}$ in (\ref{formula for neg-kdv-bdr}) and write
\begin{align*}
W_{bdr}^{-1,-\b}(b_1,b_2)(x,t) = & \frac{1}{\pi} Re \int_{\sqrt{\b}}^{\infty} e^{iP_{\b}(\mu)t} e^{-i\mu x} A\,dP_\b(\mu) - \frac{1}{\pi} Re\int_{\sqrt{\b}}^{4} e^{iP_{\b}(\mu)t} e^{i\mu x/2} e^{-R_{\b}(\mu)x} B\,dP_{\b}(\mu) \\
& - \frac{1}{\pi} Re\int_{4}^{\infty} e^{iP_{\b}(\mu)t} e^{i\mu x/2} e^{-R_{\b}(\mu)x} B\,dP_{\b}(\mu) \\
:= & I_1(x,t) - I_2(x,t) - I_{3}(x,t), \qquad\forall\, x,t\geq 0,
\end{align*}
where $A$ and $B$ are defined as in (\ref{A and B}). For $I_1(x,t)$, it is actually well defined for any $x,t\in\m{R}$, so the trivial extension 
\[\mcal{E}I_1(x,t)=\frac{1}{\pi}Re \int_{\sqrt{\b}}^{\infty}e^{iP_{\b}(\mu)t}e^{-i\mu x}A\,dP_\b(\mu),\qquad\forall\, x,t\in\m{R},\]
satisfies (\ref{est for neg-kdv-bdr, low-reg}) and (\ref{trace est for neg-kdv-bdr, low-reg}), see the estimates for the operator $ \mcal{L}_{\b,1} $ in
the proof of Lemma \ref{Lemma, neg-kdv-bdr}. For $I_2(x,t)$, we define the extension to be 
\[\mcal{E}I_2(x,t)=\frac{1}{\pi}Re \int_{\sqrt{\b}}^{4}e^{iP_{\b}(\mu)t}e^{i\mu x/2}K_{\b}(x,\mu) B \,d P_\b(\mu),\qquad\forall\, x,t\in\m{R},\]
where $ K_\b $ is as defined in (\ref{K}). Then again, by similar argument as that in the proof of Lemma \ref{Lemma, neg-kdv-bdr}, this extension works. For $I_3(x,t)$, due to the formula (\ref{A and B}) for $B$, we define $h$ such that 
\[\wh{h}(\lam)=\mb{1}_{\{\lam\geq P_\b(4)\}}\bigg(\dfrac{i\mu}{R_\b(\mu)-3\mu i/2}\, \wh{b_1^*}(\lam)+\dfrac{1}{R_\b(\mu)-3\mu i/2}\,\wh{b_2^*}(\lam) \bigg),\]
where $\mu=P_\b^{-1}(\lam)$ for $\lam\geq P_\b(4)$. This choice of $h$ implies $\wh{h}\big(P_\b(\mu)\big)=\mb{1}_{\{\mu\geq 4\}}B $. So $I_3(x,t)$ can be rewritten as 
\[I_3(x,t)=\frac{1}{\pi}Re\int_{4}^{\infty}e^{iP_{\b}(\mu)t}e^{i\mu x/2}e^{-R_{\b}(\mu)x}\wh{h}\big(P_\b(\mu)\big)\,dP_{\b}(\mu)=\frac{1}{\pi}\big[\mcal{I}_{\b,-\frac12}(h)\big](x,t),\]
where the operator $\mcal{I}_{\b,-\frac12}(h)$ is defined as in (\ref{def of I_b}). Then it follows from Claim  \ref{Prop, kdv-bdr-low-reg} that the extension $\mcal{E}I_3(x,t):=\frac{1}{\pi}\big[\mcal{EI}_{\b,-\frac12}(h)\big](x,t)$ satisfies (\ref{est for neg-kdv-bdr, low-reg}) and (\ref{trace est for neg-kdv-bdr, low-reg}). 
Hence, Lemma \ref{Lemma, neg-kdv-bdr, low-reg} is verified.
\end{proof}

We now at the stage to present the poof of  Proposition  \ref{Prop, neg-kdv}.

\begin{proof}[\bf Proof of Proposition  \ref{Prop, neg-kdv}]

Without loss of generality, we assume $T=1$. We then first deal with the case when $s\in(-\frac12,3]$. Choose $\sigma_2(s)=\min\big\{1,\frac{2s+7}{12}\big\}$ and consider any $\sigma\in(\frac12,\sigma_2(s)]$. According to the operators $ \Phi_R^{-1,-b} $ and $ \Phi_{bdr}^{-1,-\b} $ in Lemma \ref{Lemma, neg-kdv-initial} and Lemma \ref{Lemma, neg-kdv-bdr} respectively, we define 
\[\begin{split}
\wt{v}&=\Gamma^-_\b(g,q,b_1,b_2)\\
&:=\eta(t)\Phi^{-1,-\b}_{R}(g,q)+\eta(t)\Phi^{-1,-\b}_{bdr}\Big(b_1-\eta(t)\Phi^{-1,-\b}_{R}(g,q)\big|_{x=0},\, b_2-\,\p_x\big[\eta(t)\Phi^{-1,-\b}_{R}(g,q)\big]\big|_{x=0}\Big).
\end{split}\]
The rest proof can be carried out by a parallel argument as that for Proposition \ref{Prop, pos-kdv} except we apply Lemmas \ref{Lemma, neg-kdv-initial},  \ref{Lemma, neg-kdv-bdr} and \ref{Lemma, neg-kdv-bdr, low-reg} instead of Lemmas \ref{Lemma, pos-kdv-initial}, \ref{Lemma, pos-kdv-bdr} and \ref{Lemma, pos-kdv-bdr, low-reg} respectively.

\end{proof}

\section{Bilinear Estimates}
\label{Sec, bilin est}

This section is intended to prove Proposition \ref{Prop, bilin}. Since $b\leq\frac12$ is required to deal with the boundary integral operators, it brings two issues. First, the bilinear estimate may not be justified in the space $X^{\a,\b}_{s,b}$ via the classical methods as in \cite{Bou93b,KPV93Duke, KPV96}. Secondly, $X^{\a,\b}_{s,b}$ is not a subspace of $C_t\big(\m{R}; H^{s}_{x}\big)$, so living in $X^{\a,\b}_{s,b}$ may not guarantee the continuity of the solution flow. In order to overcome these two barriers, we will take advantage of the modified Fourier restriction spaces as defined in (\ref{MFR norm}) and (\ref{Y norm}). On the other hand, as it is well-known that the resonance function plays an essential role in the bilinear estimates, so we first specify its definition, see e.g. \cite{Tao01}.

\begin{defi}[\hspace{-0.03in}\cite{Tao01}]\label{Def, res fcn}
Let $\big((\a_1,\b_1),(\a_2,\b_2),(\a_3,\b_3)\big)$ be a triple in $((\m{R}\backslash\{0\})\times\m{R})^3$. Define the resonance function $H$ associated to this triple by 
\be\label{res fcn}
H(\xi_1,\xi_2,\xi_3)=\sum_{i=1}^3 \phi^{\a_i,\b_i}(\xi_i),\quad\forall\, (\xi_1,\xi_2,\xi_3)\in\m{R}^3\,\,\text{with}\,\, \sum_{i=1}^3 \xi_i=0,\ee
where $\phi$ is defined as in (\ref{phase fn}).
\end{defi}

Since Proposition \ref{Prop, bilin} contains many inequalities, we split its proof into three parts for the sake of clarity, see the following Proposition \ref{Prop, bilin, gc}--Proposition \ref{Prop, bilin, d2}. 

\begin{pro}\label{Prop, bilin, gc}
Let $-\frac34<s\leq 3$, $\a\neq 0$ and $|\b|\leq 1$. Then there exists $\sigma_0=\sigma_0(s,\a)>\frac12$ such that for any $\sigma\in(\frac12,\sigma_0]$, the following bilinear estimates 
\begin{eqnarray}
\|\p_{x}(w_1 w_2)\|_{X^{\a,\b}_{s,\sigma-1}} &\leq & C \| w_1\|_{X^{\a,\b}_{s,\frac12,\sigma}}\| w_2\|_{X^{\a,\b}_{s,\frac12,\sigma}} \label{bilin, gc}\\
\|\p_{x}(w_1 w_2)\|_{Z^{\a,\b}_{s,\sigma-1}} &\leq & C \| w_1\|_{X^{\a,\b}_{s,\frac12,\sigma}}\| w_2\|_{X^{\a,\b}_{s,\frac12,\sigma}} \label{bilin, gc, Z}
\end{eqnarray}
hold for any $w_1,w_2\in X^{\a,\b}_{s,\frac12,\sigma}$  with some constant $C=C(s,\a,\sigma)$.
\end{pro}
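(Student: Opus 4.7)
\medskip

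\noindent\textbf{Proof plan for Proposition~\ref{Prop, bilin, gc}.}

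The plan is to dualize both inequalities into a single trilinear $L^{2}$ estimate on the Fourier side and then carry out a dyadic case analysis driven by the resonance identity. Setting $f_{i}(\xi_{i},\tau_{i}) = \la\xi_{i}\ra^{s}\big[\la L_{i}\ra^{1/2} + \mb{1}_{\{e^{|\xi_{i}|}\leq 3+|\tau_{i}|\}}\la L_{i}\ra^{\sigma}\big]|\widehat{w_{i}}(\xi_{i},\tau_{i})|$ for $i=1,2$, where $L_{i}=\tau_{i}-\phi^{\a,\b}(\xi_{i})$, and extracting a dual variable $f_{3}\in L^{2}$ for either the $X$-norm (weight $\la\xi_{3}\ra^{s}\la L_{3}\ra^{\sigma-1}$) or the $Z$-norm (weight $\la\tau_{3}\ra^{s/3+1/2-\sigma}\la L_{3}\ra^{\sigma-1}$), the desired inequalities reduce to showing
\[
\int_{*}\frac{|\xi_{3}|\,\Theta(\xi_{3},\tau_{3})\,f_{1}f_{2}f_{3}}{\la\xi_{1}\ra^{s}\la\xi_{2}\ra^{s}\big[\la L_{1}\ra^{1/2}+\mb{1}_{B_{1}}\la L_{1}\ra^{\sigma}\big]\big[\la L_{2}\ra^{1/2}+\mb{1}_{B_{2}}\la L_{2}\ra^{\sigma}\big]\la L_{3}\ra^{1-\sigma}}\,\ls\,\prod_{i=1}^{3}\|f_{i}\|_{L^{2}},
\]
where $\int_{*}$ enforces $\xi_{1}+\xi_{2}+\xi_{3}=0$, $\tau_{1}+\tau_{2}+\tau_{3}=0$, and $\Theta$ is either $\la\xi_{3}\ra^{s}$ or $\la\tau_{3}\ra^{s/3+1/2-\sigma}$. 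Because all three functions live on the same characteristic surface $\tau=\phi^{\a,\b}(\xi)=\a\xi^{3}-\b\xi$, the resonance function reduces to
\[
H(\xi_{1},\xi_{2},\xi_{3}) = L_{1}+L_{2}+L_{3} = \a(\xi_{1}^{3}+\xi_{2}^{3}+\xi_{3}^{3}) = 3\a\,\xi_{1}\xi_{2}\xi_{3},
\]
so $\max_{i}\la L_{i}\ra\gs 1+|\a\xi_{1}\xi_{2}\xi_{3}|$. This is the exact KdV-type resonance exploited by Kenig--Ponce--Vega in \cite{KPV96}.

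Next I would perform a dyadic decomposition $|\xi_{i}|\sim N_{i}$, $\la L_{i}\ra\sim M_{i}$, and split into cases according to the frequency configuration (high-high-to-low, high-low-high, low-low-low) and which $M_{i}$ carries the resonance. In each case the standard Kenig--Ponce--Vega computation, based on Plancherel, Cauchy--Schwarz in the tube of $(\xi_{1},\tau_{1})$ variables, and the classical convolution bound
\[
\sup_{\xi_{3},\tau_{3}}\int\frac{d\xi_{1}}{\la L_{1}\ra^{2b_{1}}\la L_{2}\ra^{2b_{2}}}\ls 1,\qquad 2b_{1}+2b_{2}>1,
\]
yields the desired gain for any $s>-\tfrac34$ \emph{provided} both denominator weights for $L_{1},L_{2}$ can be taken to be $\la L_{i}\ra^{1/2+}$. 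Choosing $\sigma=\sigma(s)>\tfrac12$ sufficiently close to $1/2$ (shrinking as $s\downarrow -\tfrac34$) absorbs the $\eps$ losses in the classical estimate.

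\smallskip

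The main obstacle is precisely that the chosen exponent on the $L_{1},L_{2}$ factors is only $1/2$ in the region where the indicator $\mb{1}_{B_{i}}$ is off, i.e.\ the high-spatial-frequency region $e^{|\xi_{i}|}>3+|\tau_{i}|$. However, $e^{|\xi_{i}|}>3+|\tau_{i}|$ forces $|L_{i}|\ls \la\xi_{i}\ra^{3}$ with $|\tau_{i}|\ls e^{|\xi_{i}|}$, so $|\xi_{i}|\gs \log\la L_{i}\ra$, and in any dyadic block $M_{i}\sim 2^{k}$ the spatial frequency is bounded below by $\sim k$. Tracing this through the case analysis, the endpoint divergence of the convolution bound at $b_{1}=b_{2}=1/2$ is only logarithmic in $M_{i}$, i.e.\ polynomial in $k$, and is absorbed by any positive power of $\la\xi_{i}\ra$. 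Conversely, in the complementary low-frequency region $e^{|\xi_{i}|}\leq 3+|\tau_{i}|$ the indicator $\mb{1}_{B_{i}}$ is on and the stronger weight $\la L_{i}\ra^{\sigma}$ with $\sigma>1/2$ is available, which closes the endpoint estimate. This is the mechanism by which the modified space $X^{\a,\b}_{s,\frac12,\sigma}$ rescues what would otherwise be a failure of the classical KdV bilinear estimate at $b=1/2$.

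\smallskip

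Finally, for the $Z$-norm inequality \eqref{bilin, gc, Z}, the weight $\la\tau_{3}\ra^{s/3+1/2-\sigma}$ is handled by the elementary bound $\la\tau_{3}\ra\ls \max\{\la\xi_{3}\ra^{3},\la L_{3}\ra\}$: in the regime $\la L_{3}\ra\leq \la\xi_{3}\ra^{3}$ the $Z$-weight is dominated by $\la\xi_{3}\ra^{s+3/2-3\sigma}\leq\la\xi_{3}\ra^{s}$ (as $\sigma>1/2$), reducing to the $X$-estimate; in the regime $\la L_{3}\ra\geq \la\xi_{3}\ra^{3}$ the extra factor $\la L_{3}\ra^{s/3+1/2-\sigma}$ is favorable and can be absorbed into the resonance gain $\max_{i}\la L_{i}\ra\gs|\xi_{1}\xi_{2}\xi_{3}|$ together with the remaining $\la L_{3}\ra^{\sigma-1}$. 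Hence the $Z$-estimate follows from the same trilinear bookkeeping as the $X$-estimate.
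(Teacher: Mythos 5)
Your plan for the $X$-estimate (\ref{bilin, gc}) is essentially the paper's route: dualize to a weighted trilinear convolution bound (Lemma \ref{Lemma, bilin to weighted l2}), use the resonance identity (which for this triple is indeed $H=3\a\xi_1\xi_2\xi_3$, the $\b$-terms cancelling), and run a case analysis in which the extra weight $\la L_i\ra^{\sigma}$, $\sigma>\frac12$, is invoked exactly where an input frequency is small (there the indicator is automatically on), with $\sigma_0\downarrow\frac12$ as $s\downarrow-\frac34$; the paper carries this out in detail for the mixed-phase version (Proposition \ref{Prop, bilin, d2}) and declares the present case analogous. One caveat: your treatment of the indicator-off (high spatial frequency) region, "log losses absorbed by any positive power of $\la\xi_i\ra$," is not how the paper closes those cases and is shaky for $s<0$, where $\la\xi_1\ra^{-s}\la\xi_2\ra^{-s}$ sits in the numerator so there is no spare negative power of $\la\xi_i\ra$; the paper avoids logarithms altogether by rebalancing modulation exponents (e.g. $\frac12+\frac12+(1-\sigma)=(2-3\sigma)+\sigma+\sigma$), letting the maximal modulation, made large by the resonance, pay the surplus.

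The genuine gap is in the $Z$-estimate (\ref{bilin, gc, Z}). Your first-regime reduction, "$\la L_3\ra\le\la\xi_3\ra^3$ implies $\la\tau_3\ra^{s/3+1/2-\sigma}\ls\la\xi_3\ra^{s}$," fails whenever the exponent $\frac{s}{3}+\frac12-\sigma$ is negative (i.e. for all $s\le 0$ once $\sigma$ is near $\frac12$): at points with $\tau_3$ bounded and $|\xi_3|=N$ large (which lie in that regime up to constants depending on $\a$), the left side is $\sim 1$ while $\la\xi_3\ra^{s}=N^{s}\to 0$, so the estimate there does not reduce to (\ref{bilin, gc}). This is precisely where the paper must work rather than reduce: it splits by $\la\tau_3\ra$ versus $\la\xi_3\ra^{3}$ and, in the region $\la\tau_3\ra\ll\la\xi_3\ra^{3}$, exploits $\la L_3\ra\sim|\xi_3|^{3}$ to prove the stronger bound (\ref{d2, Z, reduced int est}). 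Symmetrically, your second-regime claim that the surviving factor is "favorable and can be absorbed into the resonance gain" breaks down for $s$ near $3$: combining $\la\tau_3\ra^{s/3+1/2-\sigma}$ with $\la L_3\ra^{\sigma-1}$ leaves $\la\tau_3\ra^{s/3-1/2}$ (up to $\la\tau_3\ra^{1/2}$ at $s=3$), and when all $|\xi_i|\ls 1$ and $\tau_3$ is huge the resonance $|H|=3|\a\xi_1\xi_2\xi_3|\ls 1$ gives nothing; what is needed is the triangle-inequality observation $\la\tau_3\ra\ls\la\xi_1\ra^{3}+\la\xi_2\ra^{3}+\la L_1\ra+\la L_2\ra$ (the paper's (\ref{d2, Z, s=3, key})), followed by the sub-case analysis that uses $M_1\ge\la L_1\ra^{\sigma}$ when $e^{|\xi_1|}\le 3+|\tau_1|$ and the logarithmic gain $|\xi_1|\gs\ln(3+|L_1|)$ when it is not. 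The paper also first reduces general $s$ to the two endpoints ($s$ near $-\frac34$ and $s=3$) for the $Z$-bound; your sketch treats neither endpoint correctly, so this part of the argument needs to be redone along those lines.
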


\begin{pro}\label{Prop, bilin, d1}
Let $-\frac34<s\leq 3$, $\a\neq 0$ and $|\b|\leq 1$. Then there exists $\sigma_0=\sigma_0(s,\a)>\frac12$ such that for any $\sigma\in(\frac12,\sigma_0]$, the following bilinear estimates
\begin{eqnarray}
\|\p_{x}(w_1 w_2)\|_{X^{-\a,-\b}_{s,\sigma-1}} &\leq & C \| w_1\|_{X^{\a,\b}_{s,\frac12,\sigma}}\| w_2\|_{X^{\a,\b}_{s,\frac12,\sigma}}  \label{bilin, d1}\\
\|\p_{x}(w_1 w_2)\|_{Z^{-\a,-\b}_{s,\sigma-1}} &\leq &  C \| w_1\|_{X^{\a,\b}_{s,\frac12,\sigma}}\| w_2\|_{X^{\a,\b}_{s,\frac12,\sigma}} \label{bilin, d1, Z}
\end{eqnarray}
hold for any $w_1,w_2\in X^{\a,\b}_{s,\frac12,\sigma}$  with some constant $C=C(s,\a,\sigma)$.
\end{pro}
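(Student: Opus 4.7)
The plan is to mirror the strategy used for Proposition \ref{Prop, bilin, gc}. By Plancherel and duality, both \eqref{bilin, d1} and \eqref{bilin, d1, Z} reduce to controlling a weighted trilinear integral on the diagonal $\sum_{j=1}^{3}\xi_j = \sum_{j=1}^{3}\tau_j = 0$. Writing $L_j := \tau_j - \phi^{\alpha_j,\beta_j}(\xi_j)$ with $(\alpha_1,\beta_1) = (\alpha_2,\beta_2) = (\alpha,\beta)$ and $(\alpha_3,\beta_3) = (-\alpha,-\beta)$, one obtains on the constraint surface the identity $L_1 + L_2 + L_3 = -H$, where $H$ is the mixed-phase resonance function of Definition \ref{Def, res fcn}. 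The elementary consequence $\max_j|L_j| \gtrsim |H|$ is the workhorse of the argument, converting modulation weights into frequency gain.

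First I would compute $H$ explicitly. Setting $\xi_3 = -(\xi_1+\xi_2)$ and using the identity $\xi_1^3 + \xi_2^3 + (\xi_1+\xi_2)^3 = (\xi_1+\xi_2)(2\xi_1^2 + \xi_1\xi_2 + 2\xi_2^2)$, one finds
\[ H(\xi_1,\xi_2,\xi_3) \;=\; -\xi_3\bigl[\alpha(2\xi_1^2 + \xi_1\xi_2 + 2\xi_2^2) - 2\beta\bigr]. \]
The quadratic form $2\xi_1^2 + \xi_1\xi_2 + 2\xi_2^2$ has discriminant $1-16<0$ and is therefore comparable to $\xi_1^2+\xi_2^2$; together with $|\beta|\leq 1$ this yields the key lower bound $|H| \gtrsim_{\alpha} |\xi_3|(\xi_1^2 + \xi_2^2)$ whenever $\max(|\xi_1|,|\xi_2|) \gg 1$. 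Crucially, this is of the same strength as the same-phase resonance $3\alpha\xi_1\xi_2\xi_3$ used in Proposition \ref{Prop, bilin, gc}. Moreover, since the cubic leading terms of $\phi^{\alpha,\beta}$ and $\phi^{-\alpha,-\beta}$ agree up to sign, the underlying $L^2_{\xi,\tau}$-measure estimates on convolutions of free-solution surfaces are identical to those used in the same-phase case, and no fundamentally new dispersive input will be required.

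Next I would run the same Littlewood--Paley and modulation case analysis as for Proposition \ref{Prop, bilin, gc}: split into (i) high-high-to-high, (ii) high-high-to-low, and (iii) high-low-to-high regimes, and in each case decompose according to which $L_j$ dominates. In every high-frequency case the resonance bound above, routed through $\max_j|L_j|\gtrsim|H|$, provides precisely the gain needed to absorb the derivative $\partial_x$ and the negative-regularity loss for $s > -\tfrac34$. For the low-frequency region $|\xi_1|,|\xi_2|\lesssim 1$, where $H$ may vanish, the modified-space component $\Lambda^{\alpha,\beta}_{s,\sigma}$ built into $\|\cdot\|_{X^{\alpha,\beta}_{s,\frac12,\sigma}}$ supplies, via \eqref{MEF, lb}, the stronger weight $\langle L_j\rangle^{\sigma}$ in place of $\langle L_j\rangle^{1/2}$ on $\{|\xi_j|\leq 1\}$; combined with the target weight $\langle L_3\rangle^{\sigma-1}$ this yields a net modulation power exceeding $\tfrac12$ and closes the estimate in $L^2_{\xi,\tau}$ for any $\sigma>\tfrac12$. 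For the $Z$-space variant \eqref{bilin, d1, Z} the only new feature is the output weight $\langle\tau_3\rangle^{s/3+1/2-\sigma}$ in place of $\langle\xi_3\rangle^{s}$; using $\langle\tau_3\rangle \lesssim \langle L_3\rangle + \langle\xi_3\rangle^{3}$, this weight is redistributed between a harmless modulation power and the familiar $\langle\xi_3\rangle^{s}$-weight, reducing \eqref{bilin, d1, Z} to \eqref{bilin, d1}.

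The main obstacle will be the sharpness of the threshold $s=-\tfrac34$ in the high-high-to-low configuration $|\xi_1|\sim|\xi_2|=N\gg|\xi_3|$: there the resonance gain $|H|^{1/2}\sim N|\xi_3|^{1/2}$ must overcome the combined loss $N\cdot N^{-2s}\cdot|\xi_3|$ coming from $\partial_x$ and the two input Sobolev weights $\langle\xi_j\rangle^{-s}$, and the two sides balance exactly at the KdV threshold $-\tfrac34$. Consequently, the cut-off $\sigma_0=\sigma_0(s,\alpha)>\tfrac12$ in the statement must be chosen close enough to $\tfrac12$ that the slack absorbed by the low-frequency modification does not overwhelm this marginal gain at high frequencies, yet strictly greater than $\tfrac12$ so that the low-frequency estimate retains a positive net modulation power.
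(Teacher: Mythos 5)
Your overall architecture is the paper's: reduce by duality (Lemma \ref{Lemma, bilin to weighted l2}) to a weighted convolution estimate, compute the resonance function, and run a case analysis on frequency sizes and on which modulation dominates, using Lemmas \ref{Lemma, int in tau}--\ref{Lemma, bdd int}; your formula $H=-\xi_3\bigl[\alpha(2\xi_1^2+\xi_1\xi_2+2\xi_2^2)-2\beta\bigr]$ is correct. The genuine gap is in where you locate the degeneracy and what you claim rescues it. For this mixed-output triple the resonance vanishes when the \emph{output} frequency $\xi_3$ is small, not when $|\xi_1|,|\xi_2|\lesssim 1$ (that region is the easy Case-1 analogue and needs no modified weight at all). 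In the delicate regime $|\xi_1|\sim|\xi_2|\sim N\gg 1$, $|\xi_3|\lesssim N^{-2}$ one has $|H|\lesssim 1$, so your blanket claim that "in every high-frequency case" the bound $\max_j|L_j|\gtrsim|H|$ "provides precisely the gain needed" fails; moreover the $\Lambda$-enhancement is unavailable there, since it sits on the inputs (whose indicator $e^{|\xi_i|}\le 3+|\tau_i|$ is vacuous at frequency $N$ unless the modulation is already enormous) and the output norm $X^{-\a,-\b}_{s,\sigma-1}$ carries no $\Lambda$ piece. What actually closes this case is the smallness of the symbol factor $|\xi_3|\langle\xi_3\rangle^{s}$ together with the Jacobian structure special to this triple: here $L_1+L_2$, as a function of $\xi_1$, is a \emph{quadratic} with leading coefficient $3\alpha\xi_3$, so the substitution $|P'|\sim|\xi_3|N$ (or Lemma \ref{Lemma, bdd int}) supplies the gain, and the resulting bookkeeping is exactly where $s>-\frac34$ and the upper bound on $\sigma_0$ (the analogue of \eqref{sigma_0 is good}) enter. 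Your final worry that the low-frequency modification's "slack" might "overwhelm" the high-frequency gain is backwards: enlarging the right-hand norms only weakens the claim; the constraint tying $\sigma_0$ to $s$ comes from the exponent rebalancing $\frac12+\frac12+(1-\sigma)=(2-3\sigma)+\sigma+\sigma$ used when a maximal modulation is played off against the others.

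The second gap is the $Z$-estimate \eqref{bilin, d1, Z}. The one-line reduction via $\langle\tau_3\rangle\lesssim\langle L_3\rangle+\langle\xi_3\rangle^{3}$ is only harmless when $s<0$, where the weight $\langle\tau_3\rangle^{\frac{s}{3}+\frac12-\sigma}$ decays. For $s$ up to $3$, in the region $\langle\tau_3\rangle\sim\langle L_3\rangle\gg\langle\xi_3\rangle^{3}$ this weight exceeds the available $\langle L_3\rangle^{1-\sigma}$ by a net $\langle L_3\rangle^{1/2}$, and \eqref{bilin, d1, Z} does not reduce to \eqref{bilin, d1}. The paper's treatment of the corresponding point in Proposition \ref{Prop, bilin, d2} needs the two-endpoint bound on $\bigl(\langle\tau_3\rangle^{1/3}/\langle\xi_1\rangle\langle\xi_2\rangle\bigr)^{s}$, the key inequality \eqref{d2, Z, s=3, key}, and then either the enhanced weight $M_1\ge\langle L_1\rangle^{\sigma}$ or, on the complementary region $e^{|\xi_1|}>3+|\tau_1|$, the logarithmic largeness of $|\xi_1|$; this is the one place the modified space genuinely earns its keep, and an analogous argument must be supplied here, which your outline does not anticipate.
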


\begin{pro}\label{Prop, bilin, d2}
Let $-\frac34<s\leq 3$, $\a\neq 0$ and $|\b|\leq 1$. Then there exists $\sigma_0=\sigma_0(s,\a)>\frac12$ such that for any $\sigma\in(\frac12,\sigma_0]$, the following bilinear estimates
\begin{eqnarray}
\|\p_{x}(w_1 w_2)\|_{X^{\a,\b}_{s,\sigma-1}} &\leq & C \| w_1\|_{X^{\a,\b}_{s,\frac12,\sigma}}\| w_2\|_{X^{-\a,-\b}_{s,\frac12,\sigma}} \label{bilin, d2}\\
\|\p_{x}(w_1 w_2)\|_{Z^{\a,\b}_{s,\sigma-1}} &\leq &  C \| w_1\|_{X^{\a,\b}_{s,\frac12,\sigma}}\| w_2\|_{X^{-\a,-\b}_{s,\frac12,\sigma}} \label{bilin, d2, Z}
\end{eqnarray}
hold for any $w_1\in X^{\a,\b}_{s,\frac12,\sigma}$ and $w_2\in X^{-\a,-\b}_{s,\frac12,\sigma}$  with some constant $C=C(s,\a,\sigma)$.
\end{pro}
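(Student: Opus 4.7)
The plan is to prove \eqref{bilin, d2} and \eqref{bilin, d2, Z} by reducing via duality to a weighted trilinear integral, computing the resonance identity specific to this mixed-dispersion setting, and then running a dyadic case analysis in frequency and modulation parallel to the proofs of Propositions \ref{Prop, bilin, gc} and \ref{Prop, bilin, d1}. Writing $\xi=\xi_1+\xi_2$, $\tau=\tau_1+\tau_2$, and setting
\[
L=\tau-\phi^{\a,\b}(\xi),\quad L_1=\tau_1-\phi^{\a,\b}(\xi_1),\quad L_2=\tau_2-\phi^{-\a,-\b}(\xi_2),
\]
a direct computation using \eqref{phase fn} gives
\[
L-L_1-L_2 \;=\; -\phi^{\a,\b}(\xi)+\phi^{\a,\b}(\xi_1)-\phi^{\a,\b}(\xi_2)\;=\;\xi_2\bigl[-\a(3\xi_1^2+3\xi_1\xi_2+2\xi_2^2)+2\b\bigr].
\]
The quadratic form $3\xi_1^2+3\xi_1\xi_2+2\xi_2^2$ is positive definite (discriminant $9-24<0$) and $|\b|\le 1$, so away from a compact set of frequencies one obtains the resonance lower bound
\[
\max(\la L\ra,\la L_1\ra,\la L_2\ra)\;\gs\;|\xi_2|\bigl(|\xi_1|+|\xi_2|\bigr)^2,
\]
which is the key input driving all of the dyadic estimates below.

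By duality, \eqref{bilin, d2} is equivalent to controlling the weighted trilinear integral
\[
\int \frac{|\xi|\,\la\xi\ra^{s}\,f_1(\xi_1,\tau_1)\,f_2(\xi_2,\tau_2)\,f_3(\xi,\tau)}{\la\xi_1\ra^{s}\la\xi_2\ra^{s}\,\la L\ra^{1-\sigma}\,M_1(\xi_1,L_1)\,M_2(\xi_2,L_2)}\,d\xi_1\,d\tau_1\,d\xi\,d\tau
\]
(subject to $\xi=\xi_1+\xi_2$ and $\tau=\tau_1+\tau_2$) by $\prod_j\|f_j\|_{L^2}$, where
\[
M_i(\xi_i,L_i):=\la L_i\ra^{1/2}+\mb{1}_{\{e^{|\xi_i|}\le 3+|\tau_i|\}}\la L_i\ra^{\sigma}
\]
is the combined weight defining the respective $X^{\pm\a,\pm\b}_{s,1/2,\sigma}$-norm via \eqref{MFR norm}. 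I would then decompose $\xi_1,\xi_2,\xi$ dyadically at scales $N_1,N_2,N$ and the modulations $L,L_1,L_2$ at dyadic scales as well. In each dyadic block the resonance identity forces one of the three modulations to be at least $\sim N_2(N_1+N_2)^2$, and the remaining factor is controlled by a bilinear $L^2_{xt}$ Strichartz-type estimate of Kenig--Ponce--Vega type applied to the two decoupled KdV-type flows on $\R$ (one with phase $\phi^{\a,\b}$, the other with $\phi^{-\a,-\b}$). Summing the dyadic pieces, the frequency factor $N\la N\ra^{s}/(\la N_1\ra^{s}\la N_2\ra^{s})$ together with the gain from the resonance is summable exactly when $s>-\tfrac34$, paralleling the same-dispersion case. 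The $Z$-norm estimate \eqref{bilin, d2, Z} is then reduced to \eqref{bilin, d2} via the pointwise bound
\[
\la\tau\ra^{\frac{s}{3}+\frac12-\sigma}\la L\ra^{\sigma-1}\;\ls\;\la\xi\ra^{s+1}\la L\ra^{-1/2}\;+\;\la L\ra^{\sigma-1}\la\tau\ra^{\frac{s}{3}+\frac12-\sigma}\mb{1}_{\{e^{|\xi|}\le 3+|\tau|\}},
\]
which splits the $Z$-control into a piece dominated by the $X^{\a,\b}_{s,-1/2}$-norm (already handled) and a low-frequency piece governed by the $\Lambda^{\a,\b}_{s,\sigma}$-component on the output side.

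The main obstacle is the near-cancellation regime where $|\xi_2|$ is small while $|\xi_1|\sim|\xi|$ is large: in this regime the resonance lower bound degenerates because of the $|\xi_2|$ prefactor, and a straight Bourgain-type argument at the endpoint $b=\tfrac12$ fails by a logarithmic amount. This is precisely where the $\Lambda^{\a,\b}_{s,\sigma}$-component of the input norms rescues the argument: since the support $\{e^{|\xi_2|}\le 3+|\tau_2|\}$ covers $|\xi_2|\le 1$, the modified weight supplies $\la L_2\ra^{\sigma}$ in place of $\la L_2\ra^{1/2}$, and the excess $\sigma-\tfrac12$ absorbs the logarithmic deficit in each dyadic sum. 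Selecting $\sigma_0=\sigma_0(s,\a)>\tfrac12$ sufficiently close to $\tfrac12$ so that this margin compensates the losses from every case of the dyadic decomposition is the final technical step.
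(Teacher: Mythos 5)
Your outline for the $X$-norm bound \eqref{bilin, d2} is essentially sound at the level of structure: your algebra reproduces exactly the resonance identity (\ref{d2, res fcn}), including the positive definiteness of $3\xi_1^2+3\xi_1\xi_2+2\xi_2^2$ and the lower bound (\ref{d2, large H}), and you correctly locate the dangerous degeneration at $|\xi_2|$ small, where the $\Lambda$-component of the input norm (supplying $\la L_2\ra^{\sigma}$ in place of $\la L_2\ra^{1/2}$) rescues the endpoint $b=\frac12$; this matches the paper's Case 2. Your route is nevertheless different: you propose a dyadic frequency/modulation decomposition closed by bilinear Strichartz estimates, whereas the paper never decomposes dyadically — it runs the Kenig--Ponce--Vega Cauchy--Schwarz scheme directly, evaluating the $\xi$-integrals with Lemma \ref{Lemma, int in tau} and Lemma \ref{Lemma, bdd int} and the derivative substitutions $P'$, $Q'$, $R'$ of (\ref{d2, P'}), (\ref{d2, Q'}), (\ref{d2, R'}), with a case split on $\sum_i|\xi_i|$, on $|\xi_2|$, and on which modulation is maximal; this is also where the explicit threshold $\sigma_0=\frac{7}{12}-\frac{\rho}{9}$ in (\ref{sigma_0}) comes from. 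Your version leaves all such quantitative choices (how the dyadic sums close at $b=\frac12$ away from low frequency, how $\sigma_0$ depends on $s$) unverified, but as an outline it is a plausible alternative for \eqref{bilin, d2}.

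The genuine gap is the $Z$-norm estimate \eqref{bilin, d2, Z}. First, the pointwise inequality you use to reduce it to \eqref{bilin, d2} is false in precisely the regime that matters: take $s=3$, $|\tau|$ large and $|\xi|\sim 2\ln(3+|\tau|)$, so that $e^{|\xi|}>3+|\tau|$ and $\la L\ra\sim\la\tau\ra$; then the left-hand side is $\sim\la\tau\ra^{\frac32-\sigma}\la\tau\ra^{\sigma-1}=\la\tau\ra^{\frac12}$, while your right-hand side reduces to its first term $\sim(\ln\la\tau\ra)^{4}\la\tau\ra^{-\frac12}$. Second, unlike the $X$-estimate, the weight $\la\tau_3\ra^{\frac{s}{3}+\frac12-\sigma}$ does not allow restricting to $s$ near $-\frac34$: the paper has to prove the case $s=3$ separately and then interpolate, so large $s$ cannot be avoided. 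Third, even where your splitting holds, neither piece is ``already handled'': the first piece carries the output weight $\la\xi\ra^{s+1}\la L\ra^{-\frac12}$, i.e.\ a bilinear estimate with a full extra derivative and stronger modulation decay than \eqref{bilin, d2} provides, and the second piece appeals to a ``$\Lambda$-component on the output side'' that is not among the hypotheses — both sides of \eqref{bilin, d2, Z} involve only the inputs' $X_{s,\frac12,\sigma}$-norms. The paper's proof does genuine work exactly here: in the region $\la\tau_3\ra\gg\la\xi_3\ra^3$ with $s=3$ it uses the observation $\la\xi_1\ra^3+\la\xi_2\ra^3+\la L_1\ra+\la L_2\ra\geq\la\tau_3\ra$, invokes the input $\Lambda$-weight when $e^{|\xi_1|}\leq 3+|\tau_1|$, and otherwise exploits $|\xi_1|\gs\ln(3+|L_2|)$ so that $\la\xi_1\ra^{-2}$ produces the convergent logarithmic weight $[\ln(3+|L_2|)]^{-2}$. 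None of this is recoverable from your reduction, so \eqref{bilin, d2, Z} remains unproved in your proposal.
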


The proofs for these three propositions  are similar while Proposition  \ref{Prop, bilin, d2} is slightly more challenging since $w_1$ and $w_2$ live in different spaces. As a result, we will only prove Proposition  \ref{Prop, bilin, d2}. Before the proof, we first recall some elementary  technical results.

\begin{lem}\label{Lemma, int in tau}
Let $\rho_1\geq \rho_2>\frac12$ and  $\rho_3\in\m{R}$ satisfy
\[\left\{\begin{array}{lcc}
\rho_3=\rho_2 & \text{if} & \rho_1>1,\\
\rho_3<\rho_2 & \text{if} & \rho_1=1,\\
\rho_3=\rho_1+\rho_2-1 & \text{if} & \frac12<\rho_1<1,
\end{array}\right.\]
then there exists $C=C(\rho_1,\rho_2,\rho_3)$ such that for any $a,b\in\m{R}$, 
\be\label{int in tau}
\int_{-\infty}^{\infty}\frac{dx}{\la x-a \ra^{\rho_{1}} \la -x-b \ra^{\rho_{2}}}\leq \frac{C}{\la a+b\ra^{\rho_{3}}}.\ee
\end{lem}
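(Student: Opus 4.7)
\textbf{Proof plan for Lemma \ref{Lemma, int in tau}.} The plan is to reduce the estimate to a convolution-type bound on $\la\cdot\ra$ weights and then split the line of integration according to the location of the two singularities of the integrand.

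First, I would change variables by setting $y=x-a$ and $c=a+b$, so that the inequality (\ref{int in tau}) is equivalent to
\[
I(c) := \int_{\R} \frac{dy}{\la y\ra^{\rho_1}\la y+c\ra^{\rho_2}} \;\leq\; \frac{C}{\la c\ra^{\rho_3}}.
\]
Since $\rho_1\geq \rho_2 >\frac12$ gives $\rho_1+\rho_2>1$, the integral $I(c)$ is finite, and the case $|c|\leq 2$ is trivial because then $\la c\ra^{-\rho_3}\sim 1$ while $I(c)\leq I(0)<\infty$. Thus I may assume $|c|\geq 2$, in which case $\la c\ra\sim |c|$.

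Next I would split $\R$ into three regions adapted to the two ``peaks'' of the integrand at $y=0$ and $y=-c$:
\[
R_1=\bigl\{|y|\leq |c|/2\bigr\},\qquad R_2=\bigl\{|y+c|\leq |c|/2\bigr\},\qquad R_3=\R\setminus (R_1\cup R_2).
\]
On $R_1$ one has $\la y+c\ra\sim \la c\ra$, so
\[
\int_{R_1}\frac{dy}{\la y\ra^{\rho_1}\la y+c\ra^{\rho_2}}\;\lesssim\; \la c\ra^{-\rho_2}\int_{|y|\leq |c|/2}\la y\ra^{-\rho_1}\,dy,
\]
and the remaining integral is $O(1)$ if $\rho_1>1$, $O(\log |c|)$ if $\rho_1=1$, and $O(|c|^{1-\rho_1})$ if $\frac12<\rho_1<1$. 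A direct check against the definition of $\rho_3$ shows that in each of the three cases the resulting bound is $\lesssim \la c\ra^{-\rho_3}$; this is the step that pins down the trichotomy in the statement. The estimate on $R_2$ is analogous after the substitution $z=y+c$, using $\la y\ra\sim\la c\ra$; since $\rho_2\leq \rho_1$, the resulting bound is at least as good as that from $R_1$. Finally, on $R_3$ both $\la y\ra$ and $\la y+c\ra$ are $\gtrsim \la c\ra$; splitting $R_3$ further into $|y|\leq 2|c|$ (measure $\lesssim |c|$, integrand $\lesssim \la c\ra^{-\rho_1-\rho_2}$) and $|y|>2|c|$ (where $\la y+c\ra\sim \la y\ra$, so the tail integrates to $\la c\ra^{1-\rho_1-\rho_2}$) yields a contribution of order $\la c\ra^{1-\rho_1-\rho_2}$, which is dominated by $\la c\ra^{-\rho_3}$ in all three cases.

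The only real obstacle is bookkeeping: one must verify case by case that the logarithmic loss at $\rho_1=1$ is exactly absorbed by the strict inequality $\rho_3<\rho_2$, and that the subcritical loss $|c|^{1-\rho_1}$ for $\rho_1<1$ combines with the $\la c\ra^{-\rho_2}$ gain to produce precisely $\la c\ra^{-(\rho_1+\rho_2-1)}$. No further ingredients beyond elementary calculus are needed.
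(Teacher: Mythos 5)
Your proposal is correct. The paper itself omits the proof of this lemma, citing \cite{ET16, KPV96} as standard and only remarking that $\la a+b\ra=\la (x-a)+(-x-b)\ra$; your argument (translate so the integral becomes $\int \la y\ra^{-\rho_1}\la y+c\ra^{-\rho_2}\,dy$ with $c=a+b$, then split into the two peak regions and the far region) is precisely that standard argument, and the case-by-case bookkeeping you describe — the $O(1)$, $\log|c|$, $|c|^{1-\rho_1}$ trichotomy on $R_1$, the fact that $R_2$ is no worse since $\rho_2\leq\rho_1$, and the $\la c\ra^{1-\rho_1-\rho_2}$ bound on $R_3$ — does check out against the definition of $\rho_3$ in all three cases (the only cosmetic point is that for $|c|\leq 2$ one needs a uniform bound on $I(c)$, which follows either from the rearrangement inequality or from the crude splitting you already use).
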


The proof for this lemma is standard (see e.g. \cite{ET16, KPV96}) and therefore omitted, we just want to remark that $\la a+b\ra=\la(x-a)+(-x-b)\ra$.

\begin{lem}\label{Lemma, bdd int}
If $\rho>\frac{1}{2}$, then there exists $C=C(\rho)$ such that for any $\sigma_{i}\in\m{R}$ $(0\leq i\leq 2)$ with $\sigma_{2}\neq 0$,
\be\label{bdd int for quad}
\int_{-\infty}^{\infty}\frac{dx}{\la \sigma_{2}x^{2}+\sigma_{1}x+\sigma_{0}\ra^{\rho}}\leq \frac{C}{|\sigma_{2}|^{1/2}}.\ee
Similarly, if $\rho>\frac{1}{3}$, then there exists $C=C(\rho)$ such that for any $\sigma_{i}\in\m{R}$ $(0\leq i\leq 3)$ with $\sigma_{3}\neq 0$,
\be\label{bdd int for cubic}
\int_{-\infty}^{\infty}\frac{dx}{\la \sigma_{3}x^{3}+\sigma_{2}x^{2}+\sigma_{1}x+\sigma_{0}\ra^{\rho}}\leq \frac{C}{|\sigma_{3}|^{1/3}}.\ee
\end{lem}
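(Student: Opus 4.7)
The plan is to handle both estimates through a single scheme: rescale to normalize the leading coefficient, and then bound the resulting integral uniformly in the remaining coefficients. For (\ref{bdd int for quad}), the direct route is to complete the square $\sigma_2 x^2 + \sigma_1 x + \sigma_0 = \sigma_2 (x-x_0)^2 + c$, then substitute $u = |\sigma_2|^{1/2}(x - x_0)$ to reduce the integral to $|\sigma_2|^{-1/2} \i_{\m{R}} du / \la \pm u^2 + c \ra^\rho$. Showing that $\i_{\m{R}} du / \la \pm u^2 + c \ra^\rho$ is uniformly bounded in $c \in \m{R}$ when $\rho > 1/2$ is elementary: the case $c \geq 0$ (with a $+$ sign) follows from $\la u^2 + c\ra \geq \max(1, u^2)$, and the case $c < 0$ is handled by splitting $\m{R}$ at $u = \pm\sqrt{-c}$ and performing the change of variable $v = u^2 + c$ on each monotonic piece.

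For the cubic estimate (\ref{bdd int for cubic}), I would prefer to route everything through a general sub-level set lemma. The substitution $y = |\sigma_3|^{1/3} x$ reduces matters to the uniform bound
\[ \i_{\m{R}} \frac{dy}{\la P(y)\ra^\rho} \leq C \]
over all monic real cubics $P(y) = \pm y^3 + c_2 y^2 + c_1 y + c_0$. The key ingredient is the sub-level set estimate
\[ \big| \{y \in \m{R} : |P(y)| \leq T\} \big| \leq C_d \, T^{1/d},\qquad T > 0, \]
valid for any monic real polynomial $P$ of degree $d$, with $C_d$ depending only on $d$. Granting this, the layer-cake representation yields
\[ \i_{\m{R}} \frac{dy}{\la P(y) \ra^\rho} = \rho \i_1^\infty M^{-\rho-1} \big|\{\la P \ra < M\}\big|\, dM \leq \rho\, C_d \i_1^\infty M^{-\rho - 1 + 1/d}\, dM, \]
which is finite precisely when $\rho > 1/d$; this gives (\ref{bdd int for cubic}) with $d=3$, and provides an alternative route to (\ref{bdd int for quad}) with $d=2$.

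The sub-level set estimate itself can be proved by induction on $d$. The case $d = 1$ is immediate. For the inductive step, fix a parameter $S > 0$ and decompose $E := \{|P| \leq T\}$ as $(E \cap \{|P'| \leq S\}) \cup (E \cap \{|P'| > S\})$. The first piece is controlled by applying the inductive hypothesis to the monic polynomial $P'/d$ of degree $d-1$, yielding measure at most $C_{d-1}(S/d)^{1/(d-1)}$. The second piece is a union of at most $O(d)$ intervals on which $P'$ has constant sign, so that $P$ is strictly monotonic there; on each such interval, monotonicity together with $|P| \leq T$ forces the length to be at most $2T/S$. Choosing $S = T^{(d-1)/d}$ then balances the two contributions at size $O(T^{1/d})$.

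The main obstacle is ensuring that the constants in the sub-level set estimate depend only on $d$ and not on the lower-order coefficients of $P$. This should come out cleanly from the induction because at every step the argument only invokes the leading coefficient of $P'$ (namely $d$) together with coarse bounds on the number of connected components coming from the fundamental theorem of algebra; nonetheless, a little care is warranted in counting the components of $E \setminus \{|P'| \leq S\}$ to keep the implicit constants purely degree-dependent.
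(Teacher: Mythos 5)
Your proposal is correct, but it takes a genuinely different route from the paper, which in fact gives no argument of its own: the paper's ``proof'' of Lemma \ref{Lemma, bdd int} simply refers to Lemma 2.5 of \cite{BOP97} for the cubic bound (\ref{bdd int for cubic}) and remarks that a similar argument yields the quadratic bound (\ref{bdd int for quad}); the argument cited there works directly with the specific polynomial rather than through a general sub-level set estimate. Your treatment is self-contained: for (\ref{bdd int for quad}), completing the square and substituting $u=|\sigma_2|^{1/2}(x-x_0)$ reduces matters to the uniform-in-$c$ bound for $\int_{\m{R}}\la u^2+c\ra^{-\rho}\,du$, and the only point needing care --- the Jacobian factor $1/(2|u|)$ blowing up at $u=0$ after setting $v=u^2+c$ --- is an integrable singularity, so the uniformity does hold as you claim; for (\ref{bdd int for cubic}), the rescaling to a monic polynomial plus the P\'olya-type sub-level bound $|\{|P|\le T\}|\le C_d\,T^{1/d}$ and the layer-cake identity are all sound, and your induction (splitting on $\{|P'|\le S\}$ versus $\{|P'|>S\}$, noting the latter has at most $2(d-1)+1$ monotone components so each contributes length at most $2T/S$, then balancing with $S=T^{(d-1)/d}$) keeps the constant purely degree-dependent, since only the leading coefficient of $P'$ and root-counting enter. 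What your route buys is generality and transparency: in one stroke it gives $\int_{\m{R}}\la P(x)\ra^{-\rho}\,dx\leq C_d\,|\mathrm{lead}(P)|^{-1/d}$ for every degree $d$ and every $\rho>1/d$, covering both (\ref{bdd int for quad}) and (\ref{bdd int for cubic}) uniformly in the lower-order coefficients; what the paper's route buys is brevity, outsourcing the work to an existing lemma at the cost of leaving the mechanism (and the quadratic adaptation) implicit.
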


\begin{proof}
We refer the reader to the proof of Lemma 2.5 in \cite{BOP97} where (\ref{bdd int for cubic}) was  proved. The similar argument can also be applied to  obtain (\ref{bdd int for quad}).
\end{proof}

%
%

For the proof of the bilinear estimate, it is usually beneficial to reduce it to an estimate of some weighted convolution of $L^{2}$ functions as pointed out in \cite{Tao01, CKSTT03}. The next lemma illustrates this in a general situation. For the convenience of notations, we denote $\vec{\xi}=(\xi_{1},\xi_{2},\xi_{3})$, $\vec{\tau}=(\tau_{1},\tau_{2},\tau_{3})$ and 
\be\label{int domain}
A:=\Big\{(\vec{\xi},\vec{\tau})\in\m{R}^{6}:\sum_{i=1}^{3}\xi_{i}=\sum_{i=1}^{3}\tau_{i}=0\Big\}.\ee

\begin{lem}\label{Lemma, bilin to weighted l2}
Let 
$(\a_i,\b_i)\in(\m{R}\backslash\{0\})\times\m{R}$ for $1\leq i\leq 3$. 
Let $s\in\m{R}$ and $\sigma>\frac12$. Then the bilinear estimate
\[
\|\p_{x}(w_{1}w_{2})\|_{X^{\a_{3},\b_{3}}_{s,\sigma-1}}\leq C\,\|w_{1}\|_{X^{\a_{1},\b_{1}}_{s,\frac12,\sigma}}\,\|w_{2}\|_{X^{\a_{2},\b_{2}}_{s,\frac12,\sigma}}, \quad\forall\, w_1\in X^{\a_{1},\b_{1}}_{s,\frac12,\sigma},\,\, w_2\in X^{\a_{2},\b_{2}}_{s,\frac12,\sigma}, 
\]
is equivalent to
\[
\int\limits_{A}\frac{\xi_{3}\la\xi_{3}\ra^{s}\prod\limits_{i=1}^{3}f_{i}(\xi_{i},\tau_{i})}{\la\xi_{1}\ra^{s}\la\xi_{2}\ra^{s}M_{1}M_{2}\la L_{3}\ra^{1-\sigma}} \leq C\,\prod_{i=1}^{3}\|f_{i}\|_{L^{2}_{\xi\tau}}, \quad\forall\, f_{i}\in L^2(\m{R}\times\m{R}),\quad i=1,2,3,
\]
where $L_{i}=\tau_{i}-\phi^{\a_{i},\b_{i}}(\xi_{i})$ for $ i=1,2,3$, and 
\[
M_1=\la L_{1}\ra^{\frac12}+\mb{1}_{\{e^{|\xi_1|}\leq 3+|\tau_1|\}}\la L_1\ra^{\sigma}, \quad M_2=\la L_{2}\ra^{\frac12}+\mb{1}_{\{e^{|\xi_2|}\leq 3+|\tau_2|\}}\la L_2\ra^{\sigma}.
\]
Similarly, the bilinear estimate 
\[
\|\p_{x}(w_{1}w_{2})\|_{Z^{\a_{3},\b_{3}}_{s,\sigma-1}}\leq C\,\|w_{1}\|_{X^{\a_{1},\b_{1}}_{s,\frac12,\sigma}}\,\|w_{2}\|_{X^{\a_{2},\b_{2}}_{s,\frac12,\sigma}}, \quad \forall\, w_1\in X^{\a_{1},\b_{1}}_{s,\frac12,\sigma},\,\, w_2\in X^{\a_{2},\b_{2}}_{s,\frac12,\sigma}, 
\]
is equivalent to
\[
\int\limits_{A}\frac{\xi_{3}\la\tau_{3}\ra^{\frac{s}{3}+\frac12-\sigma}\prod\limits_{i=1}^{3}f_{i}(\xi_{i},\tau_{i})}{\la\xi_{1}\ra^{s}\la\xi_{2}\ra^{s}M_{1}M_{2}\la L_{3}\ra^{1-\sigma}} \leq C\,\prod_{i=1}^{3}\|f_{i}\|_{L^{2}_{\xi\tau}}, \quad\forall\, f_{i}\in L^2(\m{R}\times\m{R}),\quad i=1,2,3.
\]
\end{lem}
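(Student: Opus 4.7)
The plan is to reduce both equivalences to weighted $L^2$ convolution estimates by Plancherel, $L^2$-duality and a change of variables, in the spirit of \cite{Tao01, KPV96, CKSTT03}. The first step is to exploit the equivalence (\ref{MFR norm}) to identify the $X^{\a,\b}_{s,\frac12,\sigma}$-norm of $w$ with the $L^{2}$-norm of $f(\xi,\tau):=\la\xi\ra^{s}M(\xi,\tau)\wh{w}(\xi,\tau)$, where
\[
M(\xi,\tau):=\la L\ra^{\frac12}+\mb{1}_{\{e^{|\xi|}\leq 3+|\tau|\}}\la L\ra^{\sigma},\qquad L=\tau-\phi^{\a,\b}(\xi).
\]
Taking the parameters $(\a_i,\b_i)$ for $i=1,2$, this realises $w_{i}\leftrightarrow f_{i}$ as a norm-preserving bijection between $X^{\a_i,\b_i}_{s,\frac12,\sigma}$ and $L^{2}(\m{R}^{2})$, and converts both sides of the target equivalences to statements purely about weighted $L^{2}$ integrals.

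Next, the plan is to compute
\[
\mcal{F}[\p_{x}(w_{1}w_{2})](\xi,\tau)=i\xi\,(\wh{w_1}*\wh{w_2})(\xi,\tau),
\]
to unfold the $X^{\a_3,\b_3}_{s,\sigma-1}$-norm via Plancherel, and then to introduce a third unit vector $g\in L^{2}$ by $L^{2}$-duality. Relabelling the dual variable as $(-\xi_{3},-\tau_{3})$ and setting $f_{3}(\xi_{3},\tau_{3}):=\ol{g(-\xi_3,-\tau_3)}$, the convolution structure of $\wh{w_1}*\wh{w_2}$ forces the two linear constraints $\sum_i\xi_i=0$ and $\sum_i\tau_i=0$ that jointly define the domain $A$ from (\ref{int domain}). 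The identity that makes all weights come out correctly is the oddness of $\phi^{\a,\b}$:
\[
\la\tau-\phi^{\a_3,\b_3}(\xi)\ra\Big|_{\xi=-\xi_{3},\tau=-\tau_{3}}=\la-\tau_{3}-\phi^{\a_3,\b_3}(-\xi_{3})\ra=\la L_{3}\ra,
\]
so the weight $\la L_3\ra^{\sigma-1}$ is preserved under the sign flip. Substituting $\wh{w_i}=f_i/(\la\xi_i\ra^{s}M_i)$ and collecting factors then produces exactly the asserted weighted convolution integral over $A$.

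The analogue for the $Z^{\a_3,\b_3}_{s,\sigma-1}$-norm is obtained by the identical manipulation; the only change is that the Plancherel representation of (\ref{Z norm}) replaces the prefactor $\la\xi_{3}\ra^{s}\la L_3\ra^{\sigma-1}$ by $\la\tau_{3}\ra^{\frac{s}{3}+\frac12-\sigma}\la L_3\ra^{\sigma-1}$, which matches the weight on the right-hand side of the second stated equivalence. No genuine obstacle arises: the proof is pure bookkeeping of Fourier weights together with duality and a sign-flipping change of variables, and the only point requiring care is verifying that the relabelling $(\xi,\tau)\mapsto(-\xi_{3},-\tau_{3})$ leaves the weight $\la L_3\ra$ invariant (via oddness of $\phi^{\a,\b}$) and produces the symmetric constraint $\sum\xi_i=\sum\tau_i=0$ defining $A$.
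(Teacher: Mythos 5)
Your proposal is correct and follows exactly the route the paper has in mind: the paper dismisses this lemma as "straightforward by using duality and Plancherel theorem" (citing \cite{Tao01, CKSTT03}), and your argument is precisely that standard reduction, with the substitution $f_i=\la\xi_i\ra^{s}M_i\wh{w_i}$, duality in $L^2$ for the third function, and the sign-flip $(\xi,\tau)\mapsto(-\xi_3,-\tau_3)$ using the oddness of $\phi^{\a,\b}$ to produce the constraint set $A$ and preserve the weight $\la L_3\ra$. The only cosmetic caveat is that the correspondence $w_i\leftrightarrow f_i$ is norm-equivalent rather than norm-preserving, which is all the equivalence of estimates requires.
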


\begin{proof}
The proof is straightforward by using duality and Plancherel theorem. The details are omitted since the argument is standard, see e.g. \cite{Tao01, CKSTT03}.
\end{proof}

Now we are ready to verify Proposition \ref{Prop, bilin, d2}.

\begin{proof}[\bf Proof of Proposition \ref{Prop, bilin, d2}]

The triple $\big((\a_1,\b_1), (\a_2,\b_2), (\a_3,\b_3)\big)$ associated to the bilinear estimates (\ref{bilin, d2}) and (\ref{bilin, d2, Z}) is 
\[(\a_1,\b_1)=(\a,\b),\quad (\a_2,\b_2)=(-\a,-\b),\quad(\a_3,\b_3)=(\a,\b).\] 
Define the set $ A $ as in (\ref{int domain}). For any $(\vec{\xi},\vec{\tau})\in A$,  let 
\be\label{d2, def of L}
L_1:=\tau_1-\phi^{\a,\b}(\xi_1), \quad L_2:=\tau_2-\phi^{-\a,-\b}(\xi_2), \quad L_3:=\tau_3-\phi^{\a,\b}(\xi_3).\ee
According to Definition \ref{Def, res fcn}, the resonance function $H=H(\xi_1,\xi_2,\xi_3)$ has the property that 
$H=-\sum\limits_{i=1}^{3}L_i$
and has the following formula.
\be\label{d2, res fcn}
H(\xi_1,\xi_2,\xi_3)=-\a\xi_2(2\xi_2^2+3\xi_1\xi_2+3\xi_1^2)+2\b\xi_2, \quad\forall\,\sum_{i=1}^3 \xi_i=0.\ee
If $(\xi_3,\tau_3)$ is fixed, then by substituting  $(\xi_2,\tau_2)=-(\xi_1+\xi_3, \tau_1+\tau_3)$, $L_1+L_2$ can be viewed as a function in $\xi_1$: 
\be\label{d2, P}
L_1+L_2 = P(\xi_1) := -2\a\xi_1^3-3\a\xi_3\xi_1^2+(-3\a\xi_3^2+2\b)\xi_1+\phi^{-\a,-\b}(\xi_3)-\tau_3.\ee
Taking the derivative of $P$ with respect to $\xi_1$ yields
\be\label{d2, P'}
P'(\xi_1)=-3\a(2\xi_1^2+2\xi_3\xi_1+\xi_3^2)+2\b.\ee
Similarly, if $(\xi_1,\tau_1)$ is fixed, then by substituting  $(\xi_3,\tau_3)=-(\xi_1+\xi_2,\tau_1+\tau_2)$, $L_2+L_3$ can be viewed as a function in $\xi_2$: 
\be\label{d2, Q}
L_2+L_3 = Q(\xi_2) := 2\a\xi_2^3+3\a\xi_1\xi_2^2+(3\a\xi_1^2-2\b)\xi_2+\phi^{\a,\b}(\xi_1)-\tau_1.\ee
Taking the derivative of $Q$ with respect to $\xi_2$ yields
\be\label{d2, Q'}
Q'(\xi_2)=3\a(2\xi_2^2+2\xi_1\xi_2+\xi_1^2)-2\b.\ee
Finally, if $(\xi_2,\tau_2)$ is fixed, then by substituting  $(\xi_1,\tau_1)=-(\xi_2+\xi_3,\tau_2+\tau_3)$, $L_3+L_1$ can be viewed as a function in $\xi_3$: 
\be\label{d2, R}
L_3+L_1 = R(\xi_3) := 3\a\xi_2\xi_3^2+3\a\xi_2^2\xi_3+\phi^{\a,\b}(\xi_2)-\tau_2.\ee
Taking the derivative of $R$ with respect to $\xi_3$ yields
\be\label{d2, R'}
R'(\xi_3)=3\a\xi_2(2\xi_3+\xi_2).\ee
These formulas (\ref{d2, P})--(\ref{d2, R'}) will play an important role in this section. 

On the other hand, since $|\b|\leq 1$ and $2\xi_2^2+3\xi_1\xi_2^2+3\xi_1^2\geq (\xi_1^2+\xi_2^2)/2$, it follows from formula (\ref{d2, res fcn}) and the fact $\sum_{i=1}^{3}\xi_i=0$ that 
\be\label{d2, large H}
|H(\xi_1,\xi_2,\xi_3)| \gs_{\a} |\xi_2|\sum_{i=1}^{3}\la\xi_i\ra^2
\ee
as long as $\sum_{i=1}^{3}|\xi_i|\geq C_1$ for some constant $C_1=C_1(\a)$.
Analogously, based on (\ref{d2, P'}) and (\ref{d2, Q'}), there exists a constant $C_2=C_2(\a)$ such that if $\sum_{i=1}^{3}|\xi_i|\geq C_2$, then 
\be\label{d2, large P' and Q'}
|P'(\xi_1)|\gs_{\a} \sum_{i=1}^{3}\la\xi_i\ra^2 \quad \mbox{and}\quad |Q'(\xi_2)|\gs_{\a} \sum_{i=1}^{3}\la\xi_i\ra^2.\ee
Define $C^{*}=\max\{4 ,\,C_1,\,C_2\}$
and fix it in the rest of this section. Next, we will prove (\ref{bilin, d2}) first and then justify (\ref{bilin, d2, Z}). For ease of notations, the dependence of constants on $s,\a,\sigma$ may not be explicitly shown.

\begin{proof}[\bf Proof of (\ref{bilin, d2})]

By Lemma \ref{Lemma, bilin to weighted l2}, it suffices to prove for any $\{f_{i}\}_{i=1,2,3}\subset L^2(\m{R}\times\m{R})$,
\[\int\limits_{A}\frac{\xi_{3}\la\xi_{3}\ra^{s}\prod\limits_{i=1}^{3}f_{i}(\xi_{i},\tau_{i})}{\la\xi_{1}\ra^{s}\la\xi_{2}\ra^{s}M_{1}M_{2}\la L_{3}\ra^{1-\sigma}} \leq C\,\prod_{i=1}^{3}\|f_{i}\|_{L^{2}_{\xi\tau}}, \]
where $A$ is defined as in (\ref{int domain}),
\be\label{d2, modified L}
M_1=\la L_{1}\ra^{\frac12}+\mb{1}_{\{e^{|\xi_1|}\leq 3+|\tau_1|\}}\la L_1\ra^{\sigma} \quad\text{and}\quad M_2=\la L_{2}\ra^{\frac12}+\mb{1}_{\{e^{|\xi_2|}\leq 3+|\tau_2|\}}\la L_2\ra^{\sigma}.\ee
Noticing $\la\xi_3\ra\leq \la\xi_1\ra\la\xi_2\ra$, it suffices to consider the case when $s$ is close to $-\frac34$. Without loss of generality, we assume $-\frac34<s\leq -\frac{9}{16}$ and denote $\rho=-s$. Then $\frac{9}{16}\leq \rho<\frac34$ and it reduces to show

\be\label{d2, weighted l2 form}
\int\limits_{A}\frac{|\xi_{3}|\la\xi_{1}\ra^{\rho}\la\xi_{2}\ra^{\rho}\prod\limits_{i=1}^{3}|f_{i}(\xi_{i},\tau_{i})|}{\la\xi_{3}\ra^{\rho}M_{1}M_{2}\la L_{3}\ra^{1-\sigma}} \leq C\,\prod_{i=1}^{3}\|f_{i}\|_{L^{2}_{\xi\tau}}.\ee
Define 
\be\label{sigma_0}
\sigma_0=\frac{7}{12}-\frac{\rho}{9}.\ee
Then (\ref{d2, weighted l2 form}) will be justified for any $\sigma\in\big(\frac12,\sigma_0\big]$. The choice of $\sigma_0$ in (\ref{sigma_0}) implies
\be\label{sigma_0 is good}
\sigma\leq \min\Big\{\frac{\rho+1}{3},\,\frac{7}{12}-\frac{\rho}{9}\Big\}.
\ee

The remaining proof will be divided into three cases depending on the size of the resonance function $ H $. The first case is when $\sum_{i=1}^{3}|\xi_i|$ is small which does not ganrantee the estimate (\ref{d2, large H}). The second case is when $\sum_{i=1}^{3}|\xi_i|$ is large enough but $|\xi_2|$ is small, so although the estimate (\ref{d2, large H}) holds, the lower bound in (\ref{d2, large H}) may be small. It is this step that forces to use the modified Fourier restriction space. The last case is when $\sum_{i=1}^{3}|\xi_i|$ is large enough and $|\xi_2|$ is not too small, so estimate (\ref{d2, large H}) holds with a large lower bound. In the following, we will show more details.

\begin{itemize}
\item {\bf Case 1:} $\sum_{i=1}^{3}|\xi_i|\leq C^*$. 

Although there is no effective lower bound for $ H $ in this case, the LHS of (\ref{d2, weighted l2 form}) can be greatly simplified by dropping all norms of $ \{\xi_i\}_{i=1}^{3} $ and it suffices to prove 
\be\label{d2, case 1}
\int\frac{\prod\limits_{i=1}^{3}|f_{i}(\xi_{i},\tau_{i})|}{\la L_1\ra^{\frac12} \la L_{2}\ra^{\frac12}\la L_{3}\ra^{1-\sigma}} \leq C\,\prod_{i=1}^{3}\|f_{i}\|_{L^{2}_{\xi\tau}}.
\ee
In the following, we will adopt the Cauchy-Schwarz inequality argument as in \cite{KPV96}.
\begin{align}\label{d2, case 1, C-S}
\text{LHS of (\ref{d2, case 1})} &= \iint\frac{|f_3|}{\la L_3\ra^{1-\sigma}}\bigg(\iint\frac{|f_1f_2|}{\la L_1\ra^{\frac12}\la L_2\ra^{\frac12}}\,d\tau_1 d\xi_1 \bigg)\, d\tau_3 d\xi_3 \notag\\
&\ls \iint\frac{|f_3|}{\la L_3\ra^{1-\sigma}}\bigg(\iint\frac{d\tau_1 d\xi_1 }{\la L_1\ra\la L_2\ra}\bigg)^{\frac12}\bigg(\iint f_1^2 f_2^2 \,d\tau_1 d\xi_1 \bigg)^{\frac12}\,d\tau_3 d\xi_3 . 
\end{align}
If 
\be\label{d2, case 1, bdd 1}
\sup_{\xi_3,\tau_3}\,\frac{1}{\la L_3\ra^{2(1-\sigma)}}\iint\frac{\,d\tau_1 d\xi_1}{\la L_1\ra\la L_2\ra}\leq C,\ee
then it follows from (\ref{d2, case 1, C-S}) and  Cauchy-Schwarz inequality that 
\be\label{d2, case 1, holder}
\text{RHS of (\ref{d2, case 1, C-S})} \ls \iint |f_3|\bigg(\iint f_1^2 f_2^2\,d\tau_1 d\xi_1 \bigg)^{\frac12}\,d\tau_3 d\xi_3 \leq  \prod_{i=1}^{3}\|f_{i}\|_{L^{2}}. \ee
So it remains to verify (\ref{d2, case 1, bdd 1}). 
Noticing
\[ L_1 = \tau_1-\phi^{\a,\b}(\xi_1), \quad L_2=\tau_2-\phi^{-\a,-\b}(\xi_2) = -\tau_1-\tau_3-\phi^{\a,\b}(\xi_1+\xi_3),\]
then it follows from Lemma \ref{Lemma, int in tau} (with $ \rho_1=\rho_2=1 $, $\rho_3=2-2\sigma$) that
\be\label{d2, case 1, bdd 2}
\text{LHS of (\ref{d2, case 1, bdd 1})}\ls \sup_{\xi_3,\tau_3}\,\frac{1}{\la L_3\ra^{2(1-\sigma)}}\int\frac{d\xi_1}{\la L_1+L_2\ra^{2-2\sigma}}.\ee
Recalling (\ref{d2, P}), $L_1+L_2=P(\xi_1)$ is a cubic polynomial in $\xi_1$, so it follows from $2-2\sigma>\frac13$ and Lemma \ref{Lemma, bdd int} that the right hand side of (\ref{d2, case 1, bdd 2}) is bounded.

\item {\bf Case 2:} $\sum_{i=1}^{3}|\xi_i|> C^*$ and $|\xi_2|\leq 1$.

In this case, $|\xi_1|\sim |\xi_3|\geq 1$ since $C^{*}\geq 4$. In addition, thanks to the modified Fourier restriction space, we now have $M_2\geq \la L_2\ra^{\sigma}$. So the proof of (\ref{d2, weighted l2 form}) reduces to show 
\be\label{d2, case 2}
\int\frac{|\xi_{3}|\prod\limits_{i=1}^{3}|f_{i}(\xi_{i},\tau_{i})|}{\la L_1\ra^{\frac12} \la L_{2}\ra^{\sigma}\la L_{3}\ra^{1-\sigma}} \leq C\,\prod_{i=1}^{3}\|f_{i}\|_{L^{2}_{\xi\tau}}.\ee

\begin{itemize}

\item {\bf Case 2.1:} $\la L_1\ra\leq \la L_3\ra$.

The key strategy is to adjust the term $ \la L_1 \ra^{\frac12} $ in the denominator in (\ref{d2, case 2}) to be $ \la L_1 \ra^{\sigma} $ whose power $ \sigma $ is larger than $ \frac12 $. Actually, since $\sigma>\frac12$ and $\frac12+(1-\sigma)=\sigma+(\frac32-2\sigma)$, we deduce
\[\dfrac{1}{\la L_1\ra^{\frac12}\la L_3\ra^{1-\sigma}}\leq \dfrac{1}{\la L_1\ra^{\sigma}\la L_3\ra^{\frac32-2\sigma}}.\]
So in order to prove (\ref{d2, case 2}), it 
suffices to establish 
\[\int\frac{|\xi_3||f_3|}{\la L_3\ra^{\frac32-2\sigma}}\frac{|f_1f_2|}{(\la L_1\ra\la L_2\ra)^{\sigma}}\leq C\prod_{i=1}^{3}\|f_i\|_{L^2}.\]
Analogous to the Cauchy-Schwarz argument in Case 1, it remains to verify
\be\label{d2, case 2.1, bdd 1}
\sup_{\xi_3,\tau_3}\,\frac{\xi_3^2}{\la L_3\ra^{3-4\sigma}}\int\frac{d\tau_1\,d\xi_1}{(\la L_1\ra\la L_2\ra)^{2\sigma}}\leq C.\ee
It then follows from $2\sigma>1$ and Lemma \ref{int in tau} that 
\[\int\frac{d\tau_1\,d\xi_1}{(\la L_1\ra\la L_2\ra)^{2\sigma}}\ls \int\frac{d\xi_1}{(\la P(\xi_1)\ra)^{2\sigma}},\]
where $P(\xi_1)=L_1+L_2$ is as defined in (\ref{d2, P}). Thus, it suffices to establish 
\be\label{d2, case 2.1, bdd 2}
\sup_{\xi_3,\tau_3}\,\frac{\xi_3^2}{\la L_3\ra^{3-4\sigma}}\int\frac{d\xi_1}{\la P(\xi_1)\ra^{2\sigma}}\leq C.\ee
Since $\sum_{i=1}^{3}|\xi_i|>C^*$, it follows from (\ref{d2, large P' and Q'}) that $|P'(\xi_1)|\gs \sum_{i=1}^{3}\xi_i^2$. Therefore, 
\begin{eqnarray*}
\text{LHS of (\ref{d2, case 2.1, bdd 1})} = \sup_{\xi_3,\tau_3}\,\frac{1}{\la L_3\ra^{3-4\sigma}}\int\frac{\xi_3^2}{|P'(\xi_1)|}\frac{|P'(\xi_1)|}{\la P(\xi_1)\ra^{2\sigma}}\,d\xi_1 \ls \int\frac{|P'(\xi_1)|}{\la P(\xi_1)\ra^{2\sigma}}\,d\xi_1\leq C.
\end{eqnarray*}

\item {\bf Case 2.2:} $\la L_3\ra \leq \la L_1\ra$. This case can be verified using similar argument as in Case 2.1.
\end{itemize}

\item {\bf Case 3:} $\sum_{i=1}^{3}|\xi_i|>C^*$ and $|\xi_2|>1$.

In this case, $|\xi_2|\sim \la\xi_2\ra$, so it suffices to prove 
\be\label{d2, case 3}
\int\frac{|\xi_{3}|\,\la\xi_1\ra^{\rho}|\xi_2|^{\rho}\prod\limits_{i=1}^{3}|f_{i}(\xi_{i},\tau_{i})|}{\la\xi_3\ra^{\rho}\la L_1\ra^{\frac12} \la L_{2}\ra^{\frac12}\la L_{3}\ra^{1-\sigma}} \leq C\,\prod_{i=1}^{3}\|f_{i}\|_{L^{2}_{\xi\tau}}.\ee
Define 
\be\label{MAX}
\text{MAX}=\max\{\la L_1\ra, \la L_2\ra, \la L_3\ra\}.\ee
Since $H=-\sum\limits_{i=1}^{3}L_i$ and $\sum_{i=1}^{3}|\xi_i|>C^*$, it follows from (\ref{d2, large H}) that 
$\text{MAX}\gs \la H\ra\gs |\xi_2|\sum_{i=1}^{3}\la\xi_i\ra^2$.
Next, we will further decompose the proof into three cases depending on which $L_i$ equals $\text{MAX}$.

\begin{itemize}
\item {\bf Case 3.1:} $\la L_1\ra=\text{MAX}$.

In this case, $\la L_1\ra = \text{MAX} \gs |\xi_2|\sum_{i=1}^{3}\la\xi_i\ra^2$.
The key strategy is again to adjust the terms $ \la L_1 \ra^{\frac12} $ and $ \la L_2 \ra^{\frac12} $ in the denominator in (\ref{d2, case 3}) to be $ \la L_1 \ra^{\sigma} $ and $ \la L_2 \ra^{\sigma} $  whose power $ \sigma $ is larger than $ \frac12 $. Meanwhile, it follows from $\frac12+\frac12+(1-\sigma)=(2-3\sigma)+\sigma+\sigma$ that
\[\frac{1}{\la L_1\ra^{\frac12} \la L_{2}\ra^{\frac12}\la L_{3}\ra^{1-\sigma}}\leq \frac{1}{\la L_1\ra^{2-3\sigma} (\la L_{2}\ra\la L_{3}\ra)^{\sigma}}.\]
Then similar as before, it suffices to establish
\be\label{d2, case 3.1, bdd 1}
\sup_{\xi_1,\tau_1}\,\frac{\la\xi_1\ra^{2\rho}}{\la L_1\ra^{4-6\sigma}}\int\frac{|\xi_2|^{2\rho}\xi_3^2}{\la\xi_3\ra^{2\rho}\la Q(\xi_2)\ra^{2\sigma}}\,d\xi_2\leq C,\ee
where $Q(\xi_2)=L_2+L_3$ is as defined in (\ref{d2, Q}).
Since $\sum_{i=1}^{3}|\xi_i|>C^*$, then $|Q'(\xi_2)|\gs \sum_{i=1}^{3}\la\xi_i\ra^2$. 
Combining with the relation $\la L_1\ra \gs |\xi_2|\sum_{i=1}^{3}\la\xi_i\ra^2$, we find
\begin{eqnarray*}
\frac{\la\xi_1\ra^{2\rho}|\xi_2|^{2\rho}\xi_3^2}{\la \xi_3\ra^{2\rho}} \ls \la L_1\ra^{2-2\rho}|Q'(\xi_2)|.
\end{eqnarray*}
As a result, 
\begin{align}
\text{LHS of (\ref{d2, case 3.1, bdd 1})} \ls \sup_{\xi_1,\tau_1}\frac{\la L_1\ra^{2-2\rho}}{\la L_1\ra^{4-6\sigma}}\int \frac{|Q'(\xi_2)|}{\la Q(\xi_2)\ra^{2\sigma}}\,d\xi_2 \ls  \sup_{\xi_1,\tau_1}\frac{\la L_1\ra^{2-2\rho}}{\la L_1\ra^{4-6\sigma}}  \label{d2, case 3.1, bdd 2}.
\end{align}
Thanks to (\ref{sigma_0 is good}), the right hand side of 
(\ref{d2, case 3.1, bdd 2}) is bounded.

\item {\bf Case 3.2:} $\la L_2\ra=\text{MAX}$.

Similar to Case 3.1, we have $\la L_2\ra\gs |\xi_2|\sum_{i=1}^{3}\la\xi_i\ra^2$ and it suffices to justify
\be\label{d2, case 3.2, bdd 1}
\sup_{\xi_2,\tau_2}\,\frac{|\xi_2|^{2\rho}}{\la L_2\ra^{4-6\sigma}}\int\frac{\la\xi_1\ra^{2\rho}\xi_3^2}{\la\xi_3\ra^{2\rho}\la R(\xi_3)\ra^{2\sigma}}\,d\xi_3\leq C,\ee
where $R(\xi_3)=L_3+L_1$ is as defined in (\ref{d2, R}). 

If $|2\xi_3+\xi_2|\geq \frac{1}{10}\la\xi_1\ra$, then it follows from (\ref{d2, R'}) that $|R'(\xi_3)|\gs |\xi_2|\la\xi_1\ra$, so
\begin{eqnarray*}
\frac{|\xi_2|^{2\rho}\la\xi_1\ra^{2\rho}\xi_3^2}{\la \xi_3\ra^{2\rho}}  \ls \la L_2\ra^{2-2\rho}|R'(\xi_3)|.
\end{eqnarray*}
Consequently, 
\begin{eqnarray}\label{d2, case 3.2, bdd 2}
\text{LHS of (\ref{d2, case 3.2, bdd 1})} \ls \sup_{\xi_2,\tau_2}\frac{\la L_2\ra^{2-2\rho}}{\la L_2\ra^{4-6\sigma}}\int \frac{|R'(\xi_3)|}{\la R(\xi_3)\ra^{2\sigma}}\,d\xi_3 \ls \sup_{\xi_2,\tau_2}\, \frac{\la L_2\ra^{2-2\rho}}{\la L_2\ra^{4-6\sigma}}\leq C,
\end{eqnarray}
where the last inequality is due to (\ref{sigma_0 is good}).
If $|2\xi_3+\xi_2|\leq \frac{1}{10}\la\xi_1\ra$, then $|\xi_1|\sim |\xi_2|\sim |\xi_3|$ and
\[\frac{|\xi_2|^{2\rho}\la\xi_1\ra^{2\rho}\xi_3^2}{\la\xi_3\ra^{2\rho}} \ls |\xi_2|^{\frac12}\la L_2\ra^{\frac{2\rho}{3}+\frac12}.\]
As a result, 
\begin{eqnarray}\label{d2, case 3.2, bdd 3}
\text{LHS of (\ref{d2, case 3.2, bdd 1})} &\ls & \sup_{\xi_2,\tau_2}\,\frac{|\xi_2|^{\frac12}\la L_2\ra^{\frac{2\rho}{3}+\frac12}}{\la L_2\ra^{4-6\sigma}}\int\frac{d\xi_3}{\la R(\xi_3)\ra^{2\sigma}}.
\end{eqnarray}
It then follows from (\ref{d2, R}) and Lemma \ref{Lemma, bdd int} and  that 
$\int\frac{d\xi_3}{\la R(\xi_3)\ra^{2\sigma}}\ls |\xi_2|^{-\frac12}$,
so 
\[\text{RHS of (\ref{d2, case 3.2, bdd 3})}\ls \sup_{\xi_2,\tau_2}\,\frac{\la L_2\ra^{\frac{2\rho}{3}+\frac12}}{\la L_2\ra^{4-6\sigma}}\leq C,\]
where the last inequality is due to (\ref{sigma_0 is good}).

\item {\bf Case 3.3:} $\la L_3\ra=\text{MAX}$. This case can be demonstrated via parallel argument as in Case 3.1.

\end{itemize}

\end{itemize}

\end{proof}

\begin{proof}[\bf Proof of (\ref{bilin, d2, Z})]

Thanks to Lemma \ref{Lemma, bilin to weighted l2}, it suffices to prove 
\be\label{d2, Z, weighted l2 form}
\int\limits_{A}\frac{|\xi_{3}|\la\tau_{3}\ra^{\frac{s}{3}+\frac12-\sigma}\prod\limits_{i=1}^{3}|f_{i}(\xi_{i},\tau_{i})|}{\la\xi_{1}\ra^{s}\la\xi_{2}\ra^{s}M_{1}M_{2}\la L_{3}\ra^{1-\sigma}} \leq C\,\prod_{i=1}^{3}\|f_{i}\|_{L^{2}_{\xi\tau}},\ee
where $M_1$, $M_2$ are as defined in (\ref{d2, modified L}). It suffices to consider the case $-\frac34<s\leq -\frac{9}{16} $ and the case $s=3$ since 
\[\bigg(\frac{\la\tau_3\ra^{\frac13}}{\la\xi_1\ra\la\xi_2\ra}\bigg)^{s}\leq \bigg(\frac{\la\tau_3\ra^{\frac13}}{\la\xi_1\ra\la\xi_2\ra}\bigg)^{-\frac{9}{16}}+\bigg(\frac{\la\tau_3\ra^{\frac13}}{\la\xi_1\ra\la\xi_2\ra}\bigg)^{3}\]
for any $-\frac{9}{16}<s<3$.

For the case of  $-\frac34<s\leq -\frac{9}{16}$, let $\rho=-s$. Then $\frac{9}{16}\leq \rho<\frac34$ and (\ref{d2, Z, weighted l2 form}) becomes 
\be\label{d2, Z, weighted l2 form, small s}
\int\limits_{A}\frac{|\xi_{3}|\la\xi_{1}\ra^{\rho}\la\xi_{2}\ra^{\rho}\prod\limits_{i=1}^{3}|f_{i}(\xi_{i},\tau_{i})|}{\la\tau_{3}\ra^{\frac{\rho}{3}+\sigma-\frac12}M_{1}M_{2}\la L_{3}\ra^{1-\sigma}} \leq C\,\prod_{i=1}^{3}\|f_{i}\|_{L^{2}_{\xi\tau}},\ee
Denote $\sigma_0=\frac{7}{12}-\frac{\rho}{9}$ as in (\ref{sigma_0}). Then (\ref{d2, Z, weighted l2 form, small s}) will be proved for any $\sigma\in\big(\frac12,\sigma_0\big]$. Noticing that this choice of $\sigma_0$ not only implies (\ref{sigma_0 is good}) but also guarantees
\be\label{sigma_0 is good in d2, Z}
\sigma\leq \frac23-\frac{2\rho}{9}.\ee
Firstly, in the region where $\la\tau_3\ra\gs \la\xi_3\ra^3$, we have 
\[\frac{1}{\la\tau\ra^{\frac{\rho}{3}+\sigma-\frac12}}\leq \frac{1}{\la\tau\ra^{\frac{\rho}{3}}}\ls \frac{1}{\la\xi_3\ra^{\rho}},\]
so (\ref{d2, Z, weighted l2 form, small s}) holds as a corollary of (\ref{d2, weighted l2 form}). Then it suffices to consider the region where $\la\tau_3\ra\ll\la\xi_3\ra^3$. Because of this simplification, we can assume 
\be\label{d2, Z, simp}
|\xi_3|\gg 1,\quad |\xi_3|>C^*,\quad |L_3|=|\tau_3-\phi^{\a,\b}(\xi_3)|\sim |\xi_3|^3.\ee
Consequently, it follows from (\ref{d2, large H}) and (\ref{d2, large P' and Q'}) that
\be\label{d2, Z, large H, P' and Q'}
|H|\sim |\xi_2|\sum_{i=1}^{3}\xi_i^2, \quad |P'(\xi_1)|\gs \sum_{i=1}^{3}\xi_i^2, \quad |Q'(\xi_2)|\gs \sum_{i=1}^{3}\xi_i^2.\ee
Thanks to (\ref{d2, Z, simp}) and (\ref{d2, Z, large H, P' and Q'}), we will actually prove (\ref{d2, Z, reduced int est}) which is a stronger estimate than (\ref{d2, Z, weighted l2 form, small s}).
\be\label{d2, Z, reduced int est}
\int\frac{|\xi_{3}|\la\xi_{1}\ra^{\rho}\la\xi_{2}\ra^{\rho}\prod\limits_{i=1}^{3}|f_{i}(\xi_{i},\tau_{i})|}{\la L_{1}\ra^{\frac12}\la L_{2}\ra^{\frac12}\la L_{3}\ra^{1-\sigma}} \leq C\,\prod_{i=1}^{3}\|f_{i}\|_{L^{2}_{\xi\tau}}.
\ee
Since $\la L_3\ra\sim |\xi_3|^3$, the justification of (\ref{d2, Z, reduced int est}) depends essentially on how large $ |\xi_3| $ is. By dividing the integral region into two parts: $|\xi_3|\gs \min\{|\xi_1|, |\xi_2|\}$ and $|\xi_3|\ll \min\{|\xi_1|, |\xi_2|\}$, we can establish (\ref{d2, Z, reduced int est}) via similar argument as that in the proof of (\ref{bilin, d2}).

For  the case of  $s=3$,   (\ref{d2, Z, weighted l2 form}) becomes 
\be\label{d2, Z, weighted l2 form, s=3}
\int\limits_{A}\frac{|\xi_{3}|\la\tau_{3}\ra^{\frac32-\sigma}\prod\limits_{i=1}^{3}|f_{i}(\xi_{i},\tau_{i})|}{\la\xi_{1}\ra^{3}\la\xi_{2}\ra^{3}M_{1}M_{2}\la L_{3}\ra^{1-\sigma}} \leq C\,\prod_{i=1}^{3}\|f_{i}\|_{L^{2}_{\xi\tau}},\ee
where 
\[M_1=\la L_{1}\ra^{\frac12}+\mb{1}_{\{e^{|\xi_1|}\leq 3+|\tau_1|\}}\la L_1\ra^{\sigma} \quad\text{ and} \quad M_2=\la L_{2}\ra^{\frac12}+\mb{1}_{\{e^{|\xi_2|}\leq 3+|\tau_2|\}}\la L_2\ra^{\sigma}\]
are defined as in (\ref{d2, modified L}). Define 
\[\sigma_0=\frac{7}{12}-\frac{1}{9}\cdot\frac{9}{16}=\frac{25}{48}.\]
Then (\ref{d2, weighted l2 form}) can be verified for $\rho=\frac{9}{16}$ (i.e. $s=-\frac{9}{16}$), and therefore also holds for $\rho=-3$ (i.e. $s=3$). Taking advantage of this result, we will justify (\ref{d2, Z, weighted l2 form, s=3}) for any $\sigma\in\big(\frac12,\frac{25}{48}\big]$.
Firstly, if $\la\tau_3\ra\ls \la\xi_3\ra^3$, then (\ref{d2, Z, weighted l2 form, s=3}) holds as a corollary of (\ref{d2, weighted l2 form}), so we assume $\la\tau_3\ra\gg \la\xi_3\ra^3$ in the following. In particular, this assumption implies $\la\tau_3\ra\sim \la L_3\ra$. Then (\ref{d2, Z, weighted l2 form, s=3}) reduces to 
\be\label{d2, Z, s=3}
\int\frac{|\xi_{3}|\la\tau_{3}\ra^{\frac12}\prod\limits_{i=1}^{3}|f_{i}(\xi_{i},\tau_{i})|}{\la\xi_{1}\ra^{3}\la\xi_{2}\ra^{3}M_{1}M_{2}} \leq C\,\prod_{i=1}^{3}\|f_{i}\|_{L^{2}_{\xi\tau}}.\ee
The key observation in the rest proof is 
\be\label{d2, Z, s=3, key}
\la\xi_1\ra^3+\la\xi_2\ra^3+\la L_1\ra+\la L_2\ra\geq \la\tau_3\ra.\ee
Based on this observation, we divide the proof into two cases.
\begin{itemize}
\item Case 1: $\la\xi_1\ra^3\gs |\tau_3|$ or $\la\xi_2\ra^3\gs |\tau_3|$. 

We will only prove for the case $\la\xi_1\ra^3\gs |\tau_3|$ since the other case is similar. Under this assumption, we have  $\la\tau_3\ra^{1/2}\ls \la\xi_1\ra^{3/2}$. Moreover, since $|\xi_3|\ls \la\xi_1\ra\la\xi_2\ra$, it suffices to show 
\be\label{d2, Z, s=3, Case 1, bdd 1}
\int\frac{\prod\limits_{i=1}^{3}|f_{i}(\xi_{i},\tau_{i})|}{\la\xi_{2}\ra^{2}\la L_1\ra^{\frac12}\la L_2\ra^{\frac12}} \leq C\,\prod_{i=1}^{3}\|f_{i}\|_{L^{2}_{\xi\tau}}\ee
in order to prove (\ref{d2, Z, s=3}). Similar as before, it remains to establish 
\be\label{d2, Z, s=3, Case 1, bdd 2}
\sup_{\xi_3,\tau_3}\iint\frac{d\tau_2\,d\xi_2}{\la\xi_2\ra^{4}\la L_1\ra\la L_2\ra}\leq C.\ee
By Lemma \ref{Lemma, int in tau}, 
$\int\frac{1}{\la L_1\ra\la L_2\ra} \,d\tau_2 \ls {\la L_1+L_2\ra}^{-\frac23}\leq 1$,
so 
\[\text{LHS of (\ref{d2, Z, s=3, Case 1, bdd 2})}\ls \sup_{\xi_3,\tau_3}\int\frac{d\xi_2}{\la\xi_2\ra^4}\ls 1.\]

\item Case 2: $\la\xi_1\ra^3\ll |\tau_3|$, $\la\xi_2\ra^3\ll |\tau_3|$ and $\la L_1\ra\geq \la L_2\ra$.

The assumptions in this case and the key observation (\ref{d2, Z, s=3, key}) together imply 
\be\label{d2, Z, s=3, Case 2, large L1}
\la L_1\ra\geq \frac{1}{4}\la\tau_3\ra,\quad  |\tau_1|\sim \la L_1\ra  \quad\text{and}\quad |\tau_1|\gg \la\xi_1\ra^3.\ee

\begin{itemize}
\item Case 2.1: $e^{|\xi_1|}\leq 3+|\tau_1|$. 
Thanks to the definition of $M_1$, this case implies 
$M_1\geq \la L_1\ra^{\sigma}\gs |\tau_3|^{\frac12}\la L_2\ra^{\sigma-\frac12}$.
So 
\[\text{LHS of (\ref{d2, Z, s=3})}\ls \int\frac{|\xi_{3}|\prod\limits_{i=1}^{3}|f_{i}(\xi_{i},\tau_{i})|}{\la\xi_{1}\ra^{3}\la\xi_{2}\ra^{3}\la L_2\ra^{\sigma}}.\]
Since $|\xi_3|\ls \la\xi_1\ra\la\xi_2\ra$, it then remains to establish 
$\sup\limits_{\xi_3,\tau_3} \iint\frac{1}{\la\xi_2\ra^4\la L_2\ra^{2\sigma}}\, d\tau_2\,d\xi_2  \leq C$,
which is obvious due to $\sigma>\frac12$.

\item Case 2.2: $e^{|\xi_1|}> 3+|\tau_1|$. 
In this case, $M_1=\la L_1\ra^{\frac12}\gs |\tau_3|^{\frac12}$ and $|\xi_1|>\ln(3+|\tau_1|)$. In addition, because of (\ref{d2, Z, s=3, Case 2, large L1}), we have 
\be\label{d2, Z, Case 2.2, large xi_1}
|\xi_1|\gs \ln(3+|L_1|)\geq \ln(3+|L_2|).\ee
As a result, 
\[\frac{|\xi_3|\la\tau_3\ra^{\frac12}}{\la\xi_{1}\ra^{3}\la\xi_{2}\ra^{3}M_1}\ls \frac{1}{\la\xi_{1}\ra^{2}\la\xi_{2}\ra^{2}}\ls \frac{1}{\big[\ln(3+|L_2|)\big]^2\la\xi_{2}\ra^{2}}.\]
Hence, 
\[\text{LHS of (\ref{d2, Z, s=3})} \ls \int\frac{\prod\limits_{i=1}^{3}|f_{i}(\xi_{i},\tau_{i})|}{\la\xi_{2}\ra^{2}\la L_2\ra^{\frac12}\big[\ln(3+|L_2|)\big]^2}.\]
Similar as before, it remains to show 
\be\label{d2, Z, case 2.2, bdd 1}
\sup_{\xi_3,\tau_3}\iint \frac{d\tau_2\,d\xi_2}{\la\xi_{2}\ra^{4}\la L_2\ra\big[\ln(3+|L_2|)\big]^4}\leq C.\ee
Noticing 
\[\i_{\m{R}}\frac{d\tau_2}{\la L_2\ra\big[\ln(3+|L_2|)\big]^4}= 2\i_{0}^{\infty}\frac{dx}{(1+x)\big[\ln(3+x)\big]^4}<\infty,\]
so 
\[\text{LHS of (\ref{d2, Z, case 2.2, bdd 1})}\ls \int\frac{d\xi_2}{\la\xi_2\ra^4}\leq C.\]
\end{itemize}

\end{itemize}
\end{proof}

Thus, the proof of Proposition \ref{Prop, bilin, d2} is finished.

\end{proof}

\section{Well-posedness}
\label{Sec, wp}

This section is devoted to the verification of Theorem \ref{Thm, main-scaling}. Equivalently, we will justify Theorem \ref{Thm, cwp}, Theorem \ref{Thm, exist of ms} and Theorem \ref{Thm, uniq of ms}.

\begin{proof}[{\bf Proof of Theorem \ref{Thm, cwp}}]
Without loss of generality, we assume $T=1$. Let  $s\in(-\frac34,3]$ and $\b\in(0,1]$ be given. Define 
\[\sigma=\min\big\{\sigma_1(s), \sigma_2(s), \sigma_0(s,1), \sigma_0(s,-1)\big\},\]
where $\sigma_1(s)$ and $\sigma_2(s)$ are the thresholds as in Proposition  \ref{Prop, pos-kdv} and Proposition  \ref{Prop, neg-kdv}, and $\sigma_0(s,1)$, $\sigma_0(s,-1)$ are the thresholds as in Proposition  \ref{Prop, bilin} when $\a=1$ or $-1$. Then $\sigma>\frac12$ and $\sigma$ only depends on $s$, so in the following, the dependence of any constant on $\sigma$ will be considered as the dependence on $s$. The choice of $r$ will be determined later, see (\ref{main pf, eps}).

Define the space $\mcal{Y}$ to be $Y^{1,\b}_{s,\frac12,\sigma} (\Omega _1) \times Y^{-1,-\b}_{s,\frac12,\sigma} (\Omega _1) $ equipped with the product norm. Denote 
\[E_{0}=\|(p,q)\|_{\mcal{H}^s_x(\m{R}^+)}+\|(a,b,c)\|_{\mcal{H}^s_t(\m{R}^+)}.\]
Then it follows from the assumption that $E_0\leq r$. Define
$\mcal{B}_{C^*} = \big\{(u,v)\in\mcal{Y}: \|(u,v)\|_{\mcal{Y}}\leq C^{*}\big\}$.
We will choose suitable $C^{*}$ and $r$ to guarantee the existence of a solution in the space $\mcal{B}_{C^*}$. For any $(u,v)\in\mcal{B}_{C^*}$, denote 
\be\label{nonlin term}
f(u,v) =-3uu_x-(uv)_x+vv_x, \quad g(u,v) =uu_x-(uv)_x-3vv_x.
\ee
Then we infer from Proposition  \ref{Prop, bilin} that $f\in X^{1,\b}_{s,\sigma-1}(\Omega _1)\cap Z^{1,\b}_{s,\sigma-1}(\Omega _1)$ and $g\in X^{-1,-\b}_{s,\sigma-1}(\Omega _1)\cap Z^{-1,-\b}_{s,\sigma-1}(\Omega _1)$. 

Since the data $(p,q)$ and $(a,b,c)$ are compatible for (\ref{ckdv-2}), then it implies $p$ and $a$ are compatible for (\ref{lin-u}). So according to Proposition \ref{Prop, pos-kdv}, we define $\t{u}=\Gamma^{+}_{\b}(f,p,a)$. By the properties of $\Gamma^{+}_{\b}(f,p,a)$ stated in Proposition  \ref{Prop, pos-kdv}, $\t{u}_{x}(0,t)$ is well-defined and belongs to $H^{\frac{s}{3}}_{t}(\m{R})$. Then again the compatibility of $(p,q)$ and $(a,b,c)$ for (\ref{ckdv-2}) implies the compatibility of $q$ and $b,c+\t{u}_{x}|_{x=0}$. So based on Proposition  \ref{Prop, neg-kdv}, we define $\t{v}=\Gamma^{-}_{\b}(g,q,b,c+\t{u}_{x}|_{x=0})$. Combining $\Gamma^+_{\b}$ and $\Gamma^-_{\b}$ together, we define $\Gamma(u,v)=(\t{u},\t{v})$, where
\be\label{def of Gamma}
\t{u} =\Gamma^+_\b(f,p,a), \quad \t{v} = \Gamma^-_\b\big(g,q,b,c+\t{u}_{x}|_{x=0}\big).
\ee
We will prove $\Gamma$ is a contraction mapping in $\mcal{B}_{C^*}$ so that its fixed point $(u,v)$ are the desired functions in Theorem \ref{Thm, cwp}.

It will be shown first that $\Gamma$ maps the closed ball $\mcal{B}_{C^*}$ into itself for suitable $ C^* $. For any $(u,v)\in \mcal{B}_{C^*}$, it follows from (\ref{est for pos lin kdv}) in Proposition \ref{Prop, pos-kdv} and (\ref{est for neg lin kdv}) in Proposition \ref{Prop, neg-kdv} that 
\begin{align*}
\|\t{u}\|_{Y^{1,\b}_{s,\frac12,\sigma} (\Omega _1)} &\leq C_1\Big(\|f\|_{X^{1,\b}_{s,\sigma-1} (\Omega _1)} +\|f\|_{Z^{1,\b}_{s,\sigma-1} (\Omega _1)} +\|p\|_{H^{s}(\R^+)}+\|a\|_{H^{\frac{s+1}{3}}(\R^+)}\Big),\\
\|\t{v}\|_{Y^{-1,-\b}_{s,\frac12,\sigma} (\Omega _1)} &\leq C_1\Big(\|g\|_{X^{-1,-\b}_{s,\sigma-1} (\Omega _1)} + \|g\|_{Z^{-1,-\b}_{s,\sigma-1} (\Omega _1)}  + \|q\|_{H^{s}(\R^+)} \\
& \qquad\quad +\|b\|_{H^{\frac{s+1}{3}}(\R^+)}+\|c\|_{H^{\frac{s}{3}}(\R^+)}+\|\t{u}_{x}|_{x=0}\|_{H^{\frac{s}{3}}(\R^+)}\Big),
\end{align*}
where $ C_1 $ is a constant that only depends on $ s $. Adding these two together and recalling the definition of $E_0$, we obtain 
\be\label{norm-est-1}
\|(\t{u},\t{v})\|_{\mcal{Y}}\leq C_{1}\Big(E_0+\|f\|_{X^{1,\b}_{s,\sigma-1} (\Omega _1)}+\|f\|_{Z^{1,\b}_{s,\sigma-1} (\Omega _1)}+\|g\|_{X^{-1,-\b}_{s,\sigma-1} (\Omega _1)}+\|g\|_{Z^{-1,-\b}_{s,\sigma-1} (\Omega _1)}+\|\t{u}_{x}|_{x=0}\|_{H^{\frac{s}{3}}(\R^+)}\Big).\ee
By the second estimate (\ref{trace est for pos lin kdv}) in Proposition \ref{Prop, pos-kdv}, 
\[\|\t{u}_{x}|_{x=0}\|_{H^{\frac{s}{3}}(\m{R}^+)}\leq C_1\Big(\|f\|_{X^{1,\b}_{s,\sigma-1} (\Omega _1)}+\|f\|_{Z^{1,\b}_{s,\sigma-1} (\Omega _1)}+\|p\|_{H^{s}(\R^+)}+\|a\|_{H^{\frac{s+1}{3}}(\R^+)}\Big).\]
Plugging this estimate into (\ref{norm-est-1}) leads to
\be\label{norm-est-2}
\|(\t{u},\t{v})\|_{\mcal{Y}}\leq C_{1}(C_{1}+1)\Big(E_0+\|f\|_{X^{1,\b}_{s,\sigma-1} (\Omega _1)}+\|f\|_{Z^{1,\b}_{s,\sigma-1} (\Omega _1)}+\|g\|_{X^{-1,-\b}_{s,\sigma-1} (\Omega _1)}+\|g\|_{Z^{-1,-\b}_{s,\sigma-1} (\Omega _1)}\Big).\ee
Since $f$ and $g$ are defined as in (\ref{nonlin term}), we apply Proposition  \ref{Prop, bilin} to conclude that
\begin{align*}
\|f\|_{X^{1,\b}_{s,\sigma-1} (\Omega _1)}+\|f\|_{Z^{1,\b}_{s,\sigma-1} (\Omega _1)}\leq C_2\big(\|u\|_{X^{1,\b}_{s,\frac12,\sigma} (\Omega _1)}+\|v\|_{X^{-1,-\b}_{s,\frac12,\sigma} (\Omega _1)}\big)^2,\\
\|g\|_{X^{-1,-\b}_{s,\sigma-1} (\Omega _1)}+\|g\|_{Z^{-1,-\b}_{s,\sigma-1} (\Omega _1)}\leq C_2\big(\|u\|_{X^{1,\b}_{s,\frac12,\sigma} (\Omega _1)}+\|v\|_{X^{-1,-\b}_{s,\frac12,\sigma} (\Omega _1)}\big)^2,
\end{align*}
where $ C_2 $ is a constant which only depends on $ s $. Putting these two estimates into (\ref{norm-est-2}), we find 
\[\|(\t{u},\t{v})\|_{\mcal{Y}}\leq C_{3}\Big[E_0+\big(\|u\|_{X^{1,\b}_{s,\frac12,\sigma} (\Omega _1)}+\|v\|_{X^{-1,-\b}_{s,\frac12,\sigma} (\Omega _1)}\big)^2\Big]\leq C_{3}\big(E_0+\|(u,v)\|_{\mcal{Y}}^2\big),\]
where the constant $ C_3 $ only depends on $ s $. Since $(u,v)\in \mcal{B}_{C^*}$, $\|(u,v)\|_{\mcal{Y}}\leq C^{*}$. Hence,
\[\|(\t{u},\t{v})\|_{\mcal{Y}}\leq C_{3}\big[E_0+(C^*)^2\big]\leq C_{3}\big[r + (C^*)^2\big].\]
Choosing $C^* = 8 C_3 r$,  then 
\be\label{norm-est-3}
\|(\t{u},\t{v})\|_{\mcal{Y}}\leq C_3 r + 64 C_3^3 r^2.\ee
Define 
\be\label{main pf, eps}
r = \frac{1}{64 C_3^2}.\ee
Then $r$ only depends on $s$ and it follows from (\ref{norm-est-3}) that
$\|(\t{u},\t{v})\|_{\mcal{Y}}\leq C^{*}/4$, which implies $(\t{u},\t{v})\in \mcal{B}_{C^*}$.

Next for any $(u_j, v_j)\in \mcal{B}_{C^*}$, $j=1,2$, similar argument as above yields 
\[ \| \Gamma (u_1, v_1 )- \Gamma (u_2 , v_2)\|_{\mcal{Y}} \leq \frac12 \| (u_1, v_1)-(u_2, v_2)\|_{\mcal{Y}}.\]
We  have thus shown  that $\Gamma$ is a contraction on $\mcal{B}_{C^*}$, which implies $\Gamma$ has a fixed point
$(u,v)\in \mcal{B}_{C^*}$. By definition of $\Gamma$, $(u,v)$ satisfies 
\[\left\{\begin{aligned}
u &=\Gamma^{+}_{\b}(f,p,a),\\
v &=\Gamma^{-}_{\b}\big(g,q,b,c+u_{x}|_{x=0}\big),
\end{aligned}\right.\]
where $f$ and $g$ are defined as in (\ref{nonlin term}). 
Taking advantage of Proposition \ref{Prop, pos-kdv} and Proposition \ref{Prop, neg-kdv}, we conclude that 
$u\in Y^{1,\b}_{s,\frac12,\sigma} (\Omega _1) \cap C_{x}^{j}\big(\m{R}^+_0; H_{t}^{\frac{s+1-j}{3}}(\m{R}^+)\big)$ and $v\in Y^{-1,-\b}_{s,\frac12,\sigma} (\Omega _1) \cap C_{x}^{j}\big(\m{R}^+_0; H_{t}^{\frac{s+1-j}{3}}(\m{R}^+)\big)$ for $j=0,1$. Hence, the proof of Theorem \ref{Thm, cwp} is completed.
\end{proof}

Now we turn to  consider the unconditional well-posedness for the IBVP (\ref{ckdv-2}) and prove Theorem \ref{Thm, exist of ms} and Theorem \ref{Thm, uniq of ms} . 
Note first that by scaling argument,   Theorem \ref{Thm, cwp}  can be restated as the following result.

\begin{cor}[Conditional Well-posedness]  \label{cwp-1}
Let $-\frac34 < s\leq 3$, $r>0$  and $0\leq  \beta \leq 1$ be given.  There exist $ \sigma = \sigma(s) > \frac12 $ and  $T = T(s,r) >0$  such that for any naturally compatible  $(\phi, \psi) \in {\cal H}^s _x(\R^+)$ and $\vec{h}= (h_1, h_2, h_3 ) \in  {\cal H}^s_t (\R^+) $ related to the IBVP (\ref{ckdv-2})  with
\[ \|\phi , \psi )\|_{{\cal H}^s_x (\R^+)} + \|\vec{h} \|_{{\cal H}^s_t (\R^+)}  \le r , \]
the system of the integral equations (SIE)  (\ref{ckdv-3}) admits a unique solution  
\[ (u,v) \in \mcal{Y}_{\sigma} (\Omega _T) :=Y^{1,\b}_{s,\frac12,\sigma} (\Omega _T) \times Y^{-1,-\b}_{s,\frac12,\sigma}(\Omega _T).\]
Moreover, the solution map is real analytic  in the corresponding spaces.
\end{cor}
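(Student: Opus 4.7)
The plan is to deduce Corollary~\ref{cwp-1} from Theorem~\ref{Thm, cwp} (applied at $T = 1$) via the same scaling argument that reduces Theorem~\ref{Thm, main} to Theorem~\ref{Thm, main-scaling} in the introduction. Given $s \in (-\tfrac{3}{4}, 3]$, let $r_0 := r(s, 1) > 0$ and $\sigma = \sigma(s) > \tfrac{1}{2}$ be the constants supplied by Theorem~\ref{Thm, cwp} with time horizon $1$. For the prescribed data size $r > 0$, I would first select a scaling parameter $\gamma \in (0, 1]$, depending only on $s$ and $r$, small enough that $C \gamma^{3/8} r \leq r_0$, where $C = C(s)$ is the constant governing the scaling bounds
\[
\|(\phi^\gamma, \psi^\gamma)\|_{\mcal{H}^s_x(\R^+)} + \|\vec h^\gamma\|_{\mcal{H}^s_t(\R^+)} \ls \gamma^{3/8}\bigl(\|(\phi, \psi)\|_{\mcal{H}^s_x(\R^+)} + \|\vec h\|_{\mcal{H}^s_t(\R^+)}\bigr)
\]
established for $s > -\tfrac{3}{4}$ immediately after the statement of Theorem~\ref{Thm, main-scaling}.

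Next I would verify directly that $(u, v)$ solves the SIE~(\ref{ckdv-3}) with dispersion parameter $\b$ on $\Omega_T$ if and only if the pair $(u^\gamma, v^\gamma)(x, t) := \gamma (u, v)(\gamma^{1/2} x, \gamma^{3/2} t)$ solves the corresponding SIE with dispersion parameter $\gamma \b \in (0, 1]$ on $\Omega_1$, where $T := \gamma^{-3/2}$; this is essentially the same computation already carried out for the PDE in the introduction, now applied to each Duhamel/boundary integral in (\ref{ckdv-3}). By construction the rescaled data $(\phi^\gamma, \psi^\gamma, \vec h^\gamma)$ have total norm at most $r_0$, so Theorem~\ref{Thm, cwp} applied at time $1$ with parameter $\gamma \b$ produces a unique solution $(u^\gamma, v^\gamma) \in \mcal{Y}_\sigma(\Omega_1)$ that depends real-analytically on the rescaled data. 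Undoing the scaling returns a solution $(u, v) \in \mcal{Y}_\sigma(\Omega_T)$ for the original SIE, with existence time $T = T(s, r) = \gamma^{-3/2}$ as required.

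The one bookkeeping item, which I expect to be the main (though still routine) technical step, is to check that the rescaling is a bounded linear bijection between $\mcal{Y}_\sigma(\Omega_T)$ (with parameter $\b$) and $\mcal{Y}_\sigma(\Omega_1)$ (with parameter $\gamma \b$), so that both uniqueness and real-analytic dependence at scale $1$ transfer back to scale $T$. This reduces to tracking the action of the frequency dilation $(\xi, \tau) \mapsto (\gamma^{-1/2} \xi, \gamma^{-3/2} \tau)$ on the norms in (\ref{MFR norm})--(\ref{Y norm}): the phase $\phi^{\a, \b}(\xi) = \a \xi^3 - \b \xi$ transforms cleanly into $\phi^{\a, \gamma \b}$ at the rescaled frequency, and the low-frequency cutoff $\mb{1}_{\{e^{|\xi|} \leq 3 + |\tau|\}}$ only affects the region $|\xi| \ls 1$, where the comparison is harmless for $\gamma \in (0, 1]$. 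No new analytical input beyond Theorem~\ref{Thm, cwp} and the bounds already quoted in the introduction is required.
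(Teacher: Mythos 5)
Your overall strategy is exactly the paper's: Corollary \ref{cwp-1} is justified in the text only by the phrase ``by scaling argument,'' and your reduction (dilate so that the data become small, which turns the parameter $\b$ into $\gamma\b\in[0,1]$, apply Theorem \ref{Thm, cwp} at $T=1$ with threshold $r_0=r(s,1)$, then undo the dilation) is that argument. However, there is a concrete error in the time bookkeeping. With $(u^{\gamma},v^{\gamma})(x,t)=\gamma\,(u,v)(\gamma^{1/2}x,\gamma^{3/2}t)$, the restriction of $(u^{\gamma},v^{\gamma})$ to $\Omega_1$ only involves $(u,v)$ at times $\gamma^{3/2}t\in(0,\gamma^{3/2})$; hence a solution of the rescaled SIE on $\Omega_1$ yields, after undoing the scaling, a solution of the original SIE on $\Omega_{\gamma^{3/2}}$, not on $\Omega_{\gamma^{-3/2}}$. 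Your stated equivalence ``$(u,v)$ solves on $\Omega_T$ iff $(u^{\gamma},v^{\gamma})$ solves on $\Omega_1$, where $T=\gamma^{-3/2}$'' is therefore false, and the concluding claim of an existence time $T=\gamma^{-3/2}\geq 1$ would amount to long-time existence for data of arbitrary size, which no scaling argument can give. The repair is immediate: take $T=T(s,r)=\gamma^{3/2}$ (the lifespan shrinks as $r$ grows), which is all the corollary asserts.

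A second, smaller caution concerns your claim that the cutoff $\mb{1}_{\{e^{|\xi|}\leq 3+|\tau|\}}$ ``only affects the region $|\xi|\ls 1$'': the region is $\{|\xi|\leq \ln(3+|\tau|)\}$, which contains arbitrarily large $|\xi|$, and it is not invariant under $(\xi,\tau)\mapsto(\gamma^{1/2}\xi,\gamma^{3/2}\tau)$. In the stretching direction (from the scale-one solution back to the original variables, which is what existence requires) the set where the rescaled cutoff is active but the original one is not is contained in a bounded region of $(\xi,\tau)$, so the $\Lambda$-component transfers at the cost of a $\gamma$-dependent constant. In the compressing direction (original solution $\mapsto$ rescaled solution, which is what you need in order to transfer uniqueness from $\mcal{Y}_{\sigma}(\Omega_1)$ back to $\mcal{Y}_{\sigma}(\Omega_T)$) that discrepancy set is unbounded, and on it the weight $\la L\ra^{\sigma}$ with $\sigma>\frac12$ is being demanded where the given extension only controls $\la L\ra^{1/2}$; so the asserted bounded bijection between the restricted spaces is not automatic there. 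This part of the argument needs either a genuine verification (e.g.\ choosing suitable extensions) or a different route for uniqueness, such as re-running the difference estimates directly on $\Omega_T$; it should not be dismissed as harmless.
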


%

By standard extension arguments, the solutions  given in Corollary \ref{cwp-1} possess  a  blow-up alternative property as described below.

\begin{lem}[Blow-up alternative] \label{blowup}
Let $-\frac34 <s\leq 3$ be given. For any given
$$(p, q )\in \H_x^s (\R^+), \quad (a,b,c)\in \H_t^s(\R^+), $$ 
there exists a $T^s_{max} >0  $ such that the corresponding solution $(u, v)\in C(0, T^s_{max}; H^s (\R^+))$ and 
\[ \mbox{ either \quad  $T^s_{max} =+\infty $  \quad  or  \quad }
\lim _{t\to T^s_{\max}} \l(\| u(\cdot, t)\|_{H^s (\R^+)}+\|  v(\cdot, t)\|_{H^s (\R^+)}\r) = +\infty .\]
\end{lem}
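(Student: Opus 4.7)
The plan is to define $T^s_{max}$ as the supremum of all $T>0$ for which the IBVP \eqref{ckdv-2} admits a mild solution on $[0,T]$ belonging to $C([0,T];\mcal{H}^s_x(\R^+))$, and then to preclude the scenario in which the $H^s$-norm of the solution stays bounded near $T^s_{max}$ by restarting the problem at a late time and applying Corollary \ref{cwp-1} with a time-step that depends only on the size of the data. Positivity of $T^s_{max}$ follows immediately from Corollary \ref{cwp-1} together with the fact (already established in the existence part of Theorem \ref{Thm, exist of ms}) that the fixed point of Theorem \ref{Thm, cwp} is a genuine mild solution. The unconditional uniqueness of Theorem \ref{Thm, uniq of ms} allows any two such local mild solutions to be glued on the overlap of their intervals, producing a well-defined maximal solution $(u,v)\in C([0,T];\mcal{H}^s_x(\R^+))$ for each $T<T^s_{max}$.

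Suppose now $T^s_{max}<\infty$; I would argue by contradiction that
\[
\liminf_{t\to T^s_{max}^-}\l(\|u(\cdot,t)\|_{H^s(\R^+)}+\|v(\cdot,t)\|_{H^s(\R^+)}\r)=+\infty,
\]
which upgrades automatically to the limit in the statement since a $\liminf$ equal to $+\infty$ is a limit. If instead there existed $M<\infty$ and a sequence $t_n\uparrow T^s_{max}$ along which $\|u(\cdot,t_n)\|_{H^s(\R^+)}+\|v(\cdot,t_n)\|_{H^s(\R^+)}\le M$, then for any such $t_0:=t_n$ I would restart \eqref{ckdv-2} with new initial data $(p',q'):=(u(\cdot,t_0),v(\cdot,t_0))$ and new boundary data obtained by translation, $(a',b',c')(t):=(a,b,c)(t+t_0)$ on $\R^+$. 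The total norm of the restarted data is bounded by $r_0:=M+\|(a,b,c)\|_{\mcal{H}^s_t(\R^+)}$, so Corollary \ref{cwp-1} furnishes an existence time $T'=T'(s,r_0)>0$ depending only on $s$ and $r_0$, hence independent of $n$. Choosing $t_0$ with $T^s_{max}-t_0<T'/2$ and invoking Theorem \ref{Thm, uniq of ms} to identify the translate of the restarted mild solution with $(u,v)$ on $[t_0,T^s_{max})$, one obtains a mild solution of \eqref{ckdv-2} on $[0,t_0+T']\supsetneq[0,T^s_{max}]$, contradicting the maximality of $T^s_{max}$.

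The main obstacle is to verify that the restarted data $(p',q',a',b',c')$ is naturally compatible in the sense of Definition \ref{Def, compatibility-lin} at the new initial time; this is essentially built in because $(u,v)$ is a mild solution of \eqref{ckdv-2} up to time $T^s_{max}$, so its boundary traces match $(a,b,c)$ at every $t_0\in(0,T^s_{max})$. More precisely, when $s>\frac12$ the Sobolev embedding in the time variable applied to the space-trace estimates of Propositions \ref{Prop, pos-kdv}--\ref{Prop, neg-kdv} yields that $u(0,\cdot)$ and $v(0,\cdot)$ are continuous functions coinciding with $a$ and $b$ on $[0,T^s_{max})$, hence $p'(0)=a(t_0)=a'(0)$ and $q'(0)=b(t_0)=b'(0)$; for $s>\frac32$ the analogous identity $q_x'(0)-p_x'(0)=c'(0)$ follows from the trace estimate with $j=1$. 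A secondary technical point worth emphasizing is that the time-step $T'$ delivered by Corollary \ref{cwp-1} depends on the data only through its $\mcal{H}^s_x\times\mcal{H}^s_t$-norm bound $r_0$ and not on any other quantity tied to $t_0$; this is precisely the form in which the local theory has been set up, and it is what makes the standard continuation argument succeed.
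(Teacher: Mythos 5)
Your overall skeleton (define $T^s_{max}$ as a supremum of existence times, restart at a late time with data bounded by $r_0$, use the fact that the local existence time from Corollary \ref{cwp-1} depends only on $s$ and the norm bound, and conclude by contradiction) is exactly the kind of ``standard extension argument'' the paper invokes, and the $\liminf$-to-$\lim$ upgrade and the compatibility check at the restart time are handled sensibly. However, as written your argument is circular relative to the paper's logical structure: you glue the restarted solution to the original one by invoking Theorem \ref{Thm, uniq of ms} (unconditional uniqueness of mild solutions) and you justify positivity of $T^s_{max}$ by appealing to the existence part of Theorem \ref{Thm, exist of ms}, but in the paper both of these theorems are proved \emph{after} and \emph{by means of} Lemma \ref{blowup} (their proofs, as well as that of Proposition \ref{Prop, Tmax}, explicitly use the blow-up alternative to guarantee $T^3_{n,max}\geq T^*$). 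So the lemma cannot be derived from those results without begging the question.

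The correct (and intended) route stays entirely inside the framework of Corollary \ref{cwp-1}: the lemma concerns the solutions of the system of integral equations (\ref{ckdv-3}), whose uniqueness holds in the restricted space $\mcal{Y}_{\sigma}(\Omega_T)=Y^{1,\b}_{s,\frac12,\sigma}(\Omega_T)\times Y^{-1,-\b}_{s,\frac12,\sigma}(\Omega_T)$, and the gluing on overlaps must be performed with this \emph{conditional} uniqueness. Concretely, one has to check that the time-translated restriction of the original SIE solution to $[t_0,T]$ is again a fixed point of the integral operator associated with the restarted data $(u(\cdot,t_0),v(\cdot,t_0),a(\cdot+t_0),b(\cdot+t_0),c(\cdot+t_0))$ and that it belongs to the corresponding restricted Bourgain-type space, so that it can be identified with the solution produced by Corollary \ref{cwp-1} on the overlap; this verification is the actual technical content of the extension argument and is what your proposal replaces by the (unavailable at this stage) unconditional uniqueness. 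With that replacement made, the rest of your argument (uniform time-step $T'(s,r_0)$, choice of $t_0$ with $T^s_{max}-t_0<T'/2$, translation invariance of the $\mcal{H}^s_t(\R^+)$ norm, and the trace-based compatibility check for $s>\frac12$, $s>\frac32$) goes through.
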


We then intend to prove Theorem \ref{Thm, exist of ms}, that is to show the solution $(u, v)$ given in Corollary \ref{cwp-1} is a mild solution of the IBVP \eqref{ckdv-1}. According to Definition \ref{Def, m-s}, mild solutions can be regarded as limits of classical solutions. So we first recall a well-known result about classical solutions.

\begin{lem}[Existence and uniqueness of classical  solutions]\label{wp-classical}
Let $r>0$ be given.  There exists a $T>0 $ such that for compatible set
$(p, q )\in \H_x^3 (\R^+)$ and $(a,b,c)\in \H_t^3(\R^+) $
with
\[ \|  ((p, q ))\| _{ \H_x^3 (\R^+)} +\|(a,b,c)\|_{\H_t^3(\R^+) } \leq r, \]
the  IBVP \eqref{ckdv-1}  admits a unique strong solution $ (u,v) $ such that both $ u $ and $ v $ belong to $ C^2 (0,T; L^2 (\R^+))\cap C(0,T; H^3 (\R^+))$.
\end{lem}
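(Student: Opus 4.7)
The plan is to invoke the conditional well-posedness of Corollary \ref{cwp-1} at regularity $s=3$ and then upgrade the time regularity directly from the PDE, using a regularisation argument for the second time derivative.

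By Corollary \ref{cwp-1} applied at $s=3$, the compatible data $(p,q)\in\mcal{H}^3_x(\R^+)$ and $(a,b,c)\in\mcal{H}^3_t(\R^+)$ with norm bounded by $r$ determine, on a time interval $[0,T]$ whose length depends only on $r$, a unique solution $(u,v)$ of the integral system (\ref{ckdv-3}) lying in $\mcal{Y}_{\sigma}(\Omega_T)\subset C([0,T];\mcal{H}^3_x(\R^+))$. Since $H^2(\R^+)$ is a Banach algebra, the nonlinear terms $uu_x$, $(uv)_x$, $vv_x$ all lie in $C([0,T];H^2(\R^+))$, while the dispersive and transport terms $u_{xxx},v_{xxx},\b u_x,\b v_x$ lie in $C([0,T];L^2(\R^+))$. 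Reading $u_t,v_t$ off from the equations then gives $u_t,v_t\in C([0,T];L^2(\R^+))$, so $u,v\in C^1([0,T];L^2(\R^+))$, and the integral identity (\ref{ckdv-3}) upgrades to the pointwise equations of (\ref{ckdv-1}) almost everywhere on $\R^+\times(0,T)$. Uniqueness in the stated class is inherited directly from Corollary \ref{cwp-1}.

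To obtain the second time derivative in $L^2$, regularise the data: choose a sequence of compatible data $(p_n,q_n,a_n,b_n,c_n)\in\mcal{H}^6_x(\R^+)\times\mcal{H}^6_t(\R^+)$ converging to $(p,q,a,b,c)$ in $\mcal{H}^3_x\times\mcal{H}^3_t$, and apply Corollary \ref{cwp-1} at $s=6$ to produce smooth approximate solutions $(u_n,v_n)$. Using the blow-up alternative Lemma \ref{blowup} together with the uniform $\mcal{H}^3$ bound on $(p_n,q_n,a_n,b_n,c_n)$, the common interval of existence can be taken to equal the $[0,T]$ from the first step (possibly after shrinking $T$ by a fixed factor). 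Repeating the regularity argument above one order higher yields $\p_tu_n,\p_tv_n\in C([0,T];H^3(\R^+))$, whence $\p_t^2u_n=-\p_x^3\p_tu_n-\b\p_x\p_tu_n+\p_tF(u_n,v_n)\in C([0,T];L^2(\R^+))$ and similarly for $\p_t^2v_n$. Finally, viewing $(w_n,z_n):=(\p_tu_n,\p_tv_n)$ as solutions of the linearisation of (\ref{ckdv-1}) about $(u_n,v_n)$ with source in $C([0,T];L^2)$ and with boundary data $(a_n',b_n',c_n')$, the linear estimates of Propositions \ref{Prop, pos-kdv}--\ref{Prop, neg-kdv} at $s=0$ applied to the differences $w_n-w_m,z_n-z_m$ yield a Cauchy property in $C([0,T];L^2(\R^+))$, whose limit is assigned as $(u_{tt},v_{tt})$.

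The main obstacle is the non-homogeneous boundary book-keeping when passing to the time-differentiated system: a direct energy estimate on $\R^+$ would pick up uncontrolled trace terms at $x=0$, forcing recourse to the half-line linear theory of Section \ref{Sec, lin pb}. The required regularity count checks out: for $s=3$, $a,b\in H^{4/3}(\R^+)$ and $c\in H^1(\R^+)$ give $a',b'\in H^{1/3}(\R^+)$ and $c'\in L^2(\R^+)$, which is precisely the $\mcal{H}^0_t(\R^+)$ regularity admitted by Propositions \ref{Prop, pos-kdv}--\ref{Prop, neg-kdv} at $s=0$. The compatibility hypothesis on the original data is what allows the approximations $(p_n,q_n,a_n,b_n,c_n)$, and hence the data for the linearised system governing $(w_n,z_n)$, to be chosen naturally compatible, so the half-line estimates apply without boundary obstruction and the argument closes.
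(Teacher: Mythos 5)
Your existence step starts the same way as the paper (apply Corollary \ref{cwp-1} at $s=3$ and read $u_t,v_t\in C([0,T];L^2)$ off the equation; the paper in fact says only this and no more), but the two places where you go beyond that contain genuine problems. First, and most importantly, the uniqueness claim ``inherited directly from Corollary \ref{cwp-1}'' is exactly the gap the paper is at pains to avoid: Corollary \ref{cwp-1} gives uniqueness only for solutions of the integral system (\ref{ckdv-3}) lying in the auxiliary space $\mcal{Y}_{\sigma}(\Omega_T)=Y^{1,\b}_{s,\frac12,\sigma}(\Omega_T)\times Y^{-1,-\b}_{s,\frac12,\sigma}(\Omega_T)$ --- the \emph{conditional} uniqueness discussed in Remark \ref{Re, uwp}. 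An arbitrary strong solution in $C^2(0,T;L^2)\cap C(0,T;H^3)$ is not known a priori to satisfy (\ref{ckdv-3}) with the specific boundary integral operators, nor to have finite $\mcal{Y}_{\sigma}$-norm, so it is not covered by that uniqueness statement; indeed Lemma \ref{wp-classical} is later the input for Theorem \ref{Thm, uniq of ms}, so its uniqueness cannot be delegated back to the fixed-point argument. The paper closes this by a standard energy estimate for the difference of two classical solutions (possible precisely because at $H^3$ regularity one can integrate by parts and control the traces at $x=0$), and your proposal needs such an argument or an explicit proof that every strong solution lies in $\mcal{Y}_\sigma$ and solves the SIE.

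Second, the regularisation step used to produce $\p_t^2 u,\p_t^2 v\in C([0,T];L^2)$ invokes Corollary \ref{cwp-1} and Lemma \ref{blowup} at $s=6$, but every ingredient of that machinery (Propositions \ref{Prop, pos-kdv}--\ref{Prop, bilin}, hence Theorem \ref{Thm, cwp} and Corollary \ref{cwp-1}) is stated and proved only for $-\frac34<s\leq 3$; the bootstrap ``$\p_t u_n\in C([0,T];H^3)$'' needs $u_n\in C([0,T];H^6)$, which is precisely what the out-of-range application was supposed to supply. So the $C^2$-in-time part does not close within the paper's toolkit as written; it would have to be obtained either from classical high-regularity theory proved independently, or by differentiating the equation and running the half-line linear estimates at $s=0$ on the time-differentiated system for the solution itself (your trace count $a',b'\in H^{1/3}$, $c'\in L^2$ is the right bookkeeping for that), rather than through an $s=6$ application of Corollary \ref{cwp-1}.
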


\begin{proof}  
The existence of the classical solution $(u, v)$ follows  directly from Corollary \ref{cwp-1} with $s=3$,  and the uniqueness of the  solution can be proved using the standard energy estimate method.
\end{proof}

Based on the notations in Lemma \ref{blowup}, it can be readily  checked that $T_{max}^{s_1}\geq T_{max}^{s_2}$ for $s_1< s_2$.  We  then propose that $T_{max}^{s_1}=T_{max}^{s_2}$.

\begin{pro}[Persistence of regularity] \label{Prop, Tmax} 
For  $-\frac34 < s_1< s_2 \leq 3$, $(p, q )\in \H_x^{s_2} (\R^+)$ and $(a,b,c)\in \H_t^{s_2}(\R^+)$, one has $T^{s_2}_{max} = T^{s_1}_{max}$,
where $ T^{s_2}_{max} $ and $ T^{s_1}_{max} $ are the lifespans of the solutions, corresponding to $ s_2 $ and $ s_1 $ respectively, determined in Lemma \ref{blowup}.
\end{pro}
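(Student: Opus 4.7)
The direction $T_{max}^{s_2}\leq T_{max}^{s_1}$ is immediate since any $\mathcal{H}_x^{s_2}$-valued mild solution is automatically an $\mathcal{H}_x^{s_1}$-valued mild solution. For the reverse direction I argue by contradiction: assume $T^*:=T_{max}^{s_2}<T_{max}^{s_1}$. By the blow-up alternative (Lemma \ref{blowup}),
\begin{equation*}
\limsup_{t\to T^*}\|(u,v)(t)\|_{\mathcal{H}_x^{s_2}(\R^+)} \;=\; +\infty,
\end{equation*}
whereas fixing any $T^{**}\in(T^*,T_{max}^{s_1})$ gives the finite bounds $M:=\|(u,v)\|_{C([0,T^{**}];\mathcal{H}_x^{s_1})}$ and $N:=\|(u,v)\|_{Y^{1,\beta}_{s_1,\frac12,\sigma}(\Omega_{T^{**}})\times Y^{-1,-\beta}_{s_1,\frac12,\sigma}(\Omega_{T^{**}})}$; the latter is finite because the solution on $[0,T^{**}]$ at regularity $s_1$ can be obtained by iterating Theorem \ref{Thm, cwp} finitely many times. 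The plan is then to extract a uniform a priori bound on $\|(u,v)(t)\|_{\mathcal{H}_x^{s_2}}$ for $t\in[0,T^*)$, contradicting the displayed blow-up.

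The key technical input is a mixed-regularity version of Proposition \ref{Prop, bilin}: for $-\tfrac34<s_1<s_2\leq 3$, $\alpha\neq 0$, $|\beta|\leq 1$, and $\sigma>\tfrac12$ small enough,
\begin{equation*}
\|\partial_x(w_1 w_2)\|_{X^{\alpha,\beta}_{s_2,\sigma-1}\cap Z^{\alpha,\beta}_{s_2,\sigma-1}}\leq C\bigl(\|w_1\|_{X^{\alpha,\beta}_{s_1,\frac12,\sigma}}\|w_2\|_{X^{\alpha,\beta}_{s_2,\frac12,\sigma}}+\|w_1\|_{X^{\alpha,\beta}_{s_2,\frac12,\sigma}}\|w_2\|_{X^{\alpha,\beta}_{s_1,\frac12,\sigma}}\bigr),
\end{equation*}
together with the analogous statements for the $(-\alpha,-\beta)$-output and the mixed-sign inputs that appear in \eqref{ckdv-3}. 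These follow verbatim from the proof of Proposition \ref{Prop, bilin} once one invokes the Leibniz-type redistribution $\langle\xi_3\rangle^{s_2-s_1}\lesssim \langle\xi_1\rangle^{s_2-s_1}+\langle\xi_2\rangle^{s_2-s_1}$, which is valid thanks to $\xi_1+\xi_2+\xi_3=0$ and allows one to shift the extra weight $s_2-s_1$ onto the lower-regularity factor, reducing each dyadic block to a single-regularity estimate already handled in Section \ref{Sec, bilin est}.

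Armed with this mixed-regularity estimate, I rerun the contraction argument of Theorem \ref{Thm, cwp} at regularity $s_2$ on any subinterval $[t_k,t_{k+1}]\subset[0,T^*)$ to obtain
\begin{equation*}
\|(u,v)\|_{\mathcal{Y}^{s_2}_\sigma(\Omega_{t_k,t_{k+1}})}\leq C_1\|(u,v)(t_k)\|_{\mathcal{H}_x^{s_2}} + C_1\|(u,v)\|_{\mathcal{Y}^{s_1}_\sigma(\Omega_{t_k,t_{k+1}})}\|(u,v)\|_{\mathcal{Y}^{s_2}_\sigma(\Omega_{t_k,t_{k+1}})},
\end{equation*}
where $\mathcal{Y}^{s}_\sigma(\Omega_I):=Y^{1,\beta}_{s,\frac12,\sigma}(\Omega_I)\times Y^{-1,-\beta}_{s,\frac12,\sigma}(\Omega_I)$ and $C_1=C_1(s_1,s_2,\sigma)$. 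Choosing a finite partition $0=t_0<t_1<\cdots<t_K=T^*$ such that $C_1\|(u,v)\|_{\mathcal{Y}^{s_1}_\sigma(\Omega_{t_k,t_{k+1}})}\leq\tfrac12$ on every piece, absorbing the nonlinear term and using the embedding $\mathcal{Y}^{s_2}_\sigma(\Omega_{t_k,t_{k+1}})\hookrightarrow C([t_k,t_{k+1}];\mathcal{H}_x^{s_2})$ yields $\|(u,v)(t_{k+1})\|_{\mathcal{H}_x^{s_2}}\leq 2C_1\|(u,v)(t_k)\|_{\mathcal{H}_x^{s_2}}$. Iterating $K$ times produces the desired uniform bound $\sup_{t\in[0,T^*)}\|(u,v)(t)\|_{\mathcal{H}_x^{s_2}}\leq(2C_1)^K\|(\phi,\psi)\|_{\mathcal{H}_x^{s_2}}<\infty$, contradicting the displayed blow-up and forcing $T_{max}^{s_2}=T_{max}^{s_1}$.

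The most delicate step will be the construction of the finite partition $\{t_k\}$: the Bourgain-type quotient norm $\|\cdot\|_{\mathcal{Y}^{s_1}_\sigma(\Omega_I)}$ is not automatically continuous in $|I|$, so one must first prove a time-localization lemma asserting $\lim_{|I|\to 0}\|(u,v)\|_{\mathcal{Y}^{s_1}_\sigma(\Omega_I)}=0$ for the fixed element $(u,v)\in\mathcal{Y}^{s_1}_\sigma(\Omega_{T^{**}})$. I expect to establish this by combining a gain $|I|^{\sigma-\frac12}$ extracted from the supercritical parameter $\sigma>\tfrac12$ in the $\Lambda^{\alpha,\beta}_{s_1,\sigma}$ component with a high/low frequency split of the critical $X^{\alpha,\beta}_{s_1,\frac12}$ component, where the low-frequency piece is handled by the $C_t H^{s_1}$-continuity of $(u,v)$ and the high-frequency piece has small mass by tail integrability. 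Once this continuity is in hand, the partition is built greedily and finiteness of $K$ follows from the global bound $N$; everything else in the argument is a routine adaptation of Sections \ref{Sec, bilin est} and \ref{Sec, wp}.
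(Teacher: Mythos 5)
There is a genuine gap, and it sits exactly where you flagged the ``most delicate step'': the time-localization lemma you need, $\lim_{|I|\to 0}\|(u,v)\|_{\mathcal{Y}^{s_1}_\sigma(\Omega_I)}=0$, is false. The norm of $Y^{\a,\b}_{s_1,\frac12,\sigma}$ contains $\sup_{t}\|\cdot\|_{H^{s_1}_x}$ (see \eqref{Y norm}), so for any extension agreeing with the solution on $\R^+\times I$ one has $\|(u,v)\|_{\mathcal{Y}^{s_1}_\sigma(\Omega_I)}\geq \sup_{t\in I}\|(u(t),v(t))\|_{\mathcal{H}^{s_1}_x(\R^+)}$, which tends to the (generically nonzero) size of the solution at the limiting time, not to zero. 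Even if you replace $\mathcal{Y}^{s_1}_\sigma$ by the norms that actually enter Proposition \ref{Prop, bilin}, namely $X^{\a,\b}_{s_1,\frac12,\sigma}$, the $\Lambda^{\a,\b}_{s_1,\sigma}$ component blocks smallness: for $|\xi|\leq 1$ the indicator $e^{|\xi|}\leq 3+|\tau|$ is always satisfied (cf.\ \eqref{MEF, lb}), so on low spatial frequencies this is an $X_{s_1,\sigma}$-type norm with $\sigma>\frac12$, which controls the sup-in-time of the low-frequency $H^{s_1}$ mass and hence cannot shrink with $|I|$. Your proposed mechanism of ``a gain $|I|^{\sigma-\frac12}$ extracted from the supercritical parameter $\sigma>\frac12$'' is backwards: time localization produces positive powers of $|I|$ only when the modulation exponent is strictly \emph{below} $\frac12$ (this is precisely the hypothesis $-\frac12<b_1\leq b_2<\frac12$ in Lemma \ref{Lemma, local FR est}); exponents above $\frac12$ give no such gain. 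Consequently no finite partition with $C_1\|(u,v)\|_{\mathcal{Y}^{s_1}_\sigma(\Omega_{t_k,t_{k+1}})}\leq\frac12$ exists unless the solution itself is small, and the absorption/iteration scheme collapses.

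The paper obtains the needed smallness differently: rather than making the $s_1$-level norm small, it builds a small factor into the nonlinear estimate. Lemma \ref{Lemma, bilin smoothing} shows the bilinear output lands in $X^{\a,\b}_{s,\sigma-1+\eps_0}$, i.e.\ with $\eps_0$ extra room in the modulation weight; Lemma \ref{Lemma, bilin, high-low reg} adds the high--low frequency redistribution $\la\xi_3\ra^{s_2-s_1}\lesssim\la\xi_1\ra^{s_2-s_1}+\la\xi_2\ra^{s_2-s_1}$ (this part of your proposal matches the paper); and Lemmas \ref{Lemma, local FR est}--\ref{Lemma,  local Duhamel} convert the $\eps_0$ room into an explicit factor $T^{\eps_0}$ multiplying $\|u\|_{X^{1,1}_{s_1,\frac12,\sigma}}+\|v\|_{X^{-1,-1}_{s_1,\frac12,\sigma}}$, which is a \emph{fixed finite} quantity since $t^*<T^{s_1}_{max}$. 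The proof of Proposition \ref{Prop, Tmax} then works on a single short window of length $\delta$ near the putative blow-up time $t^*$, chooses $\delta$ so small that $C_2\delta^{\eps_0}\big(\|u\|_{X^{1,1}_{s_1,\frac12,\sigma}}+\|v\|_{X^{-1,-1}_{s_1,\frac12,\sigma}}\big)\leq\frac12$, absorbs the $s_2$-level term, and contradicts the $H^{s_2}$ blow-up; no partition or iteration over $[0,T^*)$ is needed. Two further (secondary) points: your iteration inequality omits the boundary-data contribution $\|(a,b,c)\|_{\mathcal{H}^{s_2}_t}$, which must appear on the right-hand side when restarting at $t_k$ (it is finite, so harmless once included), and the gluing claim giving a global $\mathcal{Y}^{s_1}_\sigma(\Omega_{T^{**}})$ bound from finitely many local solves is nontrivial at the borderline exponent $b=\frac12$ and would itself need justification.
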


Before stating the proof of Proposition \ref{Prop, Tmax}, we list some smoothing properties based on the linear estimate in Lemma \ref{Lemma, lin est} and the bilinear estimates in Proposition \ref{Prop, bilin}, the proofs of the following Lemma \ref{Lemma, bilin, gc1}--Lemma \ref{Lemma, bilin, d21} will be given in Appendix \ref{Sec,  proof of bilin smoothing}.

\begin{lem}\label{Lemma, bilin, gc1}
Let $-\frac34<s_1<s_2\leq 3$, $0<T\leq 1$, $\a\neq 0$ and $|\b|\leq 1$. Then there exists $\epsilon_0 = \eps(s_1, s_2, \a) >0$ such that for any $\sigma\in(\frac12, \frac12 + \epsilon_0]$, the following bilinear estimate
\begin{align}
&\quad\, \l\| \eta\Big(\frac{t}{T}\Big) \int^t_0 W_R^{\a, \b} (t-\tau) (w_1 w_2)_x\,d\tau\r\|_{X^{\a,\b}_{s_2,\frac12, \si}} \nonumber\\
& \leq  CT^{\epsilon_0}	\Big( \| w_1\|_{X^{\a,\b}_{s_1,\frac12,\sigma}} \| w_2\|_{X^{\a,\b}_{s_2,\frac12,\sigma}}+\| w_1\|_{X^{\a,\b}_{s_2,\frac12,\sigma}}\| w_2\|_{X^{\a,\b}_{s_1,\frac12,\sigma}}\Big).  \label{bilin, gc1}
\end{align}
holds for any $w_1,w_2\in X^{\a,\b}_{s,\frac12,\sigma}$ with some positive constant $ C = C(s_1, s_2, \a, \eps_0, \sigma)$.
\end{lem}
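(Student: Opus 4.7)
The plan combines three ingredients: the Duhamel linear estimate of Lemma \ref{Lemma, lin est}; an \emph{asymmetric} refinement of the bilinear estimate of Proposition \ref{Prop, bilin}; and a Bourgain-type short-time localization responsible for the factor $T^{\epsilon_0}$. The overall architecture mirrors the standard derivation of smoothing-type bilinear estimates for KdV (as in \cite{KPV96}), adapted to the modified Fourier restriction spaces $X^{\a,\b}_{s,1/2,\si}$.

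\paragraph{Reduction to a bilinear estimate with short-time gain.}
Since $T\le1$ forces $\eta(t)\equiv 1$ on $\mathrm{supp}\,\eta(t/T)$, we may write
\[
\eta(t/T)\int_0^tW_R^{\a,\b}(t-\tau)F\,d\tau\;=\;\eta(t/T)\cdot\eta(t)\int_0^tW_R^{\a,\b}(t-\tau)F\,d\tau,\qquad F=(w_1w_2)_x.
\]
Applying Lemma \ref{Lemma, lin est} to the inner expression and then invoking the classical time-localization estimate
\[
\bigl\|\eta(t/T)\,g\bigr\|_{X^{\a,\b}_{s,b_1}}\;\ls\;T^{b_2-b_1}\,\|g\|_{X^{\a,\b}_{s,b_2}},\qquad -\tfrac12<b_1<b_2<\tfrac12,
\]
(with an auxiliary cutoff $\eta(\tau/T)$ inserted inside the integral, which is harmless on the support of $\eta(t/T)$), chosen so that $b_1=\sigma-1$ and $b_2=\sigma-1+\epsilon_0$, reduces the left side of \eqref{bilin, gc1} to
\[
T^{\epsilon_0}\,\bigl\|(w_1w_2)_x\bigr\|_{X^{\a,\b}_{s_2,\,\sigma-1+\epsilon_0}}.
\]

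\paragraph{Asymmetric bilinear estimate.}
Via the weighted $L^2$ reformulation in Lemma \ref{Lemma, bilin to weighted l2}, it remains to control an integral over the set $A$ carrying the weight $|\xi_3|\,\la\xi_3\ra^{s_2}$ in the numerator and $\la\xi_1\ra^{s_1}\la\xi_2\ra^{s_1}M_1M_2\la L_3\ra^{1-\sigma-\epsilon_0}$ in the denominator. Because $s_2-s_1>0$ and $\xi_3=-(\xi_1+\xi_2)$, the elementary pointwise bound
\[
\la\xi_3\ra^{s_2}\;\ls\;\la\xi_1\ra^{s_2-s_1}\la\xi_3\ra^{s_1}\;+\;\la\xi_2\ra^{s_2-s_1}\la\xi_3\ra^{s_1}
\]
splits the integral into two symmetric pieces. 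In each piece the surplus factor $\la\xi_j\ra^{s_2-s_1}$ is absorbed by upgrading the weight on $w_j$ from $\la\xi_j\ra^{s_1}$ to $\la\xi_j\ra^{s_2}$, so that the remaining convolution estimate is precisely that of Proposition \ref{Prop, bilin} at regularity $s_1$, yielding the two summands on the RHS of \eqref{bilin, gc1}.

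\paragraph{Main obstacle.}
The principal technical difficulty is reconciling the $\epsilon_0$-shift on the output side ($\sigma-1\to\sigma-1+\epsilon_0$) with the \emph{unshifted} $\sigma$ on the input side: the chain of inequalities naively produces the input in $X^{\a,\b}_{s,1/2,\sigma+\epsilon_0}$, which is a strictly stronger norm than $X^{\a,\b}_{s,1/2,\sigma}$. To close this gap one must verify that the case analysis in the proof of Proposition \ref{Prop, bilin} has a strict buffer of size at least $\epsilon_0$ in every exponent balance; since each such balance is of the form $\sigma\le\sigma_0(s,\a)$ with $\sigma_0>\tfrac12$ fixed, choosing
\[
\epsilon_0\;<\;\tfrac12\min\bigl\{\sigma_0(s_1,\a)-\tfrac12,\,\sigma_0(s_2,\a)-\tfrac12\bigr\}
\]
provides the necessary slack and allows the raised output weight $\la L_3\ra^{1-\sigma-\epsilon_0}$ to be absorbed without touching the input. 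A secondary technical point is checking that the Bourgain short-time lemma remains valid for the modified space $X^{\a,\b}_{s,1/2,\si}$ whose $\Lambda$-part carries the sharp cutoff $\mb{1}_{\{e^{|\xi|}\le3+|\tau|\}}$; this holds because the cutoff lives on the frequency side and is essentially preserved under convolution in $\tau$ with the rescaled Schwartz bump $T\wh\eta(T\,\cdot)$, the small leakage across its boundary contributing only an error that is itself $O(T^{\epsilon_0})$.
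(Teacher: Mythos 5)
Your proposal is correct and takes essentially the same route as the paper: the $T^{\epsilon_0}$ factor comes from time-localizing the nonlinearity between the modulation exponents $\sigma-1$ and $\sigma-1+\epsilon_0$ (Lemma \ref{Lemma, local FR est}, combined with the $\eta(t/T)$-localized Duhamel estimate of Lemma \ref{Lemma, local Duhamel}, which uses an inner cutoff at scale $2T$ and a GTV-type bound rather than a multiplier estimate at $b=\frac12$); the splitting $\la\xi_3\ra^{s_2-s_1}\ls\la\xi_1\ra^{s_2-s_1}+\la\xi_2\ra^{s_2-s_1}$ is exactly Lemma \ref{Lemma, bilin, high-low reg}; and the issue you single out as the main obstacle --- gaining $\la L_3\ra^{\epsilon_0}$ on the output while keeping the inputs at $\sigma$, by exploiting the slack in the case analysis of Proposition \ref{Prop, bilin, d2} when $\sigma$ is close to $\frac12$ --- is precisely the paper's Lemma \ref{Lemma, bilin smoothing}. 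Only cosmetic adjustments are needed (the inserted cutoff should be $\eta(\tau/(2T))$ since $|\tau|$ can reach $2T$ on the support of $\eta(t/T)$, and the identity $\eta(t/T)=\eta(t/T)\eta(t)$ requires $T\le\frac12$), so your argument matches the paper's proof in all essentials.
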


\begin{lem}\label{Lemma, bilin, d11}
Let $-\frac34<s_1<s_2\leq 3$, $0<T\leq 1$, $\a\neq 0$ and $|\b|\leq 1$. Then there exists $\epsilon_0 = \eps(s_1,s_2,\a) >0$ such that for any $\sigma\in(\frac12, \frac12+\epsilon_0]$, the following bilinear estimate
\begin{align}
&\quad\, \l\| \eta\Big(\frac{t}{T}\Big)  \int^t_0 W_R^{-\a, -\b} (t-\tau) (w_1 w_2)_x\,d\tau\r\|_{X^{-\a,-\b}_{s_2,\frac12, \si}} \nonumber \\
& \leq  CT^{\epsilon_0}	\Big( \| w_1\|_{X^{\a,\b}_{s_1,\frac12,\sigma}} \| w_2\|_{X^{\a,\b}_{s_2,\frac12,\sigma}}+\| w_1\|_{X^{\a,\b}_{s_2,\frac12,\sigma}}\| w_2\|_{X^{\a,\b}_{s_1,\frac12,\sigma}}\Big).  \label{bilin, d11}
\end{align}
holds for any $w_1,w_2\in X^{\a,\b}_{s,\frac12,\sigma}$ with some positive constant $ C = C(s_1, s_2, \a, \eps_0, \sigma)$.
\end{lem}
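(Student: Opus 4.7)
The plan is to parallel the proof of Lemma \ref{Lemma, bilin, gc1}, simply swapping the role of Proposition \ref{Prop, bilin, gc} for Proposition \ref{Prop, bilin, d1}, since the only difference between the two lemmas is the sign convention on the output dispersion. The argument splits naturally into a linear reduction, a tame bilinear estimate, and a short-time gain.

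First, I would invoke Lemma \ref{Lemma, lin est} for the semigroup $W_R^{-\a,-\b}$ (supplemented by Lemma \ref{Lemma, space trace for Duhamel} for the $Z$-contribution) to control the $X^{-\a,-\b}_{s_2,\frac12,\sigma}$-norm of the Duhamel integral on the left-hand side of (\ref{bilin, d11}) by the $X^{-\a,-\b}_{s_2,\sigma-1}\cap Z^{-\a,-\b}_{s_2,\sigma-1}$-norm of the source $(w_1 w_2)_x$, keeping the cutoff $\eta(t/T)$ in reserve for the time gain. The task then reduces to establishing a tame version of Proposition \ref{Prop, bilin, d1}, namely
\[
\|\p_x(w_1 w_2)\|_{X^{-\a,-\b}_{s_2,\sigma-1}}+\|\p_x(w_1 w_2)\|_{Z^{-\a,-\b}_{s_2,\sigma-1}}\ls \|w_1\|_{s_1}\|w_2\|_{s_2}+\|w_1\|_{s_2}\|w_2\|_{s_1},
\]
where $\|\cdot\|_{s}$ abbreviates $\|\cdot\|_{X^{\a,\b}_{s,\frac12,\sigma}}$.

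Second, through the weighted $L^2$ formulation of Lemma \ref{Lemma, bilin to weighted l2}, the tame inequality carries an extra output weight $\la\xi_3\ra^{s_2}$ against the combined input denominator $\la\xi_1\ra^{s_1}\la\xi_2\ra^{s_2}+\la\xi_1\ra^{s_2}\la\xi_2\ra^{s_1}$. Since $\xi_3=-(\xi_1+\xi_2)$ and $s_2-s_1>0$, the Moser-type bound
\[
\la\xi_3\ra^{s_2-s_1}\ls \la\xi_1\ra^{s_2-s_1}+\la\xi_2\ra^{s_2-s_1}
\]
shows that this tame integrand is dominated pointwise by the diagonal $(s_1,s_1,s_1)$-integrand already handled by Proposition \ref{Prop, bilin, d1}. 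Consequently the full case analysis used there (an analogue of Cases 1, 2, 3.1--3.3 in the proof of Proposition \ref{Prop, bilin, d2}) transfers line by line, since the resonance identity $H=-\sum L_i$, the cubic polynomials $P,Q,R$, and their lower bounds depend only on the dispersion pair $(\a,\b)$ and not on the Sobolev exponents.

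Third, the factor $T^{\eps_0}$ is extracted from the standard time-localization bound in the modified Fourier restriction space: for $\sigma=\frac12+\eps_0$ with $\eps_0$ small, one has $\|\eta(t/T)F\|_{X^{\a,\b}_{s,\frac12,\sigma}}\ls T^{\eps_0}\|F\|_{X^{\a,\b}_{s,\frac12,\sigma}}$, because the weight $\la L\ra^{1/2}$ sits strictly below $\la L\ra^{\sigma}$. The main obstacle is Step Two: extending Proposition \ref{Prop, bilin, d1} to its tame form requires re-running every regime of the case analysis (including the delicate medium-frequency case where the $\Lambda^{\a,\b}_{s,\sigma}$-weight is essential) to confirm that the Moser redistribution does not spoil any of the sup-integral bounds driving each sub-case. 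This bookkeeping is routine but voluminous, which is presumably why the authors consign it to the appendix.
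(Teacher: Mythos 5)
Your overall architecture --- reduce through the Duhamel/linear estimate, then prove a tame (asymmetric) version of the bilinear estimate by distributing $\la\xi_3\ra^{s_2-s_1}\lesssim\la\xi_1\ra^{s_2-s_1}+\la\xi_2\ra^{s_2-s_1}$ --- is the same as the paper's (Lemma \ref{Lemma, bilin, high-low reg}), and that part is sound: after redistribution each term is the diagonal estimate at level $s_1$ with the extra weight absorbed into one of the $L^2$ densities, so no genuine re-run of the case analysis is needed. The real gap is your third step, i.e.\ where the factor $T^{\eps_0}$ comes from. The bound you invoke, $\big\|\eta(t/T)F\big\|_{X^{\a,\b}_{s,\frac12,\sigma}}\lesssim T^{\eps_0}\big\|F\big\|_{X^{\a,\b}_{s,\frac12,\sigma}}$, is false: the localization estimate (Lemma \ref{Lemma, local FR est}) requires both temporal exponents to lie strictly inside $(-\frac12,\frac12)$ and gains $T^{b_2-b_1}$ only by \emph{lowering} the exponent, so with the same norm on both sides there is no exponent gap to convert into a power of $T$. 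Indeed, for $F$ a free wave $W_R^{\a,\b}(t)f$ one has $\|\eta(t/T)F\|_{X^{\a,\b}_{s,b}}\sim\|\eta(\cdot/T)\|_{H^b_t}\|f\|_{H^s}$, which does not decay as $T\to0$ when $b=\frac12$ and actually grows when $b=\sigma>\frac12$; the $\Lambda$-component of the modified norm only makes matters worse. Moreover, "keeping the cutoff in reserve" cannot be implemented as stated: once the Duhamel term is bounded via Lemma \ref{Lemma, lin est} the cutoff $\eta(t/T)$ has been consumed and can no longer produce smallness.

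The missing ingredient is an $\eps_0$-\emph{improvement in the temporal weight of the bilinear estimate itself}, which is exactly how the paper proceeds: first one proves an $\eps_0$-refined version of the relevant bilinear estimate, $\|( uv)_x\|_{X_{s,\sigma-1+\eps_0}}\lesssim\|u\|_{X_{s,\frac12,\sigma}}\|v\|_{X_{s,\frac12,\sigma}}$ in the appropriate $(\pm\a,\pm\b)$ spaces (Lemma \ref{Lemma, bilin smoothing}; for the present lemma the analogue of Proposition \ref{Prop, bilin, d1} with output weight $\sigma-1+\eps_0$), then its tame version via your Moser redistribution (Lemma \ref{Lemma, bilin, high-low reg}), then one transfers the cutoff onto the nonlinearity through the localized Duhamel bound $\big\|\eta(t/T)\int_0^t W_R^{-\a,-\b}(t-\tau)F\,d\tau\big\|_{X^{-\a,-\b}_{s_2,\frac12,\sigma}}\lesssim\big\|\eta(t/(2T))F\big\|_{X^{-\a,-\b}_{s_2,\sigma-1}}$ (Lemma \ref{Lemma, local Duhamel}), and only then applies Lemma \ref{Lemma, local FR est} with exponents $b_1=\sigma-1<b_2=\sigma-1+\eps_0<\frac12$ to harvest $T^{\eps_0}$. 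Your second step, which establishes the tame estimate only at weight $\sigma-1$, leaves no room for this; the (mild) extra work is to check that the case analysis survives the $\eps_0$ perturbation of the weight, with $\eps_0$ chosen as in Lemma \ref{Lemma, bilin smoothing}, rather than to extract $T^{\eps_0}$ from localization alone. Finally, the $Z$-norm plays no role here, since the left-hand side of (\ref{bilin, d11}) involves only the $X^{-\a,-\b}_{s_2,\frac12,\sigma}$-norm.
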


\begin{lem}\label{Lemma, bilin, d21}
Let $-\frac34<s_1<s_2\leq 3$, $0< T \leq 1$, $\a\neq 0$ and $|\b|\leq 1$. Then there exists $\epsilon_0 = \eps(s_1,s_2,\a) >0$ such that for any $\sigma\in(\frac12, \frac12 + \epsilon_0]$, the following bilinear estimate
\begin{align}
&\quad\, \l\| \eta\Big(\frac{t}{T}\Big) \int^t_0 W_R^{\a, \b} (t-\tau) (w_1 w_2)_x\,d\tau\r\|_{X^{\a,\b}_{s_2,\frac12, \si}} \nonumber\\
&\leq  CT^{\epsilon_0}	\Big( \| w_1\|_{X^{\a,\b}_{s_1,\frac12,\sigma}}\| w_2\|_{X^{-\a,-\b}_{s_2,\frac12,\sigma}}+\| w_1\|_{X^{\a,\b}_{s_2,\frac12,\sigma}}\| w_2\|_{X^{-\a,-\b}_{s_1,\frac12,\sigma}}\Big).  \label{bilin, d21}
\end{align}
holds for any $w_1\in X^{\a,\b}_{s,\frac12,\sigma}$ and $w_2\in X^{-\a,-\b}_{s,\frac12,\sigma}$ with some positive constant $ C = C(s_1, s_2, \a, \eps_0, \sigma)$.
\end{lem}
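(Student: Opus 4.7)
The plan is to deduce Lemma \ref{Lemma, bilin, d21} from Proposition \ref{Prop, bilin, d2} by two classical devices: a time-cutoff inequality for Bourgain spaces to extract the factor $T^{\eps_0}$, and a frequency-weight redistribution to produce the cross-term (smoothing) structure on the right-hand side. First, I would apply Lemma \ref{Lemma, lin est} to the Duhamel integral with source $\eta(t/T)\p_x(w_1 w_2)$; since the $Y$-norm on the left is controlled, via Lemma \ref{Lemma, lin est}, by $\|F\|_{X^{\a,\b}_{s_2,\sigma-1}}$, the proof reduces to showing
\[ \big\|\eta(t/T)\p_x(w_1 w_2)\big\|_{X^{\a,\b}_{s_2,\sigma-1}} \leq CT^{\eps_0}\,\mbox{(RHS of }\eqref{bilin, d21}\mbox{)}. \]

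To harvest the $T^{\eps_0}$ factor, I would invoke the standard time-localization estimate
\[ \|\eta(t/T) F\|_{X^{\a,\b}_{s,b'}} \leq CT^{b-b'}\|F\|_{X^{\a,\b}_{s,b}}, \qquad -\tfrac12 < b' < b < \tfrac12, \ 0<T\leq 1, \]
with $b' = \sigma - 1$ and $b = \sigma' - 1$ for $\sigma' = \sigma + \eps_0$. This produces the $T^{\eps_0}$ factor at the cost of proving the underlying bilinear estimate at the slightly elevated modulation exponent $\sigma'$. Choosing $\eps_0$ small enough that $\sigma + \eps_0$ stays below the thresholds $\sigma_0(s_1, \a)$ and $\sigma_0(s_2, \a)$ from Proposition \ref{Prop, bilin, d2} preserves the applicability of the base bilinear estimate at both regularity levels.

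What remains is the smoothing form of Proposition \ref{Prop, bilin, d2} at level $s_2$:
\[ \|\p_x(w_1 w_2)\|_{X^{\a,\b}_{s_2,\sigma'-1}} \ls \|w_1\|_{X^{\a,\b}_{s_2,\frac12,\sigma'}}\|w_2\|_{X^{-\a,-\b}_{s_1,\frac12,\sigma'}} + \|w_1\|_{X^{\a,\b}_{s_1,\frac12,\sigma'}}\|w_2\|_{X^{-\a,-\b}_{s_2,\frac12,\sigma'}}. \]
Via Lemma \ref{Lemma, bilin to weighted l2} this becomes a trilinear convolution bound in $(\vec\xi,\vec\tau)$. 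Because $\xi_1 + \xi_2 + \xi_3 = 0$, one has $\la\xi_3\ra \ls \la\xi_1\ra + \la\xi_2\ra$, so for $s_2 - s_1 \geq 0$ I would redistribute the output frequency weight by
\[ \la\xi_3\ra^{s_2} \ls \la\xi_3\ra^{s_1}\big(\la\xi_1\ra^{s_2-s_1} + \la\xi_2\ra^{s_2-s_1}\big). \]
Each summand shifts the excess regularity $s_2 - s_1$ onto a single input, and after this transfer the trilinear integrand reduces exactly to the one estimated in Proposition \ref{Prop, bilin, d2} at regularity $s_1$ and modulation $\sigma'$, closing the bound.

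The main obstacle will be the range $-\frac34 < s_1 < 0$, where $\la\cdot\ra^{s_1}$ is decreasing and the weight inequality above is not directly useful whenever one of $|\xi_1|,|\xi_2|$ is much smaller than $|\xi_3|$. In that range one has to reproduce the three-case decomposition used in the proof of Proposition \ref{Prop, bilin, d2} (small total frequency; large total frequency with $|\xi_2|\leq 1$; large total frequency with $|\xi_2|>1$) and track the $s_2-s_1$ gain through each case, matching the excess weight against the appropriate power of the resonance or a derivative of $P(\xi_1)$, $Q(\xi_2)$, $R(\xi_3)$. The margin in $\sigma$ coming from \eqref{sigma_0} is tight near $s_1=-\frac34$, so $\eps_0=\eps_0(s_1,s_2,\a)$ must be chosen small enough that both the time-localization inequality and Proposition \ref{Prop, bilin, d2} at the elevated exponent $\sigma'=\sigma+\eps_0$ remain applicable simultaneously.
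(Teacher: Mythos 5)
Your overall architecture (localized Duhamel estimate, time-cutoff inequality to harvest $T^{\eps_0}$, high--low redistribution of the weight $\la\xi_3\ra^{s_2-s_1}$) is the same as the paper's, but there is a genuine gap at the central step. After applying the time-localization inequality with $b'=\sigma-1$ and $b=\sigma-1+\eps_0$, what you must prove is a bilinear estimate whose \emph{output} modulation exponent is improved to $\sigma-1+\eps_0$ while the \emph{input} norms stay in $X^{\pm\a,\pm\b}_{\cdot,\frac12,\sigma}$. Instead you invoke Proposition \ref{Prop, bilin, d2} with its parameter set to $\sigma'=\sigma+\eps_0$, which elevates the input norms as well and yields a bound by $\| w_1\|_{X^{\a,\b}_{\cdot,\frac12,\sigma'}}\| w_2\|_{X^{-\a,-\b}_{\cdot,\frac12,\sigma'}}$. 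Since $\la L\ra^{\sigma'}\geq \la L\ra^{\sigma}$ on the $\Lambda$-region, one has $\|w\|_{X^{\a,\b}_{s,\frac12,\sigma'}}\geq \|w\|_{X^{\a,\b}_{s,\frac12,\sigma}}$ with no reverse inequality, so your chain proves a strictly weaker statement than \eqref{bilin, d21}; in the intended application (Proposition \ref{Prop, Tmax}) the right-hand side must be expressed in the same $\sigma$ as the solution norms, so the weaker version does not suffice. The paper closes exactly this point with Lemma \ref{Lemma, bilin smoothing}, i.e.\ $\|(uv)_x\|_{X^{\a,\b}_{s,\sigma-1+\eps_0}}\leq C\|u\|_{X^{\a,\b}_{s,\frac12,\sigma}}\|v\|_{X^{-\a,-\b}_{s,\frac12,\sigma}}$, which is \emph{not} a reparametrization of Proposition \ref{Prop, bilin, d2}: it requires rerunning the trilinear case analysis with the tighter choice $\eps_0=\frac{1}{16}-\frac{1}{12}\rho$, $\rho=-s$, in place of the threshold \eqref{sigma_0}. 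Your proposal is missing this estimate or any substitute for it.

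Two further remarks. The obstacle you single out for $-\frac34<s_1<0$ is not where the difficulty lies: as in Lemma \ref{Lemma, bilin, high-low reg}, the splitting $\la\xi_3\ra^{r}\leq 2^{r}\big(\la\xi_1\ra^{r}+\la\xi_2\ra^{r}\big)$ with $r=s_2-s_1\geq 0$ is performed inside the weighted-$L^2$ trilinear form, whose weights already carry $\rho=-s_1$, and the excess factor $\la\xi_i\ra^{r}$ is simply absorbed into the corresponding $L^2$ density (giving $\|\la\xi\ra^{r}f_i\|_{L^2}$); no new case analysis is needed there for any $s_1\in(-\frac34,3]$ — the genuine extra work is the $\eps_0$-gain in the output modulation just described. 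Finally, your first reduction needs care with the cutoffs: Lemma \ref{Lemma, lin est} is stated with the fixed cutoff $\eta(t)$, and inserting $\eta(\tau/T)$ into the source does not reproduce the Duhamel integral on all of the support of $\eta(t/T)$ (for $T<|t|\leq 2T$ the inner cutoff is no longer identically one); the paper's Lemma \ref{Lemma, local Duhamel} fixes this by using $\eta\big(\tfrac{t}{2T}\big)$ together with a $T$-uniform version of the Duhamel bound — a repairable but necessary adjustment.
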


Now we are ready to justify Proposition \ref{Prop, Tmax}.

\begin{proof}[{\bf Proof of Proposition \ref{Prop, Tmax}}]
For $s=s_1$ or $s_2$, denote 
$\mathcal{Y}_{s}=Y^{1,1}_{s,\frac12,\si}\times Y^{-1,-1}_{s,\frac12,\si}$.
If $t^*:=T^{s_2}_{max}<T^{s_1}_{max}$,  then  by  Lemma \ref{blowup},
\be\label{s1nb} 
r:=\sup_{0 \leq t\leq t^*} \l(\| u(\cdot, t) \|_{H^{s_1}(\R^+)} + \| v(\cdot, t) \|_{H^{s_1}(\R^+)}\r) < +\infty, 
\ee
and 
\be\label{s2nub}
\lim_{t\rightarrow t^*}   \l(\| u(\cdot, t) \|_{H^{s_2}(\R^+)} + \| v(\cdot, t) \|_{H^{s_2}(\R^+)}\r) = +\infty.
\ee
According to Corollary \ref{cwp-1} and its proof,  there exists some $T=T(s_2,r)>0$ such that for any $\delta\in (0,T)$,
\[\|(u,v)\|_{{\mathcal Y}_{s_1}(\R^+\times (t^*-2\delta,t^*-\delta))}\leq \a_{r,s_1}\l( \|(p,q)\|_{\H^{s_1}_x(\R^+)} +\|(a,b,c)\|_{\H_t^{s_1}(\R^+)}\r).\]
In addition, according to Lemma \ref{Lemma, bilin, gc1}- \ref{Lemma, bilin, d21}, one also has, 
\begin{align*}
&\sup_{t^*-2\delta<t<t^*-\delta}\|(u,v)\|_{H^{s_2}(\R^+)\times H^{s_2}(\R^+)}\\
\leq & \  \|(u,v)\|_{{\mathcal Y}_{s_2}(\R^+\times (t^*-2\delta,t^*-\delta))}\\
\leq& \ C_1 \l(\|(p,q)\|_{\H^{s_2}_1(\R^+)} +\|(a,b,c)\|_{\H_2^{s_2}(\R^+)} \r) +  C_2 \delta^{\eps_0} \Big(\|u\|_{X^{1,1}_{s_1,\frac12,\si}} + \|v\|_{X^{-1,-1}_{s_1,\frac12,\si}}\Big) \Big(\|u\|_{X^{1,1}_{s_2,\frac12,\si}} + \|v\|_{X^{-1,-1}_{s_2,\frac12,\si}}\Big),
\end{align*}
where $ \eps_0>0 $ and $ \sigma \in \big(\frac12, \frac12 + \eps_0] $ are some constants which only depend on $ s_1 $ and $ s_2 $. Hence, we can choose $\delta$ small enough such that 
\[C_2 \delta^{\eps_0} \Big(\|u\|_{X^{1,1}_{s_1,\frac12,\si}} +\|v\|_{X^{-1,-1}_{s_1,\frac12,\si}}\Big)\leq \frac12. \]
This yields
\[\sup_{t^*-2\delta<t<t^*-\delta}\|(u,v)\|_{H^{s_2}(\R^+)\times H^{s_2}(\R^+)}\leq 2C_1\l(\|(p,q)\|_{\H^{s_2}_x(\R^+)} +\|(a,b,c)\|_{\H_t^{s_2}(\R^+)} \r),\]
which contradicts with (\ref{s2nub}). 
Therefore, $T^{s_2}_{max}=T^{s_1}_{max}$ and the proof is complete.
\end{proof}

The combination of Corollary \ref{cwp-1} through Proposition \ref{Prop, Tmax} enables one to justify Theorem \ref{Thm, exist of ms}.

\begin{proof}[\bf Proof of Theorem \ref{Thm, exist of ms}]
According to Corollary \ref{cwp-1}, for given  $(p,q)\in \mcal{H}_{x}^{s}(\m{R}^{+})$ and $(a,b,c)\in \mcal{H}_{t}^{s}(\m{R}^{+})$, 
one can obtain a solution $(u,v)\in Y^{1,1}_{s,\frac12,\sigma}(\Omega _T) \times Y^{-1,-1}_{s,\frac12,\sigma}(\Omega _T) $. We will show that such a solution $(u,v)$ is, in fact, a mild solution of IBVP \eqref{ckdv-1}.
We choose  sequences  
$\{(p_n,q_n)\}_{n\geq 1} \subset \mcal{H}_{x}^{3}(\m{R}^{+})$ and $\{(a_n,b_n,c_n)\}_{n\geq 1} \subset \mcal{H}_{t}^{3}(\m{R}^{+})$
such that 
\[\lim_{n\rightarrow \infty}(p_n,q_n)=(p,q)\quad \mbox{in} \quad \H_x^s(\R^+), \qquad \lim_{n\rightarrow \infty}(a_n,b_n,c_n)=(a,b,c)\quad \mbox{in} \quad \H_t^s(\R^+).\]
Since the solution map in Corollary \ref{cwp-1} is continuous, there exist solutions $ \{(u_n, v_n)\}_{n\geq 1} $ under the initial-boundary conditions with data  $\{(p_n,q_n,a_n,b_n,c_n)\}_{n\geq 1}$ such that $ \{(u_n, v_n)\}_{n\geq 1} $ lie in $ C(0,T; H^s(\R^+)) \times C(0,T; H^s(\R^+))$ and  
\[\lim_{n\to \infty} u_n=u, \quad \lim_{n\to \infty} v_n=v \quad \mbox{in } C(0,T; H^s(\R^+)).\]
On the other hand, according to Lemma \ref{blowup}, Lemma \ref{wp-classical} and Proposition \ref{Prop, Tmax}, we know both $u_n$ and $v_n$ belong to $ C(0,T^3_{n,max}; H^3(\R^+))$ for $ n\geq 1$ with $T^3_{n,max}\geq T$. 
This leads to $u_n, v_n \in C(0,T; H^3(\R^+))$, so the proof is now complete.
\end{proof}

Finally, we show that the mild solution of IBVP \eqref{ckdv-1}  is unique, that is to prove Theorem \ref{Thm, uniq of ms}.

\begin{proof}[\bf Proof of Theorem \ref{Thm, uniq of ms}]
For given 
$(p,q)\in \mcal{H}_{x}^{s}(\m{R}^{+})$ and 
$(a,b,c)\in \mcal{H}_{t}^{s}(\m{R}^{+})$,
we assume that there are two mild solutions for the IBVP \eqref{ckdv-1}, denoted as, $(u^{(1)}, v^{(1)})$ and $ (u^{(2)}, v^{(2)})$ 
with
\[u^{(1)}, v^{(1)} \in C(0,T_1; H^s(\R^+)), \qquad  u^{(2)}, v^{(2)} \in C(0,T_2; H^s(\R^+)),\]
for some $T_1, T_2>0$.  Without loss of generality, one can set $T:=T_1\leq T_2$. According to the definition of mild solutions, one can find two sequences of classic solutions of the IBVP \eqref{ckdv-1},
\[(u^{(1)}_n, v^{(1)}_n), (u^{(2)}_n, v^{(2)}_n) \in C(0,T; H^3(\R^+))\times C(0,T; H^3(\R^+)), \quad n\geq 1,\]
with 
\begin{equation*}
\begin{cases}
u^{(1)}_n(x,0)=p^{(1)}_n(x), \quad v^{(1)}_n(x,0)=q^{(1)}_n(x),\\
u^{(1)}_n(0,t)=a^{(1)}_n(t), \quad v^{(1)}_n(0,t)=b^{(1)}_n(t), \quad \partial_x (v^{(1)}_n-u^{(1)}_n)(0,t)=c^{(1)}_n(t),
\end{cases}
\end{equation*}
and
\begin{equation*}
\begin{cases}
u^{(2)}_n(x,0)=p^{(2)}_n(x), \quad v^{(2)}_n(x,0)=q^{(2)}_n(x),\\
u^{(2)}_n(0,t)=a^{(2)}_n(t), \quad v^{(2)}_n(0,t)=b^{(2)}_n(t), \quad \partial_x (v^{(2)}_n-u^{(2)}_n)(0,t)=c^{(2)}_n(t),
\end{cases}
\end{equation*}
such that
\[
\lim_{n\to \infty} (u^{(1)}_n, v^{(1)}_n) = (u^{(1)}, v^{(1)}), \quad
\lim_{n\to \infty} (u^{(2)}_n, v^{(2)}_n) = (u^{(2)}, v^{(2)}), 
\quad \mbox{in} \quad C(0,T;H^s(\R^+)\times H^s(\R^+)),
\]
\[
\lim_{n\to \infty} (p^{(1)}_n,q^{(1)}_n) =(p,q), \quad \lim_{n\to \infty} (p^{(2)}_n,q^{(2)}_n) =(p,q) \quad \mbox{in} \quad \H^s_x(\R^+),
\]
and 
\[
\lim_{n\to \infty} (a^{(1)}_n,b^{(1)}_n,c^{(1)}_n) =(a,b,c), \quad 
\lim_{n\to \infty} (a^{(2)}_n,b^{(2)}_n,c^{(2)}_n) =(a,b,c) \quad 
\mbox{in} \quad \H^s_t(\R^+).
\]
We then denote 
$$(\tilde{u}, \tilde{v})\in \l[C(0,T_3; H^s(\R^+))\r]^2,$$ $$(\widetilde{u}_n^{(1)}, \widetilde{v}_n^{(1)})\in \l[C(0,T^1_{n,max}; H^3(\R^+))\r]^2,$$   $$(\widetilde{u}_n^{(2)}, \widetilde{v}_n^{(2)})\in \l[C(0,T^2_{n,max}; H^3(\R^+))\r]^2,$$
to be the solutions provided by Corollary \ref{cwp-1} with conditions corresponding to 
\[(p,q,a,b,c), \quad (p^{(1)}_n,q^{(1)}_n,a^{(1)}_n,b^{(1)}_n,c^{(1)}_n), \quad (p^{(2)}_n,q^{(2)}_n,a^{(2)}_n,b^{(2)}_n,c^{(2)}_n),\]
respectively.  We define $T^*:= \min\{ T, T_3\} $. According to Lemma \ref{blowup}, we can infer that $T^1_{n, max}\geq T^*$,  $T^2_{n, max}\geq T^*$ and
\[(u^{(1)}_n, v^{(1)}_n)=(\widetilde{u}_n^{(1)}, \widetilde{v}_n^{(1)}), \quad (u^{(2)}_n, v^{(2)}_n)=(\widetilde{u}_n^{(2)}, \widetilde{v}_n^{(2)}), \quad \mbox{in }[ C(0,T^*; H^3(\R^+))]^2,\]
due to the uniqueness of the classic solution of the IBVP \eqref{ckdv-1}. Moreover, according to the continuity of the solution map in Corollary \ref{cwp-1}, one has
\[\lim_{n\to \infty} (\widetilde{u}_n^{(1)}, \widetilde{v}_n^{(1)})=(\tilde{u}, \tilde{v}), \quad \lim_{n\to \infty} (\widetilde{u}_n^{(2)}, \widetilde{v}_n^{(2)})=(\tilde{u}, \tilde{v}), \quad \mbox{in } [C(0,T^*;H^s(\R^+))]^2.\]
Therefore, we deduce that
$(u^{(1)}, v^{(1)})=(\tilde{u}, \tilde{v})=(u^{(2)}, v^{(2)}) $ in $[C(0,T^*;H^s(\R^+))]^2$. 
Finally, we can use continuity property of the solutions $(u^{(1)}, v^{(1)})$, $(u^{(2)}, v^{(2)})$ and $(\tilde{u}, \tilde{v})$  to show $T_1=T_2=T_3$.  
\end{proof}




\appendix


\bigskip

\section{Proofs of Lemma \ref{Lemma, bilin, gc1}--Lemma \ref{Lemma, bilin, d21}}
\label{Sec,  proof of bilin smoothing}

The proofs for Lemma \ref{Lemma, bilin, gc1}, Lemma \ref{Lemma, bilin, d11} and Lemma \ref{Lemma, bilin, d21} are similar, so we will only prove Lemma \ref{Lemma, bilin, d21}. In order to make the argument more clearly, we first present several auxiliary results. We start with the following Lemma \ref{Lemma, bilin smoothing} which is a refinement of Proposition \ref{Prop, bilin, d2} on the temporal regularity of the term $ (uv)_x $.

\begin{lem}\label{Lemma, bilin smoothing}
Let $-\frac34<s\leq 3$, $\a\neq 0$ and $|\b|\leq 1$. Then there exists $\eps_0=\eps_0(s,\a)>0$ such that for any $\sigma\in(\frac12,\frac12+\eps_0]$, 
\be\label{B1}
\|(uv)_{x}\|_{X^{\a,\b}_{s,\sigma-1+\eps_0}}\leq C\|u\|_{X^{\a,\b}_{s,\frac12,\sigma}}\|v\|_{X^{-\a,-\b}_{s,\frac12,\sigma}},
\ee
where $C=C(s,\a,\sigma)$. 
\end{lem}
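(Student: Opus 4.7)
The plan is to adapt the argument behind Proposition \ref{Prop, bilin, d2} so as to absorb an additional factor of $\la L_3\ra^{\eps_0}$ on the Fourier side. By duality and Plancherel (the natural analogue of Lemma \ref{Lemma, bilin to weighted l2}), the estimate \eqref{B1} is equivalent to
\[
\int\limits_A \frac{|\xi_3|\,\la\xi_3\ra^{s}\,\prod\limits_{i=1}^3 |f_i(\xi_i,\tau_i)|}{\la\xi_1\ra^{s}\la\xi_2\ra^{s}\, M_1 M_2 \,\la L_3\ra^{1-\sigma-\eps_0}} \leq C \prod_{i=1}^3 \|f_i\|_{L^2_{\xi\tau}}, \qquad \forall\, f_i\in L^2(\m{R}^2),
\]
with $A$, $L_i$, and the modulation weights $M_1,M_2$ exactly as defined in the proof of Proposition \ref{Prop, bilin, d2}.

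The main tool is the resonance identity $L_1+L_2+L_3=-H$ together with the lower bound $|H|\gs_{\a} |\xi_2|\sum_{i=1}^3 \la\xi_i\ra^2$ valid in the high-frequency regime. I would split the integration region into the same three cases as in Proposition \ref{Prop, bilin, d2}, namely (i) $\sum_i|\xi_i|\leq C^{*}$, (ii) $\sum_i|\xi_i|> C^{*}$ with $|\xi_2|\leq 1$, and (iii) $\sum_i|\xi_i|>C^{*}$ with $|\xi_2|>1$, and in case (iii) subdivide further according to which $\la L_i\ra$ realizes $\mathrm{MAX}$. In each subcase the extra $\la L_3\ra^{\eps_0}$ would be disposed of as follows: if $\la L_3\ra$ is maximal one uses $\la L_3\ra^{\eps_0}\ls \la H\ra^{\eps_0}\ls \bigl(|\xi_2|\sum_i\la\xi_i\ra^2\bigr)^{\eps_0}$ and absorbs this polynomial loss into the frequency weights $\la\xi_i\ra^{s}$; if instead $\la L_1\ra$ (respectively $\la L_2\ra$) is maximal, one estimates $\la L_3\ra^{\eps_0}\leq \la L_1\ra^{\eps_0}$ (respectively $\leq\la L_2\ra^{\eps_0}$), which simply downgrades the effective weight of $M_1$ (respectively $M_2$) from $\la\cdot\ra^{1/2}$ to $\la\cdot\ra^{1/2-\eps_0}$.

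The remaining steps are literal copies of those in Proposition \ref{Prop, bilin, d2}: a Cauchy--Schwarz reduction to integrals of the form $\int d\xi_i/\la P(\xi_i)\ra^{2\sigma-2\eps_0}$ and $\int d\xi_i/\la Q(\xi_i)\ra^{2\sigma-2\eps_0}$, followed by Lemma \ref{Lemma, bdd int} applied to the cubic polynomials $P$, $Q$ from \eqref{d2, P}, \eqref{d2, Q} and to the quadratic polynomial $R$ from \eqref{d2, R}. The integrability conditions $2\sigma-2\eps_0>1/3$ (for the cubics) and $2\sigma-2\eps_0>1/2$ (for the quadratic, relevant in Case~3.2) survive for small $\eps_0$ because in Proposition \ref{Prop, bilin, d2} the admissible $\sigma$ was already cut off by the strict inequality $\sigma\leq\sigma_0(s,\a)=\tfrac{7}{12}-\tfrac{\rho}{9}$, where $\rho=\max\{-s,0\}$.

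The anticipated bottleneck is Case~3.2 with $\la L_2\ra=\mathrm{MAX}$, where the additional loss $\la L_2\ra^{\eps_0}$ must be charged against the margin in the exponent $4-6\sigma$ in front of $\la L_2\ra$; this yields a strengthened constraint of the form $\sigma\leq\tfrac{7}{12}-\tfrac{\rho}{9}-\tfrac{\eps_0}{3}$. Redefining $\eps_0=\eps_0(s,\a)>0$ sufficiently small and then restricting $\sigma\in(\tfrac12,\tfrac12+\eps_0]$ makes this compatible, and the same bookkeeping in every other subcase yields analogous constraints on $\eps_0$. Taking the minimum of the resulting admissible values of $\eps_0$ gives the claim.
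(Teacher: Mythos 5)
Your overall architecture is exactly the paper's: the paper proves Lemma \ref{Lemma, bilin smoothing} by the same duality reduction to a weighted $L^2$ convolution estimate with $\la L_3\ra^{1-\sigma-\eps_0}$ in the denominator, and then reruns the case analysis of Proposition \ref{Prop, bilin, d2} with the exponents shifted by $\eps_0$, taking $\eps_0$ a concrete small number (the paper chooses $\eps_0=\frac{1}{16}-\frac{\rho}{12}$ with $\rho=-s$) so that the old constraints of type (\ref{sigma_0 is good}) survive in slightly strengthened form; your identification of Case 3.2 as the place where the margin is consumed, and the final ``shrink $\eps_0$'' conclusion, match that.

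There is, however, one step that fails as written: in the subcase where $\la L_3\ra$ realizes $\text{MAX}$ you dispose of the loss via $\la L_3\ra^{\eps_0}\ls\la H\ra^{\eps_0}\ls\big(|\xi_2|\sum_i\la\xi_i\ra^2\big)^{\eps_0}$ and claim to absorb it into the frequency weights. The second inequality is fine (by (\ref{d2, res fcn}) one has $|H|\ls|\xi_2|\sum_i\la\xi_i\ra^2$), but the first is false: since $H=-\sum_i L_i$, maximality of $\la L_3\ra$ only gives the lower bound $\la L_3\ra\gs\la H\ra$, not an upper bound; with the frequencies fixed one can send $\tau_3\to\infty$ and split $L_1=L_2\approx-\tfrac12 L_3$, so that $\la L_3\ra$ is maximal and arbitrarily large while $H$ stays bounded. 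For the same reason the loss $\la L_3\ra^{\eps_0}$ can never be traded for powers of the $\xi_i$ (this also matters in Case 1, where the frequencies are bounded but $L_3$ is not). The correct handling — and what the paper's ``analogous argument'' amounts to — is not to move the loss at all: keep it attached to $\la L_3\ra$, so that in the Case 3.3 redistribution $\tfrac12+\tfrac12+(1-\sigma-\eps_0)=\sigma+\sigma+(2-3\sigma-\eps_0)$ the maximal factor carries the exponent $2-3\sigma-\eps_0$, which is still positive for $\sigma$ near $\tfrac12$ and $\eps_0$ small, and then apply the lower bound $\la L_3\ra=\text{MAX}\gs|\xi_2|\sum_i\la\xi_i\ra^2$ exactly as in Case 3.1, ending with the requirement $2-2\rho\le 4-6\sigma-2\eps_0$ (and similarly $1-\sigma-\eps_0>0$ suffices in Case 1). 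With that local correction, your bookkeeping and the final choice of $\eps_0=\eps_0(s,\a)$ give precisely the paper's proof.
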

\begin{proof}
By duality, it suffices to prove 
\[\iint_{\m{R}^2}\xi\,\wh{uv}\wh{w}\,d\xi\,d\tau\leq C\|u\|_{X^{\a,\b}_{s,\frac12,\sigma}}\|v\|_{X^{-\a,-\b}_{s,\frac12,\sigma}}\|w\|_{X^{\a,\b}_{-s,1-\sigma-\eps_0}},\]
for any $u\in X^{\a,\b}_{s,\frac12,\sigma}$,  $v\in X^{-\a,-\b}_{s,\frac12,\sigma}$ and $w\in X^{\a,\b}_{-s,1-\sigma-\eps_0}$. Similar to the proof of Proposition \ref{Prop, bilin, d2}, we denote 
\[L_{1}=\tau_{1}-\phi^{\a,\b}(\xi_1),\quad L_2=\tau_2-\phi^{-\a,-\b}(\xi_2),\quad L_3=\tau_3-\phi^{\a,\b}(\xi_3). \]
and 
\[M_{1}=\la L_1\ra^{\frac12}+\mb{1}_{\{e^{|\xi_1|}\leq 3+|\tau_1|\}}\la L_1\ra^{\sigma},\quad M_{2}=\la L_2\ra^{\frac12}+\mb{1}_{\{e^{|\xi_2|}\leq 3+|\tau_2|\}}\la L_2\ra^{\sigma}.\]
In addition, we define
\[\begin{array}{c}
f_{1}(\xi_1,\tau_1)=\la\xi_1\ra^{s}M_{1}\wh{u}(\xi_1,\tau_1),\quad f_{2}(\xi_2,\tau_2)=\la\xi_2\ra^{s}M_{2}\wh{v}(\xi_2,\tau_2),\\
f_{3}(\xi_3,\tau_3)=\la\xi_3\ra^{-s}\la L_{3}\ra^{1-\sigma-\eps_0}\wh{w}(\xi_3,\tau_3).\end{array}\]
Then it reduces to justify
\be\label{weighted l2 for bilin smoothing}
\int\limits_{A}\frac{|\xi_{3}|\la\xi_{1}\ra^{\rho}\la\xi_{2}\ra^{\rho}\prod\limits_{i=1}^{3}|f_{i}(\xi_{i},\tau_{i})|}{\la\xi_{3}\ra^{\rho}M_{1}M_{2}\la L_{3}\ra^{1-\sigma-\eps_0}} \leq C\,\prod_{i=1}^{3}\|f_{i}\|_{L^{2}_{\xi\tau}},\ee
where $\rho=-s$ and the set $ A $ is as defined in (\ref{int domain}). In the remaining proof, instead of introducing $\sigma_0$ as in (\ref{sigma_0}), we define 
$\eps_0=\frac{1}{16}-\frac{1}{12}\,\rho$.
Then for any $\sigma\in(\frac12,\frac12+\eps_0]$, (\ref{weighted l2 for bilin smoothing}) can be proved in an analogous way as the proof of (\ref{bilin, d2}) in Proposition \ref{Prop, bilin, d2}.

\end{proof}

Next, we will take advantage of Proposition \ref{Lemma, bilin smoothing} to deduce a slightly stronger estimate since the spatial regularity requirement on the right-hand side of (\ref{B2}) is weaker than that of (\ref{B1}).

\begin{lem}\label{Lemma, bilin, high-low reg}
Let $-\frac34<s_1\leq s_2\leq 3$, $\a\neq 0$, $|\b|\leq 1$. Then there exists $\eps_0=\eps_0(s_1,s_2,\a)>0$ such that for any $\sigma\in(\frac12,\frac12+\eps_0]$, 
\be\label{B2}
\|(uv)_{x}\|_{X^{\a,\b}_{s_2,\sigma-1+\eps_0}}\leq C\Big(\|u\|_{X^{\a,\b}_{s_1,\frac12,\sigma}}\|v\|_{X^{-\a,-\b}_{s_2,\frac12,\sigma}}+ \|u\|_{X^{\a,\b}_{s_2,\frac12,\sigma}}\|v\|_{X^{-\a,-\b}_{s_1,\frac12,\sigma}} \Big),
\ee
where $C=C(s_1,s_2,\a,\sigma)$. 
\end{lem}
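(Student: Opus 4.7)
The plan is to deduce (\ref{B2}) directly from Lemma \ref{Lemma, bilin smoothing} applied at the lower regularity $s=s_1$, exploiting the monotonicity $s_1\le s_2$ together with the elementary frequency inequality $|\xi_3|\le |\xi_1|+|\xi_2|$. First, I would set $\eps_0 := \min\{\eps_0(s_1,\a),\,\eps_0(s_2,\a)\}$, where $\eps_0(\cdot,\a)$ denotes the threshold provided by Lemma \ref{Lemma, bilin smoothing}, and fix $\sigma\in\big(\frac12,\frac12+\eps_0\big]$. Following the duality procedure of Lemma \ref{Lemma, bilin to weighted l2}, the estimate (\ref{B2}) is equivalent to controlling
\[\mcal{J} := \int_{A} |\xi_3|\,|\hat u(\xi_1,\tau_1)|\,|\hat v(\xi_2,\tau_2)|\,|\hat w(\xi_3,\tau_3)|\]
by $C(T_1+T_2)\|w\|_{X^{\a,\b}_{-s_2,1-\sigma-\eps_0}}$, where $T_1 := \|u\|_{X^{\a,\b}_{s_1,\frac12,\sigma}}\|v\|_{X^{-\a,-\b}_{s_2,\frac12,\sigma}}$, $T_2 := \|u\|_{X^{\a,\b}_{s_2,\frac12,\sigma}}\|v\|_{X^{-\a,-\b}_{s_1,\frac12,\sigma}}$, and $A$, $L_i$, $M_i$ are as in the proof of Proposition \ref{Prop, bilin, d2}.

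Next I would split the integration domain as $A=R_1\cup R_2$ with $R_1:=\{|\xi_1|\ge|\xi_2|\}$ and $R_2:=\{|\xi_2|>|\xi_1|\}$. Because $s_2-s_1\ge 0$ and $\la\xi_3\ra\lesssim \la\xi_1\ra$ on $R_1$ (respectively $\la\xi_3\ra\lesssim \la\xi_2\ra$ on $R_2$), the key frequency-localization inequalities read
\[\la\xi_3\ra^{s_2}\lesssim \la\xi_1\ra^{s_2-s_1}\la\xi_3\ra^{s_1}\ \text{on}\ R_1, \qquad \la\xi_3\ra^{s_2}\lesssim \la\xi_2\ra^{s_2-s_1}\la\xi_3\ra^{s_1}\ \text{on}\ R_2.\]
These are the only new ingredients beyond the proof of Lemma \ref{Lemma, bilin smoothing} itself.

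On $R_1$ I would dualize with the weight choice $g_1:=\la\xi_1\ra^{s_2}M_1|\hat u|$, $g_2:=\la\xi_2\ra^{s_1}M_2|\hat v|$, $g_3:=\la\xi_3\ra^{-s_2}\la L_3\ra^{1-\sigma-\eps_0}|\hat w|$, so that $\|g_1\|_{L^2}\|g_2\|_{L^2}=T_2$ and $\|g_3\|_{L^2}=\|w\|_{X^{\a,\b}_{-s_2,1-\sigma-\eps_0}}$. After inserting these weights into $\int_{R_1}$ and using the $R_1$ inequality above, the resulting multiplier $\la\xi_3\ra^{s_2}/(\la\xi_1\ra^{s_2}\la\xi_2\ra^{s_1})$ collapses to $\la\xi_3\ra^{s_1}/(\la\xi_1\ra^{s_1}\la\xi_2\ra^{s_1})$, so that the $R_1$ contribution reduces to
\[\int_{A} \frac{|\xi_3|\la\xi_3\ra^{s_1}\,g_1 g_2 g_3}{\la\xi_1\ra^{s_1}\la\xi_2\ra^{s_1}M_1 M_2\la L_3\ra^{1-\sigma-\eps_0}},\]
which is precisely the dualized weighted $L^2$ form of Lemma \ref{Lemma, bilin smoothing} at $s=s_1$, and is hence bounded by $CT_2\|w\|_{X^{\a,\b}_{-s_2,1-\sigma-\eps_0}}$. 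On $R_2$ the analogous computation with the alternate weight choice $g_1':=\la\xi_1\ra^{s_1}M_1|\hat u|$, $g_2':=\la\xi_2\ra^{s_2}M_2|\hat v|$ (swapping the roles of $s_1$ and $s_2$ on $u$ and $v$) yields a bound by $CT_1\|w\|_{X^{\a,\b}_{-s_2,1-\sigma-\eps_0}}$. Summing the two regions and taking the supremum over $w$ with $\|w\|_{X^{\a,\b}_{-s_2,1-\sigma-\eps_0}}=1$ completes the proof.

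The main obstacle is purely bookkeeping: verifying that the two different weight dualizations employed on $R_1$ and $R_2$ both telescope onto the exact kernel appearing in Lemma \ref{Lemma, bilin smoothing} at $s=s_1$. Once this careful accounting is carried out, no additional bilinear analysis or new case-splits are required, since the full content of Lemma \ref{Lemma, bilin, high-low reg} is extracted from Lemma \ref{Lemma, bilin smoothing} via the single elementary comparison $\la\xi_3\ra^{s_2-s_1}\lesssim \la\xi_1\ra^{s_2-s_1}+\la\xi_2\ra^{s_2-s_1}$ made available by the hypothesis $s_1\le s_2$.
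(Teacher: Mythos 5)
Your proposal is correct and follows essentially the same route as the paper: both reduce (\ref{B2}) by duality to the weighted $L^2$ form, transfer the excess regularity $r=s_2-s_1$ from $\la\xi_3\ra$ onto the dominant input frequency, and then invoke Lemma \ref{Lemma, bilin smoothing} at the lower regularity $s_1$. The only cosmetic difference is that you split the domain into the regions $|\xi_1|\geq|\xi_2|$ and $|\xi_2|>|\xi_1|$, whereas the paper achieves the same effect with the pointwise inequality $\la\xi_3\ra^{r}\leq 2^{r}\big(\la\xi_1\ra^{r}+\la\xi_2\ra^{r}\big)$ and a corresponding split of the integrand into two terms.
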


\begin{proof}
By duality, it suffices to prove 
\[\iint_{\m{R}^2}\xi\,\wh{uv}\wh{w}\,d\xi\,d\tau\leq C\Big(\|u\|_{X^{\a,\b}_{s_1,\frac12,\sigma}}\|v\|_{X^{-\a,-\b}_{s_2,\frac12,\sigma}}+ \|u\|_{X^{\a,\b}_{s_2,\frac12,\sigma}}\|v\|_{X^{-\a,-\b}_{s_1,\frac12,\sigma}} \Big)\|w\|_{X^{\a,\b}_{-s_2,1-\sigma-\eps_0}},\]
for any $u\in X^{\a,\b}_{s_2,\frac12,\sigma}$,  $v\in X^{-\a,-\b}_{s_2,\frac12,\sigma}$ and $w\in X^{\a,\b}_{-s_2,1-\sigma-\eps_0}$.  Denote $L_1, L_2, L_3, M_1, M_2$ in the same way as the above proof for Lemma \ref{Lemma, bilin smoothing}, and define
\[\begin{array}{c}
f_{1}(\xi_1,\tau_1)=\la\xi_1\ra^{s_1}M_{1}\wh{u}(\xi_1,\tau_1),\ \, f_{2}(\xi_2,\tau_2)=\la\xi_2\ra^{s_1}M_{2}\wh{v}(\xi_2,\tau_2),\ \,
f_{3}(\xi_3,\tau_3)=\la\xi_3\ra^{-s_2}\la L_{3}\ra^{1-\sigma-\eps_0}\wh{w}(\xi_3,\tau_3).
\end{array}\]
Denote $\rho=-s_1$ and $r=s_2-s_1\geq 0$. Then similar to (\ref{weighted l2 for bilin smoothing}), it reduces to show 
\be\label{bilin, high-low reg}\begin{split}
&\quad\, \int\limits_{A}\la\xi_3\ra^{r}\frac{|\xi_{3}|\la\xi_{1}\ra^{\rho}\la\xi_{2}\ra^{\rho}|f_{1}(\xi_{1},\tau_{1})f_{2}(\xi_{2},\tau_{2})f_{3}(\xi_{3},\tau_{3})|}{\la\xi_{3}\ra^{\rho}M_{1}M_{2}\la L_{3}\ra^{1-\sigma-\eps_0}} \\
&\leq C\,\Big(\|f_{1}\|_{L^{2}_{\xi\tau}}\|\la\xi\ra^{r}f_{2}\|_{L^{2}_{\xi\tau}}+\|\la\xi\ra^{r}f_{1}\|_{L^{2}_{\xi\tau}}\|f_{2}\|_{L^{2}_{\xi\tau}}\Big)\|f_{3}\|_{L^{2}_{\xi\tau}}.
\end{split}\ee
Since $r\geq 0$ and $\sum_{i=1}^{3}\xi_i=0$, then 
$\la\xi_3\ra^{r}=\la\xi_1+\xi_2\ra^{r}\leq 2^{r}\big(\la\xi_1\ra^{r}+\la\xi_2\ra^{r}\big)$.
Consequently,
$\text{LHS of (\ref{bilin, high-low reg})}\leq 2^{r}(I+II)$,
where 
\begin{align*}
I= \int\limits_{A}\frac{|\xi_{3}|\la\xi_{1}\ra^{\rho}\la\xi_{2}\ra^{\rho}|\la\xi_1\ra^{r}f_{1}(\xi_{1},\tau_{1})|\,|f_{2}(\xi_{2},\tau_{2})f_{3}(\xi_{3},\tau_{3})|}{\la\xi_{3}\ra^{\rho}M_{1}M_{2}\la L_{3}\ra^{1-\sigma-\eps_0}},\\
II=\int\limits_{A}\frac{|\xi_{3}|\la\xi_{1}\ra^{\rho}\la\xi_{2}\ra^{\rho}|f_{1}(\xi_{1},\tau_{1})|\, |\la\xi_2\ra^{r}f_{2}(\xi_{2},\tau_{2})f_{3}(\xi_{3},\tau_{3})|}{\la\xi_{3}\ra^{\rho}M_{1}M_{2}\la L_{3}\ra^{1-\sigma-\eps_0}}.
\end{align*}
According to Lemma \ref{Lemma, bilin smoothing},  
\[
I\leq C\, \|\la\xi\ra^{r}f_{1}\|_{L^{2}_{\xi\tau}} \|f_{2}\|_{L^{2}_{\xi\tau}} \|f_{3}\|_{L^{2}_{\xi\tau}}, \quad 
II\leq C\,\|f_{1}\|_{L^{2}_{\xi\tau}}\|\la\xi\ra^{r} f_{2}\|_{L^{2}_{\xi\tau}} \|f_{3}\|_{L^{2}_{\xi\tau}}.
\]
Thus,  (\ref{bilin, high-low reg}) is verified.
\end{proof}

Recalling the cut-off function $\eta\in C^{\infty}_{0}(\m{R})$ which satisfies $\eta(t)=1$ on $(-1,1)$ and $\text{supp}\, \eta\subset (-2,2)$, the following is a classical estimate on the Fourier restriction norms when the time is localized.

\begin{lem}\label{Lemma, local FR est}
For any $s\in\m{R}$,  $-\frac12<b_1\leq b_2<\frac12$, $\a\neq 0$, $|\b|\leq 1$ and $0<T\leq 1$, there exists a constant $C=C(s,b_1,b_2)$ such that 
\be\label{B3}
\Big\| \eta\Big(\frac{t}{T}\Big)w \Big\|_{X^{\a,\b}_{s,b_1}}\leq C\,T^{b_2-b_1}\| w\|_{X^{\a,\b}_{s,b_2}},\quad\forall\, w\in X^{\a,\b}_{s,b_2}.
\ee
\end{lem}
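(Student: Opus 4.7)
The plan is to reduce the space-time estimate to a well-known scalar Sobolev multiplier inequality on $\m{R}$, via the unitary change of frame $\tau\mapsto\tau-\phi^{\a,\b}(\xi)$. Concretely, define the \emph{profile} $v$ of $w$ by $\wh v(\xi,\sigma):=\wh w(\xi,\sigma+\phi^{\a,\b}(\xi))$. A change of variables and Plancherel yield
\[
\|w\|_{X^{\a,\b}_{s,b}} \;=\; \l\|\la\xi\ra^s \la\sigma\ra^b\, \wh v(\xi,\sigma)\r\|_{L^2_{\xi,\sigma}}\qquad\text{for every }b\in\m{R}.
\]
Since multiplication by $\eta(t/T)$ commutes with both the spatial Fourier transform and the pointwise factor $e^{-i\phi^{\a,\b}(\xi) t}$, the profile of $\eta(t/T) w$ is precisely $\eta(t/T) v$. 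Hence by Fubini in the $\xi$ variable the lemma reduces to the scalar estimate
\[
\|\eta(t/T) g\|_{H^{b_1}(\m{R})} \;\le\; C\, T^{b_2-b_1}\, \|g\|_{H^{b_2}(\m{R})},
\]
valid for every $g\in H^{b_2}(\m{R})$ and every $T\in(0,1]$, with $C$ depending only on $b_1,b_2$.

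For the scalar estimate I would proceed in three stages. First, in the case $b_1=b_2=:b$ with $|b|<\tfrac12$, one establishes the uniform-in-$T\in(0,1]$ bound $\|\eta(\cdot/T) g\|_{H^b}\le C_b\|g\|_{H^b}$ by writing $\mcal{F}[\eta(\cdot/T)g]=K_T\ast \wh g$ with $K_T(\tau):=T\wh\eta(T\tau)$, using the elementary inequality $\l|\la\tau\ra^b-\la\sigma\ra^b\r|\ls \la\tau-\sigma\ra^{|b|}$ (this is exactly where $|b|<\tfrac12$ is needed) together with Schur's test and the Schwartz decay of $\wh\eta$. Second, to gain the factor $T^{b_2-b_1}$ when $b_1<b_2$, I would split the convolution integral according to whether $|\tau-\sigma|\ls T^{-1}$ or $|\tau-\sigma|\gg T^{-1}$: in the first regime Cauchy--Schwarz over an interval of length $\sim T^{-1}$ produces the required power of $T$, while in the second the rapid decay of $\wh\eta$ dominates and the contribution is in fact of smaller order. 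Third, the case $b_1<0$ is handled by duality, replacing the pair $(b_1,b_2,g)$ by $(-b_2,-b_1,h)$ with $h\in H^{-b_1}$; the new pair lies in the same range $(-\tfrac12,\tfrac12)$ and has the same gap $b_2-b_1$.

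The main technical obstruction is the two-sided strict condition $|b_i|<\tfrac12$: the function $\eta(\cdot/T)$ is pointwise bounded but satisfies $\|\eta(\cdot/T)\|_{H^{1/2}}\to\infty$ as $T\to 0^+$, so any attempt to prove a uniform multiplier bound on $H^b$ for $b\ge\tfrac12$ is doomed, and the lower bound $b_1>-\tfrac12$ is needed for the duality step. The strict inequalities $b_1,b_2\in(-\tfrac12,\tfrac12)$ are exactly what the decomposition above requires. Once the scalar estimate is in hand, the conclusion of Lemma \ref{Lemma, local FR est} follows immediately from the reduction described in the first paragraph.
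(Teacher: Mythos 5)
The paper offers no proof of this lemma at all: it simply cites Lemma 2.11 of Tao's book, and your overall plan (conjugate by the free evolution so that the claim becomes the one-dimensional multiplier estimate $\|\eta(t/T)g\|_{H^{b_1}_t}\ls T^{b_2-b_1}\|g\|_{H^{b_2}_t}$, then a scale-$T^{-1}$ splitting, then duality for negative exponents) is exactly the standard route behind that citation; the reduction in your first paragraph is correct. The problem is in the execution of Stage 1. The inequality $|\la\tau\ra^{b}-\la\sigma\ra^{b}|\ls\la\tau-\sigma\ra^{|b|}$ holds for every $|b|\le 1$ and has nothing to do with the threshold $\frac12$, so your parenthetical ``this is exactly where $|b|<\frac12$ is needed'' misidentifies where the hypothesis enters. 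Worse, the mechanism you propose does not close: writing $\mcal{F}[\eta(\cdot/T)g]=K_T\ast\wh g$ with $K_T(\tau)=T\wh\eta(T\tau)$ and using that pointwise inequality leaves you with the error kernel $\la\tau-\sigma\ra^{b}|K_T(\tau-\sigma)|\la\sigma\ra^{-b}$, whose column sums at $|\sigma|\ls 1$ are of size $\|\la\cdot\ra^{b}K_T\|_{L^1}\sim T^{-b}$; Schur's test therefore only gives a bound growing like a negative power of $T$, not a $T$-uniform constant (the operator is uniformly bounded, but Schur is lossy here). The hypotheses $b_2<\frac12$ and $b_1>-\frac12$ genuinely enter through low-modulation integrals such as $\int_{|\sigma|\le 1/T}\la\sigma\ra^{-2b_2}d\sigma\sim T^{2b_2-1}$ and $\|\la\cdot\ra^{b_1}K_T\|_{L^{2}}\sim T^{\frac12-b_1}$ — consistent with your own remark that $\|\eta(\cdot/T)\|_{H^{1/2}}$ blows up — so the low/high splitting you postpone to Stage 2 is already indispensable in the equal-exponent case.

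A clean repair, which also removes the need for your Stage 2/Stage 3 case analysis, is to split the \emph{input} by modulation at scale $T^{-1}$: for $g_{lo}$ with $\wh{g_{lo}}$ supported in $|\sigma|\le 1/T$, apply Cauchy--Schwarz in $\sigma$ inside the convolution to get $\|\eta(\cdot/T)g_{lo}\|_{H^{b_1}}^2\ls\|g\|_{H^{b_2}}^2\iint\la\tau\ra^{2b_1}|K_T(\tau-\sigma)|^{2}\la\sigma\ra^{-2b_2}\,d\sigma\,d\tau\ls T^{2(b_2-b_1)}\|g\|^2_{H^{b_2}}$, which already uses exactly $b_1>-\frac12$ and $b_2<\frac12$ and covers the whole admissible range of gaps at once; for $g_{hi}$ one has $\la\sigma\ra^{b_1}\le T^{b_2-b_1}\la\sigma\ra^{b_2}$ on the support, and the remaining uniform $H^{b_1}$ bound follows from Young's inequality after splitting $|\tau-\sigma|\le|\sigma|$ (where $\la\tau\ra^{b_1}\ls\la\sigma\ra^{b_1}$) from $|\tau-\sigma|>|\sigma|\ge 1/T$ (where the Schwartz tails of $\wh\eta$ and the factor $\la\sigma\ra^{-|b_1|}\le T^{|b_1|}$ compensate the weight). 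Finally, note that your Stage 3 as stated does not finish the job: duality sends $(b_1,b_2)$ to $(-b_2,-b_1)$, so a mixed-sign pair $b_1<0<b_2$ is mapped to another mixed-sign pair; if you insist on arguing by duality you must in addition factor the cutoff, $\eta(t/T)=\eta(t/T)\,\tilde\eta(t/T)$ with a fattened bump $\tilde\eta$ equal to $1$ on the support of $\eta$, and compose $H^{b_2}\to H^{0}\to H^{b_1}$ — or simply run the direct estimate above, which is sign-insensitive.
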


The proof of this estimate is well-known, see e.g. Lemma 2.11 in \cite{Tao06}.  Combining Lemma \ref{Lemma, bilin, high-low reg} and Lemma \ref{Lemma, local FR est} yields the outcome below.

\begin{lem}\label{Lemma, local bilin, high-low reg}
Let $-\frac34<s_1\leq s_2\leq 3$, $\a\neq 0$, $|\b|\leq 1$ and $0<T\leq 1$. Then there exists $\eps_0=\eps_0(s_1,s_2,\a)>0$ such that for any $\sigma\in(\frac12,\frac12+\eps_0]$, 
\be\label{B4}
\Big\|\eta\Big(\frac{t}{T}\Big) (uv)_{x}\Big\|_{X^{\a,\b}_{s_2,\sigma-1}}\leq CT^{\eps_0}\Big(\|u\|_{X^{\a,\b}_{s_1,\frac12,\sigma}}\|v\|_{X^{-\a,-\b}_{s_2,\frac12,\sigma}}+ \|u\|_{X^{\a,\b}_{s_2,\frac12,\sigma}}\|v\|_{X^{-\a,-\b}_{s_1,\frac12,\sigma}} \Big),
\ee
where $C=C(s_1,s_2,\a,\sigma)$. 
\end{lem}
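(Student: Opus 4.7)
The plan is to obtain Lemma \ref{Lemma, local bilin, high-low reg} by chaining the two preceding results: the space-time bilinear smoothing estimate of Lemma \ref{Lemma, bilin, high-low reg}, which produces an extra $\eps_0$ of temporal regularity on the right factor of the dispersive weight, and the time-localization estimate of Lemma \ref{Lemma, local FR est}, which converts that extra regularity into a factor of $T^{\eps_0}$ when the time variable is cut off by $\eta(t/T)$. The target norm $X^{\a,\b}_{s_2,\sigma-1}$ has dispersive index $\sigma-1$, while Lemma \ref{Lemma, bilin, high-low reg} controls the higher dispersive index $\sigma-1+\eps_0$, so the localization lemma precisely closes this gap.

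Concretely, first take $\eps_0=\eps_0(s_1,s_2,\a)>0$ to be the constant provided by Lemma \ref{Lemma, bilin, high-low reg}, shrinking it if necessary so that $\eps_0<\tfrac12$. For any $\sigma\in(\tfrac12,\tfrac12+\eps_0]$, set $b_1:=\sigma-1$ and $b_2:=\sigma-1+\eps_0$. The constraint $-\tfrac12<b_1\leq b_2<\tfrac12$ needed in Lemma \ref{Lemma, local FR est} then follows: $b_1>-\tfrac12$ because $\sigma>\tfrac12$, and $b_2<\tfrac12$ because $\sigma+\eps_0\leq\tfrac12+2\eps_0<\tfrac32$ thanks to $\eps_0<\tfrac12$. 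Applying Lemma \ref{Lemma, local FR est} with $s=s_2$ and these choices to the function $w=(uv)_x$ yields
\[
\Big\|\eta\Big(\tfrac{t}{T}\Big)(uv)_x\Big\|_{X^{\a,\b}_{s_2,\sigma-1}}\leq C\,T^{\eps_0}\,\|(uv)_x\|_{X^{\a,\b}_{s_2,\sigma-1+\eps_0}},
\]
for some $C=C(s_2,\sigma,\eps_0)$.

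Next, invoke Lemma \ref{Lemma, bilin, high-low reg} with the same $\eps_0$ and $\sigma$ to bound the rightmost norm by the sum of products
\[
C\Big(\|u\|_{X^{\a,\b}_{s_1,\frac12,\sigma}}\|v\|_{X^{-\a,-\b}_{s_2,\frac12,\sigma}}+\|u\|_{X^{\a,\b}_{s_2,\frac12,\sigma}}\|v\|_{X^{-\a,-\b}_{s_1,\frac12,\sigma}}\Big).
\]
Multiplying the two inequalities gives the conclusion of Lemma \ref{Lemma, local bilin, high-low reg}, with the constant absorbed and depending only on $s_1,s_2,\a,\sigma$ (since $\eps_0$ itself depends only on $s_1,s_2,\a$).

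The proof poses no real obstacle; the only subtlety is the bookkeeping of parameters, namely verifying that the range of $\sigma$ in the hypothesis is compatible with the hypothesis $-\tfrac12<b_1\leq b_2<\tfrac12$ of Lemma \ref{Lemma, local FR est}. This is handled by the observation that if the $\eps_0$ coming out of Lemma \ref{Lemma, bilin, high-low reg} happens to be $\geq\tfrac12$, one simply replaces it by $\min\{\eps_0,\tfrac14\}$; all the hypotheses of Lemma \ref{Lemma, bilin, high-low reg} remain valid for this smaller constant, so the choice is consistent.
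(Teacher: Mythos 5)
Your proof is correct and follows essentially the same route as the paper: apply the time-localization estimate of Lemma \ref{Lemma, local FR est} with $s=s_2$, $b_1=\sigma-1$, $b_2=\sigma-1+\eps_0$ to extract the factor $T^{\eps_0}$, then invoke Lemma \ref{Lemma, bilin, high-low reg} to bound $\|(uv)_x\|_{X^{\a,\b}_{s_2,\sigma-1+\eps_0}}$. Your explicit verification that $-\tfrac12<b_1\leq b_2<\tfrac12$ (shrinking $\eps_0$ below $\tfrac12$ if necessary) is a small bookkeeping point the paper leaves implicit, and it is handled correctly.
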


\begin{proof}
Firstly, we choose $ \eps_0 $ as that in Lemma \ref{Lemma, bilin, high-low reg}. Then for any $\sigma\in(\frac12,\frac12+\eps_0]$, we choose $ w=(uv)_x $, $ s=s_2 $, $ b_1=\sigma-1 $ and $ b_2 = \sigma-1+\eps_0 $ in (\ref{B3}) to obtain 
\[ 
\Big\|\eta\Big(\frac{t}{T}\Big) (uv)_{x}\Big\|_{X^{\a,\b}_{s_2,\sigma-1}} \leq C T^{\eps_0} \| (uv)_x \|_{X^{\a,\b}_{s_2, \sigma - 1}}. 
\]
Then we can apply Lemma \ref{Lemma, bilin, high-low reg} to verify (\ref{B4}).
\end{proof}

Similar to Lemma \ref{Lemma, local FR est}, the estimate for modified Fourier restriction norms of the localized (in time) Duhamel term associated with the semigroup operator $ W^{\a,\b}_R $ can also be established.

\begin{lem}\label{Lemma,  local Duhamel}
Let $s\in\m{R}$, $\a\neq 0$, $|\b|\leq 1$, $\frac12<\si< 1$ and $0<T\leq 1$. Then there exists a constant $C=C(s,\a, \sigma)$ such that
\be\label{local Duhamel}
\left\|\eta\Big(\frac{t}{T}\Big)\int^t_{0} W_R^{\a, \b} (t-\tau)F\,d\tau\right\|_{X^{\a,\b}_{s,\frac12,\sigma}}\leq  C\Big\| \eta\Big(\frac{t}{2T}\Big)F  \Big\|_{X^{\a,\b}_{s,\sigma-1}}.\ee
\end{lem}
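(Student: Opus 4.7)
The strategy is to adapt the classical Kenig--Ponce--Vega Duhamel estimate (which underlies Lemma \ref{Lemma, lin est}) to the time-localized setting at scale $T\in(0,1]$, using a standard cutoff-insertion trick to reduce the desired inequality to a $T$-uniform version of Lemma \ref{Lemma, lin est}.

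The first step will be cutoff insertion. I observe that $\eta(t/T)$ is supported in $\{|t|\leq 2T\}$, and on this set the integration variable $\tau\in[0,t]$ (or $[t,0]$) satisfies $|\tau|\leq 2T$, so $\eta(\tau/(2T))=1$. Therefore
\[
\eta(t/T)\int_0^t W_R^{\a,\b}(t-\tau)F(\cdot,\tau)\,d\tau=\eta(t/T)\int_0^t W_R^{\a,\b}(t-\tau)G(\cdot,\tau)\,d\tau,
\]
where $G:=\eta(\cdot/(2T))F$. Thus the task reduces to establishing the $T$-uniform bound
\[
\Big\|\eta(t/T)\int_0^t W_R^{\a,\b}(t-\tau)G(\cdot,\tau)\,d\tau\Big\|_{X^{\a,\b}_{s,\frac12,\sigma}}\leq C\,\|G\|_{X^{\a,\b}_{s,\sigma-1}}.
\]

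Next I will carry out the standard resonance/non-resonance decomposition on the Fourier side. Starting from the representation
\[
\int_0^t W_R^{\a,\b}(t-\tau)G\,d\tau=\iint e^{i\xi x}\,\frac{e^{i\lambda t}-e^{i\phi^{\a,\b}(\xi) t}}{i(\lambda-\phi^{\a,\b}(\xi))}\,\mcal{F}G(\xi,\lambda)\,d\lambda\,d\xi,
\]
I split the $\lambda$-integration into the resonant region $R_2:=\{|\lambda-\phi^{\a,\b}(\xi)|\leq 1\}$ and the non-resonant region $R_1:=\{|\lambda-\phi^{\a,\b}(\xi)|>1\}$. On $R_2$, a Taylor expansion yields
\[
\frac{e^{i\lambda t}-e^{i\phi t}}{i(\lambda-\phi)}=te^{i\phi t}\sum_{k\geq 0}\frac{(it(\lambda-\phi))^k}{(k+1)!},
\]
so the resonant contribution takes the form $\sum_{k\geq 0}\frac{i^k}{(k+1)!}[\eta(t/T)t^{k+1}]W_R^{\a,\b}(t)q_k(x)$ with $\wh{q_k}(\xi)=\int_{R_2}(\lambda-\phi^{\a,\b}(\xi))^k\mcal{F}G(\xi,\lambda)\,d\lambda$. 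Cauchy--Schwarz in $\lambda$, together with $\frac12<\sigma<1$, yields $\|q_k\|_{H^s}\leq C(2k+1)^{-1/2}\|G\|_{X^{\a,\b}_{s,\sigma-1}}$. Combined with the direct estimate $\|\eta(t/T)t^{k+1}W_R^{\a,\b}(t)q_k\|_{X^{\a,\b}_{s,\frac12,\sigma}}\leq C^{k+1}\|q_k\|_{H^s}$ (with $C$ independent of $T\leq 1$, which follows from a Fourier computation based on the uniform bound $\|\mcal{F}_t[\eta(t/T)t^{k+1}](\mu)\|_{L^1_\mu}\leq C^{k+1}$), the sum converges geometrically. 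On $R_1$, I decompose the kernel as $\eta(t/T)e^{i\lambda t}[i(\lambda-\phi^{\a,\b}(\xi))]^{-1}-\eta(t/T)e^{i\phi^{\a,\b}(\xi) t}[i(\lambda-\phi^{\a,\b}(\xi))]^{-1}$; the first piece is controlled by computing its space-time Fourier transform $T\wh{\eta}(T(\mu-\lambda))[i(\lambda-\phi^{\a,\b}(\xi))]^{-1}$ and using the uniform identity $\|T\wh\eta(T\cdot)\|_{L^1}=\|\wh\eta\|_{L^1}$, while the second piece is a free solution whose initial-data $H^s$-norm is controlled by $\|G\|_{X^{\a,\b}_{s,\sigma-1}}$ and is therefore handled by Lemma \ref{Lemma, lin est}.

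The main obstacle, and the reason the estimate is not an immediate consequence of multiplying by a cutoff in Lemma \ref{Lemma, lin est}, will be maintaining uniformity in $T$ of all constants throughout the decomposition. This hinges on three elementary observations: the Fourier transform $T\wh\eta(T\cdot)$ of $\eta(t/T)$ has $L^1$-norm $\|\wh\eta\|_{L^1}$ independent of $T\leq 1$; the polynomial weight $\eta(t/T)t^{k+1}$ is bounded by $2^{k+1}$ on its support $\{|t|\leq 2T\leq 2\}$; and the $\Lambda^{\a,\b}_{s,\sigma}$ component of the target norm involves the indicator $\mb{1}_{\{e^{|\xi|}\leq 3+|\tau|\}}$, which together with the Schwartz decay of $\wh\eta$ absorbs the extra $\sigma-\frac12>0$ derivatives in $\tau$ without any $T$-loss. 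A secondary subtle point is the endpoint regularity $b=\frac12$ in the $X^{\a,\b}_{s,\frac12}$ component, where the near-resonance Taylor expansion provides the crucial gain since each summand becomes a free solution modulated by a smooth time weight with explicit Fourier-transform bounds.
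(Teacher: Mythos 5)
Your first step (inserting $\eta(\cdot/(2T))$ under the Duhamel integral, using that $|\tau|\leq 2T$ on the support of $\eta(t/T)$) is exactly the paper's reduction; the paper then simply invokes the known time-localized Duhamel estimate (``similar to Lemma \ref{Lemma, lin est}, see Lemma 2.1 in \cite{GTV97}''), using $\|w\|_{X^{\a,\b}_{s,\frac12,\sigma}}\leq 2\|w\|_{X^{\a,\b}_{s,\sigma}}$ and the $T$-uniform bound at the endpoint exponents $b=\sigma$, $b'=\sigma-1$. Where you diverge is in trying to reprove that estimate with the resonance threshold placed at $|\lambda-\phi^{\a,\b}(\xi)|=1$, independent of $T$, and there your argument has a genuine gap.

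The problem is the non-resonant region in the intermediate modulation range $1\lesssim|\lambda-\phi^{\a,\b}(\xi)|\lesssim 1/T$, estimated for the $\sigma$-weighted part of the norm. At low spatial frequency (say $|\xi|\leq 1$) the indicator $\mb{1}_{\{e^{|\xi|}\leq 3+|\tau|\}}$ in $\Lambda^{\a,\b}_{s,\sigma}$ is vacuous, so the $\Lambda$-component really carries the full weight $\la\tau-\phi^{\a,\b}(\xi)\ra^{\sigma}$ with $\sigma>\frac12$ at all modulations; the Schwartz decay of $\wh\eta(T\cdot)$ only kicks in for $|\tau-\lambda|\gg 1/T$ and gives nothing in the range above. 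Concretely, take $\mcal{F}G=\mb{1}_{\{|\xi|\leq1\}}\mb{1}_{\{2\leq\lambda-\phi^{\a,\b}(\xi)\leq3\}}$, so $\|G\|_{X^{\a,\b}_{s,\sigma-1}}\sim1$. Your second non-resonant piece is $\eta(t/T)W^{\a,\b}_R(t)q$ with $\|q\|_{H^s}\sim1$, and its $\Lambda^{\a,\b}_{s,\sigma}$-norm is comparable to $\|\eta(\cdot/T)\|_{H^{\sigma}_t}\|q\|_{H^s}\sim T^{\frac12-\sigma}\to\infty$ as $T\to0$; the first piece blows up identically (their difference is what stays bounded, by the cancellation in $(e^{i\lambda t}-e^{i\phi^{\a,\b}(\xi)t})/(\lambda-\phi^{\a,\b}(\xi))$). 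So estimating the two non-resonant pieces separately cannot yield a constant independent of $T$, and your closing claim that the indicator plus Schwartz decay absorb the extra $\sigma-\frac12$ ``without any $T$-loss'' is false in this range. The standard fix, and what the cited \cite{GTV97} proof does, is to place the resonant/non-resonant split at modulation $\sim1/T$ (so the Taylor expansion retains the cancellation exactly where it is needed, and in the non-resonant region the factor $\la\lambda-\phi^{\a,\b}(\xi)\ra^{-1}\lesssim T$ compensates the $T^{\frac12-\sigma}$ cost of the cutoff), or simply to quote that lemma after bounding $X^{\a,\b}_{s,\frac12,\sigma}$ by $X^{\a,\b}_{s,\sigma}$. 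Two smaller inaccuracies: for the bounds $\|\eta(t/T)t^{k+1}W^{\a,\b}_R(t)q_k\|_{X^{\a,\b}_{s,\frac12,\sigma}}\leq C^{k+1}\|q_k\|_{H^s}$ and for your first non-resonant piece, the relevant quantities are weighted $L^2$ (i.e.\ $H^{\frac12}_t$, $H^{\sigma}_t$) norms of the time factors, not $L^1_\mu$ norms of their Fourier transforms; the resonant ones are indeed uniformly bounded thanks to the factor $t^{k+1}$, but for the non-resonant piece one must also use $\sigma>\frac12$ to place $\la\lambda-\phi^{\a,\b}(\xi)\ra^{-1}\mcal{F}G$ in $L^1_\lambda$, which your sketch does not do.
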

\begin{proof}
Define $g(x,t)=\eta\big(\frac{t}{2T}\big)F(x,t)$. Then it is easily seen that 
\begin{align*}
\text{LHS of (\ref{local Duhamel})} &= \left\|\eta\Big(\frac{t}{T}\Big)\int^t_0 W_R^{\a, \b} (t-\tau)g \, d\tau\right\|_{X^{\a,\b}_{s,\frac12,\sigma}}.
\end{align*}
Then similar to Lemma \ref{Lemma, lin est} (also see Lemma 2.1 in \cite{GTV97}),  we have 
\begin{align*}
\left\|\eta\Big(\frac{t}{T}\Big)\int^t_0 W_R^{\a, \b} (t-\tau)g \,d\tau\right\|_{X^{\a,\b}_{s,\frac12,\sigma}} &\leq C\| g \|_{X^{\a,\b}_{s,\sigma-1}}= C\Big\| \eta\Big(\frac{t}{2T}\Big)F\Big\|_{X^{\a,\b}_{s,\sigma-1}}.
\end{align*}
\end{proof}

Now we have developed all the tools needed to justify Lemma \ref{Lemma, bilin, d21}.

\begin{proof}[{\bf Proof of Lemma \ref{Lemma, bilin, d21}}]
Firstly, we choose $ \eps_0 =\eps_0(s_1,s_2, \a) > 0 $ as that in Lemma \ref{Lemma, local bilin, high-low reg}. Then for any $\sigma \in (\frac12, \frac12 + \eps_0]$, we apply Lemma \ref{Lemma,  local Duhamel}, with $ s=s_2 $ and $ F = (w_1 w_2)_x $, to conclude that 
\[\l\| \eta\Big(\frac{t}{T}\Big) \int^t_0 W_R^{\a, \b} (t-\tau)  (w_1 w_2)_x\r\|_{X^{\a,\b}_{s_2,\frac12, \si}}\leq C \Big\|\eta\Big(\frac{t}{2T}\Big)(w_1 w_2)_x\Big\|_{X^{\a,\b}_{s_2,\sigma-1}}.\]
Then it follows from Lemma \ref{Lemma, local bilin, high-low reg} that 
\[  \Big\|\eta\Big(\frac{t}{2T}\Big)(w_1 w_2)_x\Big\|_{X^{\a,\b}_{s_2,\sigma-1}}\leq C T^{\eps_0}\Big(\|w_1\|_{X^{\a,\b}_{s_1,\frac12,\sigma}}\|w_2\|_{X^{-\a,-\b}_{s_2,\frac12,\sigma}}+ \|w_1\|_{X^{\a,\b}_{s_2,\frac12,\sigma}}\|w_2\|_{X^{-\a,-\b}_{s_1,\frac12,\sigma}} \Big),\]
where $C=C(s_1,s_2,\a,\sigma)$.  Combining the above two estimates leads to (\ref{bilin, d21}) in Lemma \ref{Lemma, bilin, d21}.
\end{proof}

\section*{Acknowledgements}
S.  Li is supported by the National Natural Science Foundation of China (no.  12001084 and no.  12071061), and the Applied Basic Research Program of Sichuan Province (no. 2020YJ0264).



{\small 

\newcommand{\etalchar}[1]{$^{#1}$}

}

\bigskip

\thanks{(S. Li) School of Mathematical Sciences, University of Electronic Science and Technology of China, Chengdu, Sichuan 611731, China}

\thanks{Email: lish@uestc.edu.cn}

\medskip

\thanks{(M. Chen) Department of Mathematics, Purdue University, West Lafayette, IN 47907, USA}

\thanks{Email: chen45@purdue.edu}

\medskip

\thanks{(X. Yang) Department of Mathematics, University of California, Riverside, CA, 92521, USA}

\thanks{Email: xiny@ucr.edu}

\medskip

\thanks{(B.-Y. Zhang) Department of Mathematical Sciences, University of Cincinnati, Cincinnati, OH 45221, USA}

\thanks{Email: zhangb@ucmail.uc.edu}

\end{document}